\numberwithin{equation}{section}
\numberwithin{figure}{section}
\begin{document}

\title[On the asymptotic expansion of various quantum invariants III]
{On the asymptotic expansion of various quantum invariants III:
the Reshetikhin-Turaev invariants of closed hyperbolic 3-manifolds obtained by doing integral surgery along the twist knot}
\author[Qingtao Chen and Shengmao Zhu]{Qingtao Chen and
Shengmao Zhu}

\address{Department of Pure Mathematics \\
Xi'an Jiaotong-Liverpool University \\
Suzhou Jiangsu \\
China}
\email{Qingtao.Chen@xjtlu.edu.cn,chenqtao@hotmail.com}

\address{Department of Mathematics \\
Zhejiang Normal University  \\
Jinhua Zhejiang,  321004, China }
\email{szhu@zju.edu.cn}

\begin{abstract} 
This is the third article in a series devoted to the study of the asymptotic expansions of various quantum invariants related to the twist knots. In this paper,  by using the saddle point method developed by Ohtsuki and Yokota, we obtain an asymptotic expansion formula for the Reshetikhin-Turaev invariants of closed hyperbolic 3-manifolds obtained by doing integral $q$-surgery along the twist knots $\mathcal{K}_p$ at the root of unity $e^{\frac{4\pi\sqrt{-1}}{r}}$ ($r$ is odd). 
\end{abstract}

\maketitle

\theoremstyle{plain} \newtheorem{thm}{Theorem}[section] \newtheorem{theorem}[%
thm]{Theorem} \newtheorem{lemma}[thm]{Lemma} \newtheorem{corollary}[thm]{%
Corollary} \newtheorem{proposition}[thm]{Proposition} \newtheorem{conjecture}%
[thm]{Conjecture} \theoremstyle{definition}
\newtheorem{remark}[thm]{Remark}
\newtheorem{remarks}[thm]{Remarks} \newtheorem{definition}[thm]{Definition}
\newtheorem{example}[thm]{Example}





\tableofcontents
\newpage

\section{Introduction}
Volume Conjectures relate the exponential growth rate of various quantum invariants of hyperbolic 3-
manifolds at certain root of unity to the hyperbolic volume of the manifolds.  In a series started from \cite{CZ23-1,CZ23-2}, we plan to prove various volume conjectures related to the twist knots.   

In \cite{CZ23-1} and \cite{CZ23-2}, we have proved the Detcherry-Kalfagianni-Yang's  volume conjecture  \cite{DKY18} and  Kashaev-Murakami-Murakami's volume conjecture \cite{Kash97,MuMu01}
for the colored Jones polynomial of twist knot $\mathcal{K}_p$ with $p\geq 6$.

Furthermore,  the volume conjecture   \cite{CY18} relates the Reshetikhin-Turaev and the Turaev-Viro invariants of a hyperbolic 3-manifold at certain roots of unity to the hyperbolic volume of this hyperbolic 3-manifold.  In \cite{Oht18}, Ohtsuki proved the volume conjecture \cite{CY18} for the hyperbolic 3-manifolds obtained by doing integral surgery along the figure-8 knot. Later, in \cite{WongYang20-1}, Wong and Yang proved volume conjecture \cite{CY18} for the hyperbolic 3-manifolds obtained by doing rational surgery along the figure-8 knot.  The goal of this article is to prove the volume conjecture  for  Reshetikhin-Turaev  invariants of the hyperbolic 3-manifolds $M_{p,q}$ obtained by doing integral $q$-surgery along the twist knot $\mathcal{K}_p$.

Let us state the main results of this paper precisely. First, we recall that the volume conjecture for Reshetikhin-Turaev invariant \cite{CY18}: 
\begin{conjecture} \label{conjecture-CY}
    Let $M$ be an oriented closed hyperbolic 3-manifold, for an odd integer $r$, let $RT_r(M)$ be its $r$-th Reshetikhin-Turaev invariant, then at the root of unity $t=e^{\frac{4\pi\sqrt{-1}}{r}}$, 
\begin{align}
\lim_{r\rightarrow \infty} \frac{4\pi}{r}RT_{r}(M)=Vol(M)+\sqrt{-1}CS(M) \mod \pi^2\mathbb{Z}. 
\end{align}
\end{conjecture}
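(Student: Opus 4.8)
We cannot hope to establish Conjecture~\ref{conjecture-CY} in full generality; following Ohtsuki \cite{Oht18} and Wong--Yang \cite{WongYang20-1}, the plan is to prove it for the family $M_{p,q}$ obtained by integral $q$-surgery along the twist knot $\mathcal{K}_p$, by the saddle point method of Ohtsuki and Yokota. The first step is to write $RT_r(M_{p,q})$ as an explicit finite sum: from the surgery presentation, $RT_r(M_{p,q})$ is (up to a normalization depending only on $r$, $p$, $q$) a sum over a color variable $n$ of the normalized colored Jones polynomial $J_n(\mathcal{K}_p; e^{4\pi\sqrt{-1}/r})$ times a Gaussian factor $e^{(\text{const})\cdot q n^2 / r}$ coming from the framing change. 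Using the formulas for the colored Jones polynomial of $\mathcal{K}_p$ and their quantum-dilogarithm expressions obtained in \cite{CZ23-1,CZ23-2}, each summand can be rewritten, via the asymptotic expansion of Faddeev's quantum dilogarithm, in the form $e^{\frac{r}{4\pi\sqrt{-1}}\,\Phi_{p,q}(\mathbf{t})}$ up to prefactors of polynomial growth, where $\Phi_{p,q}$ is an explicit potential function of several real variables $\mathbf{t}$.

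The second step is to pass from the sum to an integral: replacing the discrete sum by $\int e^{\frac{r}{4\pi\sqrt{-1}}\,V_{p,q}(\mathbf{t})}\,d\mathbf{t}$ over an explicit domain $D$ (by Poisson summation, retaining the central term and bounding the remaining ones by an exponentially smaller quantity), where $V_{p,q}$ incorporates $\Phi_{p,q}$ and the Gaussian surgery term. One then studies $V_{p,q}$: compute the critical point equations $\nabla V_{p,q}=0$, identify them with a reduced form of the hyperbolic gluing equations of an ideal triangulation of the complement of $\mathcal{K}_p$ together with the Dehn filling equation for slope $q$, and exhibit a solution $\mathbf{t}_0 \in D$ corresponding to the complete hyperbolic structure on $M_{p,q}$. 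One must then verify that $\frac{4\pi}{r}\log$ of the leading term, namely $\frac{1}{1}\cdot\frac{1}{1}\,V_{p,q}(\mathbf{t}_0)$ read off with the factor $\frac{r}{4\pi\sqrt{-1}}$, equals $\mathrm{Vol}(M_{p,q}) + \sqrt{-1}\,\mathrm{CS}(M_{p,q})$ modulo $\pi^2\mathbb{Z}$; this is the identification of $V_{p,q}(\mathbf{t}_0)$ with the volume and Chern--Simons functionals, using Neumann--Zagier data and the computations already carried out in \cite{Oht18,WongYang20-1} in the figure-8 case.

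The third step is the actual saddle point estimate. One checks that the Hessian of $V_{p,q}$ at $\mathbf{t}_0$ is nondegenerate, deforms the contour of integration inside (a complexification of) $D$ so that $\mathrm{Re}\big(\tfrac{1}{4\pi\sqrt{-1}} V_{p,q}\big)$ attains a strict maximum at $\mathbf{t}_0$, and shows the contributions of the other critical points and of the boundary $\partial D$ are exponentially negligible; then standard stationary-phase/steepest-descent asymptotics give $RT_r(M_{p,q}) = e^{\frac{r}{4\pi\sqrt{-1}}(\mathrm{Vol}(M_{p,q}) + \sqrt{-1}\,\mathrm{CS}(M_{p,q}))}\cdot r^{\kappa}\cdot\big(c + O(1/r)\big)$ with $c\neq 0$, from which the conjectured limit follows upon taking $\frac{4\pi}{r}\log$. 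I expect the main obstacle to be exactly the contour-deformation step: producing a rigorous certificate that the integration contour can be pushed through the geometric critical point while $\mathrm{Re}\big(\tfrac{1}{4\pi\sqrt{-1}} V_{p,q}\big)$ stays strictly below its value at $\mathbf{t}_0$ everywhere else on the contour, including near the poles and branch cuts of the quantum dilogarithm along $\partial D$. This requires delicate, mostly numerical-analytic estimates on the explicit potential of the twist knot, which grow in complexity with $p$; a secondary difficulty is the uniform-in-$r$ control of the Poisson-summation error and of the polynomial prefactors. Assembling these pieces yields Conjecture~\ref{conjecture-CY} for every $M_{p,q}$ in the admissible range of $p$ and $q$.
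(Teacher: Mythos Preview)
Your proposal is correct and follows essentially the same approach as the paper: express $RT_r(M_{p,q})$ via the colored Jones polynomial of $\mathcal{K}_p$ and quantum dilogarithms to obtain a potential function, apply Poisson summation to pass to an integral, and then use the Ohtsuki--Yokota saddle point method after identifying the geometric critical point with the complex volume via the gluing and Dehn filling equations. You also correctly anticipate that the main technical burden lies in the contour-deformation and boundary estimates, which the paper handles through a detailed subdivision of the integration region and one- and two-dimensional saddle point estimates on the slices.
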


We define the subset $S\subset\mathbb{Z}_+\times \mathbb{Z}_+$ as follows:
\begin{align}
S=\{(p,q)\in \mathbb{Z}_+\times \mathbb{Z}_+ |(p,q) \ \text{satisfies the condition} \ (\ref{formula-pqconditions})\}.    
\end{align}

\begin{equation}  \label{formula-pqconditions}
q\geq \left\{ \begin{aligned}
         & 27  &  \ ( \text{if} \ p=6) \\
         & 19 & \ ( \text{if} \ p=7)  \\ 
         & 17 & \ ( \text{if} \ p=8)  \\
         & 15 & \ ( \text{if} \ p=9, 10)  \\
         & 14 & \ ( \text{if} \ p=11, 12, 13)  \\
         & 13 & \ ( \text{if} \ 14 \leq p\leq 32)  \\ 
         & 12 & \ ( \text{if} \ p\geq 33).  \\ 
                          \end{aligned} \right.
                          \end{equation}

   Let $\hat{V}(p,q,\theta_1,\theta_2,\theta_3)$ be the potential function of Reshetikhin-Turaev invariants of $M_{p,q}$ given by formula (\ref{formula-PotentialV}). 
By Proposition \ref{prop-critical}, there exists a unique critical point $(\theta_1^0,\theta_2^0,\theta_3^0)$ of $\hat{V}(p,q;\theta_1,\theta_2,\theta_3)$. Let $z_i^0=e^{2\pi\sqrt{-1}\theta_i^0}$ for $i=1,2,3$, we put
\begin{align}
  \zeta(p,q)&=
\hat{V}(p,q;\theta_1^0,\theta_2^0,\theta_3^0)\\\nonumber
&=\pi\sqrt{-1}\left(\frac{3q}{2}+(\frac{q}{2}-1)(\theta_1^0)^2-\theta_1^0+(2p+1)(\theta_3^0)^2-(2p+3)\theta_3^0-2\theta_2^0\right)\\\nonumber
&+\frac{1}{2\pi\sqrt{-1}}\left(\frac{\pi^2}{6}+\text{Li}_2(z_2^0z_3^0)+\text{Li}_2\left(\frac{z_2^0}{z_3^0}\right)-\text{Li}_2(z_2^0z_1^0)-\text{Li}_2(z_2^0)-\text{Li}_2\left(\frac{z_2^0}{z_1^0}\right)\right).
\end{align}
and
\begin{align} \label{formula-omegapq}
    \omega(p,q)=\frac{z_2^0(z_3^0-(z_3^0)^{-1})(\sqrt{z_1^0}-\frac{1}{\sqrt{z_1^0}})}{\sqrt{(1-z_2^0z_3^0)(1-z_2^0(z_{3}^0)^{-1})}\sqrt{H(p,q;z_1^0,z_2^0,z_3^0)}},
\end{align}
where
\begin{align}
  &H(p,q;z_1^0,z_2^0,z_3^0)=4\frac{z_2^0}{z_1^0-z_2^0}\cdot \frac{z_1^0z_2^0}{1-z_1^0z_2^0}\left(\frac{z_2^0z_3^0}{1-z_2^0z_3^0}+\frac{z_2^0}{z_3^0-z_2^0}\right)\\\nonumber
  &-4\left(\frac{z_2^0}{z_1^0-z_2^0}+\frac{z_1^0z_2^0}{1-z_1^0z_2^0}\right)\frac{z_2^0z_3^0}{1-z_2^0z_3^0}\cdot\frac{z_2^0}{z_3^0-z_2^0}+\left(\frac{z_2^0}{z_1^0-z_2^0}+\frac{z_1^0z_2^0}{1-z_1^0z_2^0}\right)\frac{z_2^0}{1-z_2^0}\left(\frac{z_2^0z_3^0}{1-z_2^0z_3^0}+\frac{z_2^0}{z_3^0-z_2^0}\right)\\\nonumber
  &+(8p+4)\frac{z_2^0}{z_1^0-z_2^0}\cdot \frac{z_1^0z_2^0}{1-z_1^0z_2^0}+(2p+1)\left(\frac{z_2^0}{z_1^0-z_2^0}+\frac{z_1^0z_2^0}{1-z_1^0z_2^0}\right)\frac{z_2^0}{1-z_2^0}\\\nonumber
  &-(2p+\frac{q}{2})\left(\frac{z_2^0}{z_1^0-z_2^0}+\frac{z_1^0z_2^0}{1-z_1^0z_2^0}\right)\left(\frac{z_2^0z_3^0}{1-z_2^0z_3^0}+\frac{z_2^0}{z_3^0-z_2^0}\right)-(\frac{q}{2}-1)\frac{z_2^0}{1-z_2^0}\left(\frac{z_2^0z_3^0}{1-z_2^0z_3^0}+\frac{z_2^0}{z_3^0-z_2^0}\right)\\\nonumber
  &+4(\frac{q}{2}-1)\frac{z_2^0z_3^0}{1-z_2^0z_3^0}\cdot\frac{z_2^0}{z_3^0-z_2^0}-(2p+1)(\frac{q}{2}-1)\left(\frac{z_2^0}{z_1^0-z_2^0}+\frac{z_1^0z_2^0}{1-z_1^0z_2^0}+\frac{z_2^0}{1-z_{2}^0}-\frac{z_2^0z_3^0}{1-z_2^0z_3^0}-\frac{z_2^0}{z_3^0-z_2^0}\right).
\end{align}

Then, we have
\begin{theorem}  \label{theorem-main}
  For $(p,q)\in S$, the asymptotic expansion of Reshetikhin-Turaev $RT_{r}(M_{p,q})$ is given by the following form
    \begin{align}
        RT_{r}(M_{p,q})&=(-1)^{p+1}\sqrt{-1}e^{\sigma((\mathcal{K}_{p})_q)\left(\frac{3}{r}+\frac{r+1}{4}\right)\pi\sqrt{-1}}\omega(p,q)e^{(N+\frac{1}{2})\zeta(p,q)}\\\nonumber
&\cdot\left(1+\sum_{i=1}^d\kappa_i(p,q)\left(\frac{4\pi\sqrt{-1}}{r}\right)^i+O\left(\frac{1}{r^{d+1}}\right)\right),
    \end{align}
    for $d\geq 1$, where $\omega(p,q)$ is given by formula (\ref{formula-omegapq}) and $\kappa_i(p,q)$ are constants determined by the 3-manifold $M_{p,q}$.
\end{theorem}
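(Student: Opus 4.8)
The plan is to follow the saddle point method of Ohtsuki and Yokota, in the form used by Ohtsuki \cite{Oht18} and Wong--Yang \cite{WongYang20-1} for the figure-eight knot and in our previous papers \cite{CZ23-1,CZ23-2} for the twist knots. \textbf{Step 1: from the surgery formula to a multi-sum of exponentials.} Writing $r=2N+1$, I would apply the Reshetikhin--Turaev surgery formula to the explicit double-sum formula for the colored Jones polynomial $J_n(\mathcal{K}_p)$ (as in \cite{CZ23-1,CZ23-2}), together with the sum over the color $n$ of the surgered component, so that $RT_r(M_{p,q})$ becomes a finite sum over three integer indices $(m_1,m_2,m_3)$. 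Evaluating all quantum integers at $t=e^{\frac{4\pi\sqrt{-1}}{r}}$ and inserting the standard asymptotic expansion of the logarithm of the quantum factorial in terms of the dilogarithm $\mathrm{Li}_2$ (with explicit corrections in powers of $1/r$), this sum acquires the form $\sum_{(m_1,m_2,m_3)} g_r(m_1,m_2,m_3)\, e^{(N+\frac12)\hat V(p,q;\theta_1,\theta_2,\theta_3)}$, where $\theta_i=m_i/r$ runs over a lattice in a bounded polytope, $\hat V$ is the potential function (\ref{formula-PotentialV}), and $g_r$ is an explicit amplitude with mild $r$-dependence; the $\mathrm{Li}_2$-terms in $\zeta(p,q)$ are exactly those produced here.

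\textbf{Step 2: sum to integral, and localization near the critical point.} I would pass from the sum to an integral by the Poisson summation formula and estimate the resulting Fourier modes and boundary contributions. The crux is to prove that $\mathrm{Re}\,\hat V$ attains its maximum, over the relevant region, in a small neighborhood of the critical point $(\theta_1^0,\theta_2^0,\theta_3^0)$ of Proposition \ref{prop-critical}, and that the non-principal Fourier modes, the tails of the integral, and the region near the boundary of the polytope are all exponentially dominated by $e^{(N+\frac12)\mathrm{Re}\,\zeta(p,q)}$. This is the step I expect to be the main obstacle: it requires sharp estimates on $\mathrm{Re}\,\hat V$ on the whole polytope, and the explicit thresholds in condition (\ref{formula-pqconditions}) are precisely what is needed to make these estimates valid (for small $p$ one must take $q$ large, forcing a finite case analysis). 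One must also deform the contour of integration so that it passes through $(\theta_1^0,\theta_2^0,\theta_3^0)$ along a path on which $\mathrm{Re}\,\hat V$ stays controlled and on which the integrand has no poles; constructing this homotopy of contours is the other technically delicate point.

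\textbf{Step 3: steepest descent and the asymptotic series.} Localized to a neighborhood of $(\theta_1^0,\theta_2^0,\theta_3^0)$, the integral is handled by the multidimensional steepest descent expansion. The Hessian of $\hat V$ at the critical point has determinant equal, up to an explicit nonzero factor, to $H(p,q;z_1^0,z_2^0,z_3^0)$; one checks $H(p,q;z_1^0,z_2^0,z_3^0)\neq 0$, so the critical point is nondegenerate, and the Gaussian integral yields the amplitude $\omega(p,q)$ of (\ref{formula-omegapq}). The normalization factor of the surgery formula, together with the signature correction of the surgery presentation, contributes the prefactor $(-1)^{p+1}\sqrt{-1}\,e^{\sigma((\mathcal K_p)_q)(\frac3r+\frac{r+1}{4})\pi\sqrt{-1}}$, and the evaluation $\hat V(p,q;\theta_1^0,\theta_2^0,\theta_3^0)=\zeta(p,q)$ follows from the critical point equations and functional identities for $\mathrm{Li}_2$. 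Finally, expanding $\hat V$ beyond the quadratic term and integrating the resulting series term by term produces $1+\sum_{i=1}^d\kappa_i(p,q)(4\pi\sqrt{-1}/r)^i+O(r^{-d-1})$, with each $\kappa_i(p,q)$ a universal polynomial in the higher derivatives of $\hat V$ at the critical point and the coefficients of the quantum-factorial expansion, hence a constant determined by $M_{p,q}$.
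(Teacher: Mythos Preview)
Your proposal is correct and follows essentially the same route as the paper: express $RT_r(M_{p,q})$ via the surgery formula and the double-sum for $J_n(\mathcal{K}_p)$ as a triple sum with potential $\hat V$ (Section~\ref{Section-Potentialfunction}), apply Poisson summation and show all but the principal Fourier mode are exponentially negligible (Sections~\ref{Section-Poissonsummation}--\ref{Section-asympticexpansion}), then run the three-dimensional saddle point method at the unique critical point to extract $\omega(p,q)$ and the asymptotic series. The paper's additional ingredients you would need to fill in are the symmetry identities $\hat h_N(m_1,m_2,m_3)=\hat h_N(-m_1-1,m_2,m_3)$ and $\hat h_N(m_1,m_2,-m_3-2)=(-1)^{m_3}\hat h_N(m_1,m_2,m_3)$, which reduce the analysis to $m_1,m_3\ge 0$, and the repeated use of one- and two-dimensional saddle point estimates on slices $\theta_1=c_1$ or $\theta_3=c_3$ to dispose of the boundary pieces and the modes with $(m_1,m_3)\ne(0,0)$; these are exactly the ``sharp estimates on $\mathrm{Re}\,\hat V$'' and ``contour homotopy'' you flagged as the main obstacles.
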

By Theorem  \ref{theorem-critical=volume}, we know that
\begin{align}
     2\pi \zeta(p,q)= Vol(M_{p,q})+\sqrt{-1}CS(M_{p,q}) \mod \pi^2\sqrt{-1}\mathbb{Z}.
\end{align}
It implies that
\begin{corollary}
    For $(p,q)\in S$, we have
    \begin{align}
        \lim_{r\rightarrow \infty}\frac{4\pi}{r}\log RT_r(M_{p,q};e^{\frac{4\pi \sqrt{-1}}{r}})=Vol(M_{p,q})+\sqrt{-1}CS(M_{p,q}) \mod \pi^2\sqrt{-1}\mathbb{Z}.
    \end{align}
\end{corollary}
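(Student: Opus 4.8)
The plan is to follow the saddle point method of Ohtsuki and Yokota, in the form used by Ohtsuki \cite{Oht18} and Wong--Yang \cite{WongYang20-1} for surgeries on the figure-eight knot and in the earlier papers \cite{CZ23-1,CZ23-2} of this series. First I would rewrite $RT_r(M_{p,q})$, via the surgery formula, as a finite sum over a color index ranging over roughly $\{1,\dots,\frac{r-1}{2}\}$ of the colored Jones polynomial $J_n(\mathcal{K}_p)$ at $t=e^{4\pi\sqrt{-1}/r}$, weighted by the Gauss sum coming from the integral $q$-surgery and by the signature correction $\sigma((\mathcal{K}_p)_q)$. Substituting the formula for the colored Jones polynomial of the twist knot $\mathcal{K}_p$ from \cite{CZ23-1,CZ23-2} turns this into a multi-sum over three indices. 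Using the asymptotics of quantum factorials in terms of the dilogarithm, together with the Poisson summation formula, this multi-sum becomes a sum of Gaussian-type integrals with phase $(N+\tfrac12)\hat V(p,q;\theta_1,\theta_2,\theta_3)$, where $N=\tfrac{r-1}{2}$ and $\hat V$ is the potential of (\ref{formula-PotentialV}), and one shows that only the principal Fourier mode contributes to the leading asymptotics.

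The heart of the argument is localization at the critical point. By Proposition \ref{prop-critical}, $\hat V$ has a unique critical point $(\theta_1^0,\theta_2^0,\theta_3^0)$, with $z_i^0=e^{2\pi\sqrt{-1}\theta_i^0}$. One must then show that the domain of integration can be deformed, keeping endpoints fixed, onto a contour through this point on which $\mathrm{Re}\,\hat V$ attains its maximum only at the critical point, while $\mathrm{Re}\,\hat V$ is strictly smaller on the remaining parts, in particular near the boundary, of the original domain. I expect this convexity/estimate step to be the main obstacle: it demands region-by-region bounds on the real part of a sum of dilogarithms plus quadratic terms, and it is exactly here that the explicit numerical hypotheses (\ref{formula-pqconditions}) defining $S$ are used, guaranteeing the required sign of $\mathrm{Re}\,\hat V$ on the boundary pieces and hence that the critical point is the dominant saddle.

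Granting this, the steepest-descent expansion (Ohtsuki's version of the stationary phase lemma) yields
\[
RT_r(M_{p,q})=C\cdot\Big(\tfrac{2\pi}{r(N+1/2)}\Big)^{3/2}\frac{e^{(N+\frac12)\hat V(p,q;\theta_1^0,\theta_2^0,\theta_3^0)}}{\sqrt{\det\mathrm{Hess}\,\hat V}}\Big(1+\sum_{i\ge1}\kappa_i(p,q)\big(\tfrac{4\pi\sqrt{-1}}{r}\big)^i+O(r^{-d-1})\Big),
\]
for an explicit constant $C$, where $\hat V(p,q;\theta_1^0,\theta_2^0,\theta_3^0)=\zeta(p,q)$ by definition and $\det\mathrm{Hess}\,\hat V$ simplifies to $H(p,q;z_1^0,z_2^0,z_3^0)$. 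Assembling the Gaussian prefactor with the normalization constants of the surgery formula and the dilogarithm prefactors produces exactly $\omega(p,q)$ of (\ref{formula-omegapq}); the discrete factors $(-1)^{p+1}\sqrt{-1}$ and $e^{\sigma((\mathcal{K}_p)_q)(\frac3r+\frac{r+1}{4})\pi\sqrt{-1}}$ record the framing and signature corrections; and the $\kappa_i(p,q)$ are the standard universal polynomials in the higher derivatives of $\hat V$ at the critical point, hence invariants of $M_{p,q}$. This gives Theorem \ref{theorem-main}.

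Finally, for the Corollary I would take the logarithm of the formula in Theorem \ref{theorem-main}. Every factor except $e^{(N+\frac12)\zeta(p,q)}$ contributes only $O(\log r)$ to $\log RT_r(M_{p,q})$; in particular the signature factor contributes $\sigma((\mathcal{K}_p)_q)\frac{r+1}{4}\pi\sqrt{-1}+O(1)$, which after multiplication by $\frac{4\pi}{r}$ tends to $\sigma((\mathcal{K}_p)_q)\pi^2\sqrt{-1}\equiv0\bmod\pi^2\sqrt{-1}\mathbb{Z}$ since $\sigma((\mathcal{K}_p)_q)\in\mathbb{Z}$. Hence $\frac{4\pi}{r}\log RT_r(M_{p,q})\to 2\pi\zeta(p,q)\bmod\pi^2\sqrt{-1}\mathbb{Z}$ (using $N+\tfrac12=\tfrac r2$), and by Theorem \ref{theorem-critical=volume} the right-hand side equals $\mathrm{Vol}(M_{p,q})+\sqrt{-1}\,CS(M_{p,q})\bmod\pi^2\sqrt{-1}\mathbb{Z}$, which proves the claim.
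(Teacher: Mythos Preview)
Your proposal is correct and follows essentially the same approach as the paper: the Corollary is an immediate consequence of Theorem~\ref{theorem-main} together with Theorem~\ref{theorem-critical=volume}, obtained by taking $\frac{4\pi}{r}\log$ of the asymptotic expansion and noting that all factors other than $e^{(N+\frac12)\zeta(p,q)}$ contribute $o(1)$ modulo $\pi^2\sqrt{-1}\mathbb{Z}$. Your first three paragraphs in fact re-sketch the proof of Theorem~\ref{theorem-main} itself, which in the paper is established separately in Sections~\ref{Section-Potentialfunction}--\ref{Section-asympticexpansion}; for the Corollary alone only your final paragraph is needed, and it matches the paper's one-line derivation.
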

Hence we proved Conjecture \ref{conjecture-CY} for $M_{p,q}$.

The rest of this article is organized as follows. In Section \ref{Section-2RT}, we fix the notations and
review the related materials that will be used in this paper. In Section \ref{Section-Potentialfunction}, we compute
the potential function for the Reshetikhin-Turaev invariant for the 3-manifold obtained by doing integral surgery along the twist knot $\mathcal{K}_p$. 
In Section \ref{Section-Poissonsummation}, we express the Reshetikhin-Turaev invariant as a summation of Fourier coefficients with the help of Poisson summation formula. 
In Section \ref{Section-asympticexpansion}, 
we first show that infinite terms of the Fourier coefficients can be neglected. Then we estimate the remained term of Fourier coefficients by using the saddle point method, we obtain that only four main Fourier coefficients will contribute to the final form of the  asymptotic expansion.  Hence we finish the proof of the main Theorem  \ref{theorem-main}. 
The final Appendix  Section \ref{Section-Appendices} is devoted to the proof of several results which will be used in previous sections.

\textbf{Acknowledgements.} 

The first author would like to thank Nicolai Reshetikhin, Kefeng Liu and Weiping Zhang for bringing him to this area and a lot of discussions during his career, thank Francis Bonahon,   Giovanni Felder and Shing-Tung Yau for their continuous encouragement, support and discussions, and thank Jun Murakami and Tomotada Ohtsuki for their helpful discussions and support. He also want to thank Jørgen Ellegaard Andersen, Sergei Gukov, Thang Le, Gregor Masbaum,  Rinat Kashaev, Vladimir Turaev, Tian Yang and Hiraku Nakajima for their support, discussions and interests, and thank Yunlong Yao who built him solid analysis foundation twenty years ago. The second author would like to thank Kefeng Liu and Hao Xu for bringing him to this area when he was a graduate student at CMS of Zhejiang University, and for their constant encouragement and helpful discussions since then. Both of us thanks Ruifeng Qiu for his interests and supports.

\section{Preliminaries} \label{Section-2RT}

\subsection{Defintion of the Reshetikhin-Turaev invariants}
We use the skein theory approach  of Reshetikhin-Turaev invariants \cite{BHMV92,Lic93} 
following the concise illustration given in \cite{WongYang23}.  We focus on the $SO(3)$ TQFT theory and consider the root of unity $e^{\frac{2\pi\sqrt{-1}}{r}}$, where $r$ is an odd number, we write $r=2N+1$ with $N\geq 1$. 

\begin{definition}
    Let $F$ be an oriented surface, given $A=e^{\frac{\pi\sqrt{-1}}{r}}$.  The Kauffman bracket skein module of $F$, denoted by $K(F)$, is a $\mathbb{C}$-module generated by the isotopic classes of link diagrams in $F$ modulo the submodule generated by the following two  relations:
    
(1) Kauffman bracket skein relation:
\begin{figure}[!htb] 
\begin{align}
\langle\raisebox{-15pt}{
\includegraphics[width=50 pt]{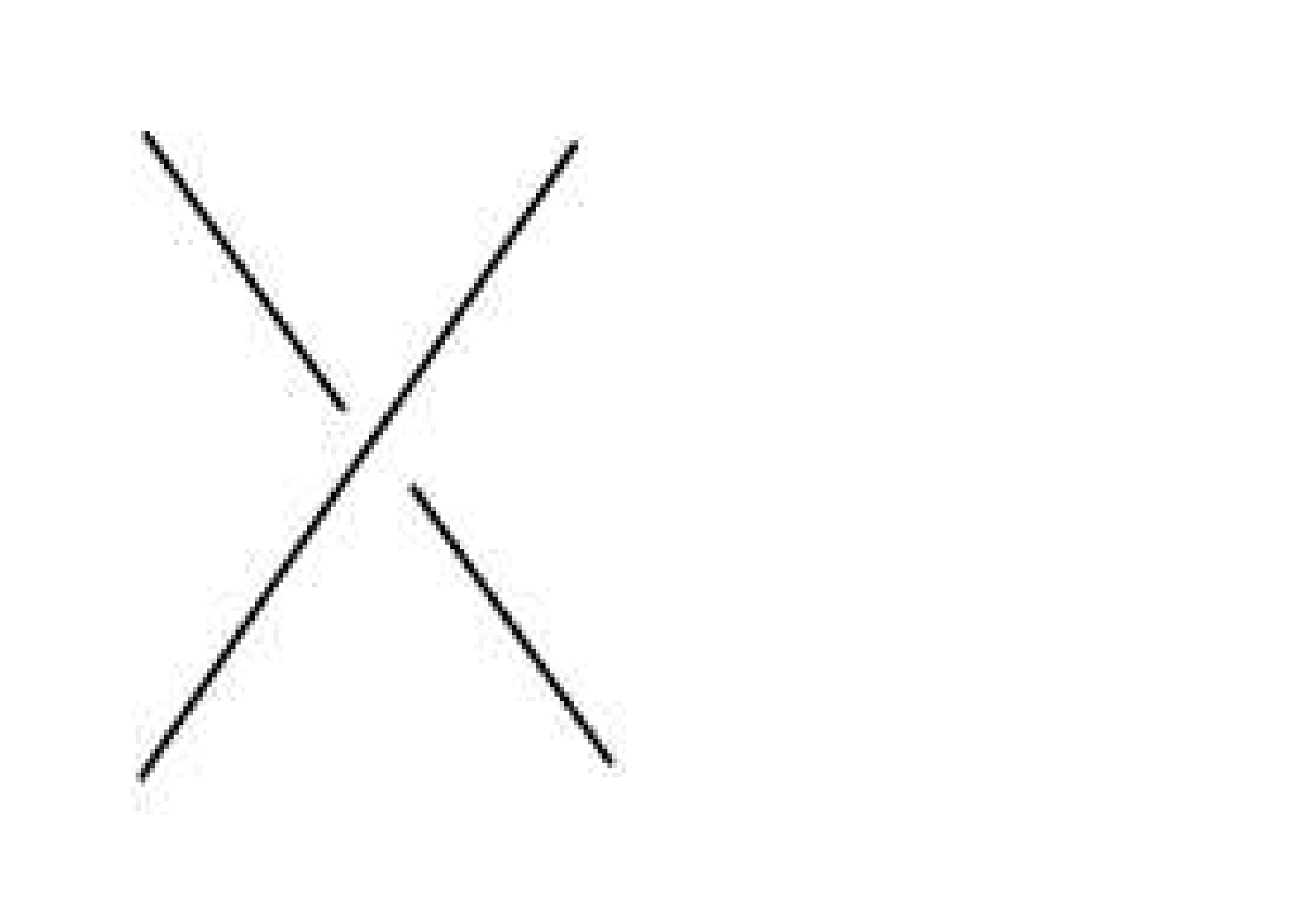}}\rangle=A\langle\raisebox{-15pt}{ \includegraphics[width=50 pt]{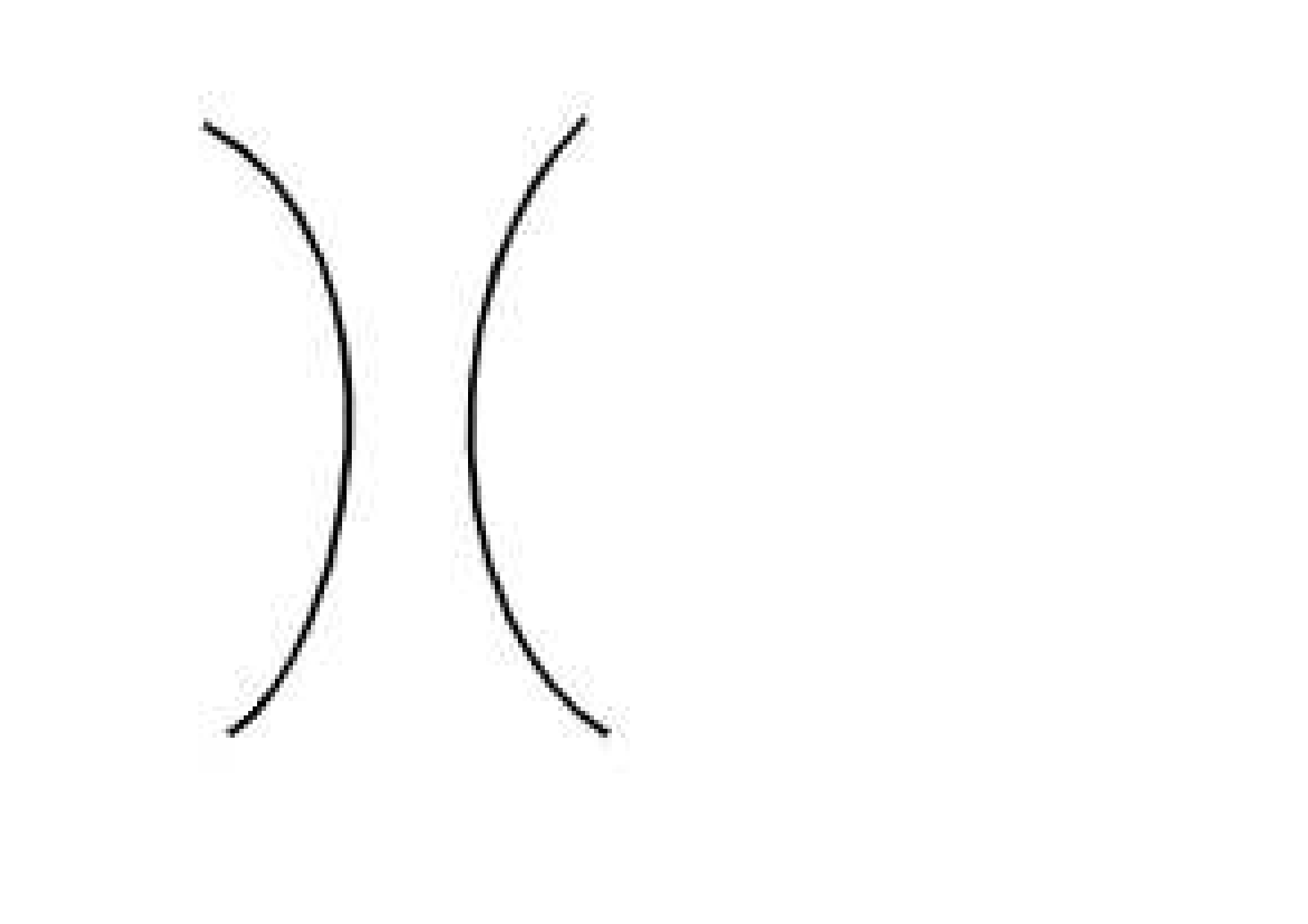}}\rangle+A^{-1}\langle\raisebox{-15pt}{ \includegraphics[width=50 pt]{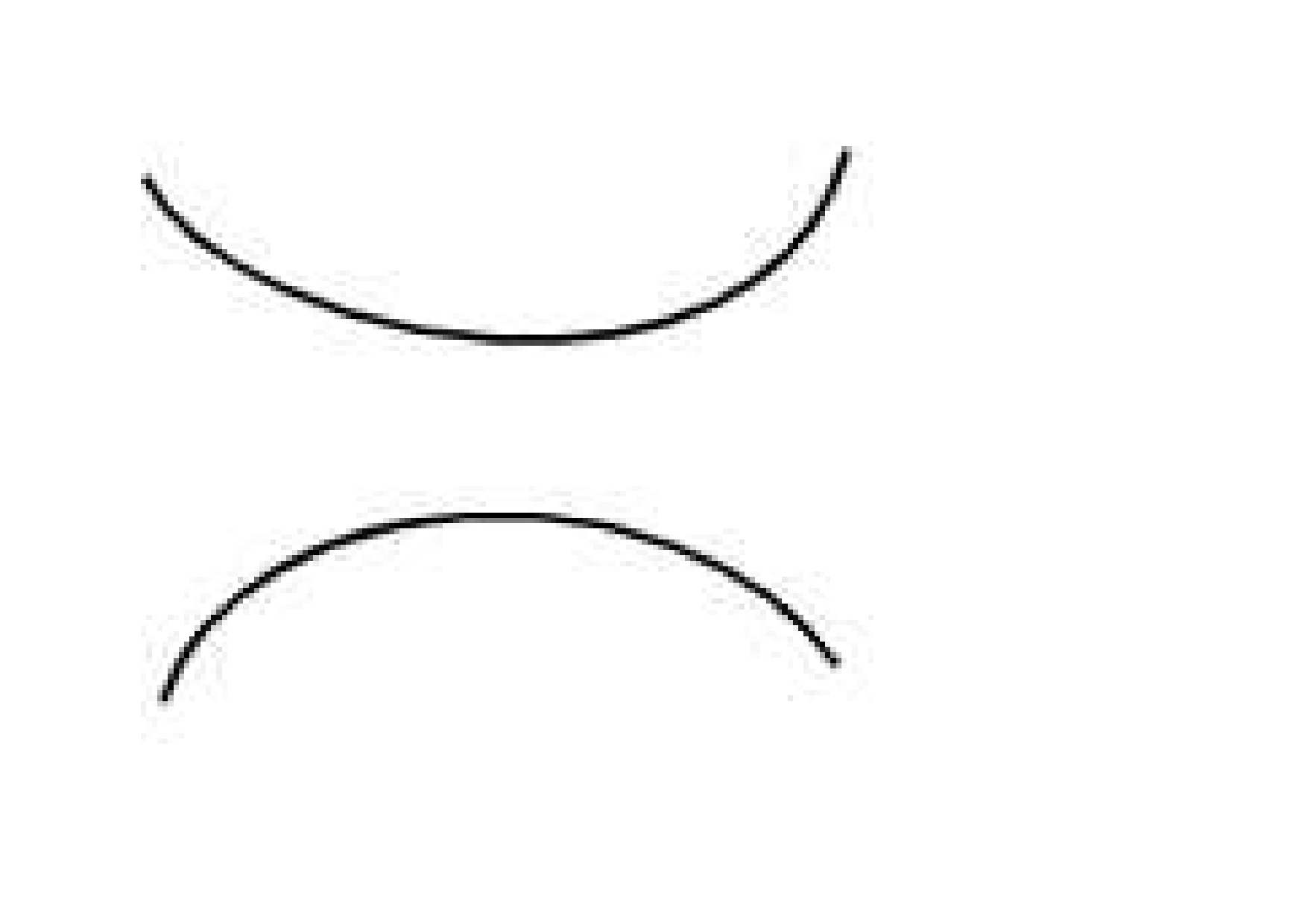}}\rangle  
\end{align}
\end{figure}

(2) Framing relation:
\begin{figure}[!htb] 
$
\langle D\cup \raisebox{-15pt}{
\includegraphics[width=50 pt]{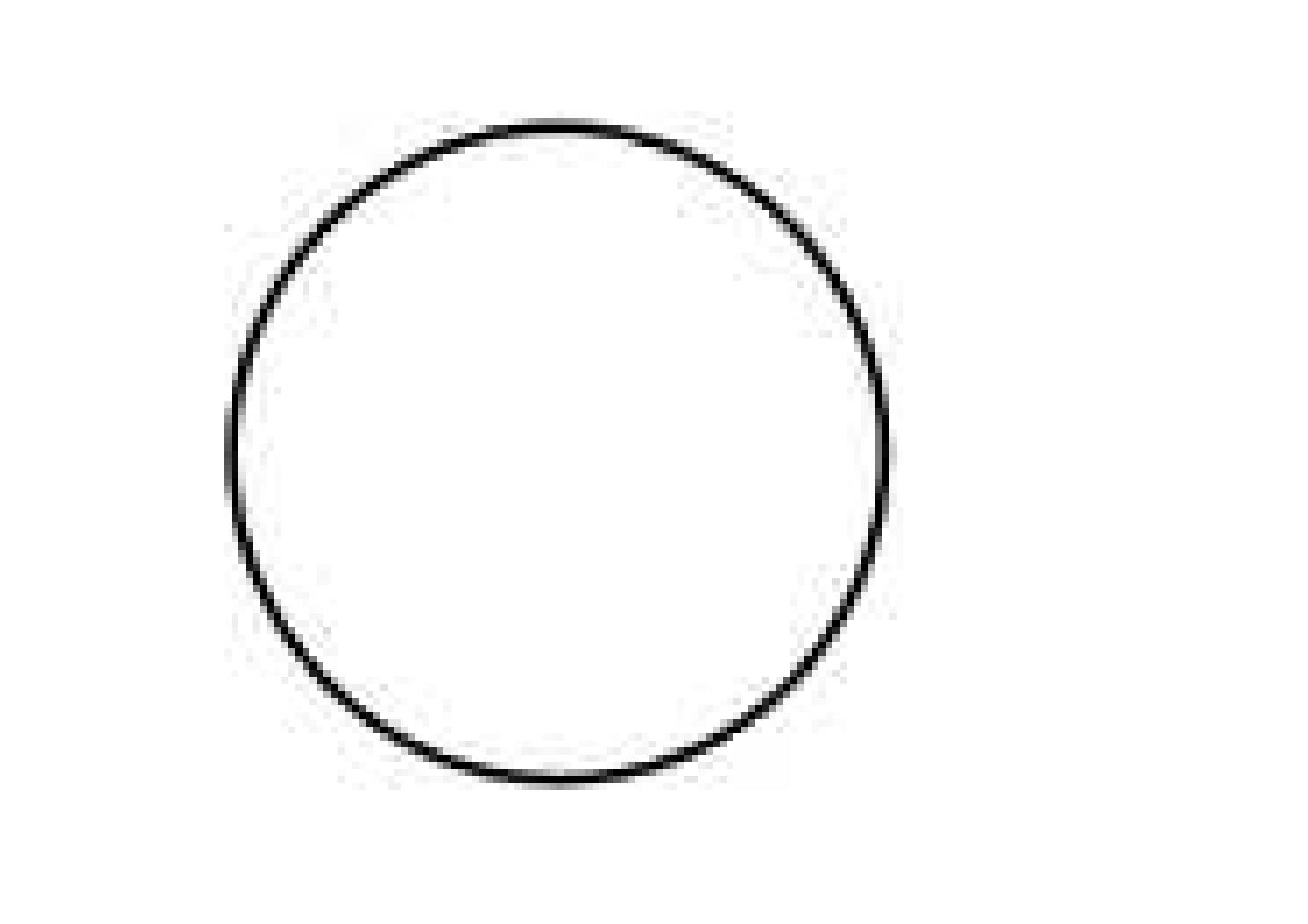}}\rangle=(-A^2-A^{-2})\langle D\rangle.
$
\end{figure}

\end{definition}

\begin{example}
When $F=\mathbb{R}^2$, there is a canonical isomorphism 
\begin{align}
    \langle\ \rangle: K(\mathbb{R}^2)\rightarrow \mathbb{C}
\end{align}
defined by sending the empty link to $1$. The image $\langle\mathcal{L}\rangle$ of the framed link $\mathcal{L}$ is called the Kauffman bracket of $\mathcal{L}$. 
\end{example}

\begin{example}
    $F=S^1\times [0,1]$, we let $\mathcal{B}=K(S^1\times [0,1])$. Actually, $\mathcal{B}$ is an algebra, which is called the Kauffman bracket skein algebra of $S^1\times I$. The algebraic structure (i.e. product) of $\mathcal{B}$ is induced by the gluing of two annulus. 
\end{example}
    
    For any link diagram $D$ in $\mathbb{R}^2$ with $k$ ordered components and $b_1,..,b_k\in \mathcal{B}$, let
    \begin{align}
        \langle b_1,..,b_k\rangle_D
    \end{align}
be the complex number obtained by cabling $b_1,...,b_k$ along the components of $D$ then taking the Kauffman bracket $\langle \ \rangle$.

Note that the skein algebra $\mathcal{B}$ is commutative and the empty diagram is the unit, hence denoted by $1$.
Let 
$z\in \mathcal{B}$ be the core curve of $S^1\times I$ as illustrated in  Figure \ref{figure-z}. 

 \begin{figure}[!htb] 
 \label{figure-z}
 \begin{align*} 
\raisebox{-15pt}{
\includegraphics[width=80 pt]{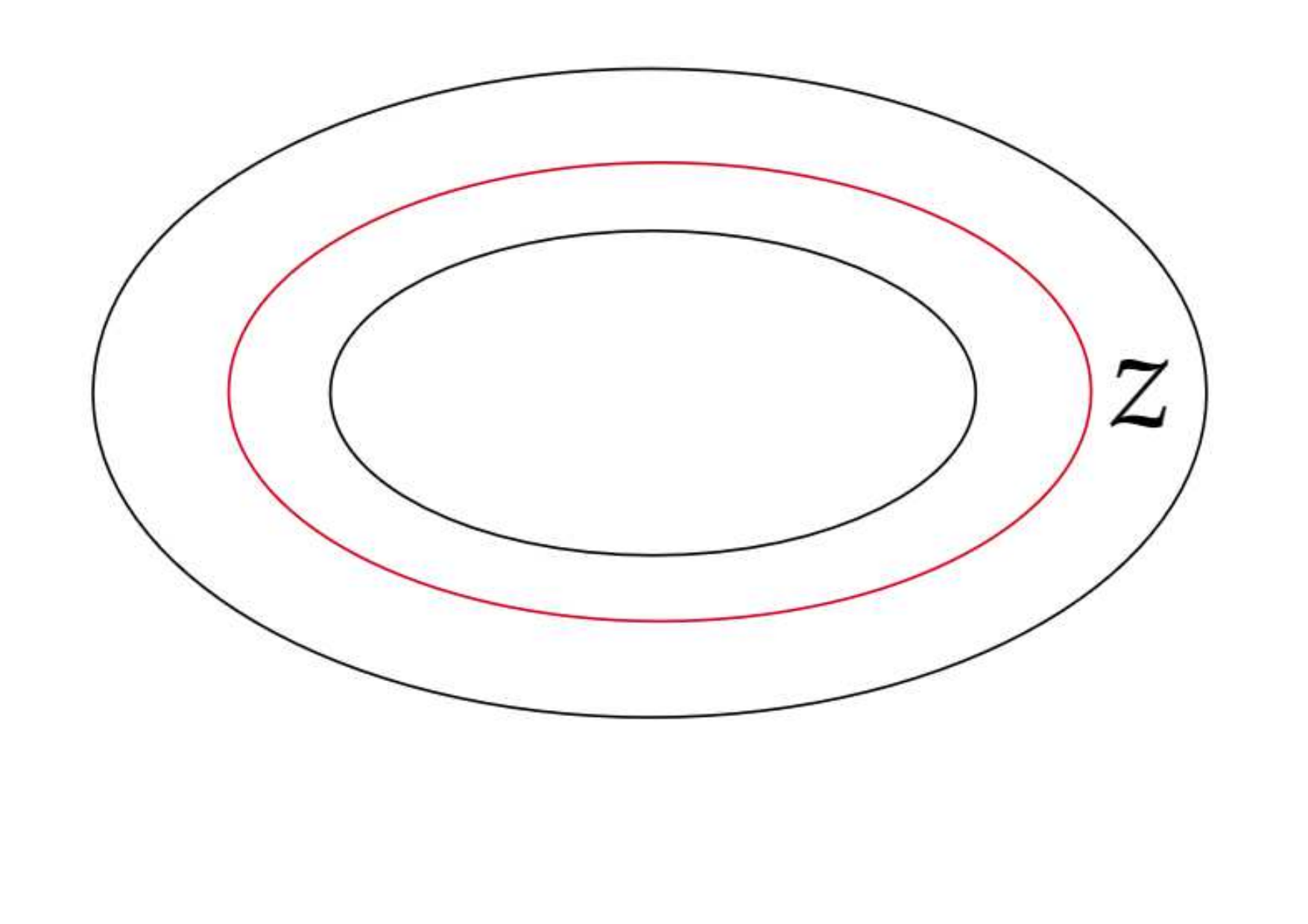}}
\end{align*}
\caption{The core curve $z$}
\end{figure}   
Then  $z^n$ means $n$-parallel copies of $z$.
Moreover, we have $\mathcal{B}=\mathbb{C}[z].$
We define the skein elements $e_n\in \mathcal{B}$ recursively by $e_0=1$, $e_1=z$ and 
$e_n=ze_{n-1}-e_{n-2}$
for $n\geq 2$. Let
\begin{align}
    \omega_r=\mu_r\sum_{n=0}^{r-2}(-1)^n[n+1]e_n,
\end{align}
the Kirby colored $\Omega_r\in \mathcal{B}$ is defined by 
\begin{align}
    \Omega_r=\mu_r\omega_r,
\end{align}
where $\mu_r=\frac{\sin\frac{2\pi}{r}}{\sqrt{r}}$, and $[n]$ is the quantum integer defined by 
\begin{align}
   [n]=\frac{t^{\frac{n}{2}}-t^{-\frac{n}{2}}}{t^{\frac{1}{2}}-t^{-\frac{1}{2}}}. 
\end{align}
Note that we fix the convention that $t=A^4=e^{\frac{4\pi\sqrt{-1}}{r}}$ throughout this paper.

 Suppose $M$ is obtained from $S^3$ by doing a surgery along a framed link $\mathcal{L}$, let $D_\mathcal{L}$ be the standard diagram of $\mathcal{L}$ which implies that the blackboard framing of $D_{\mathcal{L}}$ coincides with framing of $\mathcal{L}$. The signature of the linking matrix of $\mathcal{L}$ is denoted by $\sigma(\mathcal{L})$.  Then the $r$-th Reshetikhin-Turaev invariant of $M$  is defined as 
\begin{align}  \label{formula-RT}
RT_r(M)=\mu_r\langle\Omega_r,...,\Omega_r\rangle_{D_{\mathcal{L}}}\langle\Omega_r\rangle_{U_+}^{-\sigma(\mathcal{L})}.     \end{align}

\subsection{Dilogarithm and Lobachevsky functions}
Let $\log: \mathbb{C}\setminus (-\infty,0]\rightarrow \mathbb{C}$ be the standard logarithm function defined by 
\begin{align}
    \log z=\log |z|+\sqrt{-1}\arg z
\end{align}
with $-\pi <\arg z<\pi$. 

The dilogarithm function $\text{Li}_2: \mathbb{C}\setminus (1,\infty)\rightarrow \mathbb{C}$ is defined by 
\begin{align}
    \text{Li}_2(z)=-\int_0^{z}\frac{\log(1-x)}{x}dx
\end{align}
where the integral is along any path in $\mathbb{C}\setminus (1,\infty)$ connecting $0$ and $z$, which is holomorphic in $\mathbb{C}\setminus [1,\infty)$ and continuous in $\mathbb{C}\setminus (1,\infty)$. 

The dilogarithm function satisfies the following properties 
\begin{align}
    \text{Li}_2\left(\frac{1}{z}\right)=-\text{Li}_2(z)-\frac{\pi^2}{6}-\frac{1}{2}(\log(-z) )^2.
\end{align}
In the unit disk $\{z\in \mathbb{C}| |z|<1\}$,  $\text{Li}_2(z)=\sum_{n=1}^{\infty}\frac{z^n}{n^2}$, and on the unit circle 
\begin{align}
 \{z=e^{2\pi \sqrt{-1}\theta}|0 \leq \theta\leq 1\},    
\end{align}
we have
\begin{align}
    \text{Li}_2(e^{2\pi\sqrt{-1} \theta})=\frac{\pi^2}{6}+\pi^2\theta(\theta-1)+2\pi \sqrt{-1}\Lambda(\theta)
\end{align}
where 
\begin{align} \label{formula-Lambda(t)}
\Lambda(\theta)=\text{Re}\left(\frac{\text{Li}_2(e^{2\pi \sqrt{-1}\theta})}{2\pi \sqrt{-1}}\right)=-\int_{0}^{\theta}\log|2 \sin \pi \theta|d \theta 
\end{align}
for $\theta\in \mathbb{R}$. The function $\Lambda(\theta)$ is an odd function which has period $1$ and satisfies 
$
\Lambda(1)=\Lambda(\frac{1}{2})=0.
$

Furthermore, we have the follow estimation for the function $$\text{Re}\left(\frac{1}{2\pi\sqrt{-1}}\text{Li}_2\left(e^{2\pi\sqrt{-1}(\theta+X\sqrt{-1})}\right)\right)$$ with $\theta,X\in \mathbb{R}$.   
\begin{lemma} (see Lemma 2.2 in \cite{OhtYok18}) \label{lemma-Li2}
    Let $\theta$ be a real number with $0<\theta<1$. Then there exists a constant $C>0$ such that 
\begin{align}
    \left\{ \begin{aligned}
         &0  &  \ (\text{if} \ X\geq 0) \\
         &2\pi \left(\theta-\frac{1}{2}\right)X & \ (\text{if} \ X<0)
                          \end{aligned} \right.-C<\text{Re}\left(\frac{1}{2\pi\sqrt{-1}}\text{Li}_2\left(e^{2\pi\sqrt{-1}(\theta+X\sqrt{-1})}\right)\right)
\end{align}
\begin{align*}
    <\left\{ \begin{aligned}
         &0  &  \ (\text{if} \ X\geq 0) \\
         &2\pi \left(\theta-\frac{1}{2}\right)X & \ (\text{if} \ X<0)
                          \end{aligned}\right.+C.
\end{align*}.
\end{lemma}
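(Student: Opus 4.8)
The plan is to reduce the two-sided estimate to an explicit computation of the real part of $\frac{1}{2\pi\sqrt{-1}}\mathrm{Li}_2(e^{2\pi\sqrt{-1}(\theta+X\sqrt{-1})})$ as a function of $X$ for fixed $\theta\in(0,1)$, and then to control this function by a piecewise-linear function of $X$ up to a bounded error. First I would set $w=e^{2\pi\sqrt{-1}\theta}\cdot e^{-2\pi X}$, so that $|w|=e^{-2\pi X}$; the point is that $|w|<1$ when $X>0$, $|w|=1$ when $X=0$, and $|w|>1$ when $X<0$. For $X\geq 0$ I would use the series expansion $\mathrm{Li}_2(w)=\sum_{n\geq 1}w^n/n^2$, valid on the closed unit disk, to write
\begin{align*}
\mathrm{Re}\left(\frac{1}{2\pi\sqrt{-1}}\mathrm{Li}_2(w)\right)=\sum_{n=1}^{\infty}\frac{e^{-2\pi nX}\sin(2\pi n\theta)}{2\pi n^2}.
\end{align*}
At $X=0$ this is exactly $\Lambda(\theta)$ in the notation of (\ref{formula-Lambda(t)}), which is bounded; and since each summand is dominated in absolute value by $e^{-2\pi nX}/(2\pi n^2)\leq 1/(2\pi n^2)$, the whole sum is bounded uniformly in $X\geq 0$ by $\frac{1}{2\pi}\sum 1/n^2=\frac{\pi}{12}$. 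So on $X\geq 0$ the quantity lies in an interval of bounded length around $0$, giving the claimed estimate with the "$0$" branch.

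For $X<0$ I would invoke the inversion formula for the dilogarithm quoted in the excerpt, $\mathrm{Li}_2(1/z)=-\mathrm{Li}_2(z)-\frac{\pi^2}{6}-\frac12(\log(-z))^2$, applied with $z=w$ so that $1/z$ has modulus $e^{2\pi X}<1$. This expresses $\mathrm{Li}_2(w)$ as $-\mathrm{Li}_2(1/w)-\frac{\pi^2}{6}-\frac12(\log(-w))^2$. The first term $-\mathrm{Li}_2(1/w)$ is again a convergent series with $|1/w|<1$, so by the same argument as above its contribution to $\mathrm{Re}\big(\frac{1}{2\pi\sqrt{-1}}(\cdot)\big)$ is bounded. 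The constant $-\frac{\pi^2}{6}$ contributes $0$ to the real part after dividing by $2\pi\sqrt{-1}$ (it is purely imaginary). The main term is $-\frac{1}{2}(\log(-w))^2$: writing $\log(-w)=\log|{-w}|+\sqrt{-1}\arg(-w)=(-2\pi X)+\sqrt{-1}\cdot 2\pi(\theta-\tfrac12)$ — here one must check the branch, since $-w=e^{2\pi\sqrt{-1}(\theta-1/2)}e^{-2\pi X}$ and for $\theta\in(0,1)$ the argument $2\pi(\theta-\tfrac12)$ lies in $(-\pi,\pi)$, so this is the principal branch — we get $(\log(-w))^2=(2\pi X)^2-4\pi^2(\theta-\tfrac12)^2-8\pi^2 X(\theta-\tfrac12)\sqrt{-1}$. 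Dividing $-\frac12$ of this by $2\pi\sqrt{-1}$ and taking the real part kills the real terms and leaves exactly $2\pi(\theta-\tfrac12)X$. Combining, $\mathrm{Re}\big(\frac{1}{2\pi\sqrt{-1}}\mathrm{Li}_2(w)\big)=2\pi(\theta-\tfrac12)X+(\text{bounded})$, which is the "$X<0$" branch.

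The only genuinely delicate point — and the step I expect to require the most care — is the bookkeeping of logarithm branches and the constant $C$: one must verify that the principal branch of $\log$ in the inversion formula is indeed the one used, that $-w$ avoids the cut $(-\infty,0]$ for all $\theta\in(0,1)$ and all real $X$ (which fails precisely at $\theta=0$ and $\theta=1$, explaining the hypothesis $0<\theta<1$), and that the bound is uniform enough to extract a single constant $C$ depending only on $\theta$. Once the branch is pinned down, one can take, e.g., $C=\frac{\pi}{12}+\epsilon$ for the $X\geq 0$ piece and the same plus the negligible tail for the $X<0$ piece, and then enlarge $C$ slightly so that both the strict upper and strict lower inequalities hold (the series bounds above are not attained, so strictness is automatic). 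I would also note that this is essentially Lemma 2.2 of \cite{OhtYok18}, so for the write-up it suffices to either cite it directly or reproduce the short argument above.
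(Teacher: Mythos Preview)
Your argument is correct: the series bound for $X\geq 0$ and the inversion formula plus branch tracking for $X<0$ are exactly the standard route, and your computation of the leading term $2\pi(\theta-\tfrac12)X$ from $-\tfrac12(\log(-w))^2$ is accurate. The paper itself gives no proof of this lemma; it simply cites Lemma~2.2 of \cite{OhtYok18}, so your sketch supplies what the paper omits, and your closing remark that one may just cite the reference is in fact precisely what the paper does.
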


\subsection{Quantum dilogrithm functions}
For a positive integer $N$, we set $t=e^{\frac{2\pi\sqrt{-1}}{N+\frac{1}{2}}}$. We introduce the holomorphic function $\varphi_N(\theta)$ for $\{\theta\in
\mathbb{C}| 0< \text{Re}(\theta) < 1\}$, by the following integral
\begin{align}
\varphi_N(\theta)=\int_{-\infty}^{+\infty}\frac{e^{(2\theta-1)x}dx}{4x \sinh x
\sinh\frac{x}{N+\frac{1}{2}}}.
\end{align}
Noting that this integrand has poles at $n\pi \sqrt{-1} (n\in
\mathbb{Z})$, where, to avoid the poles at $0$, we choose the
following contour of the integral
\begin{align}
\gamma=(-\infty,-1]\cup \{z\in \mathbb{C}||z|=1, \text{Im} z\geq 0\}
\cup [1,\infty).
\end{align}

\begin{lemma}  \label{lemma-varphixi}
The function $\varphi_N(\theta)$ satisfies 
\begin{align}
    (t)_n&=\exp \left(\varphi_N\left(\frac{1}{2N+1}\right)-\varphi_N\left(\frac{2n+1}{2N+1}\right)\right)   \  \   \left(0\leq n\leq N\right), \\
    (t)_n&=\exp \left(\varphi_N\left(\frac{1}{2N+1}\right)-\varphi_N\left(\frac{2n+1}{2N+1}-1\right)+\log 2\right)   \  \   \left(N< n\leq 2N\right).
\end{align}
\end{lemma}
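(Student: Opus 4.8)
The plan is to relate the quantum Pochhammer symbol $(t)_n = \prod_{j=1}^n(1-t^j)$ to the function $\varphi_N$ by comparing how each side changes when $n$ is incremented by $1$, and then pin down the additive constant by evaluating at $n=0$. First I would establish the key finite-difference identity for $\varphi_N$: from the integral representation, for $0<\mathrm{Re}(\theta)<1$ with $\theta$ and $\theta+\tfrac{1}{2N+1}$ both in the strip, one has
\begin{align}
\varphi_N\!\left(\theta\right)-\varphi_N\!\left(\theta+\tfrac{1}{2N+1}\right)
=\int_{\gamma}\frac{\bigl(e^{(2\theta-1)x}-e^{(2\theta-1+\frac{2}{2N+1})x}\bigr)\,dx}{4x\sinh x\sinh\frac{x}{2N+1}},
\end{align}
and the bracketed numerator factors as $e^{(2\theta-1)x}\bigl(1-e^{\frac{2x}{2N+1}}\bigr) = -2e^{(2\theta-1+\frac{x}{2N+1})x/\cdots}\sinh\frac{x}{2N+1}$ (up to an exponential shift), which cancels the $\sinh\frac{x}{2N+1}$ in the denominator. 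What remains is a standard integral of the form $\int_\gamma \frac{e^{(2\theta')x}}{2x\sinh x}\,dx$, which by a residue/contour computation equals $-\log(1-e^{2\pi\sqrt{-1}(\theta'+\text{shift})})$ for an appropriate argument; choosing $\theta=\frac{2n+1}{2N+1}$ this should produce exactly $\log(1-t^{n+1})$, so that
\begin{align}
\varphi_N\!\left(\tfrac{2n+1}{2N+1}\right)-\varphi_N\!\left(\tfrac{2n+3}{2N+1}\right)=\log\bigl(1-t^{n+1}\bigr).
\end{align}

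Next I would telescope: summing this identity for the relevant range of indices and using $(t)_0=1$ (the empty product) gives, for $0\le n\le N$,
\begin{align}
\log (t)_n=\sum_{j=1}^{n}\log(1-t^j)=\varphi_N\!\left(\tfrac{1}{2N+1}\right)-\varphi_N\!\left(\tfrac{2n+1}{2N+1}\right),
\end{align}
which is the first formula after exponentiating. One must check that all arguments $\tfrac{2j+1}{2N+1}$ for $0\le j\le n\le N$ lie in the open strip $0<\mathrm{Re}(\theta)<1$; this holds precisely because $2n+1\le 2N+1$, i.e. $\tfrac{2n+1}{2N+1}\le 1$, with equality only at $n=N$, where a boundary/limit argument (continuity of $\varphi_N$ up to the relevant boundary point, or a small deformation of the contour) is needed. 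For $N<n\le 2N$ the argument $\tfrac{2n+1}{2N+1}$ exceeds $1$, so I would instead use the quasi-periodicity of $\varphi_N$: shifting $\theta\mapsto\theta-1$ changes $\varphi_N$ by an explicit elementary term. Concretely, $\varphi_N(\theta)-\varphi_N(\theta-1)$ is computed by the same contour method and yields a term contributing $\log 2$ (together with terms that are absorbed), which accounts for the extra $+\log 2$ in the second formula; combining this with the telescoped sum up to $n$ gives the stated identity in the range $N<n\le 2N$.

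The main obstacle I anticipate is the careful contour analysis: the integrand has poles at $n\pi\sqrt{-1}$, and after cancelling $\sinh\frac{x}{2N+1}$ one is left with an integral along $\gamma$ that must be evaluated by pushing the contour and collecting residues, with attention to convergence at $x\to\pm\infty$ (which constrains the allowed range of the exponential parameter, hence the strip condition) and to the half-circle detour around the origin. Getting the constant exactly right — in particular the $\log 2$ in the second case and verifying there is no spurious additive constant in the first case — requires evaluating $\varphi_N$ (or its difference) at the base point $\theta=\tfrac{1}{2N+1}$, which is why matching against $(t)_0=1$ is the natural normalization. Everything else (the telescoping, the index bookkeeping, the passage from the strip to its boundary at $n=N$) is routine once the single difference identity is in hand.
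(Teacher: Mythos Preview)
The paper does not give its own proof of this lemma; it simply refers the reader to the literature (\cite{Oht16,CJ17,WongYang20-1}). Your proposal is exactly the standard argument found there: establish the finite-difference identity
\[
\varphi_N\!\left(\tfrac{2j-1}{2N+1}\right)-\varphi_N\!\left(\tfrac{2j+1}{2N+1}\right)=\log(1-t^j)
\]
by cancelling the $\sinh\frac{x}{N+1/2}$ factor and evaluating the remaining integral $\int_\gamma \frac{e^{(2\alpha-1)x}}{2x\sinh x}\,dx=-\log(1-e^{2\pi i\alpha})$, then telescope.

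One clarification on the second formula: the ``quasi-periodicity'' you allude to is most cleanly organized as follows. For $N<n\le 2N$ the same difference identity holds with all arguments shifted by $-1$, giving
\[
\sum_{j=N+1}^{n}\log(1-t^j)=\varphi_N(0)-\varphi_N\!\left(\tfrac{2n+1}{2N+1}-1\right).
\]
Combining with the first formula at $n=N$ (which uses $\varphi_N(1)$), the extra constant is exactly $\varphi_N(0)-\varphi_N(1)$. This is computed by the \emph{other} difference identity, where one cancels $\sinh x$ rather than $\sinh\frac{x}{N+1/2}$:
\[
\varphi_N(0)-\varphi_N(1)=-\int_\gamma\frac{dx}{2x\sinh\frac{x}{N+1/2}}=-\int_{\gamma}\frac{du}{2u\sinh u}=\log 2,
\]
the last equality being the $\alpha=\tfrac12$ case of the same basic integral. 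Your worry about the boundary point $n=N$ is harmless: the defining integral for $\varphi_N$ actually converges on the slightly larger strip $-\tfrac{1}{2N+1}<\mathrm{Re}(\theta)<1+\tfrac{1}{2N+1}$, so $\varphi_N(0)$ and $\varphi_N(1)$ make sense directly, no limiting argument needed.
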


\begin{lemma} \label{lemma-varphixi2}
    We have the following identities:
\begin{align}
    \varphi_N(\theta)+\varphi_N(1-\theta)&=2\pi \sqrt{-1}\left(-\frac{2N+1}{4}\left(\theta^2-\theta+\frac{1}{6}\right)+\frac{1}{12(2N+1)}\right),\\ 
    \varphi_N\left(\frac{1}{2N+1}\right)&=\frac{2N+1}{4\pi\sqrt{-1}}\frac{\pi^2}{6}+\frac{1}{2}\log \left(\frac{2N+1}{2}\right)+\frac{\pi \sqrt{-1}}{4}-\frac{\pi \sqrt{-1}}{6(2N+1)},\\
    \varphi_N\left(1-\frac{1}{2N+1}\right)&=\frac{2N+1}{4\pi\sqrt{-1}}\frac{\pi^2}{6}-\frac{1}{2}\log \left(\frac{2N+1}{2}\right)+\frac{\pi \sqrt{-1}}{4}-\frac{\pi \sqrt{-1}}{6(2N+1)}.
\end{align}
\end{lemma}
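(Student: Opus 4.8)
The plan is to prove all three identities by direct manipulation of the defining contour integral for $\varphi_N$, using only that the contour $\gamma$ is the real axis indented over the origin, together with elementary residue calculus.

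\emph{The inversion formula (first identity).} I would add the integral representations of $\varphi_N(\theta)$ and $\varphi_N(1-\theta)$ and use $e^{(2\theta-1)x}+e^{(1-2\theta)x}=2\cosh((2\theta-1)x)$ to get
\[
\varphi_N(\theta)+\varphi_N(1-\theta)=\int_\gamma\frac{\cosh((2\theta-1)x)}{2x\sinh x\,\sinh\frac{x}{N+\frac12}}\,dx .
\]
The integrand is an \emph{odd} function of $x$, so its principal value over $\mathbb{R}$ vanishes identically and the whole integral equals the contribution of the small semicircular arc of $\gamma$ around $0$. The integrand has a pole of order $3$ there; expanding $\cosh((2\theta-1)x)$, $\sinh x$ and $\sinh\frac{x}{N+1/2}$ in Taylor series, one checks that the $x^{-3}$-term contributes nothing to the arc integral (it is $\propto 1-e^{-2\pi\sqrt{-1}}=0$), so only $-\pi\sqrt{-1}$ times the residue at $0$ survives. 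Computing that residue and using $(2\theta-1)^2=4(\theta^2-\theta+\frac16)+\frac13$ gives exactly the right-hand side of the first identity.

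\emph{The special values (second and third identities).} First note that $1-\frac1{2N+1}=\frac{2N}{2N+1}\in(0,1)$, so both values make sense, and that evaluating the inversion formula at $\theta=\frac1{2N+1}$ already yields the \emph{sum} $\varphi_N(\frac1{2N+1})+\varphi_N(1-\frac1{2N+1})$, which equals the sum of the two claimed right-hand sides. Hence it suffices to compute the \emph{difference}. For this, subtracting the two integral representations produces the numerator $-2\sinh\!\left(\frac{2N-1}{2N+1}x\right)$; after the substitution $u=\frac{x}{2N+1}$ (so that $\sinh\frac{x}{N+1/2}=\sinh 2u$ and $\sinh x=\sinh((2N+1)u)$) and the identity
\[
\frac{\sinh((2N-1)u)}{\sinh((2N+1)u)\,\sinh 2u}=\coth 2u-\coth((2N+1)u),
\]
the difference becomes $-\tfrac12\int_{\gamma'}\frac{\coth 2u-\coth((2N+1)u)}{u}\,du$, with $\gamma'$ again the real axis indented over $0$. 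The integrand has a double pole with zero residue at $0$, and the ``over-the-top'' indentation of a double pole integrates to zero, so this collapses to the absolutely convergent real integral
\[
-\int_0^\infty\left[\Bigl(\coth 2u-\tfrac1{2u}\Bigr)-\Bigl(\coth((2N+1)u)-\tfrac1{(2N+1)u}\Bigr)\right]\frac{du}{u},
\]
which is a Frullani integral for $f(u)=\coth u-\frac1u$ (with $f(0^+)=0$, $f(+\infty)=1$), hence equals $-\log\frac{2}{2N+1}=\log\frac{2N+1}{2}$. Solving the resulting sum-and-difference system produces the second and third identities exactly: the $\pm\frac12\log\frac{2N+1}{2}$ terms come from the difference and the $\frac{\pi\sqrt{-1}}{4}$ and $-\frac{\pi\sqrt{-1}}{6(2N+1)}$ terms from the inversion formula.

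\emph{Main obstacle.} The only genuinely delicate points are analytic bookkeeping, not ideas. One must justify the contour manipulations against the conditionally convergent behaviour of the integrands near $x=\pm\infty$; in particular a naive splitting of $e^{(2\theta-1)x}/\sinh\frac{x}{N+1/2}$ into separate $\coth$-pieces introduces spurious logarithmic divergences at $-\infty$, which is precisely why the computation is organized through the \emph{difference} of two $\varphi_N$-values, so that everything recombines into a single convergent Frullani integral. One must also verify carefully that the indentations of the higher-order poles at the origin contribute nothing beyond the stated residues. None of this uses the geometry of $M_{p,q}$; these are purely properties of the quantum dilogarithm $\varphi_N$.
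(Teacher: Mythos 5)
Your derivation is correct, and it is worth noting that it is necessarily a different route from the paper's, because the paper offers no proof of this lemma at all: it only refers the reader to the literature (Ohtsuki, Chen--Murakami, Wong--Yang), where such identities are established by contour/residue manipulations of the integral defining $\varphi_N$ and by known evaluations at the special point $\frac{1}{2N+1}$. Your self-contained scheme — residue at the triple pole for the inversion formula, then a sum-and-difference system at $\theta=\frac{1}{2N+1}$ with the difference collapsing to a Frullani integral for $\coth u-\frac1u$ — is a clean way to get all three identities at once, and the constants do check out: the residue $\frac{N+\frac12}{2}\bigl(\frac{(2\theta-1)^2}{2}-\frac16-\frac{1}{6(N+\frac12)^2}\bigr)$ together with $(2\theta-1)^2=4(\theta^2-\theta+\frac16)+\frac13$ reproduces the stated right-hand side, the inversion formula at $\theta=\frac{1}{2N+1}$ matches the sum of the two claimed special values, and the Frullani evaluation $-\log\frac{2}{2N+1}=\log\frac{2N+1}{2}$ matches their difference. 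One statement should be reworded, though it does not affect the outcome: the integral of the pure double-pole part $c/u^2$ over the small indentation alone is $-2c/\epsilon$, not zero; the correct statement is that $c/u^2$ has the single-valued antiderivative $-c/u$ away from the origin, so its integral over the \emph{entire} indented contour vanishes (the $-2c/\epsilon$ from the arc cancels the $+2c/\epsilon$ from the two half-lines), and this cancellation is exactly what your subtracted integrand $\bigl(\coth 2u-\frac{1}{2u}\bigr)-\bigl(\coth((2N+1)u)-\frac{1}{(2N+1)u}\bigr)$ implements before applying Frullani. Similarly, in the first identity the reduction from the unit semicircle to a small arc should be justified by observing that the nearest nonzero poles lie at $\pi\sqrt{-1}$ and $(N+\frac12)\pi\sqrt{-1}$, both outside the unit disk, with the straight segments cancelling by oddness — which is implicit in what you wrote. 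With these two points made precise, your proof is complete and independent of the cited sources.
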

The function $\varphi_N (\theta)$ is closely related to the dilogarithm function as follows.
\begin{lemma} \label{lemma-varphixi3}
    (1) For every $\theta$ with $0<\text{Re}(\theta)<1$, 
    \begin{align}
        \varphi_N(\theta)=\frac{N+\frac{1}{2}}{2\pi \sqrt{-1}}\text{Li}_2(e^{2\pi\sqrt{-1}\theta})
 -\frac{\pi \sqrt{-1}e^{2\pi\sqrt{-1}\theta}}{6(1-e^{2\pi\sqrt{-1}\theta})}\frac{1}{2N+1}+O\left(\frac{1}{(N+\frac{1}{2})^3}\right).
    \end{align}
    (2) For every $\theta$ with $0<\text{Re}(\theta)<1$, 
    \begin{align}
        \varphi_N'(\theta)=-\frac{2N+1}{2}\log(1-e^{2\pi\sqrt{-1}\theta})+O\left(\frac{1}{N+\frac{1}{2}}\right)
    \end{align}
    (3) As $N\rightarrow \infty$, $\frac{1}{N+\frac{1}{2}}\varphi_N(\theta)$ uniformly converges to $\frac{1}{2\pi\sqrt{-1}}\text{Li}_2(e^{2\pi\sqrt{-1}\theta})$ and $\frac{1}{N+\frac{1}{2}}\varphi'_N(\theta)$ uniformly converges to $-\log(1-e^{2\pi\sqrt{-1}\theta})$ on any compact subset of $\{\theta\in \mathbb{C}|0<\text{Re}(\theta)<1\}$. 
\end{lemma}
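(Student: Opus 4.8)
The plan is to argue directly from the integral representation
\[
\varphi_N(\theta)=\int_{\gamma}\frac{e^{(2\theta-1)x}\,dx}{4x\sinh x\,\sinh\frac{x}{N+\frac12}},
\]
inserting the Laurent expansion $\frac{1}{\sinh u}=\frac1u-\frac u6+u^3R(u)$ (valid with $R$ holomorphic and bounded on $|u|\le\delta$ for any fixed $\delta<\pi$) with $u=\frac{x}{N+\frac12}$. First I would split $\gamma=\gamma_1\cup\gamma_2$, where $\gamma_1=\{x\in\gamma:|x|\le\delta(N+\frac12)\}$ and $\gamma_2$ is the remaining (real) part. Since $0<\text{Re}(\theta)<1$, the factor $e^{(2\theta-1)x}/\sinh x$ decays exponentially in $|x|$ with a rate governed by $\min(\text{Re}(\theta),1-\text{Re}(\theta))$, while $|\sinh\frac{x}{N+\frac12}|\ge\sinh\delta$ on $\gamma_2$; hence the contribution of $\gamma_2$ is exponentially small in $N$, in particular $O\big((N+\frac12)^{-3}\big)$, uniformly for $\theta$ in compact subsets of the strip. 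On $\gamma_1$ the expansion converges, and substituting it reduces everything to the two model integrals
\[
I_1(\theta)=\int_{\gamma}\frac{e^{(2\theta-1)x}}{4x^2\sinh x}\,dx,\qquad I_2(\theta)=\int_{\gamma}\frac{e^{(2\theta-1)x}}{4\sinh x}\,dx,
\]
in which $\gamma_1$ may be replaced by all of $\gamma$ at the cost of one more exponentially small error, while the remainder contributes $\frac{1}{(N+\frac12)^3}\int_{\gamma}\frac{x^2e^{(2\theta-1)x}}{4\sinh x}\,R\big(\tfrac{x}{N+\frac12}\big)\,dx=O\big((N+\frac12)^{-3}\big)$. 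Because $1/\sinh$ is odd there is no $(N+\frac12)^{-2}$ term, which accounts for the order of the error in (1).

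The second step is to evaluate $I_1$ and $I_2$ by residues. The integrand of $I_1$ has simple poles at $x=n\pi\sqrt{-1}$, $n\ne0$, with residue $-e^{2\pi\sqrt{-1}\theta n}/(4n^2\pi^2)$, the pole at the origin being excluded by $\gamma$; closing the contour in the upper half-plane along arcs of radius $(n+\frac12)\pi$ --- where $|\sinh x|$ is bounded below and the $x^{-2}$ factor kills the arc --- and summing residues gives $\frac{1}{2\pi\sqrt{-1}}\sum_{n\ge1}\frac{e^{2\pi\sqrt{-1}\theta n}}{n^2}=\frac{1}{2\pi\sqrt{-1}}\text{Li}_2(e^{2\pi\sqrt{-1}\theta})$; this is proved for real $\theta$ and extended to the whole strip by analytic continuation. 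The integrand of $I_2$ decays too slowly on the closing arc, so instead I would write $I_2(\theta)=\frac12\frac{d}{d\theta}\int_{\gamma}\frac{e^{(2\theta-1)x}}{4x\sinh x}\,dx$; the inner integral can be closed in the upper half-plane for $\text{Im}(\theta)>0$, summing to $\frac12\sum_{n\ge1}\frac{e^{2\pi\sqrt{-1}\theta n}}{n}=-\frac12\log(1-e^{2\pi\sqrt{-1}\theta})$, so that $I_2(\theta)=\frac{\pi\sqrt{-1}\,e^{2\pi\sqrt{-1}\theta}}{2(1-e^{2\pi\sqrt{-1}\theta})}$ on the strip. Putting the pieces together, the $\frac1u$ term contributes $\frac{N+\frac12}{2\pi\sqrt{-1}}\text{Li}_2(e^{2\pi\sqrt{-1}\theta})$ and the $-\frac u6$ term contributes $-\frac{1}{6(N+\frac12)}I_2(\theta)=-\frac{\pi\sqrt{-1}\,e^{2\pi\sqrt{-1}\theta}}{6(1-e^{2\pi\sqrt{-1}\theta})(2N+1)}$, which is exactly (1).

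For (2) I would differentiate under the integral sign, $\varphi_N'(\theta)=\int_{\gamma}\frac{e^{(2\theta-1)x}\,dx}{2\sinh x\,\sinh\frac{x}{N+\frac12}}$, and repeat the expansion: the leading term is $(N+\frac12)\int_{\gamma}\frac{e^{(2\theta-1)x}}{2x\sinh x}\,dx=-(N+\frac12)\log(1-e^{2\pi\sqrt{-1}\theta})=-\frac{2N+1}{2}\log(1-e^{2\pi\sqrt{-1}\theta})$ by the evaluation just made, and the next term is $O\big((N+\frac12)^{-1}\big)$ since $\int_{\gamma}\frac{xe^{(2\theta-1)x}}{2\sinh x}\,dx$ converges. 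Then (3) is immediate: dividing (1) and (2) by $N+\frac12$ yields $\frac{1}{N+\frac12}\varphi_N(\theta)=\frac{1}{2\pi\sqrt{-1}}\text{Li}_2(e^{2\pi\sqrt{-1}\theta})+O\big((N+\frac12)^{-2}\big)$ and $\frac{1}{N+\frac12}\varphi_N'(\theta)=-\log(1-e^{2\pi\sqrt{-1}\theta})+O\big((N+\frac12)^{-2}\big)$ (using $\frac{2N+1}{2(N+\frac12)}=1$), the error bounds being uniform on compact subsets of $\{0<\text{Re}(\theta)<1\}$ because all the decay rates involved are bounded below there. The hardest part will be the careful, uniform-in-$\theta$ bookkeeping of the two kinds of error --- the tail of the contour versus the remainder of the $1/\sinh$-expansion --- together with the contour-closing arguments for the borderline-decaying model integrals, in particular the trick of realizing $I_2$ as a $\theta$-derivative of a better-behaved integral; the remaining computations are routine.
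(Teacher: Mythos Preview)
The paper does not give its own proof of this lemma but defers to \cite{Oht16,CJ17,WongYang20-1}; your argument is precisely the standard one found in those references. The Laurent expansion of $1/\sinh\bigl(x/(N+\tfrac12)\bigr)$, the exponentially small tail on $\gamma_2$, the residue evaluations of $I_1$ and $I_2$ (including the derivative trick for $I_2$), the oddness of $1/\sinh$ explaining the absent $(N+\tfrac12)^{-2}$ term, and the uniformity on compact subsets are all correct as you outline them.
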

See the literature, such as \cite{Oht16,CJ17,WongYang20-1} for the proof of Lemma \ref{lemma-varphixi}, \ref{lemma-varphixi2}, \ref{lemma-varphixi3}.

 \subsection{Saddle point method}
We need to use the following version of saddle point method as illustrated in \cite{Oht18}.
\begin{proposition}[\cite{Oht18}, Proposition 3.1]  \label{proposition-saddlemethod}
   Let $A$ be a non-singular symmetric complex $3\times 3$ matrix, and let $\Psi(z_1,z_2,z_3)$ and $r(z_1,z_2,z_3)$ be holomorphic functions of the forms, 
   \begin{align}
    \Psi(z_1,z_2,z_3)&=\mathbf{z}^{T}A\mathbf{z}+r(z_1,z_2,z_3), \\\nonumber
    r(z_1,z_2,z_3)&=\sum_{i,j,k}b_{ijk}z_iz_jz_k+\sum_{i,j,k,l}c_{ijkl}z_iz_jz_kz_l+\cdots
   \end{align}
   defined in a neighborhood of $\mathbf{0}\in \mathbb{C}^3$. The restriction of the domain 
   \begin{align} \label{formula-domain0}
       \{(z_1,z_2,z_3)\in \mathbb{C}^3| \text{Re}\Psi(z_1,z_2,z_3)<0\}  
   \end{align}
   to a neighborhood of $\mathbf{0}\in \mathbb{C}^3$ is homotopy equivalent to $S^2$. Let $D$ be an oriented disk embeded in $\mathbb{C}^3$ such that $\partial D$ is included in the domain (\ref{formula-domain0}) whose inclusion is homotopic to a homotopy equivalence to the above $S^2$ in the domain (\ref{formula-domain0}). Then we have the following asymptotic expansion
\begin{align}
    \int_{D}e^{N\psi(z_1,z_2,z_3)}dz_1dz_2dz_3=\frac{\pi^{\frac{3}{2}}}{N^{\frac{3}{2}}\sqrt{\det(-A)}}\left(1+\sum_{i=1}^d\frac{\lambda_i}{N^i}+O(\frac{1}{N^{d+1}})\right),
\end{align}
   for any $d$, where we choose the sign of $\sqrt{\det{(-A)}}$ as explained in Proposition \cite{Oht16}, and $\lambda_i$'s are constants presented by using coefficients of the expansion $\Psi(z_1,z_2,z_3)$, such presentations are obtained by formally expanding the following formula, 
\begin{align}
    &1+\sum_{i=1}^{\infty}\frac{\lambda_i}{N^i}\\\nonumber
    &=\exp\left(Nr\left(\frac{\partial }{\partial w_1},\frac{\partial }{\partial w_2},\frac{\partial }{\partial w_3}\right)\right)\exp\left(-\frac{1}{4N}(w_1,w_2,w_3)A^{-1}\begin{pmatrix}
      w_1 \\ w_2 \\ w_3  
    \end{pmatrix}\right)|_{w_1=w_2=w_3=0}.
\end{align}
\end{proposition}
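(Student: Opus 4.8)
The plan is to reduce the integral to a model Gaussian integral over a Lefschetz thimble through the critical point $\mathbf{0}$ and then extract the expansion by the Wick (Feynman) calculus; I carry this out in three steps. \emph{Step 1: reduction to a model thimble.} Since $\Psi$ is holomorphic, the $3$-form $\omega=e^{N\Psi}\,dz_1dz_2dz_3$ is closed on $\mathbb{C}^3$ (a holomorphic form of top holomorphic degree has vanishing $\bar\partial$, and its $\partial$ vanishes for degree reasons). Write $A=B^{T}B$ for an invertible complex matrix $B$ (every nonsingular symmetric complex matrix is congruent to the identity) and set
\begin{align*}
D_0=\bigl\{\,B^{-1}(\sqrt{-1}\,\mathbf{t})\ :\ \mathbf{t}\in\mathbb{R}^3,\ |\mathbf{t}|\le\varepsilon\,\bigr\},
\end{align*}
a $3$-disk through $\mathbf{0}$ on which $\Psi=-|\mathbf{t}|^{2}+r(B^{-1}\sqrt{-1}\,\mathbf{t})=-|\mathbf{t}|^{2}+O(|\mathbf{t}|^{3})$, so $\mathrm{Re}\,\Psi\le-\tfrac12|\mathbf{t}|^{2}<0$ off $\mathbf{0}$ for $\varepsilon$ small. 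Hence $\partial D_0$ lies in the domain (\ref{formula-domain0}), and $\partial D_0$ represents a generator of its second homology near $\mathbf{0}$ (transparent for the purely quadratic model $\mathbf{z}^{T}A\mathbf{z}$, to which $\Psi$ is biholomorphic near $\mathbf{0}$ by the complex Morse lemma). By the hypothesis on $D$, both $[\partial D]$ and $[\partial D_0]$ generate $H_2$ of the domain (\ref{formula-domain0}) near $\mathbf{0}$; after fixing the orientation of $D_0$ so that $[\partial D]=[\partial D_0]$, there is a $3$-chain $W$ in (\ref{formula-domain0}) with $\partial W=\partial D_0-\partial D$. Then $D-D_0+W$ is a $3$-cycle in $\mathbb{C}^3$, hence a boundary since $H_3(\mathbb{C}^3)=0$, so by Stokes' theorem and $d\omega=0$ we get $\int_D\omega=\int_{D_0}\omega-\int_W\omega$. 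Because $\partial D$, $\partial D_0$ and the homotopy realizing the homology are compact subsets of the open set $\{\mathrm{Re}\,\Psi<0\}$, we may choose $W$ with $\mathrm{Re}\,\Psi\le-\delta$ on it for some $\delta>0$; then $|\int_W\omega|=O(e^{-\delta N})$, so $\int_D\omega=\int_{D_0}\omega+O(e^{-\delta N})$.

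\emph{Step 2: localization, rescaling, and term-by-term integration.} On $D_0$ one has $\mathrm{Re}\,\Psi\le-\tfrac12(\varepsilon')^{2}$ outside $\{|\mathbf{z}|<\varepsilon'\}$ for any $\varepsilon'<\varepsilon$, so $\int_{D_0}\omega=\int_{D_0\cap\{|\mathbf{z}|<\varepsilon'\}}\omega+O(e^{-cN})$. Substituting $\mathbf{z}=\mathbf{u}/\sqrt{N}$, using $dz_1dz_2dz_3=N^{-3/2}du_1du_2du_3$ and
\begin{align*}
N\Psi\!\left(\tfrac{\mathbf{u}}{\sqrt{N}}\right)=\mathbf{u}^{T}A\mathbf{u}+R_N(\mathbf{u}),\qquad R_N(\mathbf{u})=N\,r\!\left(\tfrac{\mathbf{u}}{\sqrt{N}}\right)=\sum_{k\ge1}N^{-k/2}P_k(\mathbf{u}),
\end{align*}
with $P_k$ the degree-$(k+2)$ homogeneous part of $r$, gives $\int_D\omega=N^{-3/2}\int_{T_N}e^{\mathbf{u}^{T}A\mathbf{u}}e^{R_N(\mathbf{u})}\,du_1du_2du_3+O(e^{-c'N})$, where $T=B^{-1}(\sqrt{-1}\,\mathbb{R}^3)$ is the full flat thimble (on which $\mathbf{u}^{T}A\mathbf{u}=-|\mathbf{t}|^{2}$) and $T_N=T\cap\{|\mathbf{u}|<\varepsilon'\sqrt{N}\}$. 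Expanding $e^{R_N(\mathbf{u})}$ as a power series in $N^{-1/2}$ with polynomial coefficients in $\mathbf{u}$ and integrating term by term over $T_N$, uniform bounds (polynomial growth against the Gaussian $e^{-|\mathbf{t}|^{2}}$) let one replace each monomial integral over $T_N$ by the one over $T$ at the cost of an exponentially small error, and truncating at order $N^{-d}$ leaves an error $O(N^{-d-1})$; the involution $\mathbf{u}\mapsto-\mathbf{u}$ fixes $T$ and $\mathbf{u}^{T}A\mathbf{u}$, so only even $\mathbf{u}$-degree monomials contribute and only integer powers of $N^{-1}$ survive.

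\emph{Step 3: Wick calculus and the generating series.} Each surviving Gaussian moment is computed by completing the square and translating the thimble (legitimate, since the integrand is entire and keeps its Gaussian decay along the translated cycle): for any polynomial $P$,
\begin{align*}
\int_{T}e^{\mathbf{u}^{T}A\mathbf{u}+\mathbf{w}\cdot\mathbf{u}}\,du_1du_2du_3=\frac{\pi^{3/2}}{\sqrt{\det(-A)}}\,e^{-\frac14\mathbf{w}^{T}A^{-1}\mathbf{w}},\qquad \int_{T}e^{\mathbf{u}^{T}A\mathbf{u}}P(\mathbf{u})\,du_1du_2du_3=\frac{\pi^{3/2}}{\sqrt{\det(-A)}}\Bigl[P\!\left(\tfrac{\partial}{\partial\mathbf{w}}\right)e^{-\frac14\mathbf{w}^{T}A^{-1}\mathbf{w}}\Bigr]_{\mathbf{w}=0},
\end{align*}
where the orientation of $T$, matched to that of $D$ in Step 1, fixes the branch of $\sqrt{\det(-A)}$ as in \cite{Oht16}. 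Applying the second identity with $P$ equal to the truncated Taylor series of $e^{R_N}$ and carrying out the parity and power bookkeeping (a contribution involving $m$ vertices from $r$ of total differential order $2s$ carries a factor $N^{m-s}$, which both forces integer powers of $N^{-1}$ and reorganizes the half-integer powers) identifies the prefactor-stripped integral with
\begin{align*}
1+\sum_{i=1}^{d}\frac{\lambda_i}{N^{i}}+O\!\left(\tfrac{1}{N^{d+1}}\right)=\exp\!\Bigl(N\,r\bigl(\tfrac{\partial}{\partial w_1},\tfrac{\partial}{\partial w_2},\tfrac{\partial}{\partial w_3}\bigr)\Bigr)\exp\!\Bigl(-\tfrac{1}{4N}(w_1,w_2,w_3)A^{-1}(w_1,w_2,w_3)^{T}\Bigr)\Big|_{w_1=w_2=w_3=0}+O\!\left(\tfrac{1}{N^{d+1}}\right).
\end{align*}
Multiplying by the $N^{-3/2}$ from the measure and the $\pi^{3/2}/\sqrt{\det(-A)}$ from the Gaussian, and absorbing the errors $O(e^{-\delta N})$, $O(e^{-cN})$ into $O(N^{-d-1})$, gives the asserted expansion.

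\emph{Main obstacle.} The analytic part (localization, rescaling, Wick calculus) is the standard multidimensional Laplace method and is routine once organized. The delicate step is Step 1: the hypotheses pin down $\partial D$ only up to homotopy in the \emph{open} region $\{\mathrm{Re}\,\Psi<0\}$, so one must simultaneously (i) produce a concrete model thimble $D_0$ whose boundary lies in the correct $H_2$-class and with an orientation compatible with that of $D$ — which is precisely where the sign convention for $\sqrt{\det(-A)}$ gets fixed — and (ii) push the realizing homotopy into a sublevel set $\{\mathrm{Re}\,\Psi\le-\delta\}$ of genuine exponential decay, so that the interpolating contribution $\int_W\omega$ is negligible against the $N^{-3/2}$ leading term. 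Handling this deformation uniformly, rather than the term-by-term Gaussian integration, is where the real work lies.
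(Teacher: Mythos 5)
This proposition is not proved in the paper at all --- it is quoted from Ohtsuki (\cite{Oht18}, with the proof deferred to \cite{Oht16}) --- and your argument reconstructs essentially that standard proof: deform $D$ to a model thimble through the critical point (legitimate by the homotopy hypothesis, the interpolating chain contributing only $O(e^{-\delta N})$ since it sits in a compact subset of $\{\mathrm{Re}\,\Psi<0\}$), then localize, rescale by $\sqrt{N}$, and apply the Gaussian/Wick calculus to obtain the generating-series form of the $\lambda_i$. The only points to tighten are that the Stokes argument must be carried out inside the (contractible) neighborhood of $\mathbf{0}$ on which $\Psi$ is actually defined rather than in all of $\mathbb{C}^3$, and that the orientation matching of the model thimble with $D$ is precisely where the sign convention for $\sqrt{\det(-A)}$ from \cite{Oht16} enters --- both of which your write-up already flags.
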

For a proof of the Proposition \ref{proposition-saddlemethod},  see \cite{Oht16}. 
\begin{remark}[\cite{Oht18}, Remark 3.2]  \label{remark-saddle}
    As mentioned in Remark 3.6 of \cite{Oht16}, we can extend Proposition \ref{proposition-saddlemethod} to the case where $\Psi(z_1,z_2,z_3)$ depends on $N$ in such a way that $\Psi(z_1,z_2,z_3)$ is of the form 
    \begin{align}
        \Psi(z_1,z_2,z_3)=\Psi_0(z_1,z_2,z_3)+\Psi_1(z_1,z_2,z_3)\frac{1}{N}+R(z_1,z_2,z_3)\frac{1}{N^2}. 
    \end{align}
    where $\Psi_i(z_1,z_2,z_3)$'s are holomorphic functions independent of $N$, and we assume that $\Psi_0(z_1,z_2,z_3)$ satisfies the assumption of the Proposition and $|R(z_1,z_2,z_3)|$ is bounded by a constant which is independent of $N$.  
\end{remark}

\section{Computations of the potential function}  \label{Section-Potentialfunction}

Let $M_{p,q}$  be the 3-manifold obtained by the doing $q$-surgery along the twist knot $\mathcal{K}_p$, (see Figure \ref{figure-twistknot}) where the index $p$ represents $2p$ crossings (half-twists). For example, $\mathcal{K}_{-1}=4_1$, $\mathcal{K}_1=3_1$, $\mathcal{K}_2=5_2$.

\begin{figure}[!htb] \label{figure-twistknot} 
\raisebox{-15pt}{
\includegraphics[width=150 pt]{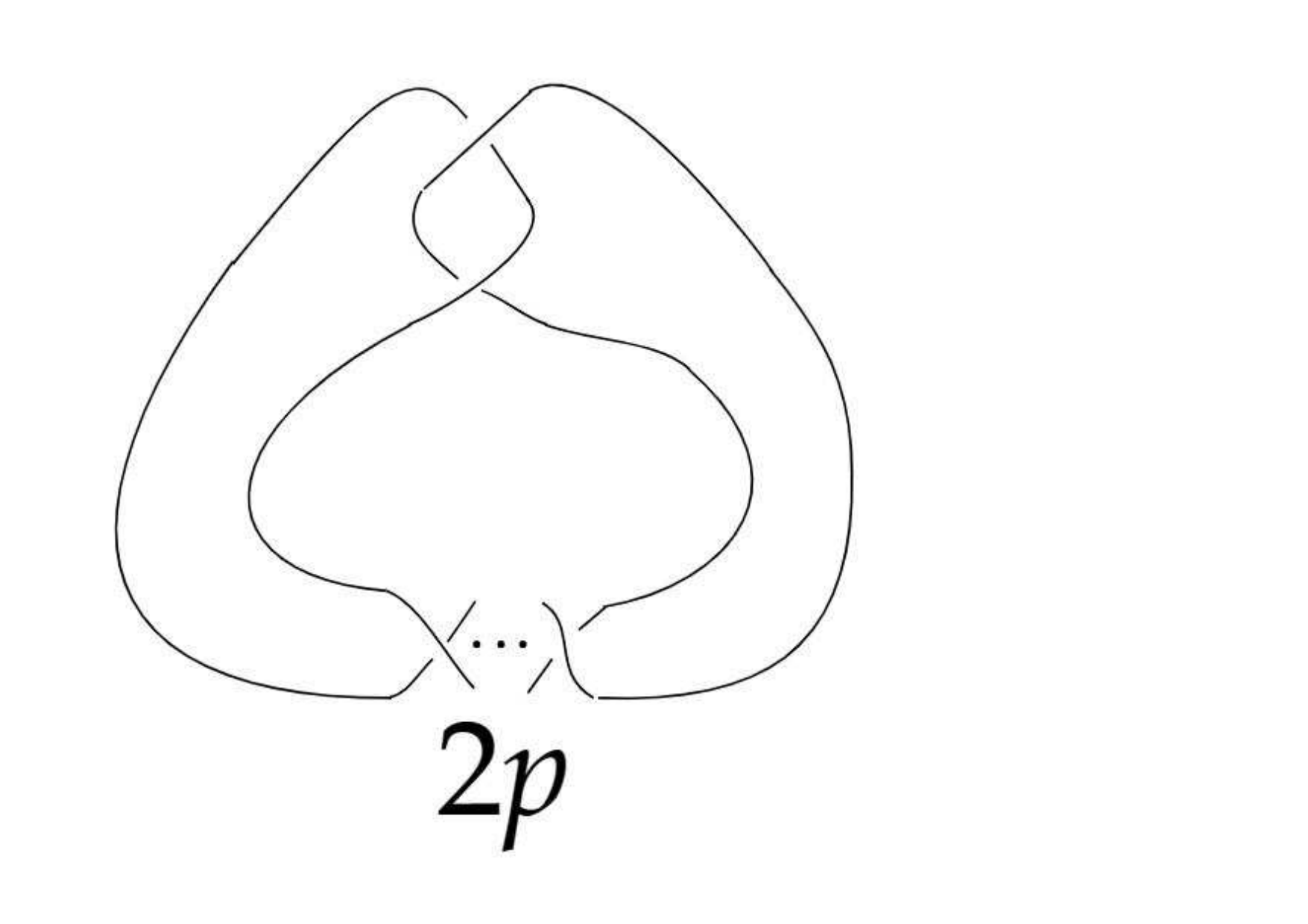}}.
\caption{Twist knot $\mathcal{K}_p$}
\end{figure}

Denoted by $(\mathcal{K}_p)_q$ the twist knot $\mathcal{K}_p$ with framing $q$, and $D_{(\mathcal{K}_p)_q}$ be its link diagram. 
By formula (\ref{formula-RT}), the $r$-th Reshetikhin-Turaev invariant of $M_{p,q}$ is given by
\begin{align}
RT_r(M)=\mu_r\langle\Omega_r\rangle_{D_{(\mathcal{K}_p)_q}}\langle\Omega_r\rangle_{U_+}^{-\sigma((\mathcal{K}_p)_q)}.    
\end{align}
A direct computation shows that
$\langle\Omega_r\rangle_{U_+}=e^{\left(-\frac{3}{r}-\frac{r+1}{4}\right)\pi \sqrt{-1}}$. 
Let $\sigma_{p,q}=\sigma((\mathcal{K}_p)_q)$ and 
\begin{align}
    \kappa_r=\mu_r^2\langle\Omega_r\rangle_{U_+}^{-\sigma_{p,q}}=\left(\frac{\sin\frac{2\pi}{r}}{\sqrt{r}}\right)^2e^{\sigma_{p,q}\left(\frac{3}{r}+\frac{r+1}{4}\right)\pi\sqrt{-1}}.
\end{align}

Then  we have 
\begin{align}
    RT_r(M_{p,q})&=\kappa_r\langle \omega_r\rangle_{D_{(\mathcal{K}_p)_q}}=\kappa_r\sum_{m=0}^{r-2}[m+1](-1)^{qm+1}t^{q\frac{m(m+2)}{4}}\langle e_{m}\rangle_{D_{(\mathcal{K}_p)_0}}.
\end{align}

Recall the definition of  colored Jones polynomial \cite{CZ23-1}, for a knot $\mathcal{K}$ with zero-framing, we have
\begin{align}
[m+1]J_{m+1}(\mathcal{K};t)=\langle (-1)^me_m\rangle_{D_{\mathcal{K}}}, 
\end{align}
then we get
\begin{align}
   RT_r(M)&=\kappa_r \sum_{m=0}^{r-2}[m+1]^2(-1)^{qm}t^{q\frac{m(m+2)}{4}}J_{m+1}(\mathcal{K}_p;t)\\\nonumber
   &=\frac{\kappa_r}{\sin^2 \left(\frac{\pi}{N+\frac{1}{2}}\right)}\sum_{m=0}^{2N-1}(-1)^{qm}\sin^2\left(\frac{(m+1)\pi}{N+\frac{1}{2}}\right)e^{\frac{qm(m+2)\pi\sqrt{-1}}{2(N+\frac{1}{2})}}J_{m+1}(\mathcal{K}_p;t)
\end{align}

And at the root of unity $t=e^{\frac{2\pi\sqrt{-1}}{N+\frac{1}{2}}}$, we have  
\begin{align}  \label{formula-sumkl}
 J_{m}(\mathcal{K}_p;t)&=\sum_{k=0}^{m-1}\sum_{l=0}^k\frac{(-1)^{k+l}\sin \frac{\pi(2l+1)}{N+\frac{1}{2}}}{\sin \frac{m\pi }{N+\frac{1}{2}}}t^{(p+\frac{1}{2})l(l+1)-m(k+\frac{1}{2})+\frac{1}{2}k^2+\frac{3}{2}k+\frac{1}{2}}\\\nonumber
 &\cdot\frac{(t)_k(t)_{m+k}}{(t)_{k+l+1}(t)_{k-l}(t)_{m-k-1}}. 
\end{align}

Therefore, let $a=N-1-m$, we obtain
\begin{align}
RT_{r}(M_{p,q})&=\kappa_r'\sum_{a=-N}^{N-1}\sum_{k=0}^{N-a-1}\sum_{l=0}^k  (-1)^{q(N-1-a)+k+l}\sin\left(\frac{(a+\frac{1}{2})\pi}{N+\frac{1}{2}}\right)\sin\left(\frac{2\pi(l+\frac{1}{2})}{N+\frac{1}{2}}\right)\\\nonumber
&\cdot t^{q\frac{(N-1-a)(N+1-a)}{4}+(p+\frac{1}{2})l(l+1)-(N-a)(k+\frac{1}{2})+\frac{1}{2}k^2+\frac{3}{2}k+\frac{1}{2}}  \frac{(t)_k(t)_{N-a+k}}{(t)_{k+l+1}(t)_{k-l}(t)_{N-a-k-1}},
\end{align}
where 
\begin{align} \label{formula-kappa'r}
\kappa'_r=\frac{\kappa_r}{\sin^2 \left(\frac{\pi}{N+\frac{1}{2}}\right)}.    
\end{align}

Let 
\begin{align}
\theta_1=\frac{a+\frac{1}{2}}{N+\frac{1}{2}}, \ 
\theta_2=\frac{k+\frac{1}{2}}{N+\frac{1}{2}},  \ \theta_3=\frac{l+\frac{1}{2}}{N+\frac{1}{2}}.       
\end{align}
Then we have
\begin{align}
    &t^{q\frac{(N-1-a)(N+1-a)}{4}+(p+\frac{1}{2})l(l+1)-(N-a)(k+\frac{1}{2})+\frac{1}{2}k^2+\frac{3}{2}k+\frac{1}{2}}\\\nonumber
    &=e^{(N+\frac{1}{2})2\pi\sqrt{-1}\left(\frac{q}{4}(1+\theta_1^2-2\theta_1-\frac{1}{(N+\frac{1}{2})^2})+(p+\frac{1}{2})(\theta_3^2-\frac{1}{4(N+\frac{1}{2})^2})-\theta_2+\theta_2\theta_1+\frac{1}{2}\theta_2^2+\frac{\theta_2}{(N+\frac{1}{2})}-\frac{1}{8(N+\frac{1}{2})^2}\right)}.
\end{align}
Moreover, by using Lemma \ref{lemma-varphixi},  we have
\begin{align}
    &\frac{(t)_k(t)_{N-a+k}}{(t)_{k+l+1}(t)_{k-l}(t)_{N-a-k-1}}\\\nonumber
    &=\exp\left(\varphi_N\left(\frac{(k+l+1)+\frac{1}{2}}{N+\frac{1}{2}}\right)+\varphi_N\left(\frac{(k-l)+\frac{1}{2}}{N+\frac{1}{2}}\right)-\varphi_N\left(\frac{\frac{1}{2}}{N+\frac{1}{2}}\right)\right.\\\nonumber
    &\left.-\varphi_N\left(\frac{k+\frac{1}{2}}{N+\frac{1}{2}}\right)-\varphi_N\left(\frac{(N-a+k)+\frac{1}{2}}{N+\frac{1}{2}}\right)+\varphi_N\left(\frac{(N-a-k-1)+\frac{1}{2}}{N+\frac{1}{2}}\right)\right)\\\nonumber
    &=e^{\varphi_N(\theta_2+\theta_3+\frac{\frac{1}{2}}{N+\frac{1}{2}})+\varphi_{N}(\theta_2-\theta_3+\frac{\frac{1}{2}}{N+\frac{1}{2}})+\varphi_N(-\theta_1-\theta_2+1)-\varphi_N(\theta_2)-\varphi_N(1+\theta_2-\theta_1)}\\\nonumber
    &\cdot e^{(N+\frac{1}{2})(\frac{\pi\sqrt{-1}}{12}-\frac{\pi\sqrt{-1}}{4(N+\frac{1}{2})}+\frac{\pi\sqrt{-1}}{12(N+\frac{1}{2})^2})}(N+\frac{1}{2})^{-\frac{1}{2}}
\end{align}
for $0<k+l+1\leq N$ and $0<N-a+k<N$, 

\begin{align}
    &\frac{(t)_k(t)_{N-a+k}}{(t)_{k+l+1}(t)_{k-l}(t)_{N-a-k-1}}\\\nonumber
    &=\exp\left(\varphi_N\left(\frac{(k+l+1)+\frac{1}{2}}{N+\frac{1}{2}}\right)+\varphi_N\left(\frac{(k-l)+\frac{1}{2}}{N+\frac{1}{2}}\right)+\varphi_N\left(\frac{(N-a-k-1)+\frac{1}{2}}{N+\frac{1}{2}}\right)\right.\\\nonumber
    &\left.-\varphi_N\left(\frac{k+\frac{1}{2}}{N+\frac{1}{2}}\right)-\varphi_N\left(\frac{(N-a+k)+\frac{1}{2}}{N+\frac{1}{2}}-1\right)-\varphi_N\left(\frac{\frac{1}{2}}{N+\frac{1}{2}}\right)+\log 2\right)\\\nonumber
    &=2e^{\varphi_N(\theta_2+\theta_3+\frac{\frac{1}{2}}{N+\frac{1}{2}})+\varphi_{N}(\theta_2-\theta_3+\frac{\frac{1}{2}}{N+\frac{1}{2}})+\varphi_N(-\theta_1-\theta_2+1)-\varphi_N(\theta_2)-\varphi_N(\theta_2-\theta_1)}\\\nonumber
    &\cdot e^{(N+\frac{1}{2})(\frac{\pi\sqrt{-1}}{12}-\frac{\pi\sqrt{-1}}{4(N+\frac{1}{2})}+\frac{\pi\sqrt{-1}}{12(N+\frac{1}{2})^2})}(N+\frac{1}{2})^{-\frac{1}{2}}
\end{align}
for $0<k+l+1\leq N$ and $N<N-a+k<2N$,

\begin{align}
    &\frac{(t)_k(t)_{N-a+k}}{(t)_{k+l+1}(t)_{k-l}(t)_{N-a-k-1}}\\\nonumber
    &=\exp\left(\varphi_N\left(\frac{(k+l+1)+\frac{1}{2}}{N+\frac{1}{2}}-1\right)+\varphi_N\left(\frac{(k-l)+\frac{1}{2}}{N+\frac{1}{2}}\right)+\varphi_N\left(\frac{(N-a-k-1)+\frac{1}{2}}{N+\frac{1}{2}}\right)\right.\\\nonumber
    &\left.-\varphi_N\left(\frac{k+\frac{1}{2}}{N+\frac{1}{2}}\right)-\varphi_N\left(\frac{(N-a+k)+\frac{1}{2}}{N+\frac{1}{2}}\right)-\varphi_N\left(\frac{\frac{1}{2}}{N+\frac{1}{2}}\right)-\log 2\right)\\\nonumber
    &=\frac{1}{2}e^{\varphi_N(\theta_2+\theta_3+\frac{\frac{1}{2}}{N+\frac{1}{2}}-1)+\varphi_{N}(\theta_2-\theta_3+\frac{\frac{1}{2}}{N+\frac{1}{2}})+\varphi_N(-\theta_1-\theta_2+1)-\varphi_N(\theta_2)-\varphi_N(1+\theta_2-\theta_1)}\\\nonumber
    &\cdot e^{(N+\frac{1}{2})(\frac{\pi\sqrt{-1}}{12}-\frac{\pi\sqrt{-1}}{4(N+\frac{1}{2})}+\frac{\pi\sqrt{-1}}{12(N+\frac{1}{2})^2})}(N+\frac{1}{2})^{-\frac{1}{2}}
\end{align}
for $N<k+l+1\leq 2N$ and $0<N-a+k<N$,

\begin{align}
    &\frac{(t)_k(t)_{N-a+k}}{(t)_{k+l+1}(t)_{k-l}(t)_{N-a-k-1}}\\\nonumber
    &=\exp\left(\varphi_N\left(\frac{(k+l+1)+\frac{1}{2}}{N+\frac{1}{2}}-1\right)+\varphi_N\left(\frac{(k-l)+\frac{1}{2}}{N+\frac{1}{2}}\right)+\varphi_N\left(\frac{(N-a-k-1)+\frac{1}{2}}{N+\frac{1}{2}}\right)\right.\\\nonumber
    &\left.-\varphi_N\left(\frac{k+\frac{1}{2}}{N+\frac{1}{2}}\right)-\varphi_N\left(\frac{(N-a+k)+\frac{1}{2}}{N+\frac{1}{2}}-1\right)-\varphi_N\left(\frac{\frac{1}{2}}{N+\frac{1}{2}}\right)\right)\\\nonumber
    &=e^{\varphi_N(\theta_2+\theta_3+\frac{\frac{1}{2}}{N+\frac{1}{2}}-1)+\varphi_{N}(\theta_2-\theta_3+\frac{\frac{1}{2}}{N+\frac{1}{2}})+\varphi_N(-\theta_1-\theta_2+1)-\varphi_N(\theta_2)-\varphi_N(\theta_2-\theta_1)}\\\nonumber
    &\cdot e^{(N+\frac{1}{2})(\frac{\pi\sqrt{-1}}{12}-\frac{\pi\sqrt{-1}}{4(N+\frac{1}{2})}+\frac{\pi\sqrt{-1}}{12(N+\frac{1}{2})^2})}(N+\frac{1}{2})^{-\frac{1}{2}}
\end{align}
for $N<k+l+1\leq 2N$ and $N<N-a+k<2N$.

Since
\begin{align}
     &e^{(N+\frac{1}{2})2\pi\sqrt{-1}\left(\frac{q}{2}(1-\theta_1-\frac{1}{N+\frac{1}{2}})+\frac{\theta_2+\theta_3}{2}-\frac{1}{2(N+\frac{1}{2})}\right)}\\\nonumber
     &
     \cdot e^{(N+\frac{1}{2})2\pi\sqrt{-1}\left(\frac{q}{4}(1+\theta_1^2-2\theta_1-\frac{1}{(N+\frac{1}{2})^2})+(p+\frac{1}{2})(\theta_3^2-\frac{1}{4(N+\frac{1}{2})^2})+\theta_2-\theta_2\theta_1+\frac{1}{2}\theta_2^2+\frac{\theta_2}{(N+\frac{1}{2})}-\frac{1}{8(N+\frac{1}{2})^2}\right)}
     \\\nonumber
     &\cdot e^{\varphi_N(\theta_2+\theta_3+\frac{\frac{1}{2}}{N+\frac{1}{2}}-1)+\varphi_{N}(\theta_2-\theta_3+\frac{\frac{1}{2}}{N+\frac{1}{2}})+\varphi_N(-\theta_1-\theta_2+1)-\varphi_N(\theta_2)-\varphi_N(\theta_2-\theta_1)}\\\nonumber
    &\cdot e^{(N+\frac{1}{2})(\frac{\pi\sqrt{-1}}{12}-\frac{\pi\sqrt{-1}}{4(N+\frac{1}{2})}+\frac{\pi\sqrt{-1}}{12(N+\frac{1}{2})^2})}(N+\frac{1}{2})^{-\frac{1}{2}}\\\nonumber
    &=(N+\frac{1}{2})^{-\frac{1}{2}}e^{(N+\frac{1}{2})\pi\sqrt{-1}\left(q(\frac{3}{2}-2\theta_1+\frac{\theta_1^2}{2}-\frac{1}{N+\frac{1}{2}}-\frac{1}{2(N+\frac{1}{2})^2})+(2p+1)(\theta_3^2-\frac{1}{4(N+\frac{1}{2})^2})-\theta_1+2\theta_2\theta_1+\theta_2^2+\theta_3+\frac{2\theta_2}{(N+\frac{1}{2})}\right)}\\\nonumber
    &\cdot e^{(N+\frac{1}{2})\pi\sqrt{-1}(-\frac{5}{4(N+\frac{1}{2})}-\frac{1}{6(N+\frac{1}{2})^2}+\frac{1}{12})},
\end{align}
finally, combining above formulas together, we obtain
\begin{align}
&RT_{r}(M_{p,q})\\\nonumber
&=\kappa_r'(N+\frac{1}{2})^{-\frac{1}{2}}\sum_{a=-N}^{N-1}\sum_{k=0}^{N-a-1}\sum_{l=0}^k  (-1)^{q(N-1-a)+k+l}\sin\left(\frac{(a+\frac{1}{2})\pi}{N+\frac{1}{2}}\right)\sin\left(\frac{2\pi(l+\frac{1}{2})}{N+\frac{1}{2}}\right)\\\nonumber
&\cdot t^{q\frac{(N-1-a)(N+1-a)}{4}+(p+\frac{1}{2})l(l+1)-(N-a)(k+\frac{1}{2})+\frac{1}{2}k^2+\frac{3}{2}k+\frac{1}{2}}  \frac{(t)_k(t)_{N-a+k}}{(t)_{k+l+1}(t)_{k-l}(t)_{N-a-k-1}}\\\nonumber
&=\kappa_r'(N+\frac{1}{2})^{-\frac{1}{2}}\sum_{a=-N}^{N-1}\sum_{k=0}^{N-a-1}\sum_{l=0}^k  \sin\left(\frac{(a+\frac{1}{2})\pi}{N+\frac{1}{2}}\right)\sin\left(\frac{2\pi(l+\frac{1}{2})}{N+\frac{1}{2}}\right)e^{(N+\frac{1}{2})\tilde{V}_N(\frac{a+\frac{1}{2}}{N+\frac{1}{2}},\frac{k+\frac{1}{2}}{N+\frac{1}{2}},\frac{l+\frac{1}{2}}{N+\frac{1}{2}})},
\end{align}
where 
$\tilde{V}_{N}(\theta_1,\theta_2,\theta_3)$
is given by 
\begin{itemize}
    \item 
    \begin{align*}
        &\pi\sqrt{-1}\left(q\left(\frac{3}{2}-2\theta_1+\frac{\theta_1^2}{2}-\frac{1}{N+\frac{1}{2}}-\frac{1}{2(N+\frac{1}{2})^2}\right)+(2p+1)\left(\theta_3^2-\frac{1}{4(N+\frac{1}{2})^2}\right)\right.\\\
    &\left.-\theta_2+2\theta_2\theta_1+\theta_2^2+\theta_3+\frac{2\theta_2}{N+\frac{1}{2}}-\frac{5}{4(N+\frac{1}{2})}-\frac{1}{6(N+\frac{1}{2})^2}+\frac{1}{12}\right)\\\nonumber
    &+\frac{1}{N+\frac{1}{2}}\left(\varphi_N(\theta_2+\theta_3+\frac{\frac{1}{2}}{N+\frac{1}{2}})+\varphi_{N}(\theta_2-\theta_3+\frac{\frac{1}{2}}{N+\frac{1}{2}})+\varphi_N(-\theta_1-\theta_2+1)\right. \\\nonumber
    &\left.-\varphi_N(\theta_2)-\varphi_N(1+\theta_2-\theta_1)\right),
    \end{align*}
when $0<\theta_2+\theta_3<1$ and $0<1-\theta_1+\theta_2<1$;
    \item 
    \begin{align*}
       &\pi\sqrt{-1}\left(q\left(\frac{3}{2}-2\theta_1+\frac{\theta_1^2}{2}-\frac{1}{N+\frac{1}{2}}-\frac{1}{2(N+\frac{1}{2})^2}\right)+(2p+1)\left(\theta_3^2-\frac{1}{4(N+\frac{1}{2})^2}\right)\right.\\\
    &\left.-\theta_2+2\theta_2\theta_1+\theta_2^2+\theta_3+\frac{2\theta_2}{N+\frac{1}{2}}-\frac{5}{4(N+\frac{1}{2})}-\frac{1}{6(N+\frac{1}{2})^2}+\frac{1}{12}\right)\\\nonumber
    &+\frac{1}{N+\frac{1}{2}}\left(\varphi_N(\theta_2+\theta_3+\frac{\frac{1}{2}}{N+\frac{1}{2}})+\varphi_{N}(\theta_2-\theta_3+\frac{\frac{1}{2}}{N+\frac{1}{2}})+\varphi_N(-\theta_1-\theta_2+1)-\varphi_N(\theta_2)\right.\\\nonumber
    &\left.-\varphi_N(\theta_2-\theta_1)\right),
    \end{align*}
    when $0<\theta_2+\theta_3<1$ and $1<1-\theta_1+\theta_2<2$;

    \item 
    \begin{align*}
       &\pi\sqrt{-1}\left(q\left(\frac{3}{2}-2\theta_1+\frac{\theta_1^2}{2}-\frac{1}{N+\frac{1}{2}}-\frac{1}{2(N+\frac{1}{2})^2}\right)+(2p+1)\left(\theta_3^2-\frac{1}{4(N+\frac{1}{2})^2}\right)\right.\\\
    &\left.-\theta_2+2\theta_2\theta_1+\theta_2^2+\theta_3+\frac{2\theta_2}{N+\frac{1}{2}}-\frac{5}{4(N+\frac{1}{2})}-\frac{1}{6(N+\frac{1}{2})^2}+\frac{1}{12}\right)\\\nonumber
    &+\frac{1}{N+\frac{1}{2}}\left(\varphi_N(\theta_2+\theta_3+\frac{\frac{1}{2}}{N+\frac{1}{2}}-1)+\varphi_{N}(\theta_2-\theta_3+\frac{\frac{1}{2}}{N+\frac{1}{2}})+\varphi_N(-\theta_1-\theta_2+1)\right.\\\nonumber
    &\left.-\varphi_N(\theta_2)-\varphi_N(1+\theta_2-\theta_1)\right),
    \end{align*}
when $1<\theta_2+\theta_3<2$ and $0<1-\theta_1+\theta_2<1$;

    \item
    \begin{align*}
       &\pi\sqrt{-1}\left(q\left(\frac{3}{2}-2\theta_1+\frac{\theta_1^2}{2}-\frac{1}{N+\frac{1}{2}}-\frac{1}{2(N+\frac{1}{2})^2}\right)+(2p+1)\left(\theta_3^2-\frac{1}{4(N+\frac{1}{2})^2}\right)\right.\\\
    &\left.-\theta_2+2\theta_2\theta_1+\theta_2^2+\theta_3+\frac{2\theta_2}{N+\frac{1}{2}}-\frac{5}{4(N+\frac{1}{2})}-\frac{1}{6(N+\frac{1}{2})^2}+\frac{1}{12}\right)\\\nonumber
    &+\frac{1}{N+\frac{1}{2}}\left(\varphi_N(t+u+\frac{\frac{1}{2}}{N+\frac{1}{2}}-1)+\varphi_{N}(\theta_2-\theta_3+\frac{\frac{1}{2}}{N+\frac{1}{2}})\right.\\\nonumber
    &\left.+\varphi_N(-\theta_1-\theta_2+1)-\varphi_N(\theta_2)-\varphi_N(\theta_2-\theta_1)\right),
    \end{align*}
     when $1<\theta_2+\theta_3<2$ and $1<1-\theta_1+\theta_2<2$.
\end{itemize}

By Lemma \ref{lemma-varphixi2}, we have
\begin{align}
    &\varphi_N(1-\theta_1-\theta_2)=-\varphi(\theta_1+\theta_2)\\\nonumber
    &+2\pi \sqrt{-1}\left(-\frac{2N+1}{4}\left((\theta_1+\theta_2)^2-(\theta_2+\theta_1)+\frac{1}{6}\right)+\frac{1}{12(2N+1)}\right)
\end{align}
since $\theta_1+\theta_2<1$.

Hence 
\begin{align}
&\tilde{V}_{N}(\theta_1,\theta_2,\theta_3)\\\nonumber
&=\pi\sqrt{-1}\left(q\left(\frac{3}{2}-2\theta_1+\frac{\theta_1^2}{2}-\frac{1}{N+\frac{1}{2}}-\frac{1}{2(N+\frac{1}{2})^2}\right)+(2p+1)\left(\theta_3^2-\frac{1}{4(N+\frac{1}{2})^2}\right)\right.\\\nonumber
    &\left.-\theta_1^2+\theta_1+\theta_3+\frac{2\theta_2}{N+\frac{1}{2}}-\frac{5}{4(N+\frac{1}{2})}-\frac{1}{12(N+\frac{1}{2})^2}-\frac{1}{12}\right)\\\nonumber
    &+\frac{1}{N+\frac{1}{2}}\left(\varphi_N(\theta_2+\theta_3+\frac{\frac{1}{2}}{N+\frac{1}{2}}-1)+\varphi_{N}(\theta_2-\theta_3+\frac{\frac{1}{2}}{N+\frac{1}{2}})\right.\\\nonumber
    &\left.-\varphi_N(\theta_2+\theta_1)-\varphi_N(\theta_2)-\varphi_N(\theta_2-\theta_1)\right),    
\end{align}
for $1<\theta_2+\theta_3<2$ and $1<1-\theta_1+\theta_2<2$, and we can write the expression for $\tilde{V}_N$ for other cases similarly.

Then we introduce the functions 
\begin{align} \label{formula-VN}
   &\hat{V}_N(\theta_1,\theta_2,\theta_3)\\\nonumber
   &=\tilde{V}_N(\theta_1,\theta_2,\theta_3)+2(q-1)\pi\sqrt{-1}\left(\theta_1-\frac{\frac{1}{2}}{N+\frac{1}{2}}\right)-2\pi\sqrt{-1}\left(\theta_2-\frac{\frac{1}{2}}{N+\frac{1}{2}}\right)\\\nonumber
   &-2(p+2)\pi\sqrt{-1}\left(\theta_3-\frac{\frac{1}{2}}{N+\frac{1}{2}}\right) \\\nonumber
   &=\pi\sqrt{-1}\left(\left(\frac{q}{2}-1\right)\theta_1^2-\theta_1+(2p+1)\theta_3^2-(2p+3)\theta_3-2\theta_2+\frac{3}{2}q-\frac{1}{12}\right.\\\nonumber
   &\left.+\frac{2\theta_2}{N+\frac{1}{2}}+\frac{p-2q+\frac{9}{4}}{N+\frac{1}{2}}-\frac{3(p+q)+2}{6(N+\frac{1}{2})^2}\right)\\\nonumber
    &+\frac{1}{N+\frac{1}{2}}\left(\varphi_N(\theta_2+\theta_3+\frac{\frac{1}{2}}{N+\frac{1}{2}}-1)+\varphi_{N}(\theta_2-\theta_3+\frac{\frac{1}{2}}{N+\frac{1}{2}})\right.\\\nonumber
    &\left.-\varphi_N(\theta_2+\theta_1)-\varphi_N(\theta_2)-\varphi_N(\theta_2-\theta_1)\right) 
\end{align}
and 
\begin{align}
&\hat{V}(\theta_1,\theta_2,\theta_3)\\\nonumber
&=\lim_{N\rightarrow \infty}V_{N}(\theta_1,\theta_2,\theta_3)\\\nonumber
&=\pi\sqrt{-1}\left(\left(\frac{q}{2}-1\right)\theta_1^2-\theta_1+(2p+1)\theta_3^2-(2p+3)\theta_3-2\theta_2+\frac{3}{2}q-\frac{1}{12}\right)\\\nonumber
   &+\frac{1}{2\pi\sqrt{-1}}\left(\text{Li}_2(e^{2\pi\sqrt{-1}(\theta_2+\theta_3)})+\text{Li}_2(e^{2\pi\sqrt{-1}(\theta_2-\theta_3)})-\text{Li}_2(e^{2\pi\sqrt{-1}(\theta_2+\theta_1)})\right.\\\nonumber
&\left.-\text{Li}_2(e^{2\pi\sqrt{-1}\theta_2})-\text{Li}_2(e^{2\pi\sqrt{-1}(\theta_2-\theta_1)})\right)  
\end{align}
for $1<\theta_2+\theta_3<2$ and $1<1-\theta_1+\theta_2<2$, and one can write the expression for $V_N$  and  $V$ for other cases similarly.

\begin{proposition} \label{Prop-RTN}
Let $r=2N+1$ with $N\geq 1$, the $r$-th Reshetikhin-Turaev invariant of $M_{p,q}$ can be written as 
  \begin{align}
   RT_{r}(M_{p,q})=\kappa_r'(N+\frac{1}{2})^{-\frac{1}{2}}\sum_{a=0}^{N-1}\sum_{k=0}^{N-a-1}\sum_{l=0}^kg_{N}(a,k,l)   
  \end{align}  
with
\begin{align}
 g_{N}(a,k,l)= \sin\left(\frac{(a+\frac{1}{2})\pi}{N+\frac{1}{2}}\right)\sin\left(\frac{2\pi(l+\frac{1}{2})}{N+\frac{1}{2}}\right)e^{(N+\frac{1}{2})\hat{V}_N\left(\frac{a+\frac{1}{2}}{N+\frac{1}{2}},\frac{k+\frac{1}{2}}{N+\frac{1}{2}},\frac{l+\frac{1}{2}}{N+\frac{1}{2}}\right)}, 
\end{align}
  where the function $\hat{V}_N(\theta_1,\theta_2,\theta_3)$ is given by formula (\ref{formula-VN}), and $\kappa'_r$ is given by (\ref{formula-kappa'r}).
\end{proposition}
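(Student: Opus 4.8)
The plan is to read off Proposition~\ref{Prop-RTN} from the formula for $RT_r(M_{p,q})$ established just before it,
\[
RT_r(M_{p,q})=\kappa'_r\,(N+\tfrac12)^{-1/2}\sum_{a=-N}^{N-1}\sum_{k=0}^{N-a-1}\sum_{l=0}^{k}\sin\!\Big(\tfrac{(a+\frac12)\pi}{N+\frac12}\Big)\sin\!\Big(\tfrac{2\pi(l+\frac12)}{N+\frac12}\Big)\,e^{(N+\frac12)\tilde{V}_N(\theta_1,\theta_2,\theta_3)},
\]
where $\theta_1=\frac{a+1/2}{N+1/2},\ \theta_2=\frac{k+1/2}{N+1/2},\ \theta_3=\frac{l+1/2}{N+1/2}$, by carrying out two reductions: (i) replacing $\tilde{V}_N$ by $\hat{V}_N$ in the summand, and (ii) collapsing the outer sum from $a\in\{-N,\dots,N-1\}$ to $a\in\{0,\dots,N-1\}$.

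Reduction (i) is purely formal. By (\ref{formula-VN}) the difference $\hat{V}_N-\tilde{V}_N$ is the affine form $2(q-1)\pi\sqrt{-1}\big(\theta_1-\tfrac{1/2}{N+1/2}\big)-2\pi\sqrt{-1}\big(\theta_2-\tfrac{1/2}{N+1/2}\big)-2(p+2)\pi\sqrt{-1}\big(\theta_3-\tfrac{1/2}{N+1/2}\big)$. At the lattice point above one has $(N+\tfrac12)\big(\theta_1-\tfrac{1/2}{N+1/2}\big)=a$, and likewise $k$ and $l$ for $\theta_2,\theta_3$, so $(N+\tfrac12)(\hat{V}_N-\tilde{V}_N)=2\pi\sqrt{-1}\big((q-1)a-k-(p+2)l\big)\in 2\pi\sqrt{-1}\,\mathbb{Z}$; hence $e^{(N+\frac12)\hat{V}_N}=e^{(N+\frac12)\tilde{V}_N}$ at every lattice point occurring in the sum, and the replacement changes nothing. (The shifts defining $\hat{V}_N$ are chosen precisely to absorb these integer ambiguities.)

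Reduction (ii) rests on the reflection identity $g_N(-1-a,k,l)=g_N(a,k,l)$, which corresponds to $\theta_1\mapsto-\theta_1$. The prefactor $\sin\!\big(\tfrac{(a+\frac12)\pi}{N+\frac12}\big)$ is odd under $a\mapsto -1-a$. On the other hand, one verifies — case by case over the four regions defining $\hat{V}_N$ in (\ref{formula-VN}), using the functional equation $\varphi_N(\theta)+\varphi_N(1-\theta)=2\pi\sqrt{-1}(\cdots)$ of Lemma~\ref{lemma-varphixi2} to move between the alternatives $\varphi_N(\theta_2+\theta_1)$ and $\varphi_N(1+\theta_2-\theta_1)$ that distinguish the strips $0<1-\theta_1+\theta_2<1$ and $1<1-\theta_1+\theta_2<2$ — that $\hat{V}_N(-\theta_1,\theta_2,\theta_3)=\hat{V}_N(\theta_1,\theta_2,\theta_3)+2\pi\sqrt{-1}\,\theta_1$: the quadratic term $(\tfrac q2-1)\theta_1^2$ and the combination $\varphi_N(\theta_2+\theta_1)+\varphi_N(\theta_2-\theta_1)$ are even in $\theta_1$, while the linear term $-\pi\sqrt{-1}\,\theta_1$ reverses sign. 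Hence $e^{(N+\frac12)\hat{V}_N(-\theta_1,\theta_2,\theta_3)}=e^{2\pi\sqrt{-1}(a+\frac12)}\,e^{(N+\frac12)\hat{V}_N(\theta_1,\theta_2,\theta_3)}=-\,e^{(N+\frac12)\hat{V}_N(\theta_1,\theta_2,\theta_3)}$, and the two sign reversals cancel. (Conceptually this reflects the symmetry $J_{m+1}(\mathcal{K}_p;t)=J_{r-m-1}(\mathcal{K}_p;t)$ of the colored Jones function at $t=e^{4\pi\sqrt{-1}/r}$, together with the invariance of $(-1)^{qm}t^{q m(m+2)/4}$ and of $\sin^2\!\big(\tfrac{(m+1)\pi}{N+\frac12}\big)$ under $m\mapsto r-2-m$.) To pass from this pointwise identity to the statement about the sum, note that for $a<0$ the factor $(t)_{N-a+k}$ vanishes as soon as $N-a+k\ge r=2N+1$, i.e.\ as soon as $k\ge N+a+1$; discarding these vanishing terms confines the inner $k$-sum for $a<0$ to $0\le k\le N+a$, which is exactly the $k$-range $0\le k\le N-(-1-a)-1$ attached to the reflected index $-1-a\in\{0,\dots,N-1\}$. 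Applying the reflection identity term by term then carries the block $a<0$ onto the block $a\ge0$, and reassembling gives the asserted formula.

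I expect reduction (ii) to be the main obstacle: one must establish the identity for $\hat{V}_N$ in each of the four regions — paying attention to the analytic continuation of $\varphi_N$ outside $0<\mathrm{Re}(\theta)<1$ supplied by Lemma~\ref{lemma-varphixi2} — keep the half-integer offsets $\tfrac{1/2}{N+1/2}$ consistent through $\theta_1\mapsto-\theta_1$, and track the normalizing factor $\kappa'_r(N+\tfrac12)^{-1/2}$ carefully while the $2N$ indices $a$ are folded onto $\{0,\dots,N-1\}$. Reduction (i) and the oddness of the sine prefactor are immediate.
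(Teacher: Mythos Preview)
Your reduction (i) is exactly what the paper does: the passage from $\tilde{V}_N$ to $\hat{V}_N$ in (\ref{formula-VN}) adds $2\pi\sqrt{-1}$ times an integer combination of $a,k,l$ to the exponent, so the summand is unchanged. This is the entire content of the paper's argument, which is simply the computation preceding the Proposition.

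Your reduction (ii), however, reveals a discrepancy you did not flag. Your symmetry computation is correct: $\hat{V}_N(-\theta_1,\theta_2,\theta_3)=\hat{V}_N(\theta_1,\theta_2,\theta_3)+2\pi\sqrt{-1}\,\theta_1$, so $g_N(-1-a,k,l)=g_N(a,k,l)$, and after discarding the terms with vanishing numerator $(t)_{N-a+k}$ the block $a\in\{-N,\dots,-1\}$ matches the block $a\in\{0,\dots,N-1\}$. But this gives
\[
\sum_{a=-N}^{N-1}=2\sum_{a=0}^{N-1},
\]
not the single sum asserted in the Proposition. You glossed over the factor of $2$ in your last sentence.

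In fact the lower limit $a=0$ in the Proposition is almost certainly a typo for $a=-N$. The paper's own derivation just above the Proposition produces $\sum_{a=-N}^{N-1}$; the domain $D'$ in Section~\ref{Section-Poissonsummation} is declared with $-1\le\theta_1\le1$; the region $D_0$ is stated to be symmetric with respect to the plane $\theta_1=0$; and the identity $\hat{h}_N(m_1,m_2,m_3)=\hat{h}_N(-m_1-1,m_2,m_3)$ derived there relies on integrating over both signs of $\theta_1$. The $\theta_1\mapsto-\theta_1$ symmetry you found is precisely what the paper exploits later, on the Fourier side, to fold the $m_1$-sum onto $m_1\ge0$ and to pass from $D_0$ to the half-cube $MCB$ in Section~\ref{Section-asympticexpansion}; the paper does \emph{not} use it at the level of the $a$-sum. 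So your reduction (ii) is not needed for the paper's Proposition as intended, and as written it does not establish the Proposition as literally stated.
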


\section{Poisson summation formula }  \label{Section-Poissonsummation}
By Proposition \ref{Prop-RTN}, we have
\begin{align}
   g_{N}(a,k,l)=\sin\left(\frac{(a+\frac{1}{2})\pi}{N+\frac{1}{2}}\right)\sin\left(\frac{2\pi(l+\frac{1}{2})}{N+\frac{1}{2}}\right)e^{(N+\frac{1}{2})\hat{V}_N\left(\frac{a+\frac{1}{2}}{N+\frac{1}{2}},\frac{k+\frac{1}{2}}{N+\frac{1}{2}},\frac{l+\frac{1}{2}}{N+\frac{1}{2}}\right)}
\end{align}
We consider the real part of the potential function 
\begin{align}
&v(\theta_1,\theta_2,\theta_3)\\\nonumber
&=\text{Re}\hat{V}(\theta_1,\theta_2,\theta_3)\\\nonumber
&= \text{Re}\left(\frac{1}{2\pi\sqrt{-1}}\left(\text{Li}_2(e^{2\pi\sqrt{-1}(\theta_2+\theta_3)})+\text{Li}_2(e^{2\pi\sqrt{-1}(\theta_2-\theta_3)})-\text{Li}_2(e^{2\pi\sqrt{-1}(\theta_2+\theta_1)})\right.\right.\\\nonumber
&\left.\left.-\text{Li}_2(e^{2\pi\sqrt{-1}\theta_2})-\text{Li}_2(e^{2\pi\sqrt{-1}(\theta_2-\theta_1)})\right)   \right)\\\nonumber
&=\Lambda(\theta_2+\theta_3)+\Lambda(\theta_2-\theta_3)-\Lambda(\theta_2+\theta_1)-\Lambda(\theta_2)-\Lambda(\theta_2-\theta_1)
\end{align}
where $D$ is given by the following domain 
\begin{align}
D'=\{(\theta_1,\theta_2,\theta_3)\in \mathbb{R}^3|-1 \leq \theta_1\leq 1, 0\leq \theta_3\leq \theta_2\leq 1-\theta_1 \}    
\end{align}

Let 
\begin{align}
D=\{(\theta_1,\theta_2,\theta_3)\in D'| \frac{1}{2}<\theta_2<1\},    
\end{align}
we have

\begin{lemma}
There exists $\epsilon>0$, such that 
the following domain 
\begin{align}
   \text{Re}\hat{V}(\theta_1,\theta_2,\theta_3)>0.324-\epsilon   
\end{align}
is included in the region $D$. 
\end{lemma}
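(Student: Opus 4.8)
The plan is to show that the sublevel-type region $\{(\theta_1,\theta_2,\theta_3)\in D' \mid \operatorname{Re}\hat V > 0.324-\epsilon\}$ forces $\frac12<\theta_2<1$, i.e.\ that outside the strip $\frac12<\theta_2<1$ one has $\operatorname{Re}\hat V \le 0.324-\epsilon$ for a suitable small $\epsilon>0$. Since on $D'$ we computed
\begin{align*}
v(\theta_1,\theta_2,\theta_3)=\operatorname{Re}\hat V(\theta_1,\theta_2,\theta_3)=\Lambda(\theta_2+\theta_3)+\Lambda(\theta_2-\theta_3)-\Lambda(\theta_2+\theta_1)-\Lambda(\theta_2)-\Lambda(\theta_2-\theta_1),
\end{align*}
the whole statement is a statement about the Lobachevsky function $\Lambda$, which is bounded with $\|\Lambda\|_\infty=\Lambda(1/6)=\frac{1}{2\pi}\cdot(\text{Cl}_2(\pi/3))\approx 0.1615$. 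First I would record the elementary bounds: $|\Lambda(\theta)|\le \Lambda(1/6)=:\Lambda_M$ for all $\theta$, $\Lambda$ is odd and $1$-periodic, and $\Lambda(\theta)\ge 0$ for $\theta\in[\tfrac12,1]$ while $\Lambda(\theta)\le 0$ for $\theta\in[0,\tfrac12]$. Note $5\Lambda_M\approx 0.8076$, which already shows $v$ can in principle exceed $0.324$, so the inequality must genuinely use the constraint on $\theta_2$.

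The core of the argument is a case analysis on the location of $\theta_2$ in $[0,1]$ (using periodicity one may reduce all arguments $\theta_2\pm\theta_1$, $\theta_2\pm\theta_3$ mod $1$ into $[0,1]$). When $0\le \theta_2\le \tfrac12$, the term $-\Lambda(\theta_2)$ is nonnegative but bounded by $\Lambda_M$; the real obstruction is to bound the remaining four terms $\Lambda(\theta_2+\theta_3)+\Lambda(\theta_2-\theta_3)-\Lambda(\theta_2+\theta_1)-\Lambda(\theta_2-\theta_1)$ when $\theta_2$ is small. Here I would exploit that on $D'$ we have $0\le\theta_3\le\theta_2$ and $|\theta_1|\le 1-\theta_2$, and use the concavity/convexity structure of $\Lambda$ on $[0,\tfrac12]$ and $[\tfrac12,1]$ together with the symmetry $\Lambda(\theta_2+\theta_3)+\Lambda(\theta_2-\theta_3)$ being maximized/minimized at controlled places, to show $v$ stays below $0.324-\epsilon$ there. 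A convenient device is the identity $\Lambda(2\theta)=2\Lambda(\theta)+2\Lambda(\theta+\tfrac12)$, which lets one relate the sum $\Lambda(\theta_2+\theta_3)+\Lambda(\theta_2-\theta_3)$ to values of $\Lambda$ at single points in the degenerate cases $\theta_3=0$ or $\theta_3=\theta_2$; combined with monotonicity of the partial derivatives one reduces to a one- or two-variable optimization over a compact region, which can then be dispatched by the crude bound $v\le 3\Lambda_M<0.324$ in most subregions and by a slightly finer estimate near the boundary $\theta_2=\tfrac12$. The analogous analysis near $\theta_2=1$ is symmetric via $\theta_2\mapsto 1-\theta_2$ (using oddness and periodicity of $\Lambda$, which swaps the roles and changes signs appropriately), so it suffices to treat $\theta_2$ near $0$ and near $\tfrac12$ from below.

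The main obstacle I anticipate is the region where $\theta_2$ is close to but below $\tfrac12$ (and symmetrically close to but above $1$): there the naive bound $v\le 3\Lambda_M\approx 0.4845$ is not good enough to beat $0.324$, so one needs a genuine estimate showing that the positive contributions $-\Lambda(\theta_2+\theta_1)-\Lambda(\theta_2-\theta_1)-\Lambda(\theta_2)$ cannot all simultaneously be near $\Lambda_M$ while $\Lambda(\theta_2+\theta_3)+\Lambda(\theta_2-\theta_3)$ stays near its maximum, because these require conflicting locations of $\theta_1,\theta_3$ relative to $\theta_2$. I would handle this by writing $v$ as a function of $(\theta_1,\theta_3)$ for fixed $\theta_2$, computing $\partial v/\partial\theta_1$ and $\partial v/\partial\theta_3$ (which are differences of $\log|2\sin\pi(\cdot)|$ terms), locating the interior critical points, and checking the value there is strictly less than $0.324$ — in fact showing the supremum over the closed region $\{\theta_2\le\tfrac12\}\cap D'$ is some explicit constant $v_0<0.324$, whence any $\epsilon<0.324-v_0$ works. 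This optimization is finite and amenable to the same saddle-point bookkeeping used elsewhere in the paper, and I expect the extremum to occur at a boundary configuration (e.g.\ $\theta_3=0$ or $\theta_1=0$ or $\theta_2=\tfrac12$) where $v$ collapses to an explicitly evaluable combination of $\Lambda$ values, completing the proof.
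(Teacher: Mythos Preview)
Your overall plan---reduce to $\theta_2\in[0,\tfrac12]$, freeze $\theta_2$, and optimize the remaining Lobachevsky sums in $\theta_1$ and $\theta_3$---is exactly the route the paper takes. Two corrections, however, are needed before it goes through.

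First, you have the sign of $\Lambda$ reversed: in fact $\Lambda(\theta)\ge 0$ on $[0,\tfrac12]$ and $\Lambda(\theta)\le 0$ on $[\tfrac12,1]$ (e.g.\ $\Lambda(1/6)>0$). Consequently your statement that ``$-\Lambda(\theta_2)$ is nonnegative'' for $\theta_2\in[0,\tfrac12]$ is false, and the various sign-based heuristics you sketch would have to be rewritten. Second, the map $\theta_2\mapsto 1-\theta_2$ sends $v$ to $-v$ (use $\Lambda(1-x)=-\Lambda(x)$), so it is not the even symmetry you invoke; in any case only $\theta_2\le\tfrac12$ needs to be treated, since on the relevant domain $\theta_2\le 1$.

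More importantly, you are anticipating far more work than is necessary. The paper's execution is very short: for fixed $\theta_2\in(0,\tfrac12)$ the derivative $\partial v/\partial\theta_1=\log\bigl|\sin\pi(\theta_2+\theta_1)/\sin\pi(\theta_2-\theta_1)\bigr|$ shows the maximum in $\theta_1$ is attained at $\theta_1=\tfrac12$; the two $\theta_3$-terms are bounded crudely by $2\Lambda(1/6)\approx 0.32307$. Combining these with the \emph{correct} sign $\Lambda(\theta_2)\ge 0$ gives $v\le 2\Lambda(1/6)-\Lambda(\theta_2)+\text{(nonpositive)}<0.324$ for any $\epsilon<0.324-2\Lambda(1/6)$. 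The numerical margin $0.324-2\Lambda(1/6)\approx 9\times 10^{-4}$ is small but uniform, so no separate ``finer estimate near $\theta_2=\tfrac12$'' or boundary/critical-point case analysis in $\theta_3$ is required. Once you fix the sign of $\Lambda$, the whole optimization collapses to this one line.
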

\begin{proof}
We only need to show that when $0\leq \theta_2\leq \frac{1}{2}$, we have 
\begin{align} \label{formula-ReV<}
   \text{Re}\hat{V}(\theta_1,\theta_2,\theta_3)\leq 0.324-\epsilon. 
\end{align}
Clearly, when $\theta_2=0, \frac{1}{2}$, $\text{Re}\hat{V}(\theta_1,\theta_2,\theta_3)=0\leq 0.324-\epsilon$ holds.

For a fixed $0<\theta_2< \frac{1}{2}$, we consider the function 
\begin{align}
 f(\theta_1,\theta_2)=2\Lambda(\theta_2)-\Lambda(\theta_2+\theta_1)-\Lambda(\theta_2-\theta_1).   
\end{align}
Then
\begin{align}
\text{Re}\hat{V}(\theta_1,\theta_2,\theta_3)=\Lambda(\theta_2+\theta_3)+\Lambda(\theta_2-\theta_3)-3\Lambda(\theta_2)+f(\theta_1,\theta_2).
\end{align}

Clearly, 
\begin{align}
f_{\theta_1}(\theta_1,\theta_2)=\log \left|\frac{\sin(\pi(\theta_2+\theta_1)) }{\sin(\pi(\theta_2-\theta_1))}\right|.     
\end{align}
since $0<\theta_2<\frac{1}{2}$ and $\theta_2+\theta_1<1$, we obtain that $f_{\theta_1}(\theta_1,\theta_2)=0$ has a unique solution $\theta_1=\frac{1}{2}$. Moreover, when $\theta_1\in [0,\frac{1}{2}]$, we have $f_{\theta_1}(\theta_1,\theta_2)\geq 0$ and when $\theta_1\in [\frac{1}{2},1-\theta_2]$, we have $f_{\theta_1}(\theta_1,\theta_2)\leq 0$.  Hence, for $0\leq \theta_2\leq \frac{1}{2}$ and $0\leq \theta_1\leq 1-\theta_2$, we have
\begin{align}
   f(\theta_1,\theta_2)\leq f\left(\frac{1}{2},\theta_2\right)=2\Lambda(\theta_2).  
\end{align}

Suppose formula (\ref{formula-ReV<}) does not holds, then
\begin{align}
\text{Re}\hat{V}(\theta_1,\theta_2,\theta_3)=\Lambda(\theta_2+\theta_3)+\Lambda(\theta_2-\theta_3)-3\Lambda(\theta_2)+f(\theta_1,\theta_2)\geq 0.324-\epsilon  
\end{align}
So we obtain 
\begin{align}
-3\Lambda(\theta_2)&\geq 0.324-\epsilon-f(\theta_1,\theta_2)-2\Lambda\left(\frac{1}{6}\right)\\\nonumber
&\geq 0.324-\epsilon-2\Lambda(\theta_2)-2\Lambda\left(\frac{1}{6}\right). 
\end{align}
It implies that 
\begin{align}
   -\Lambda(\theta_2)\geq 0.324-\epsilon-0.3230659473> 0,  
\end{align}
which is impossible, since $-\Lambda(\theta_2)\leq 0$ when $\theta_2\in [0,\frac{1}{2}]$.  
\end{proof}

Furthermore, as in \cite{CZ23-1}, we introduce the domain  
\begin{align}
D_0=&\left\{ (\theta_1,\theta_2,\theta_3)\in D|0.02 \leq \theta_2-\theta_3\leq 0.7, 1.02 \leq \theta_2+\theta_3\leq 1.7, \right. \\\nonumber & \left. 0.2 \leq \theta_3\leq 0.8,0.5\leq \theta_2\leq 0.909\right\}\cap D'
\end{align}
for this case.  

Note that the region $D_0$ lies in the tetrahedron $FEBC$ which is symmetry with respect to the plane EFH and the $\theta_2\theta_3$-plane. Note that, we only illustrated the upper half of this region in Figure \ref{figure-D0}.    

\begin{figure}[!htb] \label{figure-D0} 
\begin{align} 
\raisebox{-15pt}{
\includegraphics[width=300 pt]{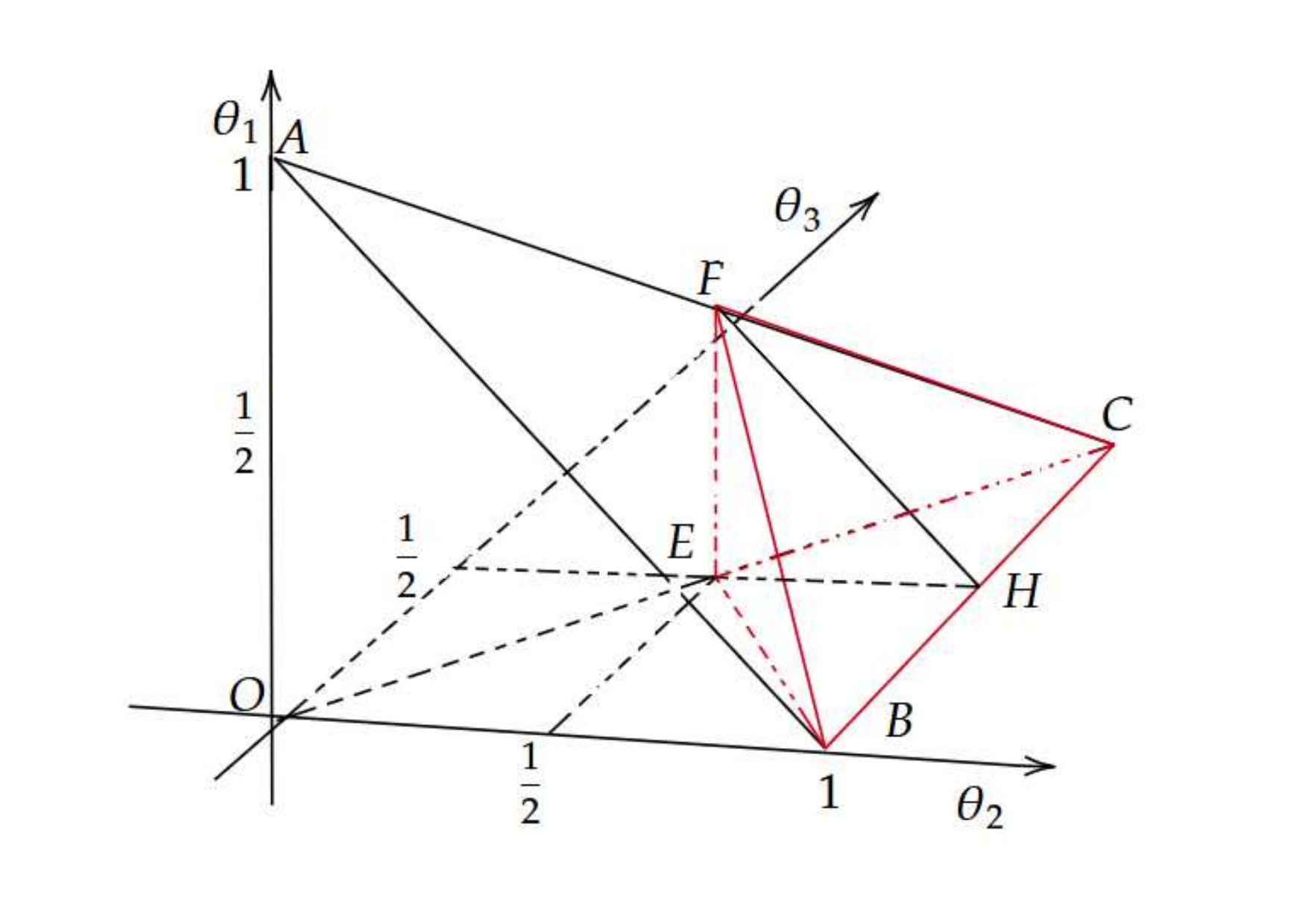}}.
\end{align}
\caption{The region $D_0$ lies in the tetrahedron FEBC}
\end{figure}

Then we have
\begin{lemma} \label{lemma-regionD'0}
The following domain
    \begin{align} \label{formula-domain}
        \left\{(\theta_1,\theta_3,\theta_3)\in D| v(\theta_1,\theta_2,\theta_3)> \frac{3.56337}{2\pi }\right\}
    \end{align}
    is included in the region $D_0$.
\end{lemma}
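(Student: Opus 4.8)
The plan is to reduce the three‐dimensional estimate to a sequence of one‐ and two‐dimensional estimates for the Lobachevsky function, exactly in the spirit of the analysis carried out in the first paper \cite{CZ23-1}. Write $v(\theta_1,\theta_2,\theta_3)=\Lambda(\theta_2+\theta_3)+\Lambda(\theta_2-\theta_3)-\Lambda(\theta_2+\theta_1)-\Lambda(\theta_2)-\Lambda(\theta_2-\theta_1)$ and recall from the previous lemma that the region where $v>0.324-\epsilon$ (hence certainly the region where $v>\frac{3.56337}{2\pi}\approx 0.567$) already sits inside $D$, so that $\tfrac12<\theta_2<1$ throughout. The target constant $\frac{3.56337}{2\pi}$ is chosen so that it is just below the critical value $v(\theta_1^0,\theta_2^0,\theta_3^0)$ attained at the genuine critical point; the content of the lemma is that to exceed this threshold one is forced into the explicitly described box $D_0$.

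First I would exploit the symmetries of $v$. As noted after the definition of $D_0$, the function is invariant under $\theta_1\mapsto -\theta_1$ and under $\theta_3\mapsto-\theta_3$ (using oddness and periodicity of $\Lambda$ together with $\Lambda(1)=0$), so it suffices to work on $0\le\theta_1$, $0\le\theta_3\le\theta_2$ and then reflect. Next I would split off the $\theta_1$–dependence exactly as in the proof of the preceding lemma: set $f(\theta_1,\theta_2)=2\Lambda(\theta_2)-\Lambda(\theta_2+\theta_1)-\Lambda(\theta_2-\theta_1)$, so that $v=\Lambda(\theta_2+\theta_3)+\Lambda(\theta_2-\theta_3)-3\Lambda(\theta_2)+f(\theta_1,\theta_2)$, and recall $f_{\theta_1}=\log|\sin\pi(\theta_2+\theta_1)/\sin\pi(\theta_2-\theta_1)|$. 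Analysing the sign of $f_{\theta_1}$ pins the maximal value of $f(\cdot,\theta_2)$ at an interior critical point and shows that once $v$ is large the first coordinate $\theta_1$ is confined to the stated interval; more precisely, if $\theta_1$ leaves $[\,\cdot\,,\,\cdot\,]$ then $f(\theta_1,\theta_2)$ drops by a definite amount below its maximum, and combined with the uniform bound $\Lambda(\theta_2+\theta_3)+\Lambda(\theta_2-\theta_3)-3\Lambda(\theta_2)\le \max_{\theta_2,\theta_3}(\cdots)$ this forces $v<\frac{3.56337}{2\pi}$, a contradiction. The remaining constraints, on $\theta_2$ and on $\theta_2\pm\theta_3$ and on $\theta_3$, are then obtained by the same two‐variable argument applied to $g(\theta_2,\theta_3)=\Lambda(\theta_2+\theta_3)+\Lambda(\theta_2-\theta_3)$ with $f$ replaced by its maximum $2\Lambda(\theta_2)$ (valid by the computation in the previous lemma), reducing to: on the region $v>\frac{3.56337}{2\pi}$ one has $g(\theta_2,\theta_3)-3\Lambda(\theta_2)+2\Lambda(\theta_2)=\Lambda(\theta_2+\theta_3)+\Lambda(\theta_2-\theta_3)-\Lambda(\theta_2)>\frac{3.56337}{2\pi}$, and then a direct study of this explicit function of two variables (monotonicity in each variable away from the maximum, using $\Lambda'(\theta)=-\log|2\sin\pi\theta|$ and the known location of the maximum of $\Lambda$ at $\theta=\tfrac16$) yields each of the interval bounds $0.02\le\theta_2-\theta_3\le0.7$, $1.02\le\theta_2+\theta_3\le1.7$, $0.2\le\theta_3\le0.8$, $0.5\le\theta_2\le0.909$.

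The main obstacle I expect is purely the bookkeeping of sharp numerical constants: every bound defining $D_0$ must be verified to be compatible with the threshold $\frac{3.56337}{2\pi}$, and this requires careful estimates of $\Lambda$ near its extrema and of the two‐variable function $\Lambda(\theta_2+\theta_3)+\Lambda(\theta_2-\theta_3)-\Lambda(\theta_2)$ on the boundary of the claimed box. Concretely, for each face of $D_0$ one shows that $v$ restricted to that face is strictly below $\frac{3.56337}{2\pi}$, which amounts to maximising an explicit real‐analytic function of at most two variables on a compact set; the values $0.324$, $2\Lambda(\tfrac16)\approx0.3230659473$ and $\frac{3.56337}{2\pi}$ are tuned so these inequalities have a small but definite margin $\epsilon>0$. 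I would organise this as a short sequence of sublemmas — one for the $\theta_1$–range, one for the $\theta_2$–range, one for the $\theta_2\pm\theta_3$–ranges, one for the $\theta_3$–range — each proved by the same $f_{\theta_i}$–sign analysis, and then conclude that the intersection of all these constraints with $D'$ is exactly $D_0$, so that $\{v>\frac{3.56337}{2\pi}\}\cap D\subseteq D_0$ as claimed. The verification that the constants fit is routine calculus once the monotonicity structure is laid out, so I would not grind through it in the text but rather cite the analogous computations in \cite{CZ23-1} and record only the key inequalities.
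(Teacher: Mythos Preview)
Your overall plan---use monotonicity in $\theta_1$ to reduce to a two--variable problem in $(\theta_2,\theta_3)$, then bound that function on the faces of $D_0$---is exactly the paper's approach, and in fact the paper's proof is just a citation of the monotonicity formulas \eqref{formula-vtheta1}, \eqref{formula-vtheta2} together with Lemma~4.1 of \cite{CZ23-1}. But there is a concrete error in how you carry out the reduction.

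You import the bound $f(\theta_1,\theta_2)\le 2\Lambda(\theta_2)$ from the proof of the \emph{previous} lemma, where it was established under the hypothesis $0<\theta_2<\tfrac12$ (there the maximum of $f(\cdot,\theta_2)$ sits at $\theta_1=\tfrac12$). In the present lemma we are already inside $D$, so $\tfrac12<\theta_2<1$, and in that range the bound is simply false: $f(0,\theta_2)=0>2\Lambda(\theta_2)$ since $\Lambda(\theta_2)<0$. The correct monotonicity, which is formula~\eqref{formula-vtheta1}, says that for $\theta_2>\tfrac12$ and $\theta_1>0$ one has $\partial v/\partial\theta_1<0$; combined with the $\theta_1\mapsto-\theta_1$ symmetry this gives $f(\theta_1,\theta_2)\le f(0,\theta_2)=0$, hence
\[
v(\theta_1,\theta_2,\theta_3)\ \le\ v(0,\theta_2,\theta_3)\ =\ \Lambda(\theta_2+\theta_3)+\Lambda(\theta_2-\theta_3)-3\Lambda(\theta_2).
\]
This is precisely the two--variable potential analysed in \cite{CZ23-1}, and Lemma~4.1 there shows that the superlevel set $\{v(0,\theta_2,\theta_3)>\frac{3.56337}{2\pi}\}$ is contained in the projection of $D_0$. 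Your reduced function $\Lambda(\theta_2+\theta_3)+\Lambda(\theta_2-\theta_3)-\Lambda(\theta_2)$ is the wrong one, so the face--by--face numerics you propose would not line up with the constants defining $D_0$. Once you replace the faulty bound by $f\le 0$, the rest of your outline collapses to a direct invocation of \cite{CZ23-1}, which is what the paper does.
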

\begin{proof}
By using formulas (\ref{formula-vtheta1}) and (\ref{formula-vtheta2}), and together with Lemma 4.1 in \cite{CZ23-1}.     
\end{proof}

By Corollary \ref{corollary-zetaR},  for $p,q$ satisfies the condition (\ref{formula-pqconditions}), we have $\zeta_{\mathbb{R}}(p,q)\geq \frac{3.56337}{2\pi }$. Hence
\begin{proposition} \label{prop-gkl}
For $(p,q)\in S$ and  $(\frac{a+\frac{1}{2}}{N+\frac{1}{2}},\frac{k+\frac{1}{2}}{N+\frac{1}{2}},\frac{l+\frac{1}{2}}{N+\frac{1}{2}})\in D\setminus D_0$,  we have
\begin{align}
    |g_{N}(k,l)|<O\left(e^{(N+\frac{1}{2})\left(\zeta_{\mathbb{R}}(p,q)-\epsilon\right)}\right)
\end{align}
for some sufficiently small $\epsilon>0$.
\end{proposition}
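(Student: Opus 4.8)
The plan is to reduce the estimate on $|g_N(a,k,l)|$ to a bound on the real part of the \emph{limiting} potential $\hat V(\theta_1,\theta_2,\theta_3)$, and then to invoke Lemma~\ref{lemma-regionD'0} to locate where that real part is large. First I would recall from Proposition~\ref{Prop-RTN} that
\begin{align*}
g_N(a,k,l)=\sin\!\left(\tfrac{(a+\frac12)\pi}{N+\frac12}\right)\sin\!\left(\tfrac{2\pi(l+\frac12)}{N+\frac12}\right)e^{(N+\frac12)\hat V_N(\theta_1,\theta_2,\theta_3)},
\end{align*}
with $\theta_i$ the usual rescaled variables. Since the two sine factors are bounded by $1$ in absolute value, we have $|g_N(a,k,l)|\le e^{(N+\frac12)\operatorname{Re}\hat V_N(\theta_1,\theta_2,\theta_3)}$. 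So the whole statement comes down to showing that on $D\setminus D_0$ one has $\operatorname{Re}\hat V_N(\theta_1,\theta_2,\theta_3)\le \zeta_{\mathbb R}(p,q)-\epsilon$ for all large $N$, uniformly in $(a,k,l)$.

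The next step is to pass from $\hat V_N$ to $\hat V$. Using Lemma~\ref{lemma-varphixi3}(3), $\frac{1}{N+\frac12}\varphi_N(\theta)$ converges uniformly on compact subsets of $\{0<\operatorname{Re}\theta<1\}$ to $\frac{1}{2\pi\sqrt{-1}}\operatorname{Li}_2(e^{2\pi\sqrt{-1}\theta})$, and the polynomial part of $\hat V_N$ differs from that of $\hat V$ by terms of order $1/N$; hence $\hat V_N\to \hat V$ uniformly on any compact subset of the relevant domain, and in particular $\operatorname{Re}\hat V_N \le \operatorname{Re}\hat V + \epsilon/2$ for $N$ large enough. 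Near the boundary faces of $D$ where some argument of a dilogarithm degenerates (e.g. $\theta_2\to 0,1$, $\theta_2\pm\theta_3\to$ integers, $\theta_2\pm\theta_1\to$ integers), I would instead use the one-sided estimate of Lemma~\ref{lemma-Li2} together with the fact, already established by the preceding lemma in this section, that $\{\operatorname{Re}\hat V>0.324-\epsilon\}\subset D$ with $\tfrac12<\theta_2<1$; this confines the relevant region away from the dangerous faces in the $\theta_2$ direction, and a similar boundary analysis handles the remaining faces, so the compact-subset convergence suffices after shrinking the domain slightly. Here I should note $\operatorname{Re}\hat V = v(\theta_1,\theta_2,\theta_3)$ by the computation displayed just before Lemma~\ref{lemma-regionD'0}, since the polynomial part of $\hat V$ is purely imaginary.

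Now I would combine the pieces: on $D\setminus D_0$, Lemma~\ref{lemma-regionD'0} (the contrapositive) gives $v(\theta_1,\theta_2,\theta_3)\le \frac{3.56337}{2\pi}$, and by Corollary~\ref{corollary-zetaR} we have $\zeta_{\mathbb R}(p,q)\ge \frac{3.56337}{2\pi}$ for $(p,q)\in S$; therefore $\operatorname{Re}\hat V(\theta_1,\theta_2,\theta_3)=v(\theta_1,\theta_2,\theta_3)\le \zeta_{\mathbb R}(p,q)$ on $D\setminus D_0$. Strictly, this only gives $\le$, not $\le -\epsilon$; to gain the strict gap I would argue by compactness: on the compact set $\overline{D}\setminus (\text{open }D_0)$ the continuous function $v$ attains its maximum, which is $\le \frac{3.56337}{2\pi}$, whereas Corollary~\ref{corollary-zetaR} in fact gives $\zeta_{\mathbb R}(p,q)> \frac{3.56337}{2\pi}$ strictly under condition~(\ref{formula-pqconditions}) — so there is a genuine $\epsilon>0$ with $v\le \zeta_{\mathbb R}(p,q)-2\epsilon$ there. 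Feeding this back through $|g_N|\le e^{(N+\frac12)(\operatorname{Re}\hat V_N)}\le e^{(N+\frac12)(v+\epsilon)}\le e^{(N+\frac12)(\zeta_{\mathbb R}(p,q)-\epsilon)}$ yields the claim, with the implied constant absorbing the lower-order corrections; summing the at most $O(N^3)$ such terms costs only a polynomial factor, which is why the statement is phrased with the $O(\,\cdot\,)$.

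The main obstacle I anticipate is the boundary/degeneration analysis in the second step: the uniform convergence $\hat V_N\to\hat V$ fails exactly on the faces of $D$ where the arguments of the quantum dilogarithms approach $0$ or $1$, and these are precisely the places where the sine prefactors also vanish, so one must balance the (at worst linear in $1/N$) blow-up of $\varphi_N$ near such a face against the vanishing of $\sin$, or else use Lemma~\ref{lemma-Li2} to show $\operatorname{Re}\hat V_N$ stays bounded above there by something well below $\zeta_{\mathbb R}(p,q)$. This is the same phenomenon handled in \cite{CZ23-1}, so I would follow that argument; everything else is routine given the lemmas already in hand.
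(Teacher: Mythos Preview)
Your proposal is correct and follows essentially the same approach as the paper. The paper gives no explicit proof of this proposition at all: it simply writes ``By Corollary~\ref{corollary-zetaR} \ldots\ we have $\zeta_{\mathbb R}(p,q)\ge \frac{3.56337}{2\pi}$. Hence'' and then states Proposition~\ref{prop-gkl}, so the argument is just Lemma~\ref{lemma-regionD'0} plus Corollary~\ref{corollary-zetaR}, with the passage from $\hat V_N$ to $\hat V$ and the boundary behavior left implicit. Your write-up supplies exactly those routine details (uniform convergence via Lemma~\ref{lemma-varphixi3}(3), the strict inequality in Corollary~\ref{corollary-zetaR} to obtain an $\epsilon$-gap, and the boundary analysis following \cite{CZ23-1}), so there is nothing further to compare.
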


For a sufficiently small $\varepsilon$, we take a smooth bump function $\psi$ on $\mathbb{R}^3$ such that
$\psi(\theta_1,\theta_2,\theta_3)=1$ on $(\theta_1,\theta_2,\theta_3)\in D_{\varepsilon}$,  $0<\psi(\theta_1,\theta_2,\theta_3)<1$ on $(\theta_1,\theta_2,\theta_3)\in D_0\setminus D_{\varepsilon}$, $\psi(\theta_1,\theta_2,\theta_3)=0$ for $(\theta_1,\theta_2,\theta_3)\notin D_0$.  
Let 
\begin{align}
    h_N(a,k,l)=\psi\left(\frac{2a+1}{2N+1},\frac{2k+1}{2N+1},\frac{2l+1}{2N+1}\right)g_N(a,k,l).
\end{align}

Then  by Proposition \ref{prop-gkl}, for $(p,q)$ satisfies (\ref{formula-pqconditions})  we obtain
\begin{align}  \label{formula-JN}
    RT_r(M)=\sum_{(a,k,l)\in \mathbb{Z}^3}h_N(a,k,l)+O\left(e^{(N+\frac{1}{2})\left(\zeta_{\mathbb{R}}(p,q)-\epsilon\right)}\right).
\end{align}
Recall the Poisson summation formula \cite{SS03} in 3-dimensional case which states that for any function $h$ in the Schwartz space on $\mathbb{R}^2$, we have 
\begin{align} \label{formula-Poisson}
    \sum_{(a,k,l)\in \mathbb{Z}^3}h(a,k,l)=\sum_{(m_1,m_2,m_3)\in \mathbb{Z}^3}\hat{h}(m_1,m_2,m_3)
\end{align}
where 
\begin{align}
    \hat{h}(m_1,m_2,m_3)=\int_{\mathbb{R}^3}h(\theta_1,\theta_2,\theta_3)e^{-2\pi \sqrt{-1}m_1 \theta_1-2\pi \sqrt{-1}m_2 \theta_2-2\pi\sqrt{-1}m_3 \theta_3}d\theta_1d\theta_2d\theta_3.
\end{align}

Note that $h_N$ is $C^{\infty}$-smooth and equals zero outside $D_0$, it is in the Schwartz space on $\mathbb{R}^3$. The Poisson summation formula (\ref{formula-Poisson}) holds for $h_N$. 

By using change of variables $\theta_1=\frac{a+\frac{1}{2}}{N+\frac{1}{2}}, \theta_2=\frac{k+\frac{1}{2}}{N+\frac{1}{2}}, \theta_3=\frac{l+\frac{1}{2}}{N+\frac{1}{2}}$, 
we compute the Fourier coefficient $\hat{h}_{N}(m_1,m_2,m_3)$ as follows
\begin{align}
    &\int_{\mathbb{R}^3}h_N(a,k,l)e^{-2\pi \sqrt{-1}m_1 a-2\pi \sqrt{-1}m_2 k-2\pi\sqrt{-1}m_3 l}dadkdl\\\nonumber
    &=(-1)^{m_1+m_2+m_3}\left(N+\frac{1}{2}\right)^3\\\nonumber
    &\cdot\int_{\mathbb{R}^3}h_N\left((N+\frac{1}{2})\theta_1-\frac{1}{2},(N+\frac{1}{2})\theta_2-\frac{1}{2},(N+\frac{1}{2})\theta_3-\frac{1}{2}\right)e^{-2\pi \sqrt{-1}\sum_{i=1}^3\frac{(2N+1)m_i(\theta_1+\theta_2+\theta_3)}{2}}d\theta_1d\theta_2d\theta_3\\\nonumber
    &=(-1)^{m_1+m_2+m_3}\left(N+\frac{1}{2}\right)^3\kappa_r'\\\nonumber
    &\cdot\int_{D_0}\sin(\pi \theta_1)\sin(2\pi \theta_3)e^{(N+\frac{1}{2})\left(V_N\left(\theta_1,\theta_2,\theta_3\right)-2\pi\sqrt{-1}m_1\theta_1-2\pi\sqrt{-1}m_2\theta_2-2\pi\sqrt{-1}m_3 \theta_3\right)}d\theta_1d\theta_2d\theta_3,
\end{align}
Therefore, applying the Poisson summation formula (\ref{formula-Poisson}) to (\ref{formula-JN}), we obtain
\begin{proposition} \label{prop-fouriercoeff}
For $(p,q)\in S$ , the Reshetinkhin-Turaev invariant $RT_r(M)$ is given by 
\begin{align} \label{formula-fouriercoeff}
     RT_r(M)=\sum_{(m_1,m_2,m_3)\in \mathbb{Z}^3}\hat{h}_N(m_1,m_2,m_3)+O\left(e^{(N+\frac{1}{2})\left(\zeta_{\mathbb{R}}(p,q)-\epsilon\right)}\right),
\end{align}
where
  \begin{align}
      \hat{h}_N(m_1,m_2,m_3)=&(-1)^{m_1+m_2+m_3}\left(N+\frac{1}{2}\right)^{\frac{5}{2}}\kappa_r' \\\nonumber
      &\cdot\int_{D_0}\sin(\pi \theta_1)\sin(2\pi \theta_3)e^{(N+\frac{1}{2})\hat{V}_N\left(\theta_1,\theta_2,\theta_3;m_1,m_2,m_3\right)}dsdtdu
\end{align}
with
\begin{align}
&\hat{V}_N\left(\theta_1,\theta_2,\theta_3;m_1,m_2,m_3\right)\\\nonumber
&=\hat{V}_N\left(\theta_1,\theta_2,\theta_3\right)-2\pi\sqrt{-1}m_1\theta_1-2\pi\sqrt{-1}m_2\theta_2-2\pi\sqrt{-1}m_3 \theta_3,
\end{align}
and $\hat{V}_{N}(\theta_1,\theta_2,\theta_3)$ is given by formula (\ref{formula-VN}). 

\end{proposition}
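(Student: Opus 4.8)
The plan is to derive the formula by combining the triple-sum expression for $RT_r(M_{p,q})$ from Proposition \ref{Prop-RTN} with the localization estimate of Proposition \ref{prop-gkl}, and then converting the resulting lattice sum over the compact region $D_0$ into a sum of Fourier coefficients via the three-dimensional Poisson summation formula.

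First I would record the reduction step. Under the affine rescaling $\theta_1=\frac{2a+1}{2N+1}$, $\theta_2=\frac{2k+1}{2N+1}$, $\theta_3=\frac{2l+1}{2N+1}$, the summation ranges $0\le a\le N-1$, $0\le k\le N-a-1$, $0\le l\le k$ of Proposition \ref{Prop-RTN} correspond to lattice points in the region $D$ (indeed in $D'$). Proposition \ref{prop-gkl} bounds the summand by $O\!\left(e^{(N+\frac12)(\zeta_{\mathbb{R}}(p,q)-\epsilon)}\right)$ whenever the rescaled point lies in $D\setminus D_0$; since there are only $O(N^3)$ lattice points in total, the combined contribution of all such terms is still $O\!\left(e^{(N+\frac12)(\zeta_{\mathbb{R}}(p,q)-\epsilon')}\right)$ after absorbing the polynomial prefactor into the exponential with a slightly smaller $\epsilon'>0$. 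Introducing the bump function $\psi$, supported in $D_0$ and equal to $1$ on the sub-region $D_\varepsilon$, changes the lattice sum only by terms whose rescaled points lie in $D_0\setminus D_\varepsilon$, which are of the same negligible order; this is exactly equation (\ref{formula-JN}), namely $RT_r(M)=\sum_{(a,k,l)\in\mathbb{Z}^3} h_N(a,k,l)+O\!\left(e^{(N+\frac12)(\zeta_{\mathbb{R}}(p,q)-\epsilon)}\right)$ with $h_N(a,k,l)=\psi(\cdots)g_N(a,k,l)$ extended by zero to all of $\mathbb{Z}^3$.

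Next, since $h_N$ is the product of the $C^\infty$ bump function $\psi$ with the smooth (indeed, in the relevant region, holomorphic) function $g_N$ and is compactly supported in $D_0$, it lies in the Schwartz space of $\mathbb{R}^3$, so the Poisson summation formula (\ref{formula-Poisson}) applies and gives $\sum_{(a,k,l)\in\mathbb{Z}^3}h_N(a,k,l)=\sum_{(m_1,m_2,m_3)\in\mathbb{Z}^3}\hat h_N(m_1,m_2,m_3)$. It then remains to evaluate $\hat h_N(m_1,m_2,m_3)=\int_{\mathbb{R}^3}h_N(a,k,l)\,e^{-2\pi\sqrt{-1}(m_1a+m_2k+m_3l)}\,da\,dk\,dl$ by the same change of variables $a=(N+\frac12)\theta_1-\frac12$, $k=(N+\frac12)\theta_2-\frac12$, $l=(N+\frac12)\theta_3-\frac12$. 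The Jacobian produces a factor $(N+\frac12)^3$; each factor $e^{-2\pi\sqrt{-1}m_j a}$ becomes $(-1)^{m_j}e^{-2\pi\sqrt{-1}(N+\frac12)m_j\theta_j}$, so the signs collect to $(-1)^{m_1+m_2+m_3}$ and the exponents $-2\pi\sqrt{-1}(N+\frac12)m_j\theta_j$ combine with $(N+\frac12)\hat V_N$ into $(N+\frac12)\hat V_N(\theta_1,\theta_2,\theta_3;m_1,m_2,m_3)$; and inserting the explicit form of $g_N$ from Proposition \ref{Prop-RTN}, which carries the prefactor $\kappa_r'(N+\frac12)^{-\frac12}$ and the trigonometric factors $\sin(\pi\theta_1)\sin(2\pi\theta_3)$, yields precisely the stated integral over $D_0$ with total prefactor $(-1)^{m_1+m_2+m_3}(N+\frac12)^{5/2}\kappa_r'$.

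The argument is essentially bookkeeping, so there is no single deep obstruction; the points that must be handled with care are: (i) confirming that the polynomial count of lattice points in $D\setminus D_0$ (and in $D_0\setminus D_\varepsilon$) is genuinely swallowed by the exponentially small error, which relies on the inequality $\zeta_{\mathbb{R}}(p,q)\ge\frac{3.56337}{2\pi}$ from Corollary \ref{corollary-zetaR} together with Lemma \ref{lemma-regionD'0} --- this is exactly where the hypothesis $(p,q)\in S$ is used; and (ii) tracking the constants and signs through the rescaling so that the prefactor $\kappa_r'(N+\frac12)^{-1/2}$ of Proposition \ref{Prop-RTN} combines with the Jacobian $(N+\frac12)^3$ to give $(N+\frac12)^{5/2}\kappa_r'$, in the normalization that the subsequent saddle-point analysis will require. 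Neither step is hard, but the second must be done carefully to keep the later asymptotic expansion consistent.
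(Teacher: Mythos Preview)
Your proposal is correct and follows essentially the same route as the paper: localize the triple sum of Proposition \ref{Prop-RTN} to $D_0$ via Proposition \ref{prop-gkl}, multiply by a bump function to obtain a Schwartz function, apply the three-dimensional Poisson summation formula, and unwind the Fourier transform through the affine change of variables to produce the stated integral formula with prefactor $(-1)^{m_1+m_2+m_3}(N+\tfrac12)^{5/2}\kappa_r'$. The two points you flag as requiring care---absorbing the polynomial lattice count into the exponential error and tracking the normalization through the Jacobian---are exactly the places where the paper's computation is concentrated.
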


\begin{lemma}
The following identity holds
    \begin{align} \label{formula-potientalsym}
          &\hat{V}_{N}(\theta_1,\theta_2,1-\theta_3;m_1,m_2,m_3)
          &=\hat{V}_{N}(\theta_1,\theta_2,\theta_3;m_1,m_2,-m_3-2)-2\pi \sqrt{-1}(m_3+1). 
    \end{align}
\end{lemma}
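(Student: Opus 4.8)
The plan is to prove the identity (\ref{formula-potientalsym}) by a direct computation, exploiting the symmetry of the potential function $\hat{V}_N$ under $\theta_3 \mapsto 1-\theta_3$. The key observation is that the $\theta_3$-dependence of $\hat{V}_N(\theta_1,\theta_2,\theta_3)$ enters only through the two terms $(2p+1)\pi\sqrt{-1}\,\theta_3^2 - (2p+3)\pi\sqrt{-1}\,\theta_3$ in the quadratic part, and through the pair $\varphi_N(\theta_2+\theta_3 + \tfrac{1/2}{N+1/2} - 1) + \varphi_N(\theta_2-\theta_3 + \tfrac{1/2}{N+1/2})$ in the quantum dilogarithm part. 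So first I would substitute $\theta_3 \mapsto 1-\theta_3$ into each of these pieces and track exactly what is produced.

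For the quantum dilogarithm part, replacing $\theta_3$ by $1-\theta_3$ swaps the two arguments up to an integer shift: $\theta_2 + (1-\theta_3) + \tfrac{1/2}{N+1/2} - 1 = \theta_2 - \theta_3 + \tfrac{1/2}{N+1/2}$, and $\theta_2 - (1-\theta_3) + \tfrac{1/2}{N+1/2} = \theta_2 + \theta_3 + \tfrac{1/2}{N+1/2} - 1$. Hence the sum $\varphi_N(\theta_2+\theta_3+\tfrac{1/2}{N+1/2}-1) + \varphi_N(\theta_2-\theta_3+\tfrac{1/2}{N+1/2})$ is literally invariant under $\theta_3 \mapsto 1-\theta_3$; no functional equation from Lemma \ref{lemma-varphixi2} is even needed here. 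For the polynomial part, I would expand $(2p+1)(1-\theta_3)^2 - (2p+3)(1-\theta_3) = (2p+1)\theta_3^2 - (2p+1)2\theta_3 + (2p+1) - (2p+3) + (2p+3)\theta_3 = (2p+1)\theta_3^2 + (- (4p+2) + (2p+3))\theta_3 - 2 = (2p+1)\theta_3^2 + (1-2p)\theta_3 - 2$. Comparing with the original $(2p+1)\theta_3^2 - (2p+3)\theta_3$, the difference is $((1-2p) - (-(2p+3)))\theta_3 - 2 = 4\theta_3 - 2$. Meanwhile, the linear term $-2\pi\sqrt{-1}\,m_3\theta_3$ in $\hat{V}_N(\cdots;m_1,m_2,m_3)$ becomes, after the substitution, $-2\pi\sqrt{-1}\,m_3(1-\theta_3) = 2\pi\sqrt{-1}\,m_3\theta_3 - 2\pi\sqrt{-1}\,m_3$.

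Next I would assemble all the $\theta_3$-linear contributions and the constants. On the left-hand side $\hat{V}_N(\theta_1,\theta_2,1-\theta_3;m_1,m_2,m_3)$ we collect: from the polynomial part, a net $\pi\sqrt{-1}(4\theta_3 - 2)$ relative to the $\theta_3$-appearances of $\hat{V}_N(\theta_1,\theta_2,\theta_3)$; from the Fourier term, $+2\pi\sqrt{-1}\,m_3\theta_3 - 2\pi\sqrt{-1}\,m_3$. So the coefficient of $\pi\sqrt{-1}\theta_3$ becomes $-(2p+3) + 4 + 2m_3 = -(2p+1) + 2m_3$ coming from the polynomial and Fourier linear terms together — wait, I must be careful to separate the $(2p+3)\theta_3$ from the $-2\pi\sqrt{-1}m_3\theta_3$; the cleanest route is to write out $\hat{V}_N(\theta_1,\theta_2,\theta_3; m_1,m_2,-m_3-2)$ explicitly, note its $\theta_3$-linear coefficient is $-\pi\sqrt{-1}(2p+3) - 2\pi\sqrt{-1}(-m_3-2) = \pi\sqrt{-1}(-(2p+3) + 2m_3 + 4) = \pi\sqrt{-1}(2m_3 + 1 - 2p)$, and check that this matches the $\theta_3$-linear coefficient obtained on the left-hand side after substitution, while the leftover constants combine to exactly $-2\pi\sqrt{-1}(m_3+1)$. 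This bookkeeping — keeping the $\varphi_N$-arguments, the $\tfrac{1/2}{N+1/2}$ shifts, and the various integer constants all consistent — is the only real obstacle, and it is purely mechanical; once the substitution $\theta_3 \mapsto 1-\theta_3$ is seen to fix the dilogarithm pair and the quadratic $\theta_3^2$-term, the remaining identity is a one-line comparison of affine-linear functions of $\theta_3$ with integer coefficients. I would therefore present it as: substitute, use the trivial argument-swap invariance of the $\varphi_N$-pair, expand the quadratic, and equate coefficients, observing that the residual additive constant is precisely $-2\pi\sqrt{-1}(m_3+1)$.
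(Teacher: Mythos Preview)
Your proposal is correct and follows essentially the same route as the paper: both arguments observe that the $\varphi_N$-pair is invariant under $\theta_3\mapsto 1-\theta_3$ (by a direct argument swap), so that the identity reduces to the elementary algebraic verification
\[
\pi\sqrt{-1}\bigl((2p+1)(1-\theta_3)^2-(2p+2m_3+3)(1-\theta_3)\bigr)
=\pi\sqrt{-1}\bigl((2p+1)\theta_3^2-(2p+2(-m_3-2)+3)\theta_3\bigr)-2\pi\sqrt{-1}(m_3+1).
\]
The paper simply states this polynomial identity in one line, while you decompose it into matching the $\theta_3$-linear coefficients and the constant term separately; the content is the same.
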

\begin{proof}
   Since we have the following identity
    \begin{align}
        &\pi\sqrt{-1}\left((2p+1)(1-\theta_3)^2-(2p+2m_3+3)(1-\theta_3)\right)\\\nonumber
        &=\pi\sqrt{-1}\left((2p+1)\theta_3^2-(2p+2(-m_3-2)+3)\theta_3)\right)\\\nonumber
        &-2\pi \sqrt{-1}(m_3+1).
    \end{align}
    which immediately gives the formula (\ref{formula-potientalsym}).
\end{proof}

\begin{proposition} \label{prop-hmn}
    For any $m_1,m_2,m_3\in \mathbb{Z}$, we have
    \begin{align}
        \hat{h}_{N}(m_1,m_2,-m_3-2)=(-1)^{m_3}\hat{h}_{N}(m_1,m_2,m_3).
    \end{align}
\end{proposition}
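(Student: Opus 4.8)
The plan is to deduce Proposition \ref{prop-hmn} directly from the symmetry identity \eqref{formula-potientalsym} by a change of variables in the Fourier integral defining $\hat{h}_N$. First I would write out
\begin{align*}
\hat{h}_{N}(m_1,m_2,-m_3-2)=&(-1)^{m_1+m_2+m_3}\left(N+\tfrac{1}{2}\right)^{\frac{5}{2}}\kappa_r'\\
&\cdot\int_{D_0}\sin(\pi\theta_1)\sin(2\pi\theta_3)e^{(N+\frac{1}{2})\hat{V}_N(\theta_1,\theta_2,\theta_3;m_1,m_2,-m_3-2)}\,d\theta_1 d\theta_2 d\theta_3,
\end{align*}
and then substitute $\theta_3\mapsto 1-\theta_3$ in the inner integral. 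The key inputs are: (i) the region $D_0$ is invariant under $\theta_3\mapsto 1-\theta_3$ — this is exactly the symmetry of $D_0$ with respect to the plane $EFH$ noted in the text after the definition of $D_0$; (ii) the bump function $\psi$, hence $h_N$, should be chosen symmetric under $\theta_3\mapsto 1-\theta_3$ as well, so that the integrand transforms correctly (this matches the stated symmetry of the tetrahedron $FEBC$); (iii) $\sin(2\pi(1-\theta_3))=-\sin(2\pi\theta_3)$, producing a factor $-1$; and (iv) the Jacobian of $\theta_3\mapsto 1-\theta_3$ contributes another factor $-1$ once orientations are accounted for, so these two signs cancel.

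After the substitution, identity \eqref{formula-potientalsym} gives
\begin{align*}
\hat{V}_N(\theta_1,\theta_2,1-\theta_3;m_1,m_2,-m_3-2)=\hat{V}_N(\theta_1,\theta_2,\theta_3;m_1,m_2,m_3)-2\pi\sqrt{-1}(-m_3-1),
\end{align*}
obtained by applying \eqref{formula-potientalsym} with $m_3$ replaced by $-m_3-2$ (note $-(-m_3-2)-2=m_3$). Since $e^{(N+\frac{1}{2})\cdot(-2\pi\sqrt{-1})(-m_3-1)}=e^{2\pi\sqrt{-1}(N+\frac{1}{2})(m_3+1)}=\big(e^{\pi\sqrt{-1}(2N+1)}\big)^{m_3+1}=(-1)^{m_3+1}$ because $2N+1=r$ is odd, the exponential prefactor contributes $(-1)^{m_3+1}$. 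Collecting everything: the two sign factors from $\sin(2\pi\theta_3)$ and the Jacobian cancel, the exponent contributes $(-1)^{m_3+1}$, and the overall sign $(-1)^{m_1+m_2+m_3}$ in front of $\hat{h}_N(m_1,m_2,-m_3-2)$ must be compared with $(-1)^{m_1+m_2+m_3}$ in front of $\hat{h}_N(m_1,m_2,m_3)$; these agree, so after pulling out the integral one is left with $\hat{h}_N(m_1,m_2,-m_3-2)=(-1)^{m_3+1}\cdot(\text{two cancelling signs})\cdot\hat{h}_N(m_1,m_2,m_3)$. A careful bookkeeping of the three sign sources must land on $(-1)^{m_3}$; I would track each factor explicitly to confirm the claimed parity.

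The main obstacle is precisely this sign bookkeeping: there are at least three independent sources of signs (the $\sin(2\pi\theta_3)$ reflection, the orientation/Jacobian of the reflection on the oriented domain $D_0$, and the $(-1)^{m_3+1}$ from the $2\pi\sqrt{-1}(m_3+1)$ term exponentiated against the half-integer $N+\frac12$ using oddness of $r$), and getting the final parity to be $(-1)^{m_3}$ rather than $(-1)^{m_3+1}$ requires care about how $D_0$ is oriented and whether the reflection is orientation-reversing in the chosen coordinates. A secondary point to verify is that the cutoff $\psi$ can indeed be taken $\theta_3\mapsto 1-\theta_3$-symmetric without affecting any earlier estimate; since $D_0$ and $D_\varepsilon$ are themselves symmetric about the plane $\theta_3=\tfrac12$ (equivalently $EFH$), one may symmetrize $\psi$ from the outset, so this causes no real difficulty. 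Everything else is a routine substitution once \eqref{formula-potientalsym} and the oddness of $r$ are in hand.
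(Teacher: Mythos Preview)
Your approach is exactly the paper's: apply the symmetry identity \eqref{formula-potientalsym} and change variables $\theta_3\mapsto 1-\theta_3$ in the integral defining $\hat{h}_N$, using that $D_0$ is symmetric about $\theta_3=\tfrac12$. The only point to fix is the sign bookkeeping you flagged. The integral over $D_0$ is an ordinary Lebesgue integral over a region, so the change of variables uses $|\det J|=1$; there is \emph{no} extra $(-1)$ from the Jacobian. That leaves exactly two sign sources: the reflection of the sine, $\sin(2\pi(1-\theta_3))=-\sin(2\pi\theta_3)$, contributing $(-1)$, and the constant in the exponent, $e^{(N+\frac12)\cdot 2\pi\sqrt{-1}(m_3+1)}=(-1)^{(2N+1)(m_3+1)}=(-1)^{m_3+1}$ since $r=2N+1$ is odd. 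Their product is $(-1)^{m_3}$, and since $(-1)^{m_1+m_2+(-m_3-2)}=(-1)^{m_1+m_2+m_3}$ the prefactors match, giving the claimed identity. Your remark about choosing $\psi$ symmetric in $\theta_3\mapsto 1-\theta_3$ is correct and harmless; the paper tacitly assumes this.
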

\begin{proof}
    Since 
    \begin{align}
      &\sin(\pi \theta_1)\sin(2\pi \theta_3)e^{(N+\frac{1}{2})\hat{V}_N\left(\theta_1,\theta_2,1-\theta_3;m_1,m_2,m_3\right)} \\\nonumber
      &=\sin(\pi \theta_1)\sin(2\pi \theta_3)e^{(N+\frac{1}{2})\left(\hat{V}_{N}\left(\theta_1,\theta_2,\theta_3;m_1,m_2,-m_3-2\right)-2\pi\sqrt{-1}(m_3+1)\right)}\\\nonumber
      &=\sin(\pi \theta_1)\sin(2\pi \theta_3)e^{(N+\frac{1}{2})\hat{V}_N\left(\theta_1,\theta_2,\theta_3;m_1,m_2,-m_3-2\right)}(-1)^{m_3+1},
    \end{align}
  since the region $D_0$ is symmetric with respect to the plane $\theta_3=\frac{1}{2}$. 

Then we have
\begin{align}
    &\int_{D_0}\sin(\pi s)\sin(2\pi u)e^{(N+\frac{1}{2})\hat{V}_N\left(\theta_1,\theta_2,\theta_3;m_1,m_2,-m_3-2\right)}d\theta_1d\theta_2d\theta_3\\\nonumber
    &=(-1)^{m_3+1}\int_{D_0}\sin (\pi \theta_1)\sin(2\pi \theta_3)e^{(N+\frac{1}{2})\hat{V}_N\left(\theta_1,\theta_2,1-\theta_3;m_1,m_2,m_3\right)}d\theta_1d\theta_2d\theta_3\\\nonumber
    &=(-1)^{m_3}\int_{D_0}\sin(\pi s)\sin(2\pi \tilde{u})\exp^{(N+\frac{1}{2})\hat{V}_N\left(\theta_1,\theta_2,\tilde{\theta}_3;m_1,m_2,m_3\right)}d\theta_1 d\theta_2 d\tilde{\theta}_3,
\end{align}
where in the third ``=", we have let $\tilde{\theta}_3=1-\theta_3$.  It follows that 
\begin{align}
        \hat{h}_{N}(m_1,m_2,-m_3-2)=(-1)^{m_3}\hat{h}_{N}(m_1,m_2,m_3).
    \end{align}
\end{proof}
\begin{corollary} 
    We have  
    \begin{align} \label{formula-bigcancel}
        \hat{h}_{N}(m_1,m_2,-1)=0,
    \end{align}
    and 
    \begin{align}
    \hat{h}_{N}(m_1,m_2,-2)=\hat{h}_{N}(m_1,m_2,0).    
    \end{align}
\end{corollary}

Note that
\begin{align} \label{formula-hatV}
  \hat{V}_{N}(-\theta_1,\theta_2,\theta_3,m_1,m_2,m_3)=\hat{V}_N(\theta_1,\theta_2,\theta_3,-m_1-1,m_2,m_3).  
\end{align}

Since the region $D_0$ is symmetry with respect to the plane $\theta_1=0$.   We have
\begin{align}
&\int_{D_0}\sin(\pi \theta_1)\sin(2\pi\theta_3)e^{(N+\frac{1}{2})\hat{V}_N(\theta_1,\theta_2,\theta_3,m_1,m_2,m_3)}d\theta_1d\theta_2d\theta_3\\\nonumber
&=-\int_{D_0}\sin(\pi\tilde{\theta}_1)\sin(2\pi\theta_3)e^{(N+\frac{1}{2})\hat{V}_N(-\tilde{\theta}_1,\theta_2,\theta_3,m_1,m_2,m_3)}d\tilde{\theta}_1d\theta_2d\theta_3\\\nonumber
&=-\int_{D_0}\sin(\pi\theta_1)\sin(2\pi\theta_3)e^{(N+\frac{1}{2})\hat{V}_N(\theta_1,\theta_2,\theta_3,-m_1-1,m_2,m_3)}d\theta_1d\theta_2d\theta_3.
\end{align}

Hence, we have 
\begin{align}
\hat{h}_N(m_1,m_2,m_3)= \hat{h}_N(-m_1-1,m_2,m_3).   
\end{align}

So we obtain 
\begin{align} \label{formula-RTN}
   RT_r(M)&=2\sum_{m_1\geq 0}\sum_{m_2\in \mathbb{Z}}\sum_{m_3\in \mathbb{Z}}\hat{h}_{N}(m_1,m_2,m_3)+O(e^{(N+\frac{1}{2})(\zeta_\mathbb{R}(p,q))-\epsilon})\\\nonumber
   &=4\sum_{m_1\geq 0}\sum_{m_2\in \mathbb{Z}}\sum_{m_3\geq 0}\hat{h}_{N}(m_1,m_2,m_3)+O(e^{(N+\frac{1}{2})(\zeta_\mathbb{R}(p,q))-\epsilon}),
\end{align}
where in the second ``=" we have used the Proposition \ref{prop-hmn}. Note that 
\begin{align} \label{formula-heN}
 \hat{h}_N(m_1,m_2,m_3)&=(-1)^{m_1+m_2+m_3}\kappa''_r\\\nonumber
 &\cdot\int_{D_0}\sin(\pi \theta_1)\sin(2\pi\theta_3)e^{(N+\frac{1}{2})\hat{V}_N(\theta_1,\theta_2,\theta_3,m_1,m_2,m_3)}d\theta_1d\theta_2d\theta_3,   
\end{align}
where 
\begin{align}
  \kappa''_r= \left(N+\frac{1}{2}\right)^\frac{5}{2}\kappa'_r=\left(N+\frac{1}{2}\right)^\frac{3}{2}e^{\sigma_{p,q}\left(\frac{3}{2N+1}+\frac{N+1}{2}\right)\pi\sqrt{-1}}. 
\end{align}

\section{Asymptotic expansions}  \label{Section-asympticexpansion}
The goal of this section is to estimate each Fourier coefficients $\hat{h}_N(m_1,m_2,m_3)$ appearing in (\ref{formula-RTN}).

\subsection{Preparations}
Recall the functions
\begin{align} \label{formula-PotentialV}
   &\hat{V}_N(\theta_1,\theta_2,\theta_3)\\\nonumber
   &=\pi\sqrt{-1}\left(\left(\frac{q}{2}-1\right)\theta_1^2-\theta_1+(2p+1)\theta_3^2-(2p+3)\theta_3-2\theta_2+\frac{3}{2}q-\frac{1}{12}\right.\\\nonumber
   &\left.+\frac{2\theta_2}{N+\frac{1}{2}}+\frac{p-2q+\frac{7}{4}}{N+\frac{1}{2}}-\frac{3(p+q)+2}{6(N+\frac{1}{2})^2}\right)\\\nonumber
    &+\frac{1}{N+\frac{1}{2}}\left(\varphi_N(\theta_2+\theta_3+\frac{\frac{1}{2}}{N+\frac{1}{2}}-1)+\varphi_{N}(\theta_2-\theta_3+\frac{\frac{1}{2}}{N+\frac{1}{2}})\right.\\\nonumber
    &\left.-\varphi_N(\theta_2+\theta_1)-\varphi_N(\theta_2)-\varphi_N(\theta_2-\theta_1)\right)
\end{align}
and
\begin{align}
\hat{V}(\theta_1,\theta_2,\theta_3)&=\pi\sqrt{-1}\left(\frac{3q}{2}+(\frac{q}{2}-1)\theta_1^2-\theta_1+(2p+1)\theta_3^2-(2p+3)\theta_3-2\theta_2\right)\\\nonumber
&+\frac{1}{2\pi\sqrt{-1}}\left(\frac{\pi^2}{6}+\text{Li}_2(e^{2\pi\sqrt{-1}(\theta_2+\theta_3)})+\text{Li}_2(e^{2\pi\sqrt{-1}(\theta_2-\theta_3)}))\right.\\\nonumber
&\left.-\text{Li}_2(e^{2\pi\sqrt{-1}(\theta_2+\theta_1)}-\text{Li}_2(e^{2\pi\sqrt{-1}\theta_2})-\text{Li}_2(e^{2\pi\sqrt{-1}(\theta_2-\theta_1)})\right).
\end{align}

\begin{proposition} \label{prop-VNexpansion}
For any $L>0$, in the region
\begin{align}
\{(\theta_1,\theta_2,\theta_3)\in \mathbb{C}^3|(\text{Re}(\theta_1),\text{Re}(\theta_2),\text{Re}(\theta_3))\in D_{0}, |\text{Im}(\theta_1)|<L, |\text{Im} \theta_2|<L, |\text{Im} \theta_3|<L\},    
\end{align}
we have 
\begin{align} \label{formula-VNexpansion}
    &\hat{V}_{N}(\theta_1,\theta_2,\theta_3)=\hat{V}(\theta_1,\theta_2,\theta_3)+\frac{1}{N+\frac{1}{2}}\left(\pi\sqrt{-1}\left(2\theta_2+(p-2q+\frac{7}{4})\right)\right.\\\nonumber
    &\left.-\frac{1}{2}\log(1-e^{2\pi\sqrt{-1}(\theta_2+\theta_3)})-\frac{1}{2}\log(1-e^{2\pi\sqrt{-1}(\theta_2-\theta_3)})\right)+\frac{1}{(N+\frac{1}{2})^2}w_{N}(\theta_1,\theta_2,\theta_3),
\end{align}
with $|w_{N}(\theta_1,\theta_2,\theta_3)|$  bounded from above by a constant independent of $N$. 
\end{proposition}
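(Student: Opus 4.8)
The plan is to start from the closed formula (\ref{formula-PotentialV}) for $\hat{V}_N$ and to replace each of its five $\varphi_N$-terms by its dilogarithm expansion from Lemma \ref{lemma-varphixi3}, keeping track of the expansion in powers of $\frac{1}{N+\frac12}$ up to and including the $\frac{1}{N+\frac12}$-order and bundling everything of order $\frac{1}{(N+\frac12)^2}$ and smaller into the remainder $w_N$. A preliminary point is to check that, for $(\text{Re}(\theta_1),\text{Re}(\theta_2),\text{Re}(\theta_3))\in D_0$ and $|\text{Im}(\theta_i)|<L$, each of the five arguments $\theta_2+\theta_3-1$, $\theta_2-\theta_3$, $\theta_2+\theta_1$, $\theta_2$, $\theta_2-\theta_1$ of $\varphi_N$ has real part in a fixed compact subinterval of $(0,1)$; this is immediate from the inequalities $1.02\le\theta_2+\theta_3\le1.7$, $0.02\le\theta_2-\theta_3\le0.7$, $0.5\le\theta_2\le0.909$ defining $D_0$ together with $0\le\theta_3\le\theta_2\le1-\theta_1$ on $D'$. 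Hence the region in the statement is a compact subset of $\{0<\text{Re}(\cdot)<1\}^3$ on which the estimates of Lemma \ref{lemma-varphixi3}, in particular part (1), hold uniformly.

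For the three unshifted arguments $\alpha\in\{\theta_2+\theta_1,\theta_2,\theta_2-\theta_1\}$, Lemma \ref{lemma-varphixi3}(1) gives
\[
\frac{1}{N+\frac12}\,\varphi_N(\alpha)=\frac{1}{2\pi\sqrt{-1}}\text{Li}_2(e^{2\pi\sqrt{-1}\alpha})+O\!\left(\frac{1}{(N+\frac12)^2}\right),
\]
uniformly, the error being $-\frac{\pi\sqrt{-1}e^{2\pi\sqrt{-1}\alpha}}{6(1-e^{2\pi\sqrt{-1}\alpha})(N+\frac12)(2N+1)}+O\big((N+\tfrac12)^{-4}\big)$, which is visibly $\frac{1}{(N+\frac12)^2}$ times a bounded function on the region. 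For the two shifted arguments $\alpha+\delta_N$ with $\delta_N=\tfrac{1/2}{N+1/2}$, I would first Taylor-expand the dilogarithm in $\delta_N$, using $\frac{d}{d\theta}\text{Li}_2(e^{2\pi\sqrt{-1}\theta})=-2\pi\sqrt{-1}\log(1-e^{2\pi\sqrt{-1}\theta})$,
\[
\text{Li}_2(e^{2\pi\sqrt{-1}(\alpha+\delta_N)})=\text{Li}_2(e^{2\pi\sqrt{-1}\alpha})-2\pi\sqrt{-1}\,\delta_N\log(1-e^{2\pi\sqrt{-1}\alpha})+O(\delta_N^2),
\]
uniformly, and then combine with Lemma \ref{lemma-varphixi3}(1); since $(N+\frac12)\delta_N=\frac12$ exactly, this yields
\[
\frac{1}{N+\frac12}\,\varphi_N(\alpha+\delta_N)=\frac{1}{2\pi\sqrt{-1}}\text{Li}_2(e^{2\pi\sqrt{-1}\alpha})-\frac{1}{2(N+\frac12)}\log(1-e^{2\pi\sqrt{-1}\alpha})+O\!\left(\frac{1}{(N+\frac12)^2}\right).
\]
(Equivalently, one may write $\varphi_N(\alpha+\delta_N)-\varphi_N(\alpha)=\int_0^{\delta_N}\varphi_N'(\alpha+s)\,ds$ and use Lemma \ref{lemma-varphixi3}(2).) Applying this with $\alpha=\theta_2+\theta_3-1$ and $\alpha=\theta_2-\theta_3$, and using $e^{2\pi\sqrt{-1}(\theta_2+\theta_3-1)}=e^{2\pi\sqrt{-1}(\theta_2+\theta_3)}$, the $\frac{1}{N+\frac12}$-corrections produced by the two shifted $\varphi_N$-terms (which appear in $\hat{V}_N$ with $+$ sign) are precisely $-\frac12\log(1-e^{2\pi\sqrt{-1}(\theta_2+\theta_3)})$ and $-\frac12\log(1-e^{2\pi\sqrt{-1}(\theta_2-\theta_3)})$.

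It remains to assemble the pieces. The polynomial part of (\ref{formula-PotentialV}) splits as the polynomial part of $\hat{V}$ — here the constant $-\frac{\pi\sqrt{-1}}{12}$ in $\hat{V}_N$ matches $\frac{1}{2\pi\sqrt{-1}}\cdot\frac{\pi^2}{6}=-\frac{\pi\sqrt{-1}}{12}$ from $\hat{V}$ — plus $\frac{1}{N+\frac12}\,\pi\sqrt{-1}\big(2\theta_2+(p-2q+\tfrac74)\big)$, plus the explicit $-\frac{3(p+q)+2}{6(N+\frac12)^2}\pi\sqrt{-1}$ term; and the five $\varphi_N$-terms produce the dilogarithm part of $\hat{V}$ at leading order together with the two $-\frac12\log(1-\cdot)$ terms at order $\frac{1}{N+\frac12}$. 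Collecting the leading order gives $\hat{V}(\theta_1,\theta_2,\theta_3)$, collecting the $\frac{1}{N+\frac12}$-order gives exactly the bracket in (\ref{formula-VNexpansion}), and all leftover contributions — the subleading terms of the five $\varphi_N$, the $O(\delta_N^2)$ Taylor remainders multiplied by $N+\frac12$, the explicit $(N+\frac12)^{-2}$ term, and the uniform $O\big((N+\frac12)^{-3}\big)$ errors of Lemma \ref{lemma-varphixi3}(1) multiplied by $\frac{1}{N+\frac12}$ — are each $\frac{1}{(N+\frac12)^2}$ times a function bounded on the region by a constant independent of $N$; their sum is $\frac{1}{(N+\frac12)^2}w_N(\theta_1,\theta_2,\theta_3)$ as claimed.

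The main obstacle is the uniformity of the error estimates over the complex strip. Lemma \ref{lemma-varphixi3} supplies uniform control on compact subsets of $\{0<\text{Re}(\cdot)<1\}$, so one must verify that the region in the statement is such a compact set (done above using the defining inequalities of $D_0$ and $D'$) and — more delicately — that the higher-order $O$-terms in part (1), not merely the leading-order convergence of part (3), are uniform there. I would justify the latter by returning to the integral representation $\varphi_N(\theta)=\int_\gamma\frac{e^{(2\theta-1)x}\,dx}{4x\sinh x\,\sinh\frac{x}{N+1/2}}$ and estimating the difference between $\varphi_N(\theta)$ and its dilogarithm expansion uniformly for $\text{Re}(\theta)$ in a compact subinterval of $(0,1)$ and $|\text{Im}(\theta)|<L$. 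Once uniformity is in hand, the rest of the argument is bookkeeping in powers of $\frac{1}{N+\frac12}$.
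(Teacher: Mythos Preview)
Your proposal is correct and follows essentially the same approach as the paper: both expand the five $\varphi_N$-terms via Lemma~\ref{lemma-varphixi3} and handle the two shifted arguments by a first-order Taylor expansion in the shift $\delta_N=\tfrac{1/2}{N+1/2}$, then collect terms in powers of $(N+\tfrac12)^{-1}$. The only cosmetic difference is the order of operations---the paper Taylor-expands $\varphi_N$ first and then converts each piece to dilogarithm form, whereas you convert first and then Taylor-expand $\text{Li}_2$---and your explicit verification that the five arguments stay in a compact subset of $\{0<\text{Re}(\cdot)<1\}$, together with your remark on the uniformity of the $O$-terms in Lemma~\ref{lemma-varphixi3}(1), makes precise a point the paper leaves implicit.
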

\begin{proof}
By using Taylor expansion, together with Lemma \ref{lemma-varphixi2}, we have

\begin{align}
    &\varphi_N\left(\theta_2+\theta_3-1+\frac{1}{N+\frac{1}{2}}\right)\\\nonumber
    &=\varphi_N(\theta_2+\theta_3-1)+\varphi'_N(\theta_2+\theta_3-1)\frac{1}{N+\frac{1}{2}}\\\nonumber&+\frac{\varphi''_{N}(\theta_2+\theta_3-1)}{2}\frac{1}{(N+\frac{1}{2})^2}+O\left(\frac{1}{(N+\frac{1}{2})^2}\right)\\\nonumber
    &=\frac{N+\frac{1}{2}}{2\pi\sqrt{-1}}\text{Li}_2(e^{2\pi\sqrt{-1}(\theta_2+\theta_3)})-\frac{\pi\sqrt{-1}}{6(2N+1)}\frac{e^{2\pi\sqrt{-1}(\theta_2+\theta_3)}}{1-e^{2\pi\sqrt{-1}(\theta_2+\theta_3)}}\\\nonumber
    &-\frac{1}{2}\log(1-e^{2\pi\sqrt{-1}(\theta_2+\theta_3)})+\frac{\pi\sqrt{-1}}{2(2N+1)}\frac{e^{2\pi\sqrt{-1}(\theta_2+\theta_3)}}{1-e^{2\pi\sqrt{-1}(\theta_2+\theta_3)}}+O\left(\frac{1}{(N+\frac{1}{2})^2}\right). 
    \end{align}
  Expanding  $\varphi_N\left(\theta_2-\theta_3+\frac{1}{N+\frac{1}{2}}\right)$, $\varphi_N(\theta_2+\theta_1)$, $\varphi_N(\theta_2)$ and $\varphi_N(\theta_2-\theta_1)$ similarly,
we obtain
\begin{align}
    &\frac{1}{N+\frac{1}{2}}\left(\varphi_N(\theta_2+\theta_3+\frac{\frac{1}{2}}{N+\frac{1}{2}}-1)+\varphi_{N}(\theta_2-\theta_3+\frac{\frac{1}{2}}{N+\frac{1}{2}})\right.\\\nonumber
    &\left.-\varphi_N(\theta_2+\theta_1)-\varphi_N(\theta_2)-\varphi_N(\theta_2-\theta_1)\right)\\\nonumber
    &=\frac{1}{2\pi\sqrt{-1}}\left(\text{Li}_2(e^{2\pi\sqrt{-1}(\theta_2+\theta_3)})+\text{Li}_2(e^{2\pi\sqrt{-1}(\theta_2-\theta_3)})-\text{Li}_2(e^{2\pi\sqrt{-1}(\theta_2+\theta_1)})\right.\\\nonumber
&\left.-\text{Li}_2(e^{2\pi\sqrt{-1}\theta_2})-\text{Li}_2(e^{2\pi\sqrt{-1}(\theta_2-\theta_1)})\right)-\frac{1}{2}\log(1-e^{2\pi\sqrt{-1}(\theta_2+\theta_3)})\\\nonumber
&-\frac{1}{2}\log(1-e^{2\pi\sqrt{-1}(\theta_2-\theta_3)})+w'_{N}(\theta_1,\theta_2,\theta_3),
\end{align}
where
\begin{align}
    &w'_{N}(\theta_1,\theta_2,\theta_3)\\\nonumber
    &=\frac{\pi\sqrt{-1}}{12}\left(2\frac{e^{2\pi\sqrt{-1}(\theta_2+\theta_3)}}{1-e^{2\pi\sqrt{-1}(\theta_2+\theta_3)}}+2\frac{e^{2\pi\sqrt{-1}(\theta_2-\theta_3)}}{1-e^{2\pi\sqrt{-1}(\theta_2-\theta_3)}}+\frac{e^{2\pi\sqrt{-1}(\theta_2+\theta_1)}}{1-e^{2\pi\sqrt{-1}(\theta_2+\theta_1)}}\right.\\\nonumber
    &\left.+\frac{e^{2\pi\sqrt{-1}(\theta_2+\theta_1)}}{1-e^{2\pi\sqrt{-1}(\theta_2+\theta_1)}}+\frac{e^{2\pi\sqrt{-1}\theta_2}}{1-e^{2\pi\sqrt{-1}\theta_2}}+O\left(\frac{1}{N+\frac{1}{2}}\right)\right).
\end{align}
Let 
\begin{align}
   w_{N}(\theta_1,\theta_2,\theta_3)=  w'_{N}(\theta_1,\theta_2,\theta_3)-\frac{3(p+q)+2}{6},
\end{align}
then we obtain 
the formula (\ref{formula-VNexpansion}).
\end{proof}

We consider the critical point of $\hat{V}(\theta_1,\theta_2,\theta_3)$ which is given by the solution of the following equations

\begin{align} \label{equation-critical1}
\hat{V}_{\theta_1}=\pi\sqrt{-1}((q-2)\theta_1+1)+\log(1-e^{2\pi\sqrt{-1}(\theta_2+\theta_1)})-\log(1-e^{2\pi\sqrt{-1}(\theta_2-\theta_1)})=0,   
\end{align}
\begin{align} \label{equation-critical2}
\hat{V}_{\theta_2}&=-2\pi\sqrt{-1}-\log(1-e^{2\pi\sqrt{-1}(\theta_2+\theta_3)})-\log(1-e^{2\pi\sqrt{-1}(\theta_2-\theta_3)})\\\nonumber
&+\log(1-e^{2\pi\sqrt{-1}(\theta_2+\theta_1)})+\log(1-e^{2\pi\sqrt{-1}\theta_2})+\log(1-e^{2\pi\sqrt{-1}(\theta_2-\theta_1)})=0,    
\end{align}
\begin{align} \label{equation-critical3}
\hat{V}_{\theta_3}&=\pi\sqrt{-1}((4p+2)\theta_3-(2p+3))-\log(1-e^{2\pi\sqrt{-1}(\theta_2+\theta_3)})\\\nonumber
&+\log(1-e^{2\pi\sqrt{-1}(\theta_2-\theta_3)})=0.    
\end{align}

\begin{proposition}  \label{prop-critical}
 The critical point equations (\ref{equation-critical1}), (\ref{equation-critical2}) and (\ref{equation-critical3})  has a unique solution $(\theta_1^0,\theta_2^0,\theta_3^0)$ in the region $D_{0\mathbb{C}}=\{(\theta_1,\theta_2,\theta_3)\in \mathbb{C}^3| (\text{Re}(\theta_1),\text{Re}(\theta_2),\text{Re}(\theta_3))\in D_0\}$.
\end{proposition}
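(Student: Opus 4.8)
The strategy is the standard one for establishing existence and uniqueness of the critical point of a quantum-invariant potential function: first produce a solution by relating the critical point equations to the gluing/completeness equations of a concrete ideal triangulation of $M_{p,q}$ (or, more directly in this setting, of the twist knot complement $S^3\setminus\mathcal{K}_p$ together with the surgery equation), whose existence is guaranteed by Thurston's theory of the complete hyperbolic structure, and then upgrade this to uniqueness inside $D_{0\mathbb{C}}$ by a combination of convexity of the real part $v$ on the relevant slice together with direct analysis of the equations. Concretely, I would first exponentiate (\ref{equation-critical1})--(\ref{equation-critical3}): writing $z_i = e^{2\pi\sqrt{-1}\theta_i}$, the three equations become polynomial-type equations in $z_1, z_2, z_3$ of the shape
\begin{align*}
z_1^{q-2}\cdot\frac{1-z_2 z_1}{1-z_2 z_1^{-1}} &= -1,\\
\frac{(1-z_2 z_1)(1-z_2)(1-z_2 z_1^{-1})}{(1-z_2 z_3)(1-z_2 z_3^{-1})} &= 1,\\
z_3^{4p+2}\cdot\frac{1-z_2 z_3^{-1}}{1-z_2 z_3} &= 1
\end{align*}
(up to the precise sign and exponent bookkeeping coming from the $\pi\sqrt{-1}$ linear terms), and I would identify this system with the hyperbolicity equations appearing in \cite{CZ23-1,CZ23-2} for the colored Jones potential of $\mathcal{K}_p$ after the surgery term is incorporated; the correspondence should be essentially the one already used in the companion papers, with the new variable $\theta_1$ playing the role of the surgery/meridian parameter.

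Second, for existence I would invoke the fact that $M_{p,q}$ is hyperbolic for $(p,q)\in S$ (which follows from the same geometric input used in Theorem \ref{theorem-critical=volume}, quoted later in the paper), so that the geometric solution of the gluing+Dehn-surgery equations exists and is unique up to the finite symmetry of the triangulation; pulling this solution back through the above substitution and checking that it lies in the prescribed box $D_{0\mathbb{C}}$ (using the explicit numerical constraints defining $D_0$ and the estimates of Lemma \ref{lemma-regionD'0} on where the real part of $\hat V$ is large) gives a critical point in $D_{0\mathbb{C}}$. Third, for uniqueness I would argue that any critical point in $D_{0\mathbb{C}}$ must in fact have $(\theta_1,\theta_2,\theta_3)$ with real part in $D_0$, restrict attention to that compact region, and use the strict concavity of the function $v(\theta_1,\theta_2,\theta_3)=\operatorname{Re}\hat V$ (the sum of Lobachevsky terms $\Lambda(\theta_2+\theta_3)+\Lambda(\theta_2-\theta_3)-\Lambda(\theta_2+\theta_1)-\Lambda(\theta_2)-\Lambda(\theta_2-\theta_1)$ together with the quadratic surgery term, on the convex slice cut out by $D_0$) to conclude there is at most one critical point of the full holomorphic $\hat V$ there; alternatively, one eliminates $z_1$ and $z_3$ from the exponentiated system, reduces to a single equation in $z_2$, and checks by a monotonicity/winding argument on the relevant arc that it has exactly one root in the allowed range. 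I would follow whichever of these is cleaner given the computations already set up in \cite{CZ23-1}.

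\textbf{Main obstacle.} The delicate point is the \emph{localization} step: showing that \emph{every} solution of the critical point equations lying in $D_{0\mathbb{C}}$ — a priori only constrained to have real parts in $D_0$, with imaginary parts completely free — is the geometric one, rather than a spurious extra root of the exponentiated polynomial system (exponentiating introduces, in principle, solutions differing by integer shifts in the $\theta_i$, and the high exponents $q-2$ and $4p+2$ make the system have large degree). Controlling this requires either a genuine global concavity argument for $v$ on all of $D_{0\mathbb{C}}$ (not just on the real slice), which is where the careful choice of the box $D_0$ and the parameter range $S$ pays off, or an explicit resultant/discriminant computation; I expect the bulk of the real work, and the reason the hypotheses (\ref{formula-pqconditions}) are needed, to sit precisely here, and I would handle it by the same region-by-region estimates on Lobachevsky functions used in the companion papers \cite{CZ23-1,CZ23-2}, deferring the most computational parts to the Appendix.
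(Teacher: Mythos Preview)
Your overall strategy---produce the critical point from the hyperbolic structure on $M_{p,q}$, then argue uniqueness by some convexity, deferring the hardest estimates to \cite{CZ23-1}---matches the paper's in spirit (the paper's proof also ends with ``generalize Lemmas 6.5--6.9 in \cite{CZ23-1} \dots\ finish the proof as in \cite{CZ23-1}''). But the specific convexity you invoke is the wrong one, and this is precisely the ``main obstacle'' you flag without resolving.

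You propose strict concavity of $v=\operatorname{Re}\hat V$ on the \emph{real} slice $D_0$, even folding in ``the quadratic surgery term''; note first that those quadratic terms carry the coefficient $\pi\sqrt{-1}$ and so contribute nothing to $v$. More importantly, concavity on the real slice says nothing about critical points with large imaginary parts, which is exactly what $D_{0\mathbb{C}}$ allows. The paper's mechanism is a \emph{fibrewise} convexity in the imaginary directions: writing $\hat f(\theta_{1R},X_1,\theta_{2R},X_2,\theta_{3R},X_3)=\operatorname{Re}\hat V(\theta_{1R}+\sqrt{-1}X_1,\ldots)$, Proposition~\ref{proposition-Hessianf} shows that for real parts in the sub-box $D_H\subset D_0$ the Hessian of $\hat f$ in $(X_1,X_2,X_3)$ is positive definite (the entries are explicit sines and the sign checks are elementary). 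Coupled with the asymptotic dichotomy of Proposition~\ref{prop-inequalities26}/Corollary~\ref{coro-U0noU0}---inside $U_0$ one has $\hat f\to+\infty$ in every fibre direction, outside $U_0$ one has $\hat f\to-\infty$ along some direction---this forces any critical point in $D_{0\mathbb{C}}$ to have real part in $U_0$: a strictly convex function on the fibre that tends to $-\infty$ somewhere cannot have a critical point. That is Lemma~\ref{lemma-t0s0}, and it is the localization step you were missing a tool for. After that, both you and the paper hand off to \cite{CZ23-1}.
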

\begin{proof}
    See Appendix \ref{Appendix-criticalexists} for a proof.   
\end{proof}
Now we set 
$\zeta(p,q)$ to be the critical value of the potential function $\hat{V}(p,q;\theta_1,\theta_2,\theta_3)$, i.e. 
\begin{align}
    \zeta(p,q)=V(p,q,\theta_1^0,\theta_2^0,\theta_3^0),
\end{align}
and set
\begin{align} \label{formula-zetaR(p)}
    \zeta_\mathbb{R}(p,q)=\text{Re} \zeta(p,q)=\text{Re} \hat{V}(p,q,\theta_1^0,\theta_2^0,\theta_3^0). 
\end{align}

\begin{lemma} \label{lemma-volumeestimate}
When $(p,q)\in S$, we have the following estimation for $\zeta_{\mathbb{R}}(p,q)$
\begin{align}
    2\pi \zeta_{\mathbb{R}}(p,q)\geq 3.56337.
\end{align}
\end{lemma}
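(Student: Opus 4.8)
\textbf{Proof proposal for Lemma \ref{lemma-volumeestimate}.}

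The plan is to bound $\zeta_{\mathbb{R}}(p,q)=\text{Re}\,\hat{V}(p,q;\theta_1^0,\theta_2^0,\theta_3^0)$ from below by relating it to the real potential $v(\theta_1,\theta_2,\theta_3)=\Lambda(\theta_2+\theta_3)+\Lambda(\theta_2-\theta_3)-\Lambda(\theta_2+\theta_1)-\Lambda(\theta_2)-\Lambda(\theta_2-\theta_1)$ studied in Section \ref{Section-Poissonsummation}, and then invoking the estimate already carried over from \cite{CZ23-1}. Concretely, since $(\theta_1^0,\theta_2^0,\theta_3^0)\in D_{0\mathbb{C}}$ is the (complex) critical point supplied by Proposition \ref{prop-critical}, I would first show that its real part $(\text{Re}\,\theta_1^0,\text{Re}\,\theta_2^0,\text{Re}\,\theta_3^0)$ — or, more robustly, the actual value of $\text{Re}\,\hat V$ at the critical point — is at least the value of $v$ at an appropriately chosen interior point of $D_0$. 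The cleanest route is to note that $\text{Re}\,\hat V$ is, up to the explicit polynomial terms $\pi\sqrt{-1}(\cdots)$ which contribute to $\text{Re}$ only through the imaginary parts of the $\theta_i^0$, governed by the Bloch--Wigner type combination of Lobachevsky functions; so I would reduce $2\pi\zeta_{\mathbb{R}}(p,q)$ to (a perturbation of) $2\pi\,v$ evaluated at the genuine critical configuration.

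The key steps, in order, are: (i) record that on $D_0$ one has $v(\theta_1,\theta_2,\theta_3)=\Lambda(\theta_2+\theta_3)+\Lambda(\theta_2-\theta_3)-\Lambda(\theta_2+\theta_1)-\Lambda(\theta_2)-\Lambda(\theta_2-\theta_1)$, and that this is exactly the real part of the dilogarithm part of $\hat V$; (ii) use Proposition \ref{prop-critical} to locate the critical point and show that the imaginary-part contributions from the polynomial terms $\pi\sqrt{-1}((\tfrac q2-1)\theta_1^2-\theta_1+(2p+1)\theta_3^2-(2p+3)\theta_3-2\theta_2+\tfrac{3q}{2})$ are controlled, monotone in $q$, or otherwise bounded below uniformly for $(p,q)$ satisfying (\ref{formula-pqconditions}); (iii) combine this with the numerical lower bound for the Lobachevsky combination — this is precisely the content of Lemma 4.1 in \cite{CZ23-1} together with the region analysis giving Lemma \ref{lemma-regionD'0} — to conclude $2\pi\zeta_{\mathbb{R}}(p,q)\ge 3.56337$. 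Here the constant $3.56337$ is not arbitrary: $\tfrac{3.56337}{2\pi}$ is exactly the threshold appearing in (\ref{formula-domain}), so the lemma is the assertion that the critical value lies above the cutoff used to discard the non-$D_0$ part of the sum, and I would make the logical dependence on Corollary \ref{corollary-zetaR} explicit.

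The main obstacle will be step (ii): establishing the monotonicity (or at least a uniform lower bound) of $\zeta_{\mathbb{R}}(p,q)$ in the two integer parameters $p,q$ over the infinite family $S$, since the critical point $(\theta_1^0,\theta_2^0,\theta_3^0)$ depends on $(p,q)$ and the polynomial terms involve $p$ and $q$ with growing coefficients. The natural way to handle this is to show $\partial_q\,\zeta_{\mathbb{R}}(p,q)>0$ and $\partial_p\,\zeta_{\mathbb{R}}(p,q)>0$ by differentiating through the critical point — using that $\partial_{\theta_i}\hat V=0$ there, so only the explicit $p,q$-dependence survives — thereby reducing the infinitely many cases to checking the finitely many boundary cases listed in (\ref{formula-pqconditions}) (namely $(6,27),(7,19),(8,17),(9,15),(11,14),(14,13),(33,12)$), each of which is a finite numerical verification of the value of the potential at the critical point. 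I would expect the paper to relegate the precise numerics and the derivative computations to the Appendix (this is the role of Corollary \ref{corollary-zetaR} and the surrounding results), and here I would simply assemble them: monotonicity in $p,q$ plus the boundary evaluations plus the region lemma give the stated inequality.
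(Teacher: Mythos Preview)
Your approach is genuinely different from the paper's, and it contains a real gap.

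The paper does \emph{not} bound the critical value by analyzing the potential directly. Instead it proves Theorem \ref{theorem-critical=volume}, which identifies the critical equations (\ref{equation-Vs})--(\ref{equation-Vu}) with the gluing and Dehn-filling equations (\ref{equantion-geom1}) for the hyperbolic structure on $M_{p,q}$, and shows that $2\pi\hat V(p,q;\theta_1^0,\theta_2^0,\theta_3^0)$ equals $Vol(M_{p,q})+\sqrt{-1}CS(M_{p,q})$ up to an element of $\pi^2\sqrt{-1}\mathbb{Z}$. Taking real parts gives $2\pi\zeta_{\mathbb R}(p,q)=Vol(M_{p,q})$ exactly, and the lemma then reduces to checking $Vol(M_{p,q})>3.56337$ for $(p,q)\in S$; this is the content of Corollary \ref{corollary-zetaR}. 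The Taylor expansions in $\gamma_1=1/p$, $\gamma_2=1/q$ around $v_8\approx 3.664$ in the appendix, together with the finitely many boundary values of $(p,q)$ listed in (\ref{formula-pqconditions}), are what make the volume check finite.

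Your plan, by contrast, tries to extract the bound directly from the potential. The gap is in steps (ii)--(iii). First, Lemma \ref{lemma-regionD'0} goes the wrong way for your purposes: it says the superlevel set $\{v>3.56337/(2\pi)\}$ is contained in $D_0$, which is an \emph{upper} bound on $v$ outside $D_0$, not a lower bound on the critical value. Second, and more seriously, at the complex critical point the polynomial block $\pi\sqrt{-1}\big((\tfrac q2-1)\theta_1^2+(2p+1)\theta_3^2+\cdots\big)$ contributes to $\text{Re}\,\hat V$ through $\text{Im}(\theta_i^0)$ with coefficients that grow linearly in $p$ and $q$; saying these contributions are ``controlled, monotone in $q$, or otherwise bounded below uniformly'' is exactly the hard part, and nothing in Section \ref{Section-Poissonsummation} or in \cite{CZ23-1} supplies it. Your monotonicity idea---differentiate through the critical point so only the explicit $(p,q)$-dependence survives, then check the boundary cases---is sound in spirit and is effectively what the paper's expansion in $\gamma_1,\gamma_2$ encodes, but carrying it out rigorously without the geometric identification requires controlling $\text{Im}(\theta_i^0)$ uniformly over $S$, which you have not done. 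Finally, note that Corollary \ref{corollary-zetaR} \emph{is} the statement of the lemma (restated in the appendix), so invoking it is circular; what you would actually need is Theorem \ref{theorem-critical=volume}.
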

\begin{proof}
    See Appendix \ref{Appendix-geometry} for a proof. 
\end{proof}

Set $z_1=e^{2\pi\sqrt{-1}\theta_1}, z_2=e^{2\pi\sqrt{-1}\theta_2}$, and $z_3=e^{2\pi\sqrt{-1}\theta_3}$, by straightforward computations, we have

\begin{align}
    \hat{V}_{\theta_1\theta_1}&=(q-2)\pi\sqrt{-1}-\frac{2\pi\sqrt{-1}e^{2\pi\sqrt{-1}(\theta_2+\theta_1)}}{1-e^{2\pi\sqrt{-1}(\theta_2+\theta_1)}}-\frac{2\pi\sqrt{-1}e^{2\pi\sqrt{-1}(\theta_2-\theta_1)}}{1-e^{2\pi\sqrt{-1}(\theta_2-\theta_1)}}\\\nonumber
    &=2\pi\sqrt{-1}\left(\frac{q}{2}-1-\frac{1}{z_1/z_2-1}-\frac{1}{1/(z_1z_2)-1}\right)
\end{align}
\begin{align}
    \hat{V}_{\theta_1\theta_2}&=-\frac{2\pi\sqrt{-1}e^{2\pi\sqrt{-1}(\theta_2+\theta_1)}}{1-e^{2\pi\sqrt{-1}(\theta_2+\theta_1)}}+\frac{2\pi\sqrt{-1}e^{2\pi\sqrt{-1}(\theta_2-\theta_1)}}{1-e^{2\pi\sqrt{-1}(\theta_2-\theta_1)}}\\\nonumber
    &=2\pi\sqrt{-1}\left(\frac{1}{z_1/z_2-1}-\frac{1}{1/(z_1z_2)-1}\right),
\end{align}
\begin{align}
    \hat{V}_{\theta_1\theta_3}=0.
\end{align}

\begin{align}
    \hat{V}_{\theta_2\theta_2}&=\frac{2\pi\sqrt{-1}e^{2\pi\sqrt{-1}(\theta_2+\theta_3)}}{1-e^{2\pi\sqrt{-1}(\theta_2+\theta_3)}}+\frac{2\pi\sqrt{-1}e^{2\pi\sqrt{-1}(\theta_2-\theta_3)}}{1-e^{2\pi\sqrt{-1}(\theta_2-\theta_3)}}-\frac{2\pi\sqrt{-1}e^{2\pi\sqrt{-1}(\theta_2+\theta_1)}}{1-e^{2\pi\sqrt{-1}(\theta_2+\theta_1)}}\\\nonumber
    &-\frac{2\pi\sqrt{-1}e^{2\pi\sqrt{-1}\theta_2}}{1-e^{2\pi\sqrt{-1}\theta_2}}-\frac{2\pi\sqrt{-1}e^{2\pi\sqrt{-1}(\theta_2-\theta_1)}}{1-e^{2\pi\sqrt{-1}(\theta_2-\theta_1)}}\\\nonumber
    &=2\pi\sqrt{-1}\left(-\frac{1}{z_1/z_2-1}-\frac{1}{1/z_2-1}-\frac{1}{1/(z_1z_2)-1}\right.\\\nonumber
    &\left.+\frac{1}{1/(z_2z_3)-1}+\frac{1}{z_3/z_2-1}\right).
\end{align}
\begin{align}
    \hat{V}_{\theta_2\theta_3}&=\frac{2\pi\sqrt{-1}e^{2\pi\sqrt{-1}(\theta_2+\theta_3)}}{1-e^{2\pi\sqrt{-1}(\theta_2+\theta_3)}}-\frac{2\pi\sqrt{-1}e^{2\pi\sqrt{-1}(\theta_2-\theta_3)}}{1-e^{2\pi\sqrt{-1}(\theta_2-\theta_3)}}\\\nonumber
    &=2\pi\sqrt{-1}\left(\frac{1}{1/(z_2z_3)-1}-\frac{1}{z_3/z_2-1}\right).
\end{align}

\begin{align}
    \hat{V}_{\theta_3\theta_3}&=\pi\sqrt{-1}(4p+2)+\frac{2\pi\sqrt{-1}e^{2\pi\sqrt{-1}(\theta_2+\theta_3)}}{1-e^{2\pi\sqrt{-1}(\theta_2+\theta_3)}}+\frac{2\pi\sqrt{-1}e^{2\pi\sqrt{-1}(\theta_2-\theta_3)}}{1-e^{2\pi\sqrt{-1}(\theta_2-\theta_3)}}\\\nonumber
    &=2\pi\sqrt{-1}\left((2p+1)+\frac{1}{1/(z_2z_3)-1}+\frac{1}{z_3/z_2-1}\right).
\end{align}

We let
\begin{align}
 \hat{f}(\theta_1,X_1,\theta_2,X_2,\theta_3,X_3)=\text{Re}\hat{V}(p,q,\theta_1+\sqrt{-1}X_1,\theta_2+\sqrt{-1}X_2,\theta_3+\sqrt{-1}X_3),  
\end{align}
then for $i\in \{1,2,3\}$, 
\begin{align}
\hat{f}_{X_i}&=\text{Re}(\sqrt{-1}\hat{V}(p,q,\theta_1+\sqrt{-1}X_1,\theta_2+\sqrt{-1}X_i,\theta_3+\sqrt{-1}X_3)\\\nonumber
&=-\text{Im}\hat{V}_{\theta_i}(p,q,\theta_1+\sqrt{-1}X_1,\theta_2+\sqrt{-1}X_2,\theta_3+\sqrt{-1}X_3)
\end{align}
and
\begin{align}
\hat{f}_{X_iX_i}=-\text{Im}\left(\sqrt{-1}\hat{V}_{\theta_i\theta_i}(p,q,\theta_1+\sqrt{-1}X,\theta_2+\sqrt{-1}X_2,\theta_3+\sqrt{-1}X_3)\right).    
\end{align}

Hence the Hessian matrix of $\hat{f}$ is given by 
\begin{align}
   Hess(\hat{f})&=\begin{pmatrix}
     \hat{f}_{X_1X_1} & \hat{f}_{X_1X_2} & \hat{f}_{X_1X_3} \\
     \hat{f}_{X_2X_1} & \hat{f}_{X_2X_2} & \hat{f}_{X_2X_3} \\ 
     \hat{f}_{X_3X_1} & \hat{f}_{X_2X_2} & \hat{f}_{X_3X_3}
    \end{pmatrix}\\\nonumber
    &=2\pi \text{Im}\begin{pmatrix}
   a+c   & -a+c & 0 \\
     -a+c & a+b+c+d+e & d-e \\ 
     0 & d-e & d+e
    \end{pmatrix}
\end{align}
where 
\begin{align}
a&=-\text{Im}\frac{1}{1-z_2/z_1}=-\frac{\sin(2\pi(\theta_2-\theta_1))}{e^{2\pi(X_2-X_1)}+e^{-2\pi(X_2-X_1)}-2\cos(2\pi(\theta_2-\theta_1))} \\\nonumber
b&=-\text{Im}\frac{1}{1-z_2}=-\frac{\sin(2\pi \theta_2)}{e^{2\pi X_2}+e^{-2\pi X_2}-2\cos(2\pi \theta_2)}  \\\nonumber
c&=-\text{Im}\frac{1}{1-z_1z_2}=-\frac{\sin(2\pi( \theta_2+\theta_1))}{e^{2\pi (X_2+X_1)}+e^{-2\pi (X_2+X_1)}-2\cos(2\pi (\theta_2+\theta_1))} \\\nonumber
d&=\text{Im}\frac{1}{1-z_2z_3}=\frac{\sin(2\pi (\theta_2+\theta_3))}{e^{2\pi (X_2+X_3)}+e^{-2\pi (X_2+X_3)}-2\cos(2\pi (\theta_2+\theta_3))}\\\nonumber
e&=\text{Im}\frac{1}{1-z_2/z_3}=\frac{\sin(2\pi (\theta_2-\theta_3))}{e^{2\pi (X_2-X_3)}+e^{-2\pi (X_2-X_3)}-2\cos(2\pi (\theta_2-\theta_3))}.  
\end{align}

Obviously, when $\frac{1}{2}<\theta_2-\theta_1,\theta_2,\theta_2+\theta_1<1$, $1<\theta_2+\theta_3<\frac{3}{2}$ and $0<\theta_2-\theta_3<\frac{1}{2}$, we have that $a,b,c,d,e>0$.

Set 
\begin{align}
    D_{H}=\{(\theta_1,\theta_2,\theta_3)\in D_0| \frac{1}{2}<\theta_2\pm \theta_1,\theta_2<1, 1<\theta_2+\theta_3<\frac{3}{2}, 0<\theta_2-\theta_3<\frac{1}{2}\}.
\end{align}
We obtain
\begin{proposition} \label{proposition-Hessianf}
 For $(\theta_1,\theta_2,\theta_3)\in D_{H}$, the Hessian matrix
\begin{align}
    Hess(\hat{f})
\end{align}
is positive definite. 
\end{proposition}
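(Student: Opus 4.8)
The plan is to read off positive definiteness directly from the explicit formula for the Hessian computed just above, so that essentially no new analysis is required. Write $\mathrm{Hess}(\hat f)=2\pi M$ with
\begin{align*}
M=\begin{pmatrix}
a+c & -a+c & 0\\
-a+c & a+b+c+d+e & d-e\\
0 & d-e & d+e
\end{pmatrix},
\end{align*}
a real symmetric matrix in the real quantities $a,b,c,d,e$. First I would recall, exactly as in the sentence preceding the statement, that on $D_{H}$ all five of $a,b,c,d,e$ are strictly positive: the defining inequalities of $D_{H}$ put $2\pi(\theta_2-\theta_1)$, $2\pi\theta_2$, $2\pi(\theta_2+\theta_1)$ in $(\pi,2\pi)$, $2\pi(\theta_2+\theta_3)$ in $(2\pi,3\pi)$ and $2\pi(\theta_2-\theta_3)$ in $(0,\pi)$, so in each of the formulas for $a,\dots,e$ the sine in the numerator carries the sign that makes the quotient positive, while the denominator $e^{t}+e^{-t}-2\cos\alpha$ is positive because $e^{t}+e^{-t}\ge 2>2\cos\alpha$ for each angle $\alpha$ that occurs (none of them a multiple of $2\pi$ on $D_{H}$). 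This positivity is the only point at which the geometry of $D_{H}$ enters.

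Given this, I would establish that $M$ is positive definite in one of two equivalent ways. The slickest is to notice that $M$ is a positively weighted sum of outer products,
\begin{align*}
M=a\,v_1v_1^{T}+c\,v_2v_2^{T}+b\,v_3v_3^{T}+d\,v_4v_4^{T}+e\,v_5v_5^{T},
\end{align*}
with $v_1=(1,-1,0)^{T}$, $v_2=(1,1,0)^{T}$, $v_3=(0,1,0)^{T}$, $v_4=(0,1,1)^{T}$, $v_5=(0,1,-1)^{T}$, which one checks on the five independent matrix entries; since the weights $a,\dots,e$ are positive and $v_1,\dots,v_5$ span $\mathbb{R}^{3}$, this gives $x^{T}Mx>0$ for every $x\neq 0$. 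Alternatively one applies Sylvester's criterion: the leading principal minors of $M$ come out to $a+c$, then $(a+c)(a+b+c+d+e)-(c-a)^{2}=4ac+(a+c)(b+d+e)$, and finally $\det M=4de(a+c)+4ac(d+e)+b(a+c)(d+e)$, each of which is manifestly positive. Either route yields that $M$, and hence $\mathrm{Hess}(\hat f)=2\pi M$, is positive definite on $D_{H}$.

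I do not anticipate any real obstacle here: the substantive work was already done in putting the Hessian into the above shape and in verifying $a,\dots,e>0$. The only steps needing genuine care are bookkeeping ones: pinning down the signs in the formulas for $a,\dots,e$ from the precise angle ranges enforced on $D_{H}$ (so that the asserted positivity really holds), and, if one prefers the Sylvester computation, carrying the $2\times 2$ and $3\times 3$ minors through so that the indefinite-looking terms $(c-a)^{2}$ and $(d-e)^{2}$ cancel against the squares hidden inside $(a+c)^{2}$ and $(d+e)^{2}$, leaving only products of the positive quantities $a,b,c,d,e$.
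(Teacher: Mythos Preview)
Your proposal is correct and follows essentially the same route as the paper: verify $a,b,c,d,e>0$ on $D_H$ and then apply Sylvester's criterion, obtaining exactly the minors $a+c$, $4ac+(a+c)(b+d+e)$, and $4de(a+c)+4ac(d+e)+b(a+c)(d+e)$ that the paper computes. Your additional outer-product decomposition is a nice alternative packaging of the same positivity, but it is not used in the paper.
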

\begin{proof}
Since
    \begin{align}
&    \begin{vmatrix}
   a+c   & -a+c & 0 \\
     -a+c & a+b+c+d+e & d-e \\ 
     0 & d-e & d+e
\end{vmatrix}\\\nonumber
&=abd+abe+4acd+4ace+4ade+cbd+cbe+4cde>0,
\end{align}
\begin{align}
&    \begin{vmatrix}
   a+c   & -a+c \\
     -a+c & a+b+c+d+e 
\end{vmatrix}\\\nonumber
&=ab+4ac+ad+ae+cb+cd+ce>0
\end{align}
and $a+c>0$ by $a,b,c,d,e>0$ when $(\theta_1,\theta_2,\theta_3)\in D_{H}$. 
\end{proof}

\subsection{The case $m_2\neq 0$}

We say that a Fourier coefficient $\hat{h}_{N}(m_1,m_2,m_3)$ can be neglected if exists some $\epsilon>0$, such that
\begin{align}
    \hat{h}_{N}(m_1,m_2,m_3)=O(e^{(N+\frac{1}{2})(\zeta_{\mathbb{R}}(p,q)-\epsilon)}). 
\end{align}

We introduce
\begin{align}
&\hat{V}(\theta_1,\theta_2,\theta_3,m_1,m_2,m_3)=\hat{V}(\theta_1,\theta_2,\theta_3)-2\pi\sqrt{-1}\sum_{i=1}^3m_i\theta_i\\\nonumber
&=\pi\sqrt{-1}\left(\frac{3q}{2}+\left(\frac{q}{2}-1\right)\theta_1^2-(2m_1+1)\theta_1+2(m_2+1)\theta_2+(2p+1)\theta_3^2-(2p+3+2m_3)\theta_3\right)\\\nonumber
&+\frac{1}{2\pi\sqrt{-1}}\left(\frac{\pi^2}{6}+\text{Li}_2(e^{2\pi\sqrt{-1}(\theta_2+\theta_3)})+\text{Li}_2(e^{2\pi\sqrt{-1}(\theta_2-\theta_3)})-\text{Li}_2(e^{2\pi\sqrt{-1}(\theta_2+\theta_1)})\right.\\\nonumber
&\left.-\text{Li}_2(e^{2\pi\sqrt{-1}\theta_2})-\text{Li}_2(e^{2\pi\sqrt{-1}(\theta_2-\theta_1)})\right).
\end{align}

First, when $m_2=-1$, it is easy to see that
\begin{align} \label{formula-ReV-1}
&\lim_{X_2\rightarrow +\infty}\text{Re}\hat{V}
(\theta_1+\sqrt{-1}X_1,\theta_2+\sqrt{-1}X_2,\theta_3+\sqrt{-1}X_3;m_1,m_2,m_3)|_{X_3=X_1=0}\\\nonumber
&=\lim_{X_2\rightarrow +\infty}\text{Re}\left(\frac{1}{2\pi\sqrt{-1}}\left(\text{Li}_2(e^{2\pi\sqrt{-1}(\theta_2+\theta_3+\sqrt{-1}X_2)})+\text{Li}_2(e^{2\pi\sqrt{-1}(\theta_2-\theta_3+\sqrt{-1}X_2)})\right.\right.\\\nonumber &\left.\left.
-\text{Li}_2(e^{2\pi\sqrt{-1}(\theta_2-\theta_1+\sqrt{-1}X_2)})-\text{Li}_2(e^{2\pi\sqrt{-1}(\theta_2+\sqrt{-1}X_2)})-\text{Li}_2(e^{2\pi\sqrt{-1}(\theta_2+\theta_1+\sqrt{-1}X_2)})\right)\right)\\\nonumber
&=0.
\end{align}

It follows that the term
$
    \hat{h}_{N}(m_1,-1,m_3)
$
can be neglected for any $m_1,m_3\in \mathbb{Z}$.

Next, we consider the case $m_2\neq-1$. Motivated by Lemma \ref{lemma-Li2}, for $(\theta_1,\theta_2,\theta_3)\in D_{0}$,  we introduce the following  function
\begin{equation} \label{eq:2}
F(X_1,X_2,X_3;m_1,m_2,m_3)=\left\{ \begin{aligned}
         &0  &  \ (\text{if} \ X_2+X_3\geq 0) \\
         &\left(\theta_2+\theta_3-\frac{3}{2}\right)(X_2+X_3) & \ (\text{if} \ X_2+X_3<0)
                          \end{aligned} \right.
                          \end{equation}
\begin{equation*}
 +\left\{ \begin{aligned}
         &0  &  \ (\text{if} \ X_2-X_3\geq 0) \\
         &\left(\theta_2-\theta_3-\frac{1}{2}\right)(X_2-X_3) & \ (\text{if} \ X_2-X_3<0)
                          \end{aligned} \right.
                          \end{equation*}
\begin{equation*}
-\left\{ \begin{aligned}
         &0  &  \ (\text{if} \ X_2\geq 0) \\
         &\left(\theta_2-\frac{1}{2}\right)X_2 & \ (\text{if} \ X_2<0)
                          \end{aligned} \right.  
                          \end{equation*}
\begin{equation*}
-\left\{ \begin{aligned}
         &0  &  \ (\text{if} \ X_2+X_1\geq 0) \\
         &\left(\theta_2+\theta_1-\frac{1}{2}\right)(X_2+X_1) & \ (\text{if} \ X_2+X_1<0)
                          \end{aligned} \right.  
                          \end{equation*}  
\begin{equation*}
-\left\{ \begin{aligned}
         &0  &  \ (\text{if} \ X_2-X_1\geq 0) \\
         &\left(\theta_2-\theta_1-\frac{1}{2}\right)(X_2-X_1) & \ (\text{if} \ X_2-X_1<0)
                          \end{aligned} \right.  
                          \end{equation*}                           

\begin{equation*}
    +\left(-\left(\frac{q}{2}-1\right)\theta_1+m_1+\frac{1}{2}\right)X_1+(m_2+1)X_2+\left(p+\frac{3}{2}+m_3-(2p+1)\theta_3\right)X_3
\end{equation*}                       
where we use $\theta_2+\theta_3-\frac{3}{2}$ instead of $\theta_2+\theta_3-\frac{1}{2}$ in the first summation since in our situation $1< \theta_2+\theta_3<2$.

The following simple observation is useful. 
\begin{lemma} \label{lemma-X1X2X3}
Given a linear function on  $F(X_1,X_2,X_3)=AX_1+BX_2+CX_3$ over $\mathbb{R}^3$ with $X_1\geq X_2\geq X_3\geq 0$,  we have

(1) If $A<0$ or $A+B<0$ or $A+B+C<0$, then we can find a direction  with  $X_1^2+X_2^2+X_3^2\rightarrow \infty$, such that $F(X_1,X_2,X_3)\rightarrow -\infty$; 

(2)  If $A>0$ and $A+B>0$  and $A+B+C>0$, then we have  
$F(X_1,X_2,X_3)\rightarrow +\infty$ as $X_1^2+X_2^2+X_3^2\rightarrow +\infty$.
\end{lemma}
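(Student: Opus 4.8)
The plan is to treat the two parts as elementary facts about a linear functional on the closed cone $\mathcal C=\{(X_1,X_2,X_3)\in\mathbb R^3\mid X_1\geq X_2\geq X_3\geq 0\}$, and to reduce everything to evaluating $F$ on the extreme rays of $\mathcal C$. First I would observe that $\mathcal C$ is a polyhedral cone generated by the three rays
\begin{align}
v_1=(1,0,0),\qquad v_2=(1,1,0),\qquad v_3=(1,1,1),
\end{align}
so that every point of $\mathcal C$ can be written as $X=s_1 v_1+s_2 v_2+s_3 v_3$ with $s_1,s_2,s_3\geq 0$; indeed $s_1=X_1-X_2$, $s_2=X_2-X_3$, $s_3=X_3$, all nonnegative precisely because of the constraints. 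Since $F$ is linear, $F(X)=s_1F(v_1)+s_2F(v_2)+s_3F(v_3)=s_1A+s_2(A+B)+s_3(A+B+C)$, which is the identity that makes both statements transparent.

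For part (2), if $A>0$, $A+B>0$ and $A+B+C>0$, then each coefficient in the displayed sum is positive, so $F(X)\geq c(s_1+s_2+s_3)$ for $c=\min\{A,A+B,A+B+C\}>0$. Because $\|X\|$ and $s_1+s_2+s_3$ are comparable on $\mathcal C$ (both are norms on the finitely generated cone, or one simply notes $s_1+s_2+s_3=X_1$ and $X_1\geq\|X\|/\sqrt3$ on $\mathcal C$), we get $F(X)\to+\infty$ as $\|X\|\to\infty$ within $\mathcal C$. For part (1), suppose one of $A$, $A+B$, $A+B+C$ is negative; say it is the coefficient of $s_j$. Then moving out along the ray $X=s v_j$, $s\to+\infty$, keeps $X$ inside $\mathcal C$, sends $X_1^2+X_2^2+X_3^2\to\infty$, and makes $F(X)=s\cdot F(v_j)\to-\infty$. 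This produces the required direction.

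The argument is essentially a one-line polyhedral-geometry observation, so there is no real obstacle; the only point requiring a moment's care is making sure the chosen escaping ray genuinely stays in the region $X_1\geq X_2\geq X_3\geq 0$, which is why I would phrase things in terms of the extreme rays $v_1,v_2,v_3$ rather than ad hoc coordinate choices — each $v_j$ lies in $\mathcal C$ by inspection, so $sv_j\in\mathcal C$ for all $s\geq0$. I would also remark that the three conditions $A>0$, $A+B>0$, $A+B+C>0$ in (2) are exactly the negations of the three alternatives in (1) (together with the boundary cases $A=0$ etc., which are excluded in both statements), so the two parts are complementary and the dichotomy is sharp; this is the form in which the lemma will later be applied to decide which Fourier coefficients $\hat h_N(m_1,m_2,m_3)$ can be neglected.
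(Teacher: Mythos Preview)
Your proof is correct. The paper states this lemma without proof, calling it merely a ``simple observation,'' so there is no argument to compare against; your extreme-ray decomposition $X=s_1v_1+s_2v_2+s_3v_3$ with $F(X)=s_1A+s_2(A+B)+s_3(A+B+C)$ is exactly the natural way to make the observation precise, and both parts follow immediately from it.
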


Then we have
\begin{proposition} \label{prop-inequalities26}
If $(\theta_1,\theta_2,\theta_3)\in D_0$ satisfies the following 26 inequalities: 
\begin{align}
&(1): m_2+1>0 \nonumber \\\nonumber 
&(2): (2p+1)\theta_3<p+m_2+m_3+\frac{5}{2}\\\nonumber
&(3): 2p\theta_3+\theta_2<p+m_3+2\\\nonumber
&(4): (2p-1)\theta_3-\theta_2<p-m_2+m_3\\\nonumber
&(5): \theta_2>m_2+\frac{1}{2}\\\nonumber
&(6): (2p-1)\theta_3+\theta_2>p+m_2+m_3+1\\\nonumber
&(7):   2p\theta_3-\theta_2>p+m_3 \\\nonumber 
&(8): (2p+1)\theta_3>p-m_2+m_3+\frac{1}{2} \\\nonumber
&(9):  (2p+1)\theta_3+(\frac{q}{2}-1)\theta_1<p+m_2+m_3+m_1+3
\end{align}    
\begin{align}
&(10): (\frac{q}{2}-1)\theta_1<m_2+m_1+\frac{3}{2} \nonumber \\\nonumber
&(11): \theta_2-\frac{q}{2}\theta_1>-m_1\\\nonumber
&(12): (2p+1)\theta_3-(\frac{q}{2}-1)\theta_1<p+m_2+m_3-m_1+2\\\nonumber
&(13): (\frac{q}{2}-1)\theta_1>-m_2+m_1-\frac{1}{2}\\\nonumber
&(14): \theta_2+\frac{q}{2}\theta_1>m_1+1 \\\nonumber
&(15): 2p\theta_3+\frac{q}{2}\theta_1<p+m_3+m_1+2\\\nonumber
&(16): 2p\theta_3-\frac{q}{2}\theta_1<p+m_3-m_1+1\\\nonumber
&(17): (2p-1)\theta_3-\theta_2+(\frac{q}{2}+1)\theta_1<p-m_2+m_3+m_1+\frac{1}{2}\\\nonumber
&(18): (2p-1)\theta_3-\theta_2-(\frac{q}{2}+1)\theta_1<p-m_2+m_2-m_1-\frac{1}{2}\\\nonumber
\end{align}
\begin{align}
&(19): \theta_2-(\frac{q}{2}+1)\theta_1>m_2-m_1\nonumber\\\nonumber
&(20): \theta_2+(\frac{q}{2}+1)\theta_1>m_2+m_1+1\\\nonumber
&(21): (2p-1)\theta_3+\theta_2-(\frac{q}{2}+1)\theta_1>p+m_2+m_3-m_1+\frac{1}{2}\\\nonumber
&(22): (2p-1)\theta_3+\theta_2+(\frac{q}{2}+1)\theta_1>p+m_2+m_3+m_1+\frac{3}{2}\\\nonumber
&(23): 2p\theta_3-\frac{q}{2}\theta_1>p+m_3-m_1\\\nonumber
&(24): 2p\theta_3+\frac{q}{2}\theta_1>p+m_3+m_1+1\\\nonumber
&(25): (2p+1)\theta_3-(\frac{q}{2}-1)\theta_1>p-m_1+m_3-m_1\\\nonumber
&(26): (2p+1)\theta_3+(\frac{q}{2}-1)\theta_1>p-m_2+m_3+m_1+1
\end{align}
then we have $F(X_1,X_2,X_3,m_1,m_2,m_3)\rightarrow +\infty$ as $X_1^2+X_2^2+X_3^2\rightarrow +\infty$. 
\end{proposition}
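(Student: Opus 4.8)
The plan is to use that $F(\,\cdot\,;m_1,m_2,m_3)$, viewed as a function of $(X_1,X_2,X_3)$, is continuous, piecewise linear, positively homogeneous of degree one, and vanishes at the origin: each of the five piecewise terms scales by $t>0$ because the branch (whether $X_2\pm X_j$ is positive or negative) does not change, and the remaining tail is linear. Consequently $F(X_1,X_2,X_3)\to+\infty$ as $X_1^2+X_2^2+X_3^2\to+\infty$ if and only if $F>0$ on $\mathbb{R}^3\setminus\{0\}$; indeed, if $F\ge c>0$ on the unit sphere (which is compact), then $F(X)=\|X\|\,F(X/\|X\|)\ge c\|X\|$. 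So it suffices to prove the strict positivity of $F$ away from the origin.

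To this end, decompose $\mathbb{R}^3$ into the $3!\cdot 2^{3}=48$ closed simplicial cones $C_{\sigma,\varepsilon}=\bigl\{\,\varepsilon_{\sigma(1)}X_{\sigma(1)}\ge\varepsilon_{\sigma(2)}X_{\sigma(2)}\ge\varepsilon_{\sigma(3)}X_{\sigma(3)}\ge 0\,\bigr\}$, indexed by $\sigma\in S_{3}$ and $\varepsilon=(\varepsilon_1,\varepsilon_2,\varepsilon_3)\in\{\pm1\}^{3}$; these visibly cover $\mathbb{R}^3$. The key observation is that on each $C_{\sigma,\varepsilon}$ the signs of all five linear forms $X_2+X_3,\ X_2-X_3,\ X_2,\ X_2+X_1,\ X_2-X_1$ are constant, because the ordering and the signs defining $C_{\sigma,\varepsilon}$ determine the sign of each $X_i$ together with the comparison of $|X_i|$ with $|X_j|$, hence the sign of every $X_2\pm X_j$. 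Therefore $F$ restricted to $C_{\sigma,\varepsilon}$ is a genuine linear form, obtained by replacing in $F$ each of the five piecewise terms by the branch dictated by the sign of its argument on $C_{\sigma,\varepsilon}$ and keeping the linear tail unchanged.

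Now apply Lemma \ref{lemma-X1X2X3}(2). After the coordinate change $Y_i=\varepsilon_iX_i$ followed by the permutation $\sigma$, the cone $C_{\sigma,\varepsilon}$ becomes the standard cone $\{Y_1\ge Y_2\ge Y_3\ge 0\}$ and $F|_{C_{\sigma,\varepsilon}}$ takes the shape $AY_1+BY_2+CY_3$ for suitable reals $A,B,C$ depending on $\sigma,\varepsilon$ and on $p,q,\theta_1,\theta_2,\theta_3,m_1,m_2,m_3$; the Lemma then guarantees that $F(X)\to+\infty$ as $\|X\|\to\infty$ with $X\in C_{\sigma,\varepsilon}$ provided $A>0$, $A+B>0$ and $A+B+C>0$, and these three conditions say exactly that $F$ is positive at the three extreme rays $(1,0,0),(1,1,0),(1,1,1)$ of the $Y$-cone. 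Letting $\sigma$ and $\varepsilon$ range over all possibilities, the extreme rays so obtained are precisely the $26$ vectors $\pm e_1,\pm e_2,\pm e_3$, the vectors $\pm e_i\pm e_j$ for $1\le i<j\le 3$ with all four sign choices, and the eight vectors $(\pm1,\pm1,\pm1)$, where $e_1,e_2,e_3$ denotes the standard basis of $\mathbb{R}^3$. Thus the totality of conditions produced is the set of $26$ inequalities $F(v;m_1,m_2,m_3)>0$, one for each such $v$. Evaluating $F$ at any of these $v$ is immediate, since the entries of $v$ lie in $\{-1,0,1\}$, so for each of the five piecewise terms one reads off at once both the active branch and the value of its argument; collecting the coefficients of $\theta_1,\theta_2,\theta_3,m_1,m_2,m_3$ and the constant term turns these $26$ conditions into precisely the inequalities $(1)$–$(26)$ of the statement (up to trivial rearrangement). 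Hence, under the hypotheses $(1)$–$(26)$, $F$ is positive on each $C_{\sigma,\varepsilon}\setminus\{0\}$, so $F>0$ on $\mathbb{R}^3\setminus\{0\}$, and, by the first paragraph, $F(X_1,X_2,X_3)\to+\infty$ as $X_1^2+X_2^2+X_3^2\to+\infty$.

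The only conceptual ingredient is the constant-sign observation making $F$ piecewise linear with respect to this cone decomposition, together with Lemma \ref{lemma-X1X2X3}; everything else is bookkeeping. The main obstacle in a fully written proof is therefore the organized matching of the $26$ extreme-ray evaluations of $F$ with the listed inequalities — in particular keeping correct track of the constant $\theta_2+\theta_3-\tfrac32$ (rather than $\theta_2+\theta_3-\tfrac12$, which is where the constraint $1<\theta_2+\theta_3<2$ on $D_0$ enters) and of the minus signs preceding the $X_2$, $X_2+X_1$ and $X_2-X_1$ terms, since a single sign or constant slip would corrupt one of the twenty-six inequalities.
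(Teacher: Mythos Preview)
Your proposal is correct and follows essentially the same route as the paper: both decompose $\mathbb{R}^3$ into the $48$ cones determined by the signs of the $X_i$ and the ordering of their absolute values, observe that $F$ is linear on each cone, and apply Lemma~\ref{lemma-X1X2X3}(2) to obtain three inequalities per cone. Your packaging is a bit cleaner---you identify the $26$ inequalities with positivity of $F$ at the $26$ extreme rays of the cone decomposition and use homogeneity to pass to the sphere---whereas the paper works out a representative block of six cones explicitly and leaves the remaining cases to the reader; but the underlying argument is the same.
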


We prove the proposition by subdividing $(X_1,X_2,X_3)\in \mathbb{R}^3$ into 48 regions. 

Case I: $X_2\geq X_3\geq 0$, 

I-A: $X_2\geq X_3\geq X_1\geq 0$, then 
\begin{align}
 &F(X_1,X_2,X_3;m_1,m_2,m_3)\\\nonumber
 &=(m_2+1)X_2+(p+m_3+\frac{3}{2}-(2p+1)\theta_3)X_3+(m_1+\frac{1}{2}-(\frac{q}{2}-1)\theta_1)X_1   
\end{align}
By Lemma \ref{lemma-X1X2X3}, if 
\begin{align}
m_1+1>0, \  \ (2p+1)\theta_3<p+m_2+m_3+\frac{5}{2},    
\end{align}
and 
\begin{align}
(2p+1)\theta_3+(\frac{q}{2}-1)\theta_1<p+m_2+m_3+m_1+3    
\end{align}
then we have $F(X_1,X_2,X_3)\rightarrow +\infty$ as $X_1^2+X_2^2+X_3^2\rightarrow +\infty$. 

I-B: $X_2\geq X_1\geq X_3\geq 0$, 
\begin{align}
 &F(X_1,X_2,X_3;m_1,m_2,m_3)\\\nonumber
 &=(m_2+1)X_2+(m_1+\frac{1}{2}-(\frac{q}{2}-1)\theta_1)X_1   +(p+m_3+\frac{3}{2}-(2p+1)\theta_3)X_3
\end{align}
By Lemma \ref{lemma-X1X2X3}, if 
\begin{align}
m_2+1>0, \ (\frac{q}{2}-1)\theta_1<m_2+m_1+\frac{3}{2}    
\end{align}
and 
\begin{align}
(2p+1)\theta_3+(\frac{q}{2}-1)\theta_1<p+m_2+m_3+m_1+3    
\end{align}
then we have $F(X_1,X_2,X_3)\rightarrow +\infty$ as $X_1^2+X_2^2+X_3^2\rightarrow +\infty$. 

I-C: $X_2\geq X_1\geq X_3\geq 0$, 
\begin{align}
 &F(X_1,X_2,X_3;m_1,m_2,m_3)\\\nonumber
 &=(m_1+\theta_2-\frac{q}{2}\theta_1)X_1+(m_2+\frac{3}{2}-\theta_2+\theta_1)X_2+(p+m_3+\frac{3}{2}-(2p+1)\theta_3)X_3   
\end{align}
By Lemma \ref{lemma-X1X2X3}, if 
\begin{align}
m_1+\theta_2-\frac{q}{2}\theta_1>0, \ (\frac{q}{2}-1)\theta_1<m_2+m_1+\frac{3}{2}    
\end{align}
and 
\begin{align}
(2p+1)\theta_3+(\frac{q}{2}-1)\theta_1<p+m_2+m_3+m_1+3    
\end{align}
then we have $F(X_1,X_2,X_3)\rightarrow +\infty$ as $X_1^2+X_2^2+X_3^2\rightarrow +\infty$. 

I-D: $X_2\geq X_3\geq -X_1\geq 0$, 
\begin{align}
 &F(X_1,X_2,X_3;m_1,m_2,m_3)\\\nonumber
 &=(m_2+1)X_2+(p+m_3+\frac{3}{2}-(2p+1)\theta_3)X_3+(-m_1-\frac{1}{2}+(\frac{q}{2}-1)\theta_1)(-X_1)   
\end{align}
By Lemma \ref{lemma-X1X2X3}, if 
\begin{align}
m_2+1>0, \ (2p+1)\theta_3<p+m_2+m_3+\frac{5}{2}    
\end{align}
and 
\begin{align}
(2p+1)\theta_3-(\frac{q}{2}-1)\theta_1<p+m_2+m_3-m_1+2    
\end{align}
then we have $F(X_1,X_2,X_3)\rightarrow +\infty$ as $X_1^2+X_2^2+X_3^2\rightarrow +\infty$. 

I-E: $X_2\geq -X_1\geq X_3\geq 0$, 
\begin{align}
 &F(X_1,X_2,X_3;m_1,m_2,m_3)\\\nonumber
 &=(m_2+1)X_2+(-m_1-\frac{1}{2}+(\frac{q}{2}-1)\theta_1)(-X_1)+(p+m_3+\frac{3}{2}-(2p+1)\theta_3)X_3   
\end{align}
By Lemma \ref{lemma-X1X2X3}, if 
\begin{align}
m_2+1>0, \ (\frac{q}{2}-1)\theta_1>-m_2+m_1-\frac{1}{2}    
\end{align}
and 
\begin{align}
(2p+1)\theta_3-(\frac{q}{2}-1)\theta_1<p+m_2+m_3-m_1+2    
\end{align}
then we have $F(X_1,X_2,X_3)\rightarrow +\infty$ as $X_1^2+X_2^2+X_3^2\rightarrow +\infty$. 

I-F: $-X_1\geq X_2\geq X_3\geq 0$, 
\begin{align}
 &F(X_1,X_2,X_3;m_1,m_2,m_3)\\\nonumber
 &=(-m_1-1+\theta_2+\frac{q}{2}\theta_1)(-X_1)+(m_2+\frac{3}{2}-\theta_2-\theta_1)X_2+(p+m_3+\frac{3}{2}-(2p+1)\theta_3)X_3   
\end{align}
By Lemma \ref{lemma-X1X2X3}, if 
\begin{align}
\theta_2+\frac{q}{2}\theta_1>m_1+1, \ (\frac{q}{2}-1)\theta_1>-m_2+m_1-\frac{1}{2}    
\end{align}
and 
\begin{align}
(2p+1)\theta_3-(\frac{q}{2}-1)\theta_1<p+m_2+m_3-m_1+2    
\end{align}
then we have $F(X_1,X_2,X_3)\rightarrow +\infty$ as $X_1^2+X_2^2+X_3^2\rightarrow +\infty$. 

Hence from the above 6 cases I-A,B,C,D, E and F, we obtain $8$ inequalities: 
\begin{align}
    & m_1+1>0, \\\nonumber 
    & (2p+1)\theta_3<p+m_2+m_3+\frac{5}{2} \\\nonumber
    & (2p+1)\theta_3+(\frac{q}{2}-1)\theta_1<p+m_2+m_3+m_1+3\\\nonumber
    & (\frac{q}{2}-1)\theta_1<m_2+m_1+\frac{3}{2} \\\nonumber
    & \theta_2-\frac{q}{2}\theta_1>-m_1\\\nonumber
    & (2p+1)\theta_1-(\frac{q}{2}-1)\theta_2<p+m_1+m_3-m_1+2 \\\nonumber
    & (\frac{q}{2}-1)\theta_1>-m_2+m_1-\frac{1}{2}\\\nonumber
    &\theta_2+\frac{q}{2}\theta_1>m_1+1
\end{align}

Similarly, by considering the other $7$ cases II:$Y\geq X\geq 0$, III: $Y\geq -X\geq 0$, IV: $-X\geq Y\geq 0$, V: $-X\geq -Y\geq 0$, VI: $-Y\geq -X\geq 0$, VII:$-Y\geq X\geq 0$, and VIII: $X\geq -Y\geq 0$,  we finally obtain the 26 inequalities in Proposition \ref{prop-inequalities26}.

From the inequalities (1) and (5), we have 
\begin{align}
   -1<m_2<\theta_2-\frac{1}{2}<\frac{1}{2} 
\end{align}
since $\theta_2<1$. Hence for any $(\theta_1,\theta_2,\theta_3)\in D_0$, when $m_2< -1$ or $m_2\geq 1$, $F(X_1,X_2,X_3;m_1,m_2,m_3)$ goes to $-\infty$ in some direction of $X_1^2+X_2^2+X_3^2\rightarrow +\infty$. 

In conclusion, when $m_2\neq 0$, the Fourier coefficients 
$
\hat{h}_{N}(m_1,m_2,m_3)    
$
 can be neglected. 
In the following, we consider the Fourier coefficient $\hat{h}_{N}(m_1,m_2,m_3)$ with $m_2=0$.
\subsection{The case $m_2=0$ but  $(m_1,m_3)\neq (0,0)$}
\begin{theorem} \label{theorem-m_1}
  When $m_1=0$ and $m_3\geq 1$, then 
\begin{align}
\hat{h}_{N}(0,0,m_3)    
\end{align}
 can be neglected.
\end{theorem}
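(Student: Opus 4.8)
The plan is to show that in $\hat h_N(0,0,m_3)$ the contour $D_0$ can be deformed in the imaginary directions to a chain along which $\mathrm{Re}\,\hat V(\,\cdot\,;0,0,m_3)<\zeta_\mathbb{R}(p,q)-\epsilon$; by Proposition~\ref{prop-VNexpansion} and the explicit form of $\kappa''_r$ this yields $\hat h_N(0,0,m_3)=O\!\bigl(e^{(N+\frac12)(\zeta_\mathbb{R}(p,q)-\epsilon)}\bigr)$, i.e.\ the coefficient can be neglected. A first reduction: by Proposition~\ref{prop-hmn} we have $\hat h_N(0,0,-m_3-2)=(-1)^{m_3}\hat h_N(0,0,m_3)$, which, together with $\hat h_N(0,0,-1)=0$ and $\hat h_N(0,0,-2)=\hat h_N(0,0,0)$, shows that the statement for $m_3\ge 1$ also disposes of $m_3\le-3$, so these are the only remaining cases. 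It is convenient to note that on the real slab $D_0\times\{0\}$ one has $\mathrm{Re}\,\hat V(\theta_1,\theta_2,\theta_3;0,0,m_3)=v(\theta_1,\theta_2,\theta_3)$, since the polynomial part of $\hat V$ and the term $-2\pi\sqrt{-1}\,m_3\theta_3$ are purely imaginary for real arguments; consequently no deformation is needed over the part of $D_0$ on which $v<\zeta_\mathbb{R}(p,q)-\epsilon$, and the real work is confined to the sub-region where $v\ge\zeta_\mathbb{R}(p,q)-\epsilon$.

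For the deformation I would proceed as in the treatment of the case $m_2\neq 0$ and in \cite{CZ23-1}: for a real point $(\theta_1,\theta_2,\theta_3)\in D_0$, Lemma~\ref{lemma-Li2} identifies the leading asymptotics of $\mathrm{Re}\,\hat V(\theta_1+\sqrt{-1}X_1,\theta_2+\sqrt{-1}X_2,\theta_3+\sqrt{-1}X_3;0,0,m_3)$ as $X_1^2+X_2^2+X_3^2\to\infty$ with the piecewise-linear function $F(X_1,X_2,X_3;0,0,m_3)$ of (\ref{eq:2}). By the cone-by-cone analysis behind Proposition~\ref{prop-inequalities26} together with Lemma~\ref{lemma-X1X2X3}(1), whenever at the given point at least one of the $26$ inequalities of Proposition~\ref{prop-inequalities26}, specialised to $m_1=m_2=0$ and $m_3\ge1$, fails, there is a ray along which $F$, hence $\mathrm{Re}\,\hat V(\,\cdot\,;0,0,m_3)$, tends to $-\infty$. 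So the theorem reduces to the combinatorial claim: for $m_1=m_2=0$ and $m_3\ge 1$ no point of $D_0$ (or at least no point with $v\ge\zeta_\mathbb{R}(p,q)-\epsilon$, which is all that is needed) satisfies all $26$ inequalities simultaneously. Granting this, the gradient-flow/Stokes argument of \cite{CZ23-1} pushes the relevant part of the contour to imaginary infinity, where the integrand vanishes, and only an $O(e^{(N+\frac12)(\zeta_\mathbb{R}(p,q)-\epsilon)})$ term survives.

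To establish the claim I would specialise the inequalities. With $m_1=m_2=0$, (2) and (8) confine $(2p+1)\theta_3$ to $(p+m_3+\tfrac12,\,p+m_3+\tfrac52)$, so $\theta_3>\tfrac12+\tfrac{m_3}{2p+1}$, while (3), (6), (7) read $2p\theta_3+\theta_2<p+m_3+2$, $(2p-1)\theta_3+\theta_2>p+m_3+1$, $2p\theta_3-\theta_2>p+m_3$; eliminating $\theta_3$ between (3) and (6) gives $\theta_2>m_3-p+2$, which is already impossible in $D_0$ once $m_3\ge p-1$. For $1\le m_3\le p-2$ I would combine (3), (6), (7) with the explicit constraints defining $D_0$ ($\tfrac12<\theta_2\le 0.909$, $0.2\le\theta_3\le 0.8$, $\theta_2-\theta_3\ge 0.02$, $\theta_2+\theta_3\le 1.7$), which already confines $\theta_3$ to a narrow band, and then invoke the $\theta_1$-inequalities (9)--(26): since $(p,q)\in S$ makes $\tfrac q2-1$ large, these pin $\theta_1$ to a tiny interval about $0$, and a short check, using both the constants of $D_0$ and the localisation $v\ge\zeta_\mathbb{R}(p,q)-\epsilon$ (which pins $\theta_3$ near $\theta_3^0$), should show the surviving set is empty. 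The main obstacle is exactly this bookkeeping: unlike the case $m_2\neq 0$, where inequalities (1) and (5) give the contradiction at one stroke and uniformly over $D_0$, here the contradiction is spread over (2), (3), (6), (7), (8) and several of (9)--(26), the escape window in $\theta_3$ shrinks as $m_3\to1$ and $p$ grows, and one must verify, uniformly over all $(p,q)\in S$ and all $m_3\ge 1$, that this window misses the part of $D_0$ that is not already exponentially suppressed --- a verification that genuinely uses the volume lower bound $2\pi\zeta_\mathbb{R}(p,q)\ge 3.56337$ of Lemma~\ref{lemma-volumeestimate}.
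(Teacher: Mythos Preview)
Your reduction is sound up to the point where you claim that for $m_1=m_2=0$, $m_3\ge 1$ the $26$ inequalities of Proposition~\ref{prop-inequalities26} are inconsistent on the high-$v$ part of $D_0$; this claim, however, is not a ``short check'' and the parenthetical justification you give is incorrect. The function $v(\theta_1,\theta_2,\theta_3)=\mathrm{Re}\,\hat V$ on the real slab is independent of $p$ and $q$, so the condition $v\ge\zeta_\mathbb{R}(p,q)-\epsilon$ does \emph{not} localise $\theta_3$ near $\theta_3^0\approx\tfrac12+\tfrac{1}{2p}$. Concretely, for $m_3=1$ the $\theta_3$-window on which (2), (6), (7), (8) are simultaneously satisfied is roughly $(\tfrac12+\tfrac{7}{8p},\,\tfrac12+\tfrac{3}{2p})$, and inequality~(14) forces $\theta_1>2(1-\theta_2)/q$; for large $p$ and moderate $q$ (e.g.\ $q=12$, which is allowed in $S$ for all $p\ge 33$) one can choose $(\theta_1,\theta_2,\theta_3)$ in this window with all $26$ inequalities holding, and whether such a point also has $v\ge\zeta_\mathbb{R}(p,q)-\epsilon$ is a delicate balance of $O(1/p^2)$ and $O(1/q^2)$ terms that you have not addressed. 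At best this would require a separate quantitative estimate comparable in strength to the one the paper actually proves.

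The paper's argument is structurally different and avoids this issue entirely. It splits $D_0$ at the hyperplane $\theta_3=\tfrac12+\tfrac{7}{8p}$. On $\{\theta_3\le\tfrac12+\tfrac{7}{8p}\}$ inequalities~(6) and~(7) (with $m_2=0$, $m_3\ge 1$) combine to give $(4p-1)\theta_3>2p+2m_3+1$, which fails; hence on this part one can push the contour to $-\infty$ as you describe. On $\{\theta_3>\tfrac12+\tfrac{7}{8p}\}$ the paper does \emph{not} attempt to push to $-\infty$; instead it fixes $\theta_3=c_3$ and applies a two-dimensional saddle point argument in $(\theta_1,\theta_2)$ (Theorem~\ref{theorem-usedinm1}), using the estimate of Lemmas~\ref{theorem-inprincipal1} and~\ref{theorem-inprincipal2} that for $c_3\ge\tfrac12+\tfrac{7}{8p}$ the two-variable critical value satisfies $2\pi\,\mathrm{Re}\,\hat V(\theta_1(c_3),\theta_2(c_3),c_3)<Vol(M_{p,q})=2\pi\zeta_\mathbb{R}(p,q)$. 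This last inequality is precisely the quantitative input your ``short check'' would have to supply, and it is proved via the Taylor expansions of Section~\ref{Appendix-geometry} rather than by manipulating the $26$ linear inequalities.
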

\begin{proof}
For $c_3\geq \frac{1}{2}+\frac{7}{8p}$, then 
$\tilde{c}_3\geq \frac{7}{8p}$, by Lemmas \ref{theorem-inprincipal1} and \ref{theorem-inprincipal2},  we have
\begin{align}
    &2\pi \text{Re}\hat{V}(p,q; \theta_1,\theta_2,c_3=\frac{1}{2}+\tilde{c}_3;0,0,m_3)\\\nonumber
    &\leq 2\pi\text{Re}\hat{V}\left(p,q,\theta_1\left(\frac{1}{2}\right),\theta_2\left(\frac{1}{2}\right),\frac{1}{2}\right)-\pi^2 \tilde{c}_3^2\\\nonumber
    &\leq 2\pi\text{Re}\hat{V}\left(p,q,\theta_1\left(\frac{1}{2}\right),\theta_2\left(\frac{1}{2}\right),\frac{1}{2}\right)-\pi^2\frac{49}{64}\frac{1}{p^2}\\\nonumber
    &< Vol\left(M_{p,q}\right).  
\end{align}
Then the integral over the region  
$\{(\theta_1,\theta_2,\theta_3)\in D_0|\theta_3\leq \frac{1}{2}+\frac{7}{8p}\}$
can be  neglected by using Theorem \ref{theorem-usedinm1}.

Moreover, when $m_3\geq 1$,  from the inequalities (6) and (7), we obtain 
\begin{align}
(4p-1)\theta_3>2p+2m_3+1    
\end{align}
Hence
\begin{align}
\theta_3>\frac{1}{2}+\frac{4m_3+3}{2(4p-1)}>\frac{1}{2}+\frac{7}{8p}.    
\end{align}

Therefore, over the region $\{(\theta_1,\theta_2,\theta_3)\in D_0|\theta_3\leq \frac{1}{2}+\frac{7}{8p}\}$, we can find a direction of 
$X_1^2+X_2^2+X_3^2\rightarrow +\infty $ such that 
\begin{align}
\text{Re}\hat{V}(p,q;0,0,m_3,\theta_1,\theta_2,\theta_3)\rightarrow -\infty.     
\end{align}
It follows that the integral over this region can be neglected.   
Therefore, the whole integral can be neglected. 
\end{proof}

\begin{theorem} \label{theorem-m3}
  When $m_1\geq 1$ and $m_3=0$,  then 
\begin{align}
\hat{h}_{N}(m_1,0,0)    
\end{align}
  can be neglected.
\end{theorem}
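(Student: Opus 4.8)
The plan is to run the two-region argument from the proof of Theorem \ref{theorem-m_1}, with the roles of $\theta_3$ and $\theta_1$ interchanged. The structural facts I would use are: on the real locus $\text{Re}\,\hat{V}(\theta_1,\theta_2,\theta_3;m_1,0,0)=\text{Re}\,\hat{V}(\theta_1,\theta_2,\theta_3)$, since the extra term $-2\pi\sqrt{-1}m_1\theta_1$ is purely imaginary there; and
\[
\text{Re}\,\hat{V}(\theta_1,\theta_2,\theta_3)=\Lambda(\theta_2+\theta_3)+\Lambda(\theta_2-\theta_3)-\Lambda(\theta_2+\theta_1)-\Lambda(\theta_2)-\Lambda(\theta_2-\theta_1)
\]
is an even function of $\theta_1$ whose unique critical point in the $\theta_1$-direction on $D_0$ is $\theta_1=0$ (its $\theta_1$-derivative is $\log\left|\frac{\sin\pi(\theta_2+\theta_1)}{\sin\pi(\theta_2-\theta_1)}\right|$, which vanishes only at $\theta_1=0$ when $\tfrac12<\theta_2<1$, its second $\theta_1$-derivative at $\theta_1=0$ equals $2\pi\cot\pi\theta_2<0$, and in fact $\text{Re}\,\hat{V}(\theta_1,\theta_2,\theta_3)<\text{Re}\,\hat{V}(0,\theta_2,\theta_3)$ for every $\theta_1\ne0$ in the range of $D_0$). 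I then split $D_0=R_1\cup R_2$ with
\[
R_1=\Bigl\{(\theta_1,\theta_2,\theta_3)\in D_0\ \Big|\ \tfrac{q}{2}\theta_1+\theta_2\ge 2\Bigr\},\qquad R_2=\Bigl\{(\theta_1,\theta_2,\theta_3)\in D_0\ \Big|\ \tfrac{q}{2}\theta_1+\theta_2< 2\Bigr\},
\]
and show that the contribution to $\hat{h}_N(m_1,0,0)$ from each piece is $O\!\left(e^{(N+\frac12)(\zeta_{\mathbb{R}}(p,q)-\epsilon)}\right)$ for some $\epsilon>0$.

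On $R_2$ I argue as in the second half of the proof of Theorem \ref{theorem-m_1}. Specializing Proposition \ref{prop-inequalities26} to $m_2=m_3=0$, inequality $(14)$ reads $\theta_2+\tfrac{q}{2}\theta_1>m_1+1$, which for $m_1\ge1$ is violated on all of $R_2$. In the case analysis proving Proposition \ref{prop-inequalities26} (case I-F, where the coefficient of $-X_1$ in $F(X_1,X_2,X_3;m_1,0,0)$ is precisely $\theta_2+\tfrac{q}{2}\theta_1-(m_1+1)$), the failure of $(14)$ makes this coefficient negative, so by Lemma \ref{lemma-X1X2X3}(1) there is a ray along which $X_1^2+X_2^2+X_3^2\to\infty$ and $F\to-\infty$; by Lemma \ref{lemma-Li2} this forces $\text{Re}\,\hat{V}(p,q;\theta_1+\sqrt{-1}X_1,\theta_2+\sqrt{-1}X_2,\theta_3+\sqrt{-1}X_3;m_1,0,0)\to-\infty$ along that ray. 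Deforming the contour of integration into such directions (by the partition-of-unity argument used throughout Section \ref{Section-asympticexpansion}) shows the $R_2$-part of $\hat{h}_N(m_1,0,0)$ is negligible.

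On $R_1$ I would invoke the $\theta_1$-analogue of Lemmas \ref{theorem-inprincipal1} and \ref{theorem-inprincipal2}: writing $\theta_1=c_1$, for $c_1$ in the range forced by $R_1$ (so $c_1\ge\tfrac{2(2-\theta_2)}{q}\ge\tfrac{2}{q}$) one has
\begin{align*}
2\pi\,\text{Re}\,\hat{V}(p,q;c_1,\theta_2,\theta_3;m_1,0,0)
&\le 2\pi\,\text{Re}\,\hat{V}\bigl(p,q;0,\theta_2(0),\theta_3(0)\bigr)-\pi^2 c_1^2\\
&\le 2\pi\,\text{Re}\,\hat{V}\bigl(p,q;0,\theta_2(0),\theta_3(0)\bigr)-\frac{4\pi^2}{q^2}\\
&< Vol(M_{p,q}),
\end{align*}
where $\bigl(0,\theta_2(0),\theta_3(0)\bigr)$ is the solution of the critical point equations (\ref{equation-critical2}), (\ref{equation-critical3}) with $\theta_1=0$, and the last inequality holds for $(p,q)\in S$ because the gap $2\pi\,\text{Re}\,\hat{V}\bigl(0,\theta_2(0),\theta_3(0)\bigr)-Vol(M_{p,q})$ is of order $1/q^2$ (the true critical point has $|\text{Re}\,\theta_1^0|\sim 1/(q-2)$, so the restricted critical value at $\theta_1=0$ lies within $O(1/q^2)$ of $\zeta_{\mathbb{R}}(p,q)=Vol(M_{p,q})/2\pi$), hence is beaten by $4\pi^2/q^2$ once the constants are checked. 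By compactness of $\overline{D_0}$ this yields a uniform $\epsilon>0$ with $2\pi\,\text{Re}\,\hat{V}<Vol(M_{p,q})-\epsilon$ on $R_1$, hence $2\pi\,\text{Re}\,\hat{V}_N<Vol(M_{p,q})-\epsilon/2$ for large $N$ by Proposition \ref{prop-VNexpansion}, and Theorem \ref{theorem-usedinm1} then shows the $R_1$-part of $\hat{h}_N(m_1,0,0)$ is negligible. Since $R_1\cup R_2=D_0$, combining the two estimates proves the theorem.

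The main obstacle is the estimate on $R_1$, i.e.\ establishing the $\theta_1$-analogue of Lemmas \ref{theorem-inprincipal1} and \ref{theorem-inprincipal2} with the correct numerical constants, uniformly over $(p,q)\in S$. This is delicate precisely because the threshold $c_1\sim 2/q$ produced by inequality $(14)$ and the distance $\sim 1/(q-2)$ from the slice $\theta_1=0$ to the true critical point are of the same order, so the quadratic gain $\pi^2 c_1^2$ must be weighed against an $O(1/q^2)$ excess with sharp constants. Concretely this needs a uniform concavity/curvature estimate for the Lobachevsky combination $-\Lambda(\theta_2+\theta_1)-\Lambda(\theta_2-\theta_1)$ in the $\theta_1$-direction over the $(\theta_2,\theta_3)$-range of $D_0$, together with a comparison with $Vol(M_{p,q})$ of the type carried out in Lemma \ref{lemma-volumeestimate}; this is where the hypothesis $(p,q)\in S$ enters. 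The $R_2$ part, by contrast, is routine once Proposition \ref{prop-inequalities26} is available.
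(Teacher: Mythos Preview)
Your approach is essentially the paper's: split according to inequality (14), push the contour to $-\infty$ where (14) fails, and on the remaining region use a quadratic decrease in $\theta_1$ plus a two-dimensional saddle-point estimate on slices $\theta_1=c_1$. A few points to straighten out. First, the paper splits simply at $\theta_1=2/q$ rather than along the curve $\tfrac{q}{2}\theta_1+\theta_2=2$; your observation $c_1\ge\frac{2(2-\theta_2)}{q}>\frac{2}{q}$ shows this makes no difference. Second, the reference value at $\theta_1=0$ is not merely ``within $O(1/q^2)$ of $\zeta_{\mathbb R}(p,q)$''---it is exactly $Vol(S^3\setminus\mathcal{K}_p)/2\pi$, and the paper packages the two needed inequalities as Lemmas \ref{theorem-inprincipal3} and \ref{theorem-inprincipal4}: the first gives $Vol(M_{p,q})>Vol(S^3\setminus\mathcal{K}_p)-6\pi^2/q^2$, the second gives a drop of $\tfrac{3}{2}\pi^2 c_1^2$ (not $\pi^2 c_1^2$); at $c_1=2/q$ these match on the nose. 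Your constant $\pi^2$ would force a sharper version of Lemma \ref{theorem-inprincipal3}, so use the paper's $\tfrac{3}{2}\pi^2$. Third, the saddle-point input on $R_1$ is Theorem \ref{theorem-usedinm3} (two-dimensional saddle on slices $\theta_1=c_1$), not Theorem \ref{theorem-usedinm1} (which handles slices $\theta_3=c_3$); you had them swapped.
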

\begin{proof} 
For $\frac{2}{q}\leq c_1<\frac{1}{4}$, by Lemmas \ref{theorem-inprincipal3} and \ref{theorem-inprincipal4},  we have
\begin{align} \label{formula-inequality}
    2\pi \text{Re}\hat{V}(p,q; c_1,\theta_2,\theta_3;m_1,0,0)&\leq Vol\left(S^3\setminus \mathcal{K}_p\right)-\frac{3}{2}\pi^2 c_1^2\\\nonumber
    &\leq Vol\left(S^3\setminus \mathcal{K}_p\right)-6\pi^2\frac{1}{q^2}\\\nonumber
    &<Vol\left(M_{p,q}\right).  
\end{align}
Then the integral over the region  
$\{(\theta_1,\theta_2,\theta_3)\in D_0|\theta_1> \frac{2}{q}\}$ can be  neglected by using Theorem \ref{theorem-usedinm3}.

Moreover, when $m_1\geq 1$,  from the inequalities (14), we obtain 
\begin{align}
\theta_1>\frac{2(2-\theta_2)}{q}>\frac{2}{q},
\end{align}

Therefore, over the region
$\{(\theta_1,\theta_2,\theta_3)\in D_0|\theta_1\leq \frac{2}{q}\}$, we can find a direction of 
$X_1^2+X_2^2+X_3^2\rightarrow +\infty $ such that 
\begin{align}
\text{Re}\hat{V}(p,q;\theta_1,\theta_2,\theta_3;m_1,0,0)\rightarrow -\infty.     
\end{align}
It follows that the integral over this region can be neglected. 

Therefore, the whole integral can be neglected. 
\end{proof}

\begin{theorem}
  When $m_1\geq 1$ and $m_3\geq 1$, then 
\begin{align}
\hat{h}_{N}(m_1,0,m_3)    
\end{align}
 can be neglected.
\end{theorem}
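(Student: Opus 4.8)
The plan is to follow the template of Theorems~\ref{theorem-m_1} and~\ref{theorem-m3}: partition $D_0$ according to the size of $\theta_1$, dispose of the ``small $\theta_1$'' part by deforming the contour of integration into the complex domain, and dispose of the ``large $\theta_1$'' part by a pointwise upper bound on $\text{Re}\,\hat{V}$. The observation that makes both halves go through with no new input is that on the real slice the extra term $-2\pi\sqrt{-1}\sum_i m_i\theta_i$ in $\hat{V}(\theta_1,\theta_2,\theta_3;m_1,m_2,m_3)$ is purely imaginary, so $\text{Re}\,\hat{V}(\theta_1,\theta_2,\theta_3;m_1,0,m_3)=\text{Re}\,\hat{V}(\theta_1,\theta_2,\theta_3)$ is independent of $m_1,m_3$; hence every estimate for $\text{Re}\,\hat{V}$ obtained in the proof of Theorem~\ref{theorem-m3} is available here verbatim. (Symmetrically one could split on $\theta_3$ and invoke $m_3\geq1$ through inequalities $(6)$ and $(7)$ of Proposition~\ref{prop-inequalities26}, but the split on $\theta_1$ already closes the argument.)

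First I would treat the region $\{(\theta_1,\theta_2,\theta_3)\in D_0\mid \theta_1\geq \frac{2}{q}\}$. By Lemmas~\ref{theorem-inprincipal3} and~\ref{theorem-inprincipal4}, on this region $2\pi\,\text{Re}\,\hat{V}(p,q;\theta_1,\theta_2,\theta_3;m_1,0,m_3)\leq Vol(S^3\setminus\mathcal{K}_p)-\frac{3}{2}\pi^2\theta_1^2\leq Vol(S^3\setminus\mathcal{K}_p)-\frac{6\pi^2}{q^2}<Vol(M_{p,q})=2\pi\zeta_{\mathbb{R}}(p,q)$, the final inequality holding because $(p,q)\in S$ and the final equality being Theorem~\ref{theorem-critical=volume}. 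Thus the integrand is $O\left(e^{(N+\frac{1}{2})(\zeta_{\mathbb{R}}(p,q)-\epsilon)}\right)$ there for some $\epsilon>0$, and Theorem~\ref{theorem-usedinm3} shows that the contribution of this region to $\hat{h}_N(m_1,0,m_3)$ is negligible.

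Next I would treat the complementary region $\{(\theta_1,\theta_2,\theta_3)\in D_0\mid \theta_1<\frac{2}{q}\}$. Since $m_1\geq1$, $\theta_2<1$, and $\frac{q}{2}\theta_1<1$, we have $\theta_2+\frac{q}{2}\theta_1<2\leq m_1+1$, so inequality $(14)$ of Proposition~\ref{prop-inequalities26} is violated at every point of this region. Reading off the relevant case in the proof of that proposition — the cone $-X_1\geq X_2\geq X_3\geq 0$, in which $-X_1$ carries the coefficient $\theta_2+\frac{q}{2}\theta_1-m_1-1<0$ — Lemma~\ref{lemma-X1X2X3}(1) produces a direction with $X_1^2+X_2^2+X_3^2\to\infty$ along which $F(X_1,X_2,X_3;m_1,0,m_3)\to-\infty$, hence, by Lemma~\ref{lemma-Li2}, along which $\text{Re}\,\hat{V}(p,q;\theta_1+\sqrt{-1}X_1,\theta_2+\sqrt{-1}X_2,\theta_3+\sqrt{-1}X_3;m_1,0,m_3)\to-\infty$. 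Deforming the contour of integration in this direction, exactly as in the proofs of Theorems~\ref{theorem-m_1} and~\ref{theorem-m3}, makes the contribution of this region negligible as well. Adding the two regions proves the theorem.

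The main obstacle, as in the preceding theorems, is not conceptual but one of uniformity: in the second region the ``good direction'' of the contour deformation varies with $(\theta_1,\theta_2,\theta_3)$, and these local deformations must be patched together over the (compact) closure of the region, which is exactly what Theorem~\ref{theorem-usedinm3} is built to do; and one needs the surgery inequality $Vol(S^3\setminus\mathcal{K}_p)-\frac{6\pi^2}{q^2}<Vol(M_{p,q})$ for $(p,q)\in S$, supplied by the geometric estimates of Appendix~\ref{Appendix-geometry} (cf. Lemma~\ref{lemma-volumeestimate}). With both ingredients already established, what remains is routine bookkeeping.
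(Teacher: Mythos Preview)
Your treatment of the region $\{\theta_1<\tfrac{2}{q}\}$ is correct and matches the paper's. The gap is in the region $\{\theta_1\geq\tfrac{2}{q}\}$. Your key observation---that on the \emph{real} slice $\text{Re}\,\hat{V}$ is independent of $m_1,m_3$---is true but insufficient. Theorem~\ref{theorem-inprincipal4} does not give a pointwise bound over real $(\theta_2,\theta_3)$ (where $\text{Re}\,\hat{V}$ can in fact exceed $\zeta_{\mathbb{R}}(p,q)$); it bounds $\text{Re}\,\hat{V}$ only at the two-dimensional \emph{complex} critical point $(\theta_2(c_1),\theta_3(c_1))$ of $\hat{V}(c_1,\cdot,\cdot;m_1,0,0)$, and Theorem~\ref{theorem-usedinm3} then converts that critical-value bound into an integral bound via a contour deformation in $(\theta_2,\theta_3)$. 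But once $\theta_3$ acquires imaginary part $X_3$, the term $-2\pi\sqrt{-1}\,m_3\theta_3$ contributes $2\pi m_3 X_3$ to the real part; for $m_3\geq1$ the critical-point equations change and the deformation built for $m_3=0$ no longer controls the integrand. So neither the critical-value bound of Theorem~\ref{theorem-inprincipal4} nor Theorem~\ref{theorem-usedinm3} transfers ``verbatim'' to the present case, contrary to what you claim.

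The paper closes this gap precisely by the nested split you set aside. On each slice $\theta_1=c_1$ with $c_1\geq\tfrac{2}{q}$ it splits again at $\theta_3=\tfrac{1}{2}+\tfrac{7}{8p}$. For $\theta_3<\tfrac{1}{2}+\tfrac{7}{8p}$ the combination of inequalities~(6) and~(7) of Proposition~\ref{prop-inequalities26} fails (this is where $m_3\geq1$ enters), producing a direction in $(X_2,X_3)$ along which $\text{Re}\,\hat{V}\to-\infty$. For $\theta_3=c_3\geq\tfrac{1}{2}+\tfrac{7}{8p}$ one applies the one-dimensional saddle point method in $\theta_2$ alone: the critical point $T_2(c_1,c_3)$ is independent of $m_3$, and Lemma~\ref{Lemma-inequ} together with~(\ref{formula-inequality}) gives $2\pi\,\text{Re}\,\hat{V}(p,q;c_1,T_2(c_1,c_3),c_3)<Vol(M_{p,q})$. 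Both hypotheses $m_1\geq1$ and $m_3\geq1$ are genuinely used.
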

\begin{proof}
 First,   when $m_1\geq 1$,  from the inequalities (14), we obtain 
\begin{align}
\theta_1>\frac{2(2-\theta_2)}{q}>\frac{2}{q},
\end{align}

Therefore, over the region
when $\theta_1< \frac{2}{q}$, we can find a direction of 
$X_1^2+X_2^2+X_3^2\rightarrow +\infty $ such that 
\begin{align}
\text{Re}\hat{V}(p,q;\theta_1,\theta_2,\theta_3;m_1,0,m_3)\rightarrow -\infty.     
\end{align}

It follows that the integral over the region $\{\theta_1< \frac{2}{q}\}$ can be neglected. 

For $\frac{2}{q}\leq c_1\leq \frac{1}{4}$, on the region $D_0(c_1)=\{\theta_1=c_1\} \cap D_0$, we estimate  the integral 
\begin{align}
    \int_{D_0(c_1)}\sin(\pi c_1)\sin(2\pi\theta_3)e^{(N+\frac{1}{2})\hat{V}_N(c_1,\theta_2,\theta_3;m_1,0,m_3)}d\theta_2d\theta_3
\end{align}
as follows. 

For $\theta_3<\frac{1}{2}+\frac{7}{8p}$, then
$\theta_3<\frac{1}{2}+\frac{7}{2(4p-1)}$, in this case,  we can find a direction of 
$X_2^2+X_3^2\rightarrow +\infty $ such that 
\begin{align}
\text{Re}\hat{V}(p,q;c_1,\theta_2,\theta_3;m_1,0,m_3)\rightarrow -\infty.     
\end{align}
So the integral over $\theta_3<\frac{1}{2}+\frac{7}{8p}$ can be neglected.  

For $\theta_3\geq \frac{1}{2}+\frac{7}{8p}$, we use the one-dimensional saddle point method.  Let $\theta_3=c_3$, 
for $c_3\geq \frac{1}{2}+\frac{7}{8p}$, then by Lemma \ref{Lemma-inequ} and formula (\ref{formula-inequality}), we have 
\begin{align}
 &2\pi \text{Re}\hat{V}(p,q;m_1,0,m_3,c_1,T_2(c_1,\frac{1}{2}+\tilde{c}_3),c_3=\frac{1}{2}+\tilde{c}_3)\\\nonumber
 &<2\pi \text{Re} \hat{V}(p,q;m_1,0,0,c_1,\theta_2(c_1),\theta_3(c_1))\\\nonumber
 &<Vol\left(M_{p,q}\right).
\end{align}
By using the one-dimensional saddle method, see Appendix \ref{appendix-onedim}, 
we can show that integral over the region $\theta_3\geq \frac{1}{2}+\frac{7}{8p}$ can also be neglected. 

Hence the whole integral can be neglected. 
\end{proof}

Therefore, 
\begin{proposition}
There exists $\epsilon>0$, such that
\begin{align} \label{formula-propRT}
RT_{r}(M_{p,q})=4\hat{h}_{N}(0,0,0)+O(e^{(N+\frac{1}{2})(\zeta_{\mathbb{R}}(p,q)-\epsilon)}).     
\end{align}
\end{proposition}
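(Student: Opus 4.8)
The plan is to obtain the statement from the decomposition (\ref{formula-RTN}) by discarding all the Fourier coefficients except $\hat{h}_{N}(0,0,0)$. Write the triple sum in (\ref{formula-RTN}) as
\begin{align*}
\sum_{m_1\geq 0}\sum_{m_2\in\mathbb{Z}}\sum_{m_3\geq 0}\hat{h}_{N}(m_1,m_2,m_3)=\hat{h}_{N}(0,0,0)+\Sigma_{\mathrm{I}}+\Sigma_{\mathrm{II}},
\end{align*}
where $\Sigma_{\mathrm{I}}$ collects the terms with $m_1\geq 0$, $m_3\geq 0$ and $m_2\neq 0$, and $\Sigma_{\mathrm{II}}$ collects the terms with $m_1\geq 0$, $m_3\geq 0$, $m_2=0$ and $(m_1,m_3)\neq(0,0)$. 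Since these two families together with $(0,0,0)$ exhaust the whole index set, it suffices to prove that $\Sigma_{\mathrm{I}}$ and $\Sigma_{\mathrm{II}}$ are each $O(e^{(N+\frac{1}{2})(\zeta_{\mathbb{R}}(p,q)-\epsilon)})$ for one common $\epsilon>0$; then (\ref{formula-propRT}) follows after multiplying by $4$ and absorbing everything into the error term already present in (\ref{formula-RTN}).

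The estimates for the individual summands are supplied by the preceding subsections. For $\Sigma_{\mathrm{I}}$, the analysis of the case $m_2\neq 0$ applies: by inequalities $(1)$ and $(5)$ of Proposition \ref{prop-inequalities26} a critical-type contribution would require $-1<m_2<\theta_2-\frac{1}{2}<\frac{1}{2}$ on $D_0$, so for $m_2\leq -2$ or $m_2\geq 1$ the linear function $F(X_1,X_2,X_3;m_1,m_2,m_3)$ tends to $-\infty$ along some ray with $X_1^2+X_2^2+X_3^2\to\infty$ (Lemma \ref{lemma-X1X2X3}), whence by Lemma \ref{lemma-Li2} the contour defining $\hat{h}_{N}$ can be deformed into the region $\{\mathrm{Re}\,\hat{V}<\zeta_{\mathbb{R}}(p,q)-\epsilon\}$; and for $m_2=-1$, equation (\ref{formula-ReV-1}) shows the integrand stays $O(1)$ on the exponential scale, so $\hat{h}_{N}(m_1,-1,m_3)$ grows at most polynomially in $N$ and is therefore negligible because $\zeta_{\mathbb{R}}(p,q)>0$. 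For $\Sigma_{\mathrm{II}}$ the three possibilities $m_1=0,\ m_3\geq 1$; $m_1\geq 1,\ m_3=0$; and $m_1\geq 1,\ m_3\geq 1$ are precisely Theorem \ref{theorem-m_1}, Theorem \ref{theorem-m3}, and the theorem stated immediately after Theorem \ref{theorem-m3}.

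The one point that needs genuine care, and the step I expect to be the \emph{main obstacle}, is that $\Sigma_{\mathrm{I}}$ and $\Sigma_{\mathrm{II}}$ are infinite sums, so term-by-term negligibility must be upgraded to negligibility of the total. I would argue this in two stages. First, since $h_N$ is $C^\infty$ with support in the compact set $\overline{D_0}$, integration by parts in the Fourier integral forces $\hat{h}_{N}(m_1,m_2,m_3)$ to decay in $(1+|m_1|+|m_2|+|m_3|)$ faster than any polynomial, so all the sums converge absolutely. Second, outside a fixed finite box $\{|m_1|,|m_2|,|m_3|\leq M_0\}$ the descent analysis above is uniform and in fact improves with $|m|$: the term $-2\pi\sqrt{-1}m_i\theta_i$ contributes $2\pi m_iX_i$ to $\mathrm{Re}\,\hat{V}$ along the chosen ray, which is negative and grows linearly in $|m_i|$, so the contour deformation bounds such a coefficient by $O\!\left(e^{(N+\frac{1}{2})(\zeta_{\mathbb{R}}(p,q)-c(1+|m_1|+|m_2|+|m_3|))}\right)$ for some $c>0$, whose sum over the tail is itself $O(e^{(N+\frac{1}{2})(\zeta_{\mathbb{R}}(p,q)-c)})$. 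The finitely many indices inside the box other than $(0,0,0)$ are each handled by one of the results cited in the previous paragraph, producing an individual positive exponent; taking $\epsilon$ to be the minimum of those finitely many numbers together with $c$ gives $\Sigma_{\mathrm{I}}+\Sigma_{\mathrm{II}}=O(e^{(N+\frac{1}{2})(\zeta_{\mathbb{R}}(p,q)-\epsilon)})$. Substituting into (\ref{formula-RTN}) yields (\ref{formula-propRT}).
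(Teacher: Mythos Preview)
Your proposal is correct and follows the same route as the paper: you invoke the decomposition (\ref{formula-RTN}) and then discard every Fourier coefficient other than $\hat h_N(0,0,0)$ by citing exactly the results the paper proves for that purpose --- the $m_2\neq 0$ analysis based on Proposition \ref{prop-inequalities26} and (\ref{formula-ReV-1}), and Theorems \ref{theorem-m_1}, \ref{theorem-m3} and the subsequent theorem for the $m_2=0$ cases. The paper itself states the proposition as an immediate corollary of these inputs without writing a separate argument, so there is nothing to compare beyond that.

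Your extra paragraph on why the \emph{infinite} sum of negligible terms is still negligible is a genuine addition: the paper does not spell this out. Your two-stage reasoning is sound. The integration-by-parts step, as you probably noticed, only yields $N$-dependent constants (each derivative of $e^{(N+\frac12)\hat V_N}$ brings down a factor of $N$), so it gives absolute convergence for each $N$ but not uniform control; the real work is done by your second step. There the point is that the piecewise-linear model $F(X_1,X_2,X_3;m_1,m_2,m_3)$ of Proposition \ref{prop-inequalities26} contains the terms $m_iX_i$ explicitly, so deforming the contour to a \emph{fixed} height $|X|=X_0$ in the appropriate octant (uniformly over $D_0$) forces $\mathrm{Re}\,\hat V$ below $C-2\pi X_0(|m_1|+|m_2|+|m_3|)$ for some constant $C$ coming from Lemma \ref{lemma-Li2}; this gives the geometric tail bound you claim and makes the sum over large $|m|$ harmless. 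One minor wording issue: for $m_2=-1$ the limit (\ref{formula-ReV-1}) is $0$ but is not attained at finite $X_2$, so ``grows at most polynomially'' should be ``is $O(e^{(N+\frac12)\delta})$ for every $\delta>0$''; since $\zeta_{\mathbb R}(p,q)>0$ the conclusion is unchanged.
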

Finally, we only need to estimate the integral $\hat{h}_{N}(0,0,0)$.

\subsection{The contribution of $\hat{h}_{N}(0,0,0)$}

We let $U_0'$ be the region determined by inequalities illustrated in Proposition \ref{prop-inequalities26}  when $m_1=m_2=m_3=0$. Let $U_0=U'_0\cap D_0$, as a consequence of Proposition \ref{prop-inequalities26}, 

\begin{corollary}\label{coro-U0noU0}
For any $(\theta_1,\theta_2,\theta_3)\in D_0$, 

(i) when $(\theta_1,\theta_2,\theta_3)\in U_0$, we have 
\begin{align}
    f(X_1,X_2,X_3)\rightarrow +\infty \ \text{as } \ X_1^2+X_2^2+X_3^2\rightarrow +\infty.
\end{align}

(ii) when $(\theta_1,\theta_2,\theta_3)\notin U_0$, 
\begin{align}
     f(X_1,X_2,X_3)\rightarrow -\infty \ \text{in some directions of} \  X_1^2+X_2^2+X_3^2\rightarrow +\infty.
\end{align}
\end{corollary}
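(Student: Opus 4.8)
The plan is to read off both assertions from Proposition \ref{prop-inequalities26} and the case analysis in its proof, specialized to $m_1=m_2=m_3=0$, together with the elementary Lemma \ref{lemma-X1X2X3}. First I would record the link between the function $f(X_1,X_2,X_3)$ occurring in the statement and the piecewise-linear model $F(\,\cdot\,;0,0,0)$: applying Lemma \ref{lemma-Li2} to each of the five dilogarithms in $\hat V$ and taking the real part of the quadratic monomials in $\theta_1,\theta_3$ after the substitution $\theta_i\mapsto\theta_i+\sqrt{-1}X_i$, one sees that $f(X_1,X_2,X_3)=\mathrm{Re}\,\hat V(\theta_1+\sqrt{-1}X_1,\theta_2+\sqrt{-1}X_2,\theta_3+\sqrt{-1}X_3)$ differs from $F(X_1,X_2,X_3;0,0,0)$ by a quantity bounded in absolute value by a constant depending only on $(\theta_1,\theta_2,\theta_3)$, uniformly over the compact set $\overline{D_0}$. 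Hence $f\to+\infty$ as $X_1^2+X_2^2+X_3^2\to+\infty$ if and only if $F(\,\cdot\,;0,0,0)$ does, and $f\to-\infty$ along a ray if and only if $F(\,\cdot\,;0,0,0)$ does, so it suffices to prove the dichotomy for $F(\,\cdot\,;0,0,0)$.

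For part (i), recall that $U_0=U_0'\cap D_0$, where $U_0'$ is cut out precisely by the $26$ inequalities of Proposition \ref{prop-inequalities26} with $m_1=m_2=m_3=0$. Thus for $(\theta_1,\theta_2,\theta_3)\in U_0$ all $26$ inequalities hold, so Proposition \ref{prop-inequalities26} yields $F(X_1,X_2,X_3;0,0,0)\to+\infty$ as $X_1^2+X_2^2+X_3^2\to+\infty$, and therefore $f\to+\infty$ as well.

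For part (ii), let $(\theta_1,\theta_2,\theta_3)\in D_0\setminus U_0$. Since the point lies in $D_0$ it cannot lie in $U_0'$, so at least one of the $26$ inequalities --- say inequality $(j)$ --- fails. Going back to the proof of Proposition \ref{prop-inequalities26}, inequality $(j)$ occurs there as one of the three hypotheses controlling $F(\,\cdot\,;0,0,0)$ on one of the $48$ cones into which $\mathbb{R}^3$ is partitioned. On that cone $F(\,\cdot\,;0,0,0)$ is, in the appropriately ordered coordinates $X'_1\ge X'_2\ge X'_3\ge0$, a linear function $AX'_1+BX'_2+CX'_3$, and inequality $(j)$ is exactly one of the three conditions $A>0$, $A+B>0$, $A+B+C>0$. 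Assuming $(\theta_1,\theta_2,\theta_3)$ does not lie on the (measure-zero) hyperplane where $(j)$ holds with equality, its failure means $A<0$, $A+B<0$, or $A+B+C<0$, and then Lemma \ref{lemma-X1X2X3}(1) supplies a direction inside that cone with $X_1^2+X_2^2+X_3^2\to+\infty$ along which $F(\,\cdot\,;0,0,0)\to-\infty$; hence $f\to-\infty$ in that direction.

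The main obstacle I anticipate is the bookkeeping in part (ii): one must verify that the list of $26$ inequalities in Proposition \ref{prop-inequalities26} is exactly the union, after deleting duplicates, of the $48$ triples of hypotheses attached to the cones, so that the failure of any single one of them is genuinely detected inside some cone by Lemma \ref{lemma-X1X2X3}(1). This is a finite and mechanical check already implicit in the proof of Proposition \ref{prop-inequalities26}. The boundary case, where the failed inequality holds with equality, concerns only a measure-zero subset of $D_0$ and does not affect the later use of the corollary, so it can be safely ignored.
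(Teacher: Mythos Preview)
Your proposal is correct and follows exactly the approach the paper intends: the corollary is stated in the paper simply ``as a consequence of Proposition \ref{prop-inequalities26}'' with no further argument, and you have supplied precisely the missing link, namely that Lemma \ref{lemma-Li2} bounds $f-2\pi F(\,\cdot\,;0,0,0)$ uniformly so that the asymptotic behaviour of $f$ is governed by the piecewise-linear model $F$, after which (i) is immediate from Proposition \ref{prop-inequalities26} and (ii) follows by tracing the failed inequality back to one of the $48$ cones and invoking Lemma \ref{lemma-X1X2X3}(1). Your caveat about the measure-zero boundary where an inequality holds with equality is accurate and is handled (implicitly) the same way in the paper's subsequent use of the corollary.
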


We introduce the cube MCB as shown in figure \ref{figure-cube}, the integrand is symmetric with respect to $\theta_3=\frac{1}{2}$, $\theta_1=0$ and by Lemma \ref{lemma-regionD'0}, 
we have
\begin{figure}[!htb] \label{figure-cube} 
\begin{align*} 
\raisebox{-15pt}{
\includegraphics[width=450 pt]{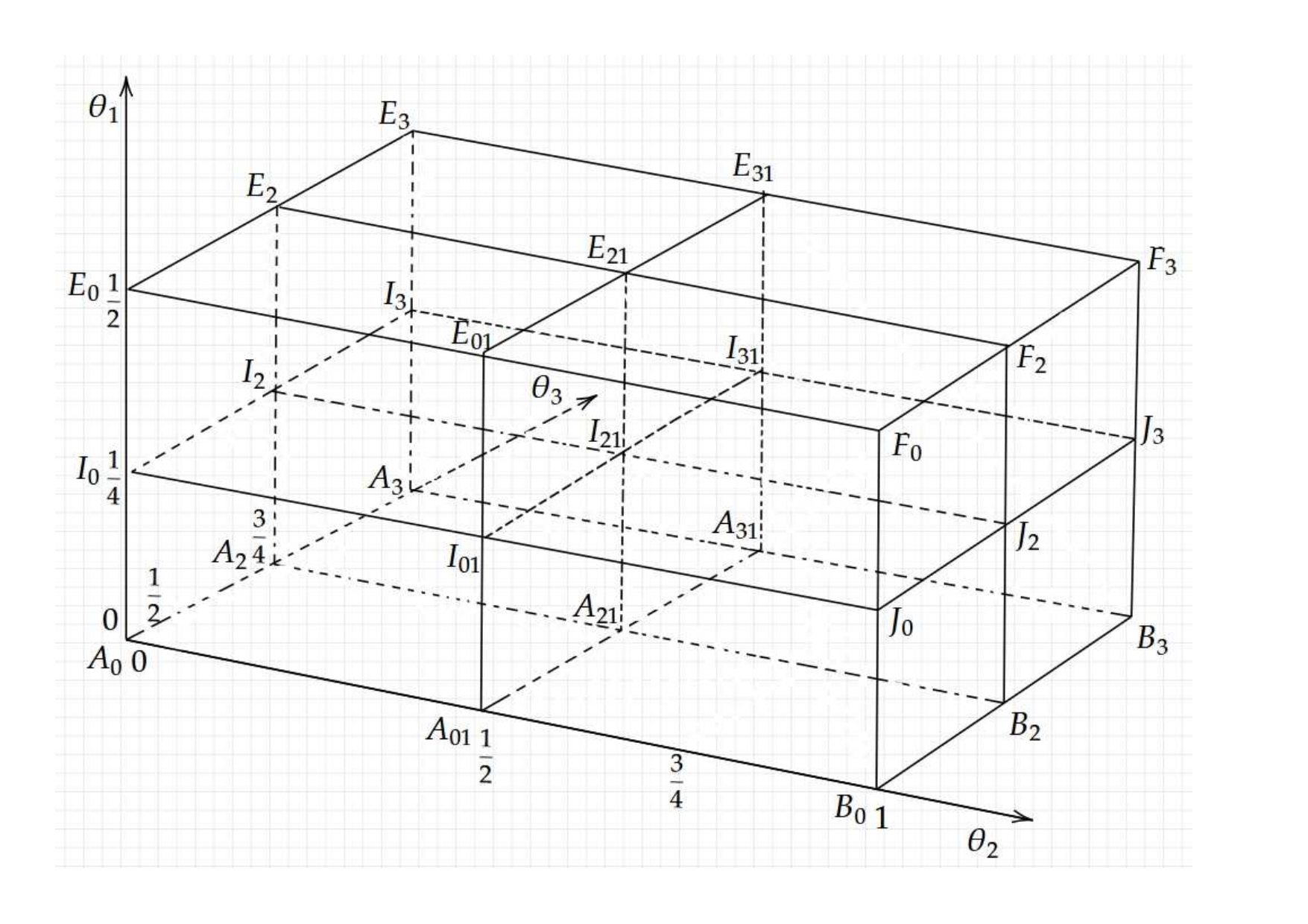}}.
\end{align*}
\caption{The cube MCB}
\end{figure}

\begin{align}
 &\int_{D_0}\sin(\pi \theta_1)\sin(2\pi\theta_3)e^{(N+\frac{1}{2})\hat{V}_N(\theta_1,\theta_2,\theta_3)}d\theta_1d\theta_2d\theta_3\\\nonumber
 &=4\int_{MCB}\sin(\pi \theta_1)\sin(2\pi\theta_3)e^{(N+\frac{1}{2})\hat{V}_N(\theta_1,\theta_2,\theta_3)}d\theta_1d\theta_2d\theta_3 +O(e^{\frac{N+\frac{1}{2}}{2\pi }(3.56337-\epsilon)})   
\end{align}

So we only need to consider the integral over the cube MCB 
\begin{align}
 \int_{MCB}\sin(\pi \theta_1)\sin(2\pi\theta_3)e^{(N+\frac{1}{2})\hat{V}_N(\theta_1,\theta_2,\theta_3)}d\theta_1d\theta_2d\theta_3.    
\end{align}

Consider the function 
\begin{align}
  v(\theta_1,\theta_3,\theta_3)&=\text{Re}\hat{V}(p,q,\theta_1,\theta_2,\theta_3)\\\nonumber
  &=\Lambda(\theta_2+\theta_3)+\Lambda(\theta_2-\theta_3)-\Lambda(\theta_2+\theta_1)-\Lambda(\theta_2)-\Lambda(\theta_2-\theta_1). 
\end{align}

we have
\begin{align} \label{formula-vtheta1}
\frac{dv(\theta_1,\theta_2,\theta_3)}{d\theta_1}=\Lambda'(\theta_2-\theta_1)-\Lambda'(\theta_2+\theta_1)=2\log \frac{|\sin \pi(\theta_2+\theta_1)|}{|\sin \pi(\theta_2-\theta_1)|}<0    
\end{align}
and 
\begin{align} \label{formula-vtheta2}
\frac{dv(\theta_1,\theta_2,\theta_3)}{d\theta_3}=\Lambda'(\theta_2+\theta_3)-\Lambda'(\theta_2-\theta_3)=2\log \frac{|\sin \pi(\theta_2-\theta_3)|}{|\sin \pi(\theta_2+\theta_3)|}<0    
\end{align}
for $0<\theta_1< \frac{1}{2}$,  $\frac{1}{2}<\theta_2<1$ and $\frac{1}{2}<\theta_3<1$. 

Therefore, on the cube $CB_1: I_{21}J_2J_3I_{31}-E_{21}F_2F_3E_{31} $, for $(\theta_1,\theta_2,\theta_3)\in CB_1$,  
we have
\begin{align} \label{formula-CB1}
&2\pi v(\theta_1,\theta_3,\theta_3)\leq  2\pi \max_{\theta_2\in (\frac{1}{2},1)} v\left(\frac{1}{4},\theta_2,\frac{3}{4}\right)\\\nonumber
&=2\pi v\left(\frac{1}{4},\frac{5}{6},\frac{3}{4}\right)=v_3 <3.56337.   
\end{align}
On the cube $CB_2: A_{21}B_2B_3A_{31}-I_{21}J_2J_3I_{31}$, for $(\theta_1,\theta_2,\theta_3)\in CB_2$, we have
\begin{align} \label{formula-CB2}
&2\pi v(\theta_1,\theta_3,\theta_3)\leq 2\pi\max_{\theta_2\in (\frac{1}{2},1)}v\left(0,\theta_2,\frac{3}{4}\right)\\\nonumber
&=2\pi v\left(0,\frac{5}{6},\frac{3}{4}\right)=3.552296<3.56337.   
\end{align}
On the cube $CB_3: I_{01}J_0J_2I_{21}-E_{01}F_0F_2E_{21}$, for $(\theta_1,\theta_2,\theta_3)\in CB_3$, we have
\begin{align}  \label{formula-CB3}
&2\pi v(\theta_1,\theta_3,\theta_3)\leq 2\pi\max_{\theta_2\in (\frac{1}{2},1)}v\left(\frac{1}{4},\theta_2,\frac{1}{2}\right)\\\nonumber
&=2\pi v\left(\frac{1}{4},0.688635,\frac{1}{2}\right)=3.252728<3.56337.   
\end{align}

Now we consider the cube $SPM_1: A_{01}B_0B_2A_{21}-I_{01}J_0J_2I_{21}$ as shown in Figure \ref{figure2}, it is obvious that 
\begin{align}
MCB=SPM_1\cup CB_1\cup CB_2\cup CB_3.     
\end{align}
By the formulas (\ref{formula-CB1}), (\ref{formula-CB2}) and (\ref{formula-CB3}),  we obtain 
\begin{align}
 &\int_{MCB}\sin(\pi \theta_1)\sin(2\pi\theta_3)e^{(N+\frac{1}{2})\hat{V}_N(\theta_1,\theta_2,\theta_3)}\\\nonumber
 &=\int_{SPM_1}\sin(\pi \theta_1)\sin(2\pi\theta_3)e^{(N+\frac{1}{2})\hat{V}_N(\theta_1,\theta_2,\theta_3)}+O(e^{\frac{N+\frac{1}{2}}{2\pi }(3.56337-\epsilon)}).  
\end{align}

\begin{figure}[!htb] \label{figure2} 
\begin{align*} 
\raisebox{-15pt}{
\includegraphics[width=500 pt]{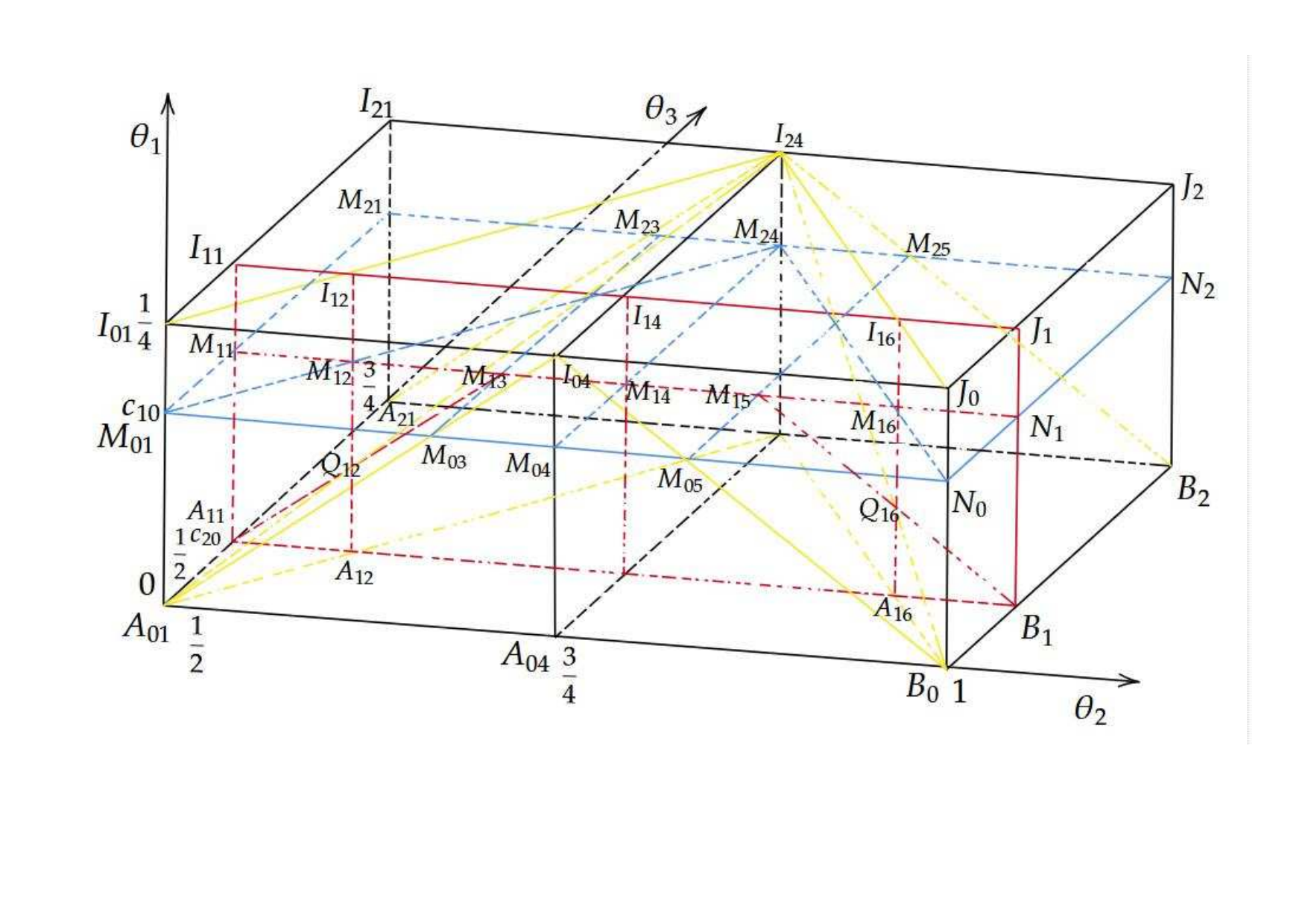}}.
\end{align*}
\caption{The cube $SPM_1$}
\end{figure}

Now we consider the integral over $SPM_1$. 
For the cube $CB_4: M_{11}N_1N_2M_{21}-I_{11}J_1J_2I_{21}$.

For any fixed $(c_1,c_3)\in M_{11}M_{21}J_{21}I_{11}$, i.e. $c_{10}\leq c_1<\frac{1}{4}$ and $c_{20}\leq c_3<\frac{3}{4}$, 
the critical point equation $\hat{V}_{\theta_2}(p,q,c_1,\theta,c_3)=0$ has a unique solution $\theta_2(c_1,c_3)$ with $\text{Re}(\theta_2(c_1,c_3))\in (\frac{1}{2},1)$, see Appendix \ref{appendix-onedim}. By Proposition \ref{prop-Rec1c3decrease},
we obtain
\begin{align}
&2\pi \text{Re}\hat{V}(p,q,c_1,\theta_2(c_1,c_3),c_3)\\\nonumber
&\leq 2\pi\text{Re}\hat{V}(p,q,c_{10},\theta_2(c_{10},c_{30}),c_{30}))\\\nonumber
&=2\pi\text{Re}\hat{V}(p,q,0.1225320,0.808058-0.111315\sqrt{-1},0.600484))\\\nonumber
& =3.25806<3.56337
\end{align}

By using the one-dimensional saddle point method, see Appendix \ref{appendix-onedim} for checking the condition, we estimate the integral 
\begin{align}
\int_{\frac{1}{2}}^1 \sin(\pi \theta_1)\sin(2\pi\theta_3)e^{(N+\frac{1}{2})\hat{V}_N(c_1,\theta_2,c_3)}d\theta_2  =O(e^{\frac{N+\frac{1}{2}}{2\pi}(3.56337-\epsilon)}) 
\end{align}
for some $\epsilon>0$. 
Therefore, 
\begin{align}
&\int_{CB4}\sin(\pi \theta_1)\sin(2\pi\theta_3)e^{(N+\frac{1}{2})\hat{V}_N(\theta_1,\theta_2,\theta_3)}\\\nonumber
&=\int_{(\theta_1,\theta_3)\in M_{11}M_{21}J_{21}I_{11}} \left(\int_{\frac{1}{2}}^{1}\sin(\pi \theta_1)\sin(2\pi\theta_3)e^{(N+\frac{1}{2})\hat{V}_N(\theta_1,\theta_2,\theta_3)}d\theta_2\right)d\theta_1d\theta_3\\\nonumber
&=O(e^{\frac{N+\frac{1}{2}}{2\pi }(3.55337-\epsilon)}).
\end{align}

On the Triangular Prism $TPL_1: A_{01}M_{03}M_{01}-A_{2}M_{23}M_{21}$, for $(\theta_1,\theta_2,\theta_3)\in TPL_1$, 
by formula (\ref{formula-vtheta2}), we have 
\begin{align} \label{formula-TPL1}
&2\pi \max_{(\theta_1,\theta_2,\theta_3)\in TPL_1} v(\theta_1,\theta_2,\theta_3)\\\nonumber 
   &=2\pi \max_{(\theta_1,\theta_2)\in A_{01}M_{03}M_{01}}v(\theta_1,\theta_2,\frac{1}{2})\\\nonumber
   &=2\pi \max_{0\leq c\leq c_{10}}v\left(c,c+\frac{1}{2},\frac{1}{2}\right)\\\nonumber
   &=2\pi v\left(c_{10},c_{10}+\frac{1}{2},\frac{1}{2}\right)=3.37448<3.56337. 
\end{align}
Similarly, on the Triangular Prism $TPR_1: B_{0}M_{06}N_{0}-B_{2}M_{25}N_{2}$, we have
\begin{align} \label{formula-TPR1}
&2\pi \max_{(\theta_1,\theta_2,\theta_3)\in TPR_1} v(\theta_1,\theta_2,\theta_3)\\\nonumber
&=2\pi \max_{(\theta_1,\theta_3)\in B_{0}M_{05}N_0} v\left(\theta_1,\theta_2,\frac{1}{2}\right)\\\nonumber
&=2\pi \max_{0\leq c\leq c_{10}}v(c,1-c,\frac{1}{2})\\\nonumber
&=2\pi v\left(c_{10},1-c_{10},\frac{1}{2}\right)=2.93274<3.56337.
\end{align}
On the Triangular Prism $TPL_2: A_{01}A_{12}A_{11}-I_{01}I_{12}I_{11}$,  we have
\begin{align}
&2\pi \max_{(\theta_1,\theta_2,\theta_3)\in TPL_2} v(\theta_1,\theta_2,\theta_3)\\\nonumber
&=2\pi \max_{(\theta_2,\theta_3)\in A_{01}A_{12}A_{11}} v(0,\theta_2,\theta_3)\\\nonumber
&=2\pi \max_{0\leq c\leq c_{30}}v(0,c,c)\\\nonumber
&=2\pi v(0, c_{30},c_{30})=2.27746<3.56337.
\end{align}
On the Triangular Prism $TPR_2: B_{0}B_{1}A_{16}-J_{0}J_{1}I_{16}$, we have
\begin{align}
&2\pi \max_{(\theta_1,\theta_2,\theta_3)\in TPR_2} v(\theta_1,\theta_2,\theta_3)\\\nonumber
&=2\pi \max_{(\theta_2,\theta_3)\in B_{0}B_{1}A_{16}} v(0,\theta_2,\theta_3)\\\nonumber
&=2\pi \max_{0\leq c\leq c_{30}}v\left(0,\frac{3}{2}-c,c\right)\\\nonumber
&=2\pi v\left(0, \frac{3}{2}-c_{30},c_{30}\right)=3.56337.
\end{align}

Note that the intersections of $TPL_1$ and $TPL_2$ is a truncated triangular prism $TTPL: A_{01}Q_{12}A_{11}-M_{01}M_{12}M_{11}$, and the intersections of $TPR_1$ and $TPR_2$ is a truncated triangular prisim $TTPR: B_0Q_{16}B_1-N_{01}M_{16}N_1$.

Now, we let $SPM_2=SPM_1-TPL_1-TPR_1-(TPL_2\setminus TTPL)-(TPR_2\setminus TTPR)$, by previous formulas, we have
\begin{align}
&\int_{SPM_1}\sin(\pi \theta_1)\sin(2\pi\theta_3)e^{(N+\frac{1}{2})\hat{V}_N(\theta_1,\theta_2,\theta_3)}\\\nonumber
&=\int_{SPM_2}\sin(\pi \theta_1)\sin(2\pi\theta_3)e^{(N+\frac{1}{2})\hat{V}_N(\theta_1,\theta_2,\theta_3)}+O(e^{\frac{N+\frac{1}{2}}{2\pi}(3.55337-\epsilon)}).  
\end{align}

For the Trapezoidal Prism $TZP_1: A_{11}B_1M_{15}M_{13}-A_{21}B_2M_{25}M_{23}$, first, we add two triangular prisms  $A_{11}M_{13}M_{11}-A_{21}M_{23}M_{21}$ and 
 $B_1M_{15}N_1-B_2M_{25}N_2$ to $TZP_1$, we obtain a cube $CB_5: A_{11}B_1B_2A_{21}-M_{11}N_1N_2M_{21}$, 

By formulas (\ref{formula-TPL1}) and (\ref{formula-TPR1}), we obtain that 
\begin{align}
&\int_{TZP_1}\sin(\pi \theta_1)\sin(2\pi\theta_3)e^{(N+\frac{1}{2})\hat{V}_N(\theta_1,\theta_2,\theta_3)}\\\nonumber
&=\int_{CB_5}\sin(\pi \theta_1)\sin(2\pi\theta_3)e^{(N+\frac{1}{2})\hat{V}_N(\theta_1,\theta_2,\theta_3)}+O(e^{\frac{N+\frac{1}{2}}{2\pi}(3.55337-\epsilon)})
\end{align}

For any fixed $(c_1,c_3)\in A_{11}A_{21}M_{21}M_{11}$, i.e. $0\leq c_{1}\leq c_{10}$ and $c_{20}\leq c_3<\frac{3}{4}$,  
the critical point equation $\hat{V}_{\theta_2}(p,q,c_1,\theta,c_3)=0$ has a unique solution $\theta_2(c_1,c_3)$ with $\text{Re}(\theta_2(c_1,c_3))\in (\frac{1}{2},1)$, see Appendix \ref{appendix-onedim}. By Proposition \ref{prop-Rec1c3decrease}
we obtain
\begin{align}
&2\pi \text{Re}\hat{V}(p,q,c_1,\theta_2(c_1,c_3),c_3)\\\nonumber
&\leq 2\pi\text{Re}\hat{V}(p,q,c_{10},\theta_2(c_{10},c_{30}),c_{30}))\\\nonumber
&=2\pi\text{Re}\hat{V}(p,q,0,0.8270666460-0.1216893136\sqrt{-1},0.600484))\\\nonumber
& =3.563367<3.56337.
\end{align}

By using the one-dimensional saddle point method, see Appendix \ref{appendix-onedim} for checking the condition, we estimate the integral 
\begin{align}
\int_{\frac{1}{2}}^1 \sin(\pi \theta_1)\sin(2\pi\theta_3)e^{(N+\frac{1}{2})\hat{V}_N(c_1,\theta_2,c_3)}d\theta_2  =O(e^{\frac{N+\frac{1}{2}}{2\pi}(3.56337-\epsilon)}) 
\end{align}
for some $\epsilon>0$. 
Therefore, 
\begin{align}
&\int_{CB_5}\sin(\pi \theta_1)\sin(2\pi\theta_3)e^{(N+\frac{1}{2})\hat{V}_N(\theta_1,\theta_2,\theta_3)}\\\nonumber
&=\int_{(\theta_1,\theta_3)\in A_{11}A_{21}M_{21}M_{11}}\left(\int_{\frac{1}{2}}^{1}\sin(\pi \theta_1)\sin(2\pi\theta_3)e^{(N+\frac{1}{2})\hat{V}_N(\theta_1,\theta_2,\theta_3)}d\theta_2\right)d\theta_1d\theta_3\\\nonumber
&=O(e^{\frac{N+\frac{1}{2}}{2\pi }(3.56337-\epsilon)}).
\end{align}

For the Trapezoidal Prism $TZP_2: M_{01}N_0M_{16}M_{12}-I_{01}J_0I_{16}I_{12}$, we can estimates its integral similarly as the computation for $TZP_1$.

Finally, we obtain a polyhedron $SPM_2=MCB-\cup_{i=1}^6CB_i-\cup_{i=1}^2 TPL_i-\cup_{i=1}^2 TPR_i$.  
Then we use the polyhedron $SPM_2$ to do the reflection along the plane $\theta_3=\frac{1}{2}$ and $\theta_1=0$ to obtain a new polyhedron denoted by $P$, the key point is that $P\subset D_{H}$. Let $D_0''=P\cap D_{0}$,  the above analysis tell us that
\begin{align}
&\int_{D_0}\sin(\pi \theta_1)\sin(2\pi\theta_3)e^{(N+\frac{1}{2})\hat{V}_N(\theta_1,\theta_2,\theta_3)}d\theta_1d\theta_2d\theta_3\\\nonumber
&=\int_{D''_0}\sin(\pi \theta_1)\sin(2\pi\theta_3)e^{(N+\frac{1}{2})\hat{V}_N(\theta_1,\theta_2,\theta_3)}d\theta_1d\theta_2d\theta_3+O(e^{(N+\frac{1}{2})\zeta_{\mathbb{R}}(p,q)-\epsilon}).    
\end{align}

\begin{proposition} \label{prop-m0n0} 
For $(p,q)$ satisfies the condition (\ref{formula-pqconditions}),  we have
\begin{align} \label{formula-D''}
    \hat{h}_N(0,0,0)&=-\kappa''_r\int_{D_0}\sin(\pi \theta_1)\sin(2\pi\theta_3)e^{(N+\frac{1}{2})\hat{V}_N(\theta_1,\theta_2,\theta_3)}d\theta_1d\theta_2d\theta_3   \\\nonumber
    &=\kappa''_r\omega(p,q)e^{(N+\frac{1}{2})\zeta_{\mathbb{R}}(p,q)}\\\nonumber
&\cdot\left(1+\sum_{i=1}^d\kappa_i(p,q)\left(\frac{2\pi\sqrt{-1}}{N+\frac{1}{2}}\right)^i+O\left(\frac{1}{(N+\frac{1}{2})^{d+1}}\right)\right),
    \end{align}
    for $d\geq 1$, where $\omega(p,q)$ and $\kappa_i(p,q)$ are constants determined by $M_{p,q}$.
\end{proposition}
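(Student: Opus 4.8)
The plan is to evaluate the integral defining $\hat h_N(0,0,0)$ by the three--dimensional saddle point method of Proposition \ref{proposition-saddlemethod} (in the $N$--dependent form of Remark \ref{remark-saddle}), based at the critical point $(\theta_1^0,\theta_2^0,\theta_3^0)$ of $\hat V$ produced by Proposition \ref{prop-critical}. First I would use the reduction carried out above: after the surgery on the cube $MCB$ one has, up to an error $O\bigl(e^{(N+\frac12)(\zeta_{\mathbb R}(p,q)-\epsilon)}\bigr)$,
\begin{equation*}
\hat h_N(0,0,0)=-\kappa_r''\int_{D_0''}\sin(\pi\theta_1)\sin(2\pi\theta_3)\,e^{(N+\frac12)\hat V_N(\theta_1,\theta_2,\theta_3)}\,d\theta_1 d\theta_2 d\theta_3+O\bigl(e^{(N+\frac12)(\zeta_{\mathbb R}(p,q)-\epsilon)}\bigr),
\end{equation*}
where $D_0''=P\cap D_0$ and $P$ is obtained by reflecting $SPM_2$ across the planes $\theta_3=\tfrac12$ and $\theta_1=0$. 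The crucial structural point is that $P\subset D_H$, so by Proposition \ref{proposition-Hessianf} the Hessian of $\text{Re}\,\hat V$ is positive definite throughout $D_0''$, and $\zeta_{\mathbb R}(p,q)=\text{Re}\,\hat V(\theta_1^0,\theta_2^0,\theta_3^0)$ is attained only at the interior critical point; hence $D_0''$ is an admissible deformed disk for the method.

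Next I would make the dependence on $N$ explicit. By Proposition \ref{prop-VNexpansion},
\begin{equation*}
\hat V_N=\hat V+\frac{1}{N+\frac12}\,\hat V^{(1)}+\frac{1}{(N+\frac12)^2}\,w_N ,
\end{equation*}
with
\begin{equation*}
\hat V^{(1)}(\theta_1,\theta_2,\theta_3)=\pi\sqrt{-1}\Bigl(2\theta_2+p-2q+\tfrac74\Bigr)-\tfrac12\log\bigl(1-e^{2\pi\sqrt{-1}(\theta_2+\theta_3)}\bigr)-\tfrac12\log\bigl(1-e^{2\pi\sqrt{-1}(\theta_2-\theta_3)}\bigr)
\end{equation*}
and $w_N$ uniformly bounded on a complex neighbourhood of $D_0''$. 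Writing $e^{(N+\frac12)\hat V_N}=e^{(N+\frac12)\hat V}\,e^{\hat V^{(1)}}\,e^{\frac{1}{N+\frac12}w_N}$, translating $\theta_i=\theta_i^0+z_i$, and Taylor expanding $\hat V$ at $\theta^0$ (where $\nabla\hat V=0$) yields $\hat V(\theta^0+z)=\zeta(p,q)+\mathbf{z}^{T}A\mathbf{z}+r(\mathbf{z})$ with $r(\mathbf{z})=O(|\mathbf{z}|^3)$ and $A=\tfrac12\,Hess\,\hat V(\theta^0)$, the symmetric matrix whose entries are the second derivatives $\hat V_{\theta_i\theta_j}$ computed earlier, evaluated at $z_i^0=e^{2\pi\sqrt{-1}\theta_i^0}$. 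The smooth prefactor is expanded using $\sin(\pi\theta_1)=\tfrac{1}{2\sqrt{-1}}\bigl(\sqrt{z_1}-\tfrac{1}{\sqrt{z_1}}\bigr)$ and $\sin(2\pi\theta_3)=\tfrac{1}{2\sqrt{-1}}\bigl(z_3-\tfrac{1}{z_3}\bigr)$.

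Then I would verify the hypotheses of Proposition \ref{proposition-saddlemethod}. Non--singularity of $A$ holds because $2\pi\,\text{Im}\,A$ equals, up to sign, the positive definite matrix $Hess(\hat f)$ of Proposition \ref{proposition-Hessianf} on $D_H\supset D_0''$; the same positive definiteness shows that near $\theta^0$ the set $\{\text{Re}(\hat V-\zeta)<0\}$ is homotopy equivalent to $S^2$, and that $\partial D_0''$ sits inside it with the correct homotopy class once the local model is matched to the global bounds $2\pi\,\text{Re}\,\hat V<3.56337\le 2\pi\,\zeta_{\mathbb R}(p,q)$ proved above on the excised cubes $CB_1,\dots,CB_6$ and the prisms $TPL_i$, $TPR_i$, exactly as in \cite{Oht18,WongYang20-1}. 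Applying the method, and absorbing $e^{\hat V^{(1)}}$, $e^{\frac{1}{N+\frac12}w_N}$ and the prefactor expansion into the asymptotic series via Remark \ref{remark-saddle}, one obtains
\begin{align*}
&\int_{D_0''}\sin(\pi\theta_1)\sin(2\pi\theta_3)\,e^{(N+\frac12)\hat V_N}\,d\theta_1 d\theta_2 d\theta_3\\
&=\frac{\pi^{3/2}\,\sin(\pi\theta_1^0)\sin(2\pi\theta_3^0)\,e^{\hat V^{(1)}(\theta^0)}}{(N+\frac12)^{3/2}\sqrt{\det(-A)}}\,e^{(N+\frac12)\zeta(p,q)}\Bigl(1+\sum_{i=1}^d\tilde\kappa_i\,(N+\tfrac12)^{-i}+O\bigl((N+\tfrac12)^{-d-1}\bigr)\Bigr).
\end{align*}
Multiplying by $-\kappa_r''$ the powers $(N+\tfrac12)^{\pm3/2}$ cancel, leaving the asserted form $\kappa_r''\,\omega(p,q)\,e^{(N+\frac12)\zeta(p,q)}\bigl(1+\sum_i\kappa_i(p,q)(\tfrac{2\pi\sqrt{-1}}{N+1/2})^i+O((N+\tfrac12)^{-d-1})\bigr)$; a direct computation, using $e^{\hat V^{(1)}(\theta^0)}=(\text{a fixed root of unity})\cdot z_2^0\bigl((1-z_2^0z_3^0)(1-z_2^0(z_3^0)^{-1})\bigr)^{-1/2}$ together with an explicit evaluation of $\det(-A)$ as a constant multiple of $H(p,q;z_1^0,z_2^0,z_3^0)$ and the square--root convention of \cite{Oht16}, identifies the leading coefficient with $\omega(p,q)$ of formula (\ref{formula-omegapq}). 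The $\kappa_i(p,q)$ come from the $\lambda_i$ of Proposition \ref{proposition-saddlemethod} together with the Taylor data of $r$, $\hat V^{(1)}$ and $w_N$, hence are invariants of $M_{p,q}$.

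The hard part will be the topological verification in the third step: showing that $\partial D_0''$ can be deformed, inside $\{\text{Re}(\hat V-\zeta)<0\}$, onto the sphere $S^2$ required by Proposition \ref{proposition-saddlemethod}. This is precisely what the elaborate surgery on the cubes and triangular/trapezoidal prisms in the previous subsection was arranged to make possible, and gluing the local Hessian picture to those global estimates is the delicate point. A secondary technical task is the closed--form evaluation of $\det(-A)$ in terms of $z_1^0,z_2^0,z_3^0$ and the consistent choice of its branch, which are needed to recover $\omega(p,q)$ and the function $H(p,q;z_1^0,z_2^0,z_3^0)$ precisely.
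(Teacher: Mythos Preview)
Your proposal is correct and follows essentially the same route as the paper: reduce to $D_0''\subset D_H$, use the expansion of $\hat V_N$ from Proposition~\ref{prop-VNexpansion}, apply the saddle point method of Proposition~\ref{proposition-saddlemethod}/Remark~\ref{remark-saddle} at $(\theta_1^0,\theta_2^0,\theta_3^0)$, and read off $\omega(p,q)$ from the prefactor and the Hessian determinant $H(p,q;z_1^0,z_2^0,z_3^0)$.

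The only substantive difference is in how the ``hard part'' you flag is carried out. The paper does not rely on the excision estimates alone to place $\partial D_0''$ in the right homotopy class; instead it separates this into a dedicated proposition (Proposition~\ref{propostion-checksaddle}) and builds an explicit homotopy $D_\delta''$ by pushing along the fiberwise gradient flow of $-\hat f$ in the imaginary directions. Corollary~\ref{coro-U0noU0} and Proposition~\ref{proposition-Hessianf} then guarantee that inside $U_0$ the flow converges to a unique minimum (giving the deformed disk $D_1''$ through the critical point), while outside $U_0$ it escapes to infinity; the boundary contribution (your ``matching the local model to the global bounds'') is controlled separately by the two--dimensional saddle point estimates of Theorems~\ref{theorem-usedinm1} and~\ref{theorem-usedinm3} on the slices $D_0(c_1)$, $D_0(c_3)$. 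So when you flesh this step out, the mechanism to aim for is this gradient--flow deformation rather than a direct argument from the cube/prism bounds.
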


\begin{proof}
We need to estimate the following integral
  \begin{align}
 &\int_{D_0}\sin(\pi \theta_1)\sin(2\pi\theta_3)e^{(N+\frac{1}{2})\hat{V}_N(\theta_1,\theta_2,\theta_3)}d\theta_1d\theta_2d\theta_3. 
\end{align}  

According to formula (\ref{formula-D''}), we only need to estimate the integral  
\begin{align}  \label{formula-integralD''}
\int_{D''_0}\sin(\pi \theta_1)\sin(2\pi\theta_3)e^{(N+\frac{1}{2})\hat{V}_N(\theta_1,\theta_2,\theta_3)}d\theta_1d\theta_2d\theta_3. 
\end{align}

We will verify the conditions of Proposition \ref{proposition-saddlemethod} for saddle point method in Proposition \ref{propostion-checksaddle}. By Proposition \ref{prop-VNexpansion} and  Remark \ref{remark-saddle},   
we can apply the Proposition \ref{proposition-saddlemethod} to the above integral (\ref{formula-integralD''}). Let $(\theta_1^0,\theta_2^0,\theta_3^0)$ be the critical point of $\hat{V}(p,q,\theta_1,\theta_2,\theta_3)$, we obtain that

\begin{align}
  &\int_{D''_0}\sin(\pi \theta_1)\sin(2\pi\theta_3)e^{(N+\frac{1}{2})\hat{V}_N(\theta_1,\theta_2,\theta_3)}d\theta_1d\theta_2d\theta_3\\\nonumber
        &=\left(\frac{\pi}{N+\frac{1}{2}}\right)^{\frac{3}{2}}\frac{\alpha(\theta_1^0,\theta_2^0,\theta_3^0)}{\sqrt{\det \left(-\frac{1}{2}\text{Hess}(\hat{V})(\theta_1^0,\theta_2^0,\theta_3^0)\right)}}e^{(N+\frac{1}{2})\zeta(p,q)}\\\nonumber
        &\left(1+\sum_{i=1}^d\kappa_i(p,q)\left(\frac{2\pi\sqrt{-1}}{N+\frac{1}{2}}\right)^i+O\left(\frac{1}{(N+\frac{1}{2})^{d+1}}\right)\right)   
\end{align}
 where the function  
\begin{align}
    &\alpha(\theta_1^0,\theta_2^0,\theta_3^0)\\\nonumber
    &=\sin(\pi\theta_1^0)\sin(2\pi\theta_3^0)e^{\pi\sqrt{-1}\left(2\theta_2^0+(p-2q+\frac{7}{4})\right)-\frac{1}{2}\log(1-e^{2\pi\sqrt{-1}(\theta_2^0+\theta_3^0)})-\frac{1}{2}\log(1-e^{2\pi\sqrt{-1}(\theta_2^0-\theta_3^0)})}\\\nonumber
    &=\frac{(-1)^{p-\frac{1}{4}}z_2^0(z_3^0-\frac{1}{z_3^{0}})(\sqrt{z_1^0}-\frac{1}{\sqrt{z_1^0}})}{4\sqrt{(1-z_2^0z_3^0)(1-z_2^0\frac{1}{z_{3}^0})}}.
\end{align}
the determinant of the Hessian matrix at $(\theta_1^0,\theta_2^0,\theta_3^0)$ is given by 
\begin{align}
    &\det \left(-\frac{1}{2}\text{Hess}(\hat{V})(\theta_1^0,\theta_2^0,\theta_3^0)\right)=(-\frac{1}{2})^3(2\pi\sqrt{-1})^3H(p,q;z_1^0,z_2^0,z_3^0)
\end{align}
where 
\begin{align}
  &H(p,q;z_1^0,z_2^0,z_3^0)=4\frac{z_2^0}{z_1^0-z_2^0}\cdot \frac{z_1^0z_2^0}{1-z_1^0z_2^0}\left(\frac{z_2^0z_3^0}{1-z_2^0z_3^0}+\frac{z_2^0}{z_3^0-z_2^0}\right)\\\nonumber
  &-4\left(\frac{z_2^0}{z_1^0-z_2^0}+\frac{z_1^0z_2^0}{1-z_1^0z_2^0}\right)\frac{z_2^0z_3^0}{1-z_2^0z_3^0}\cdot\frac{z_2^0}{z_3^0-z_2^0}+\left(\frac{z_2^0}{z_1^0-z_2^0}+\frac{z_1^0z_2^0}{1-z_1^0z_2^0}\right)\frac{z_2^0}{1-z_2^0}\left(\frac{z_2^0z_3^0}{1-z_2^0z_3^0}+\frac{z_2^0}{z_3^0-z_2^0}\right)\\\nonumber
  &+(8p+4)\frac{z_2^0}{z_1^0-z_2^0}\cdot \frac{z_1^0z_2^0}{1-z_1^0z_2^0}+(2p+1)\left(\frac{z_2^0}{z_1^0-z_2^0}+\frac{z_1^0z_2^0}{1-z_1^0z_2^0}\right)\frac{z_2^0}{1-z_2^0}\\\nonumber
  &-(2p+\frac{q}{2})\left(\frac{z_2^0}{z_1^0-z_2^0}+\frac{z_1^0z_2^0}{1-z_1^0z_2^0}\right)\left(\frac{z_2^0z_3^0}{1-z_2^0z_3^0}+\frac{z_2^0}{z_3^0-z_2^0}\right)-(\frac{q}{2}-1)\frac{z_2^0}{1-z_2^0}\left(\frac{z_2^0z_3^0}{1-z_2^0z_3^0}+\frac{z_2^0}{z_3^0-z_2^0}\right)\\\nonumber
  &+4(\frac{q}{2}-1)\frac{z_2^0z_3^0}{1-z_2^0z_3^0}\cdot\frac{z_2^0}{z_3^0-z_2^0}-(2p+1)(\frac{q}{2}-1)\left(\frac{z_2^0}{z_1^0-z_2^0}+\frac{z_1^0z_2^0}{1-z_1^0z_2^0}+\frac{z_2^0}{1-z_{2}^0}-\frac{z_2^0z_3^0}{1-z_2^0z_3^0}-\frac{z_2^0}{z_3^0-z_2^0}\right).
\end{align}

Therefore, we have

\begin{align}
  &\int_{D''_0}\sin(\pi \theta_1)\sin(2\pi\theta_3)e^{(N+\frac{1}{2})\hat{V}_N(\theta_1,\theta_2,\theta_3)}d\theta_1d\theta_2d\theta_3\\\nonumber
        &=\frac{(-1)^{p}\sqrt{-1}z_2(z_3-z_3^{-1})(\sqrt{z_1}-\frac{1}{\sqrt{z_1}})}{4(N+\frac{1}{2})^{\frac{3}{2}}\sqrt{(1-z_2z_3)(1-z_2z_{3}^{-1})}\sqrt{H(p,q;z_1^0,z_2^0,z_3^0)}}e^{(N+\frac{1}{2})\zeta(p,q)}\\\nonumber
        &\cdot\left(1+\sum_{i=1}^d\kappa_i(p,q)\left(\frac{2\pi\sqrt{-1}}{N+\frac{1}{2}}\right)^i+O\left(\frac{1}{(N+\frac{1}{2})^{d+1}}\right)\right). 
\end{align}
Hence
\begin{align}
     \hat{h}_N(0,0,0)&=-\kappa''_r\int_{D_0}\sin(\pi \theta_1)\sin(2\pi\theta_3)e^{(N+\frac{1}{2})\hat{V}_N(\theta_1,\theta_2,\theta_3)}d\theta_1d\theta_2d\theta_3\\\nonumber
     &=(-1)^{p+1}\frac{\sqrt{-1}}{4}e^{\sigma((\mathcal{K}_p)_q)\left(\frac{3}{2N+1}+\frac{N+1}{2}\right)\pi\sqrt{-1}}
     \omega(p,q)e^{(N+\frac{1}{2})\zeta(p,q)}\\\nonumber
        &\cdot\left(1+\sum_{i=1}^d\kappa_i(p,q)\left(\frac{2\pi\sqrt{-1}}{N+\frac{1}{2}}\right)^i+O\left(\frac{1}{(N+\frac{1}{2})^{d+1}}\right)\right)
\end{align}
where 
\begin{align} \label{formula-omega}
    \omega(p,q)=\frac{z_2^0(z_3^0-(z_3^0)^{-1})(\sqrt{z_1^0}-\frac{1}{\sqrt{z_1^0}})}{\sqrt{(1-z_2^0z_3^0)(1-z_2^0(z_{3}^0)^{-1})}\sqrt{H(p,q;z_1^0,z_2^0,z_3^0)}}.
\end{align}
\end{proof}

\begin{proposition} \label{propostion-checksaddle}
    When we apply Proposition \ref{proposition-saddlemethod} (saddle point method)  to the integral (\ref{formula-integralD''}), the assumptions of Proposition \ref{proposition-saddlemethod} holds. 
\end{proposition}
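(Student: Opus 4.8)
The plan is to reduce the verification to the leading potential. Put $\Psi_0(\theta_1,\theta_2,\theta_3)=\hat{V}(p,q;\theta_1,\theta_2,\theta_3)-\zeta(p,q)$, translated so that the critical point $(\theta_1^0,\theta_2^0,\theta_3^0)$ of Proposition \ref{prop-critical} becomes the origin. Since that critical point lies in the interior of $D_{0\mathbb{C}}$, where the dilogarithm terms of $\hat{V}$ are holomorphic and stay off their branch cuts, $\Psi_0$ is holomorphic near $\mathbf{0}$, and Taylor expansion gives $\Psi_0(\mathbf{z})=\mathbf{z}^{T}A\mathbf{z}+r(\mathbf{z})$ with $A=-\tfrac12\,\mathrm{Hess}(\hat{V})(\theta_1^0,\theta_2^0,\theta_3^0)$ symmetric and $r$ of order $\geq 3$; this is the form required in Proposition \ref{proposition-saddlemethod}. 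By Proposition \ref{prop-VNexpansion}, the $N$-dependence of $\hat{V}_N$ is $\hat{V}+\tfrac{1}{N+1/2}\hat{V}_1+\tfrac{1}{(N+1/2)^2}w_N$ with $\hat{V}_1$ holomorphic near $\mathbf{0}$ and $w_N$ uniformly bounded, which is exactly the situation permitted by Remark \ref{remark-saddle}; moreover the prefactor $\sin(\pi\theta_1)\sin(2\pi\theta_3)e^{\cdots}$ is holomorphic and non-vanishing near the critical point, because $\sin\pi\theta_1^0\neq 0$, $\sin 2\pi\theta_3^0\neq 0$ and $1-e^{2\pi\sqrt{-1}(\theta_2^0\pm\theta_3^0)}\neq 0$ (the real parts lie in $D_0$), so it is absorbed in the usual way. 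It thus remains to check that $A$ is non-singular and that the disk hypothesis of Proposition \ref{proposition-saddlemethod} holds for $\Psi_0$.

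For non-singularity, recall from the computation preceding Proposition \ref{proposition-Hessianf} that, writing $\hat{f}(\theta,X)=\mathrm{Re}\,\hat{V}(\theta+\sqrt{-1}X)$, the Hessian of $\hat{f}$ in the $X$-variables equals $-\mathrm{Re}\,\mathrm{Hess}(\hat{V})(\theta+\sqrt{-1}X)=2\,\mathrm{Re}(A)$, and that Proposition \ref{proposition-Hessianf} shows this matrix is positive definite for all $X\in\mathbb{R}^3$ whenever $\theta\in D_H$ (the quantities $a,b,c,d,e$ there are positive for every $X$ as soon as $\theta\in D_H$). Since $D_0''=P\cap D_0\subset P\subset D_H$ contains the real part $(\mathrm{Re}\,\theta_1^0,\mathrm{Re}\,\theta_2^0,\mathrm{Re}\,\theta_3^0)$ of the critical point, $\mathrm{Re}(A)$ is positive definite, hence the symmetric matrix $A$ is non-singular; equivalently $H(p,q;z_1^0,z_2^0,z_3^0)\neq 0$, since $\det(-\tfrac12\,\mathrm{Hess}(\hat{V}))=(-\tfrac12)^3(2\pi\sqrt{-1})^3H(p,q;z_1^0,z_2^0,z_3^0)$. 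With $A$ non-singular, the real quadratic form $\mathrm{Re}(\mathbf{z}^{T}A\mathbf{z})$ has signature $(3,3)$, so $\{\mathrm{Re}\,\Psi_0<0\}$, restricted to a neighbourhood of $\mathbf{0}$, deformation retracts onto the unit sphere of its negative $3$-dimensional subspace and is thus homotopy equivalent to $S^2$.

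To exhibit the disk, the plan is to take $D$ to be the graph over $D_0''$ of the imaginary shift $X^{*}(\theta)$ minimizing $X\mapsto\mathrm{Re}\,\hat{V}(\theta+\sqrt{-1}X)$ over a small fixed box, and to move the contour from the real slice $D_0''$ to $D$; for a small enough box no branch cut of $\mathrm{Li}_2$ is crossed, so Cauchy's theorem keeps the integral unchanged. On the tube $D_H\times\mathbb{R}^3$ the function $\mathrm{Re}\,\hat{V}(\theta+\sqrt{-1}X)$ is strictly convex in $X$ and, since the Hessian of $\mathrm{Re}\,\hat{V}$ in $\theta$ equals $\mathrm{Re}\,\mathrm{Hess}(\hat{V})=-2\,\mathrm{Re}(A)$, strictly concave in $\theta$; hence $\phi(\theta):=\min_{X}\mathrm{Re}\,\hat{V}(\theta+\sqrt{-1}X)$ is strictly concave on $D_0''$, and its unique critical point, the critical point of $\hat{V}$ by the uniqueness in Proposition \ref{prop-critical}, is therefore its unique, non-degenerate maximum, with value $\zeta_{\mathbb{R}}(p,q)$. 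On $\partial D_0''$ one has $\phi(\theta)\leq\mathrm{Re}\,\hat{V}(\theta)=v(\theta)<\zeta_{\mathbb{R}}(p,q)$ by the cube-by-cube bounds on $CB_1,\dots,CB_6$, the triangular and trapezoidal prisms, the one-dimensional saddle reductions, Lemma \ref{lemma-regionD'0} and the monotonicity Proposition \ref{prop-Rec1c3decrease} established above, all of which keep $2\pi v$ strictly below $2\pi\zeta_{\mathbb{R}}(p,q)$, which is $\geq 3.56337$ by Lemma \ref{lemma-volumeestimate}. Consequently $\mathrm{Re}\,\Psi_0\leq 0$ on $D$ with equality only at the critical point, so $D\setminus\{(\theta_1^0,\theta_2^0,\theta_3^0)\}$ deformation retracts onto $\partial D$ along the flow of $-\nabla\,\mathrm{Re}\,\hat{V}$, staying inside $\{\mathrm{Re}\,\Psi_0<0\}$; hence a small sphere around the critical point inside $D$ is homotopic in $\{\mathrm{Re}\,\Psi_0<0\}$ to $\partial D$, and since that sphere represents the local $S^2$ of the previous paragraph, the inclusion $\partial D\hookrightarrow\{\mathrm{Re}\,\Psi_0<0\}$ is homotopic there to a homotopy equivalence onto $S^2$, which is the last hypothesis of Proposition \ref{proposition-saddlemethod}.

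The main obstacle is the construction and global control of $D$: one must weld the regionwise estimates established above into a single coherent fiberwise deformation over all of $D_0''$, check that the minimizing shift $X^{*}(\theta)$ stays inside the allowed box so that no singularity of $\hat{V}$ (nor of $\hat{V}_N$) is met along the homotopy, and confirm the minimax identity $\max_{\theta\in D_0''}\phi(\theta)=\zeta_{\mathbb{R}}(p,q)$ together with non-degeneracy of the maximum — the step in which the positive-definiteness on $D_H$ (Proposition \ref{proposition-Hessianf}) and the strict boundary inequalities are combined.
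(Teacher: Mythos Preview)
Your approach --- deform the real cycle $D_0''$ to the graph $D$ of the fiberwise imaginary minimizer $X^{*}(\theta)$, and use convexity in $X$ together with concavity in $\theta$ to locate the unique maximum of $\phi$ at $\zeta_{\mathbb{R}}(p,q)$ --- is essentially the paper's own argument: the paper runs the gradient flow of $-\nabla_{X}\hat{f}$ to the same minimizing graph (there called $D_1''$, with height function $h=\phi$) and proves the identical minimax statement. Your non-singularity argument for $A$ via positive definiteness of $\mathrm{Re}(A)$ on $D_H$ is also correct.

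There is, however, a genuine gap in your contour-shift step. The sentence ``Cauchy's theorem keeps the integral unchanged'' is not right: $D_0''$ and $D$ do not share a boundary (the latter lies over $\partial D_0''$ at height $X^{*}(\theta)\neq 0$), so Stokes gives $\int_{D_0''}-\int_{D}=\int_{\mathrm{sides}}$, and the side contribution must be shown to be $O\bigl(e^{(N+\frac12)(\zeta_{\mathbb{R}}(p,q)-\epsilon)}\bigr)$. On the portion of $\partial D_0''$ lying in $\partial D_0$ this follows, as you say, from $v<\zeta_{\mathbb{R}}-\epsilon$ and the monotone decrease of $\mathrm{Re}\,\hat{V}$ along the flow. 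But $\partial D_0''$ also contains the two \emph{internal} faces $D_0(c_{10})=\{\theta_1=c_{10}\}$ and $D_0(c_{30})=\{\theta_3=c_{30}\}$, and on these $v$ is \emph{not} pointwise bounded strictly below $\zeta_{\mathbb{R}}(p,q)$: the cube/prism estimates you invoke bound $v$ only on the pieces that were \emph{removed} to form $D_0''$, while the one-dimensional saddle reductions bound the saddle value $\mathrm{Re}\,\hat{V}(c_1,\theta_2(c_1,c_3),c_3)$, not $v$ itself on those faces. (Indeed the paper records $2\pi v(0,\tfrac32-c_{30},c_{30})=3.56337$, exactly at the threshold.) Your concavity argument yields $\phi<\zeta_{\mathbb{R}}$ there, but that controls only the \emph{endpoint} $t=1$ of the side cylinder; the $t=0$ end sits at $v(\theta)$, which is not small.

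The paper closes this gap by handling the side pieces over $D_0(c_{10})$ and $D_0(c_{30})$ with a \emph{two-dimensional} saddle-point method (Theorems \ref{theorem-usedinm1}, \ref{theorem-usedinm3} and Appendix \ref{Appendix-twodimsaddle}): on each slice the relevant 2D critical value is $<\zeta_{\mathbb{R}}(p,q)-\epsilon$ (Proposition \ref{prop-twocritical} and its $\theta_1=c_1$ analogue), so the slice integrals are small, and integrating over the homotopy parameter preserves the bound. This is precisely the step your last paragraph labels ``the main obstacle'' without resolving, and it cannot be absorbed into the Cauchy argument you give.
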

\begin{proof}
We note that, by Lemma \ref{lemma-varphixi3}, $\hat{V}_N(p,q;\theta_1,\theta_2,\theta_3)$ uniformly converges to the $\hat{V}(p,q;\theta_1,\theta_2,\theta_3)$ on $D''_0$ as $N\rightarrow \infty$. Hence,  we only need to verify the assumptions of the saddle point method for $\hat{V}(p,q;\theta_1,\theta_2,\theta_3)$.  
 We show that there exists a homotopy $D''_\delta$ ($0\leq \delta\leq 1 $) between $D''_0$ and $D''_1$ such that 
\begin{align}
 &(\theta_1^0,\theta_2^0,\theta_3^0)\in D''_1,  \label{saddle-1} \\ 
    &D''_1-\{(\theta_1^0,\theta_2^0,\theta_3^0)\}\subset \{(\theta_1,\theta_2,\theta_3)\in \mathbb{C}^3|Re V(p,q;\theta_1,\theta_2,\theta_3)<\zeta_\mathbb{R}(p,q)\}, \label{saddle-2} \\
    & \int_{\partial D''_{\delta}} \sin(\pi \theta_1)\sin(2\pi\theta_3)e^{(N+\frac{1}{2})\hat{V}_N(\theta_1,\theta_2,\theta_3)}d\theta_1d\theta_2d\theta_3=O\left(e^{(N+\frac{1}{2})(\zeta_{\mathbb{R}}(p,q)-\epsilon)}\right). \label{saddle-3}
\end{align}

   In the fiber of the projection $\mathbb{C}^3\rightarrow \mathbb{R}^3$ at $(\theta_{1R},\theta_{2R},\theta_{3R})\in D_0''$, we consider the flow from $(X_1,X_2,X_3)=(0,0,0)$ determined by the vector field $(-\frac{\partial \hat{f}}{\partial X_1},-\frac{\partial \hat{f}}{\partial X_2},-\frac{\partial \hat{f}}{\partial X_3})$. By the construction of the region $D''_0\subset D_{H}$, together with Corollary \ref{coro-U0noU0} and Proposition \ref{proposition-Hessianf}, the convex neighborhood $U_0$ of $(\theta^0_{1R},\theta^0_{2R},\theta^0_{3R})$ satisfies the following holds. 
 \begin{itemize}
        \item[(1)] If $(\theta_{1R},\theta_{2R},\theta_{3R})\in U_0$, then $\hat{f}$ has a unique minimal point, and the flow goes there.
        \item[(2)] If $(\theta_{1R},\theta_{2R},\theta_{3R})\in D''_0\setminus U_0$, then the flow goes to infinity. 
    \end{itemize}
We put $\mathbf{g}(\theta_{1R},\theta_{2R},\theta_{3R})=(g_1(\theta_{1R},\theta_{2R},\theta_{3R}),g_2(\theta_{1R},\theta_{2R},\theta_{3R}),g_2(\theta_{1R},\theta_{2R},\theta_{3R})$ to be the minimal point of $(1)$. In particular, $|\mathbf{g}(\theta_{1R},\theta_{2R},\theta_{3R})|\rightarrow \infty$ as $(\theta_{1R},\theta_{2R},\theta_{3R})$ goes to $\partial U_0$. Further, for a sufficiently large $R>0$, we stop the flow when $|\mathbf{g}(\theta_{1R},\theta_{2R},\theta_{3R})|=R$. We construct the revised flow $\hat{\mathbf{g}}(\theta_{1R},\theta_{2R},\theta_{3R})$, by putting $\hat{\mathbf{g}}(\theta_{1R},\theta_{2R},\theta_{3R})=\mathbf{g}(\theta_{1R},\theta_{2R},\theta_{3R})$ for $(\theta_{1R},\theta_{2R},\theta_{3R})\in U_0$ with $|\mathbf{g}(\theta_{1R},\theta_{2R},\theta_{3R})|<R$, otherwise, by putting $|\hat{\mathbf{g}}(\theta_{1R},\theta_{2R},\theta_{3R})|=(R,R,R)$. 

 We define the ending of the homotopy by
\begin{align}
    D''_1=\{(\theta_{1R},\theta_{2R},\theta_{3R})+\hat{\mathbf{g}}(\theta_{1R},\theta_{2R},\theta_{3R})\sqrt{-1}|(\theta_{1R},\theta_{2R},\theta_{3R})\in D''_0\}. 
\end{align}
Further, we define the internal part of the homotopy by setting it along the flow from $(\theta_{1R},\theta_{2R},\theta_{3R})$ determined by the vector field $\left(-\frac{\partial \hat{f}}{\partial X_1},-\frac{\partial \hat{f}}{\partial X_2},-\frac{\partial \hat{f}}{\partial X_3}\right)$. 

We show (\ref{saddle-1}) and (\ref{saddle-2}) as follows. We consider the function
\begin{align}
    h(\theta_{1R},\theta_{2R},\theta_{3R})=\text{Re} V(\theta_{1R}+\sqrt{-1}\hat{g}_1,\theta_{2R}+\sqrt{-1}\hat{g}_2,\theta_{3R}+\sqrt{-1}\hat{g}_3). 
\end{align}
If $(\theta_{1R},\theta_{2R},\theta_{3R})\notin U_0$, by (2), $-h(\theta_{1R},\theta_{2R},\theta_{3R})$ is sufficiently large (because we let $R$ be sufficiently large), hence (\ref{saddle-2}) holds in this case. Otherwise, $(\theta_{1R},\theta_{2R},\theta_{3R})\in U_0$, in this case, $\hat{\mathbf{g}}(\theta_{1R},\theta_{2R},\theta_{3R})=\mathbf{g}(\theta_{1R},\theta_{2R},\theta_{3R})$. It is shown from the definition of $\hat{\mathbf{g}}(\theta_{1R},\theta_{2R},\theta_{3R})$ that 
\begin{align}
    \frac{\partial \text{Re} \hat{V}}{\partial X_1}=\frac{\partial \text{Re} \hat{V}}{\partial X_2}=\frac{\partial \text{Re} \hat{V}}{\partial X_3}=0 \ \text{at} \ (X_1,X_2,X_3)=\mathbf{g}(\theta_{1R},\theta_{2R},\theta_{3R}),
\end{align}
which implies 
\begin{align}
    \text{Im}\frac{\partial \hat{V}}{\partial \theta_1}=\text{Im} \frac{\partial \hat{V}}{\partial \theta_2}=\text{Im} \frac{\partial \hat{V}}{\partial \theta_3}=0 \ \text{at} \ (\theta_{1R},\theta_{2R},\theta_{3R})+\mathbf{g}(\theta_{1R},\theta_{2R},\theta_{3R})\sqrt{-1}.
\end{align}
On the other hand, for $i=1,2,3$, 
    \begin{align}
        \frac{\partial h}{\partial \theta_{iR}}=\text{Re}\frac{\partial \hat{V}}{\partial \theta_i}, \ \text{at} \ (\theta_{1R},\theta_{2R},\theta_{3R})+\mathbf{g}(\theta_{1R},\theta_{2R},\theta_{3R})\sqrt{-1}.
    \end{align}
Therefore, when  $(\theta_{1R},\theta_{2R},\theta_{3R})+\mathbf{g}(\theta_{1R},\theta_{2R},\theta_{3R})\sqrt{-1}$ is a critical point of $\hat{V}$, $(\theta_{1R},\theta_{2R},\theta_{3R})$ is a critical point of $h(\theta_{1R},\theta_{2R},\theta_{3R})$. Hence by Proposition \ref{prop-critical}, $h(\theta_{1R},\theta_{2R},\theta_{3R})$ has a unique maximal point at $(\theta_{1R}^0,\theta_{2R}^0,\theta_{3R}^0)$ which is equal to $\zeta_{\mathbb{R}}(p,q)$. Therefore, (\ref{saddle-1}) and (\ref{saddle-2}) holds.

We show (\ref{saddle-3}) as follows. Note that the boundary of $\partial D''_0$ consists of $D_0(c_{10})$, $D_0(c_{30})$ and the partial boundaries of $D_0$ denoted by $D_{0b}$.  Hence $\partial D''_{\delta}$ consists of three parts denoted by 
\begin{align}
    \partial D''_{\delta}=A_1\cup A_2\cup B,
\end{align}
where $A_1$ and $A_2$ comes from the flows start at $(\theta_{1R},\theta_{2R},\theta_{3R})\in D_0(c_{10})$ and  $(\theta_{1R},\theta_{2R},\theta_{3R})\in D_0(c_{30})$ respectively, while $B$ comes from the flows start at $(\theta_{1R},\theta_{2R},\theta_{3R})\in D_{0b}$.

By its definition, $D_{0b}\subset \partial D_{0}\subset \{(\theta_{1},\theta_{2},\theta_{3})\in \mathbb{C}^3|\text{Re} \hat{V}(p,q;\theta_{1},\theta_{2},\theta_{3})<\zeta_\mathbb{R}(p,q)-\epsilon\}$, and the function $\text{Re} \hat{V}(p,q;\theta_1,\theta_2,\theta_3)$ decreases under the flow, so we have 
\begin{align} \label{formula-integralB2}
    \int_{B}\sin(\pi \theta_1)\sin(2\pi\theta_3)e^{(N+\frac{1}{2})\hat{V}_N(\theta_1,\theta_2,\theta_3)}d\theta_1d\theta_2d\theta_3=O\left(e^{(N+\frac{1}{2})(\zeta_{\mathbb{R}}(p,q)-\epsilon)}\right).
\end{align}

By Theorems \ref{theorem-usedinm1} and \ref{theorem-usedinm3}, the integrals on $D_0(c_{10})$ and $D_0(c_{30})$ 
is also of order $O(e^{(N+\frac{1}{2})(\zeta_\mathbb{R}(p,q)-\epsilon}))$. By applying the saddle point method to the slices of the region $A_1\cup A_2$ as shown in Appendix \ref{Appendix-twodimsaddle}, we can prove that
\begin{align} \label{formula-integralA1A22}
    \int_{A_1\cup A_2}\sin(\pi \theta_1)\sin(2\pi\theta_3)e^{(N+\frac{1}{2})\hat{V}_N(\theta_1,\theta_2,\theta_3)}d\theta_1d\theta_2d\theta_3=O\left(e^{(N+\frac{1}{2})(\zeta_{\mathbb{R}}(p,q)-\epsilon)}\right).
\end{align}
Combining formulas (\ref{formula-integralB2}) and (\ref{formula-integralA1A22}) together, we prove (\ref{saddle-3}).

By (\ref{saddle-1}) (\ref{saddle-2}) and (\ref{saddle-3}), the required homotopy exists. Hence the assumptions of Proposition \ref{proposition-saddlemethod} holds when we apply the saddle point method to the integral (\ref{formula-integralD''}).  
\end{proof}

\subsection{Final proof} \label{subsection-final}

Now we can finish the proof of Theorem \ref{theorem-main} as follows.
\begin{proof}
Using formula (\ref{formula-propRT}), together with Proposition \ref{prop-m0n0}, we obtain
    \begin{align}
        RT_{r}(M_{p,q})&=4\hat{h}_{N}(0,0,0)+O(e^{(N+\frac{1}{2})(\zeta_\mathbb{R}(p,q))-\epsilon}),
        \\\nonumber
        &=(-1)^{p+1}\sqrt{-1}e^{\sigma_{p,q}\left(\frac{3}{2N+1}+\frac{N+1}{2}\right)\pi\sqrt{-1}}\omega(p,q)e^{(N+\frac{1}{2})\zeta_{\mathbb{R}}(p,q)}\\\nonumber
&\cdot\left(1+\sum_{i=1}^d\kappa_i(p,q)\left(\frac{2\pi\sqrt{-1}}{N+\frac{1}{2}}\right)^i+O\left(\frac{1}{(N+\frac{1}{2})^{d+1}}\right)\right),
    \end{align}
    for $d\geq 1$, where $\omega(p,q)$ is given by formula (\ref{formula-omega}) and $\kappa_i(p,q)$ are constants determined by $M_{p,q}$.
\end{proof}

\section{Appendices} \label{Section-Appendices}

\subsection{Geometry of the critical point}  \label{Appendix-geometry}

\subsubsection{The geometric equation}
Note that the hyperbolic geometric equations (which include the gluing equations and Dehn filling equations) for the $M_{p,q}$ can be written as follows:
\begin{equation} \label{equantion-geom1} 
\left\{ \begin{aligned}
         &\log w+\log x+\log y+\log z = 2\pi\sqrt{-1}, \\
          &        \log(1-w)+\log(1-x)-\log(1-y)-\log(1-z)=0,\\
          &qu_1+v_1=-2\pi\sqrt{-1},  \\
          &u_2-pv_2=-2\pi\sqrt{-1},
                          \end{aligned} \right.
                          \end{equation}
where 
\begin{align}
    u_1&=\log (w-1)+\log x+\log y-\log(y-1)-\pi\sqrt{-1},\\\nonumber
    v_1&=2\log x +2\log y-2\pi\sqrt{-1}, \\\nonumber
    u_2&=\log (w-1)+\log x+\log z-\log(z-1)-\pi\sqrt{-1},\\\nonumber
    v_2&=2\log x +2\log z-2\pi\sqrt{-1}.
\end{align}

It follows that
\begin{align}
&xyzw=1, \label{geomequation-1}\\
&(1-w)(1-x)=(1-y)(1-z), \label{geomequation-2} \\
&(xz)^{2p-1}=-\frac{w-1}{z-1}, \label{geomequation-3} \\
&\left(\frac{-(w-1)xy}{y-1}\right)^q(xy)^2=1. \label{geomequation-4}
\end{align}

Then, we have
\begin{proposition}
    \begin{align}
      w&=1-(xz)^{2p-1}(z-1) \\\nonumber
      y&=1-(xz)^{2p-1}(x-1)
    \end{align}
    where $x,z$ is determined by the following two equations
\begin{align}
    ((xz)^{2p-1}(x-1)-1)((xz)^{2p-1}(z-1)-1)xz&=1\\\nonumber
    \left(-\frac{z-1}{x-1}\right)^q(x(1-(xz)^{2p-1}(x-1)))^{q+2}=1
\end{align}
\end{proposition}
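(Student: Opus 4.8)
The plan is to prove the proposition by pure elimination within the exponentiated geometric system (\ref{geomequation-1})--(\ref{geomequation-4}), so that no further geometric input is needed; the only auxiliary facts I would invoke are that on the region where the geometric solution lives the quantities $x-1$, $z-1$, $y-1$ are nonzero (so the divisions below are legitimate), together with the already-established passage from the logarithmic gluing/Dehn-filling equations (\ref{equantion-geom1}) to the polynomial equations (\ref{geomequation-1})--(\ref{geomequation-4}).

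First I would rewrite equation (\ref{geomequation-3}), namely $(xz)^{2p-1}=-\frac{w-1}{z-1}$, as $(xz)^{2p-1}(z-1)=1-w$, which gives immediately $w=1-(xz)^{2p-1}(z-1)$. Substituting $1-w=(xz)^{2p-1}(z-1)$ into (\ref{geomequation-2}) and using the elementary identity $(z-1)(1-x)=(x-1)(1-z)$, I would cancel the nonzero factor $1-z$ to obtain $y=1-(xz)^{2p-1}(x-1)$. This establishes the two displayed formulas for $w$ and $y$.

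Next I would substitute these expressions into the two remaining equations. Writing $w=-\big((xz)^{2p-1}(z-1)-1\big)$ and $y=-\big((xz)^{2p-1}(x-1)-1\big)$, equation (\ref{geomequation-1}) becomes $xz\big((xz)^{2p-1}(x-1)-1\big)\big((xz)^{2p-1}(z-1)-1\big)=1$, which is the first equation determining $x,z$. For (\ref{geomequation-4}) I would use $-(w-1)=(xz)^{2p-1}(z-1)$ and $y-1=-(xz)^{2p-1}(x-1)$ to simplify $\frac{-(w-1)xy}{y-1}=-\frac{z-1}{x-1}\,xy$, so that $\left(\frac{-(w-1)xy}{y-1}\right)^q(xy)^2=\left(-\frac{z-1}{x-1}\right)^q(xy)^{q+2}$; inserting $xy=x\big(1-(xz)^{2p-1}(x-1)\big)$ then turns (\ref{geomequation-4}) into $\left(-\frac{z-1}{x-1}\right)^q\big(x(1-(xz)^{2p-1}(x-1))\big)^{q+2}=1$, the second equation determining $x,z$.

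Finally I would check that the elimination is reversible: given $x,z$ satisfying the two equations and $w,y$ defined by the two formulas, reversing each of the above steps recovers (\ref{geomequation-1})--(\ref{geomequation-4}) (equation (\ref{geomequation-3}) from the formula for $w$, (\ref{geomequation-2}) from the identity $(z-1)(1-x)=(x-1)(1-z)$, (\ref{geomequation-1}) from the first equation, (\ref{geomequation-4}) from the second), each reversal again only using $x-1,z-1,y-1\neq 0$. I do not expect any genuine obstacle here: the algebra is routine, and the only point that requires care is verifying that all the divisions are legitimate on the relevant complex region and — if one wants the statement at the level of the original logarithmic equations rather than their exponentials — that the $2\pi\sqrt{-1}$ ambiguities are tracked so no spurious branches are introduced.
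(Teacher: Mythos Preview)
Your proposal is correct and follows essentially the same approach as the paper: derive $w$ from (\ref{geomequation-3}), derive $y$ from (\ref{geomequation-2}) after substituting for $1-w$, and then plug into (\ref{geomequation-1}) and (\ref{geomequation-4}). In fact you supply more detail than the paper's own proof, which only records the derivation of $w$ and $y$ (the paper obtains $y$ by first writing $y=\tfrac{z+xw-w-x}{z-1}$ from (\ref{geomequation-2}) and then substituting, whereas you cancel $1-z$ directly via $(z-1)(1-x)=(x-1)(1-z)$) and leaves the substitution into (\ref{geomequation-1}) and (\ref{geomequation-4}) implicit; your explicit reduction and reversibility check are a welcome addition.
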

\begin{proof}
From equation (\ref{geomequation-3}), we obtain the first equation,
 \begin{align}
      w&=1-(xz)^{2p-1}(z-1). 
    \end{align}
From equation (\ref{geomequation-1}), we obtain 
\begin{align}
    y=\frac{1}{xzw}=\frac{1}{xz(1-(xz)^{2p-1}(z-1))}.
\end{align}
From equation (\ref{geomequation-2}), we obtian
\begin{align}
    y=\frac{z+xw-w-x}{z-1}=1-(xz)^{2p-1}(x-1).
\end{align}

Moreover, suppose $(x,y,z,w)$ is a solution to the geometric equations (\ref{equantion-geom1}), then the complex volume of the $M_{p,q}$ is given by the following formula 
\begin{align}
&Vol(M_{p,q})+\sqrt{-1}CS(M_{p,q})\\\nonumber&=-\frac{1}{\sqrt{-1}}\left(R(w)+R(x)+R\left(\frac{1}{1-y}\right)+R\left(\frac{1}{1-z}\right)\right)\\\nonumber
&+\frac{\pi}{2}\left(3\pi\sqrt{-1}\log(w-1)+\log(x)+\log(y)-\log(y-1)\right.\\\nonumber
&\left.+\frac{1}{p}(3\pi\sqrt{-1}+\log(w-1)+\log(x)+\log(z)-\log(z-1))\right) \mod \pi^2\sqrt{-1}\mathbb{Z},  
\end{align}
where 
\begin{align}
R(x)=\frac{1}{2}\log(x)\log(1-x)+\text{Li}_2(x).
\end{align}

\end{proof}

\subsection{Critical value gives the complex volume}
Recall that
\begin{align}
    \hat{V}&=\pi\sqrt{-1}\left(\frac{3}{2}q+\frac{q}{2}\theta_1^2-\theta_1^2-\theta_1-2\theta_2-(2p+3)\theta_3+(2p+1)\theta_3^2\right)\\\nonumber
    &+\frac{1}{2\pi\sqrt{-1}}\left(\frac{\pi^2}{6}-\text{Li}_2(e^{2\pi\sqrt{-1}(\theta_2-\theta_1)})-\text{Li}_2(e^{2\pi\sqrt{-1}(\theta_2+\theta_1)})-\text{Li}_2(e^{2\pi\sqrt{-1}\theta_2})\right.\\\nonumber
    &\left.+\text{Li}_2(e^{2\pi\sqrt{-1}(\theta_2+\theta_3)})+\text{Li}_2(e^{2\pi\sqrt{-1}(\theta_2-\theta_3)})\right).
\end{align}
The critical point equations of $\hat{V}$ are given by 
\begin{align} \label{equation-Vs}
\hat{V}_{\theta_1}=\pi\sqrt{-1}(q\theta_1-2\theta_1-1)+\log(1-e^{2\pi\sqrt{-1}(\theta_2+\theta_1)})-\log(1-e^{2\pi\sqrt{-1}(\theta_2-\theta_1)})=0,
\end{align}
\begin{align}  \label{equation-Vt}
\hat{V}_{\theta_2}&=-2\pi\sqrt{-1}-\log(1-e^{2\pi\sqrt{-1}(\theta_2+\theta_3)})-\log(1-e^{2\pi\sqrt{-1}(\theta_2-\theta_3)})\\\nonumber
&+\log(1-e^{2\pi\sqrt{-1}(\theta_2+\theta_1)})+\log(1-e^{2\pi\sqrt{-1}\theta_2})+\log(1-e^{2\pi\sqrt{-1}(\theta_2-\theta_1)})=0,    
\end{align}
and
\begin{align} \label{equation-Vu}
\hat{V}_{\theta_3}&=\pi\sqrt{-1}((4p+2)\theta_3-(2p+3))-\log(1-e^{2\pi\sqrt{-1}(\theta_2+\theta_3)})\\\nonumber&+\log(1-e^{2\pi\sqrt{-1}(\theta_2-\theta_3)})=0.    
\end{align}

By equation (\ref{equation-Vs}), we have 
\begin{align}
&q(-2\pi\sqrt{-1}\theta_1)+4\pi\sqrt{-1}\theta_1+2\log(1-e^{2\pi\sqrt{-1}(\theta_2-\theta_1)})\\\nonumber
&-2\log(1-e^{2\pi\sqrt{-1}(\theta_2+\theta_1)})=-2\pi\sqrt{-1}.    
\end{align}
Comparing to the equation (\ref{equantion-geom1}), we must have
\begin{align} \label{equation-s}
    \log (w-1)+\log x+\log y-\log(y-1)=-\pi\sqrt{-1}(2\theta_1+1)
\end{align}
and 
\begin{align}
\log x+\log y=\pi\sqrt{-1}(2\theta_1-1)+\log(1-e^{2\pi\sqrt{-1}(\theta_2-\theta_1)})-\log(1-e^{2\pi\sqrt{-1}(\theta_2+\theta_1)})    
\end{align}
By equation (\ref{equation-Vu}), we have
\begin{align}
  &p2\pi\sqrt{-1}(2\theta_3-1)+\pi\sqrt{-1}(2\theta_3-1)-\log(1-e^{2\pi\sqrt{-1}(\theta_2+\theta_3)})\\\nonumber
  &+\log(1-e^{2\pi\sqrt{-1}(\theta_2-\theta_3)})=2\pi\sqrt{-1}.  
\end{align}
Comparing to equation (\ref{equantion-geom1}), we have 
\begin{align} \label{equation-u}
 \log x+\log z=2\pi\sqrt{-1}(\theta_3-1)
\end{align}
and 
\begin{align}
  &-\log(w-1)-\log x-\log z+\log(z-1)\\\nonumber
  &=2\pi\sqrt{-1}\theta_3-\log(1-e^{2\pi\sqrt{-1}(\theta_2+\theta_3)})+\log(1-e^{2\pi\sqrt{-1}(\theta_2-\theta_3)}).   
\end{align}
In conclusion, we have the correspondence 
\begin{equation}   \label{equation-corresponding}
\left\{ \begin{aligned}
         &\log (w-1)+\log x+\log y-\log(y-1)=-\pi\sqrt{-1}(2\theta_1+1), \\
          &       \log x+\log y=\pi\sqrt{-1}(2\theta_1-1)+\log(1-e^{2\pi\sqrt{-1}(\theta_2-\theta_1)})-\log(1-e^{2\pi\sqrt{-1}(\theta_2+\theta_1)}), \\
          &  \log x+\log z=2\pi\sqrt{-1}(\theta_3-1),\\
          &-\log(w-1)-\log x-\log z+\log(z-1),\\\nonumber
  &=2\pi\sqrt{-1}\theta_3-\log(1-e^{2\pi\sqrt{-1}(\theta_2+\theta_3)})+\log(1-e^{2\pi\sqrt{-1}(\theta_2-\theta_3)}). 
                          \end{aligned} \right.
                          \end{equation}

Then we will prove the following identity. 
\begin{theorem} \label{theorem-critical=volume}
Suppose $(\theta_1^0,\theta_2^0,\theta_3^0)$ is a solution of the critical point equations (\ref{equation-Vs}), (\ref{equation-Vt}) and  (\ref{equation-Vu}), and for $(p,q)\in S$, then  we have
\begin{align}
2\pi \hat{V}(p,q;\theta_1^0,\theta_2^0,\theta_3^0)=Vol(M_{p,q})+\sqrt{-1}CS(M_{p,q})+(3q-p-7)\pi^2\sqrt{-1}.    
\end{align}    
\end{theorem}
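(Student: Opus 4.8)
The plan is to match the critical point equations of $\hat{V}$ with the hyperbolic geometric equations (\ref{geomequation-1})--(\ref{geomequation-4}) via the correspondence (\ref{equation-corresponding}), then show that the critical value $2\pi\hat{V}(p,q;\theta_1^0,\theta_2^0,\theta_3^0)$ agrees with the complex-volume formula up to the stated $\pi^2\sqrt{-1}$-correction. First I would introduce the shape parameters by setting $w,x,y,z$ according to the relations dictated by (\ref{equation-corresponding}): concretely, define $x,z$ through $\log x + \log z = 2\pi\sqrt{-1}(\theta_3^0-1)$ together with the combination coming from equation (\ref{equation-Vu}), and $w,y$ through the analogous combinations from (\ref{equation-Vs}) and (\ref{equation-Vt}). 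One must then verify that these assignments actually satisfy the four geometric equations; the gluing equation $xyzw=1$ and $(1-w)(1-x)=(1-y)(1-z)$ should drop out of equation (\ref{equation-Vt}) after exponentiating, while $(xz)^{2p-1}=-\frac{w-1}{z-1}$ and the Dehn-filling equation (\ref{geomequation-4}) come from equations (\ref{equation-Vu}) and (\ref{equation-Vs}) respectively, together with the definitions of $u_1,v_1,u_2,v_2$. By Proposition \ref{prop-critical} the critical point is unique in $D_{0\mathbb{C}}$, so it corresponds to the geometric (discrete faithful) solution, which is what makes the complex-volume formula applicable.

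Next I would compute $2\pi\hat{V}$ at the critical point and convert the dilogarithm terms into the Rogers/Bloch–Wigner-type combination $R(w)+R(x)+R(\frac{1}{1-y})+R(\frac{1}{1-z})$ appearing in the complex-volume formula. The standard device here is to use the critical point equations to eliminate the $\log(1-e^{2\pi\sqrt{-1}(\cdots)})$ terms: multiply each equation by the appropriate $\theta_i^0$ and add, so that $2\pi\sqrt{-1}\hat{V}$ at the critical point can be rewritten purely in terms of $\mathrm{Li}_2$ values plus a quadratic polynomial in the $\theta_i^0$. Then apply the dilogarithm five-term / inversion identities (the inversion formula $\mathrm{Li}_2(1/z)=-\mathrm{Li}_2(z)-\frac{\pi^2}{6}-\frac12(\log(-z))^2$ quoted in Section \ref{Section-2RT}) and the change of variables to $w,x,y,z$ to identify the transcendental part with $-\frac{1}{\sqrt{-1}}\bigl(R(w)+R(x)+R(\frac{1}{1-y})+R(\frac{1}{1-z})\bigr)$, while the remaining elementary terms assemble into the $\frac{\pi}{2}(\cdots)$ term of the complex-volume formula plus the explicit $(3q-p-7)\pi^2\sqrt{-1}$ discrepancy. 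Careful bookkeeping of the branches of $\log$ is essential, and the constant $(3q-p-7)\pi^2\sqrt{-1}$ is exactly the ambiguity absorbed by working mod $\pi^2\sqrt{-1}\mathbb{Z}$ — though here we want it on the nose, so the branch choices must be tracked precisely.

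The main obstacle I anticipate is the branch-tracking and the precise determination of the integer multiple of $\pi^2\sqrt{-1}$. Each step — exponentiating the critical equations, applying the inversion formula for $\mathrm{Li}_2$, and substituting $\log x, \log z$, etc. — introduces potential $2\pi\sqrt{-1}$ shifts in the logarithms and corresponding $\pi^2$-type shifts in the dilogarithms, and on the region $D_0$ the arguments $\theta_2\pm\theta_3$ cross $1$, which is why the potential function was defined piecewise in Section \ref{Section-Potentialfunction}. So the real work is to pin down, for $(\theta_1^0,\theta_2^0,\theta_3^0)\in D_{0\mathbb{C}}$ and $(p,q)\in S$, which branch of each logarithm is in force, and to propagate this through the algebra to land on the clean constant $(3q-p-7)\pi^2\sqrt{-1}$. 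I would organize this by first proving the identity modulo $\pi^2\sqrt{-1}\mathbb{Z}$ (where the five-term and inversion identities can be used freely), and then separately computing the integer by evaluating both sides numerically at one representative $(p,q)$ (e.g. using the explicit critical point data already quoted in Section \ref{Section-asympticexpansion}, such as $(p,q)$ with the listed numerical critical point) and invoking continuity in the parameters to conclude it is the same integer throughout $S$.
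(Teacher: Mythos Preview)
Your approach is viable but differs substantively from the paper's. You propose a direct identification: rewrite $2\pi\hat V$ at the critical point using the Legendre-transform trick (multiply the critical equations by $\theta_i^0$ and sum), then push the dilogarithm block into the Rogers form $R(w)+R(x)+R(\tfrac{1}{1-y})+R(\tfrac{1}{1-z})$ via five-term and inversion identities, with the exact integer multiple of $\pi^2\sqrt{-1}$ pinned down by a numerical check plus continuity over $S$. The paper instead takes a \emph{parametric} route: it sets $\gamma_1=1/p$, $\gamma_2=1/q$, packages both sides as functions $F(\gamma_1,\gamma_2)$ and $G(\gamma_1,\gamma_2)$ (formulas (\ref{formula-Fgamma1gamma2}) and (\ref{formula-Ggamma1gamma2})), and proves $F\equiv G$ by showing $\partial_{\gamma_i}F=\partial_{\gamma_i}G$ together with $F(0,0)=G(0,0)$. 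The point is that when you differentiate $\hat V(p,q;\theta_1^0,\theta_2^0,\theta_3^0)$ in $\gamma_i$, the chain-rule terms $\hat V_{\theta_j}\cdot d\theta_j^0/d\gamma_i$ vanish by the critical equations, so only the explicit $\gamma_i$-dependence survives and the derivatives reduce to elementary rational expressions in $\theta_i^0$ (and likewise on the geometric side via the Neumann--Zagier variation formula for complex volume). This sidesteps the five-term relation entirely and replaces your laborious branch-tracking by a single evaluation at the degenerate limit $\gamma_1=\gamma_2=0$. What your approach buys is self-containment (no need to invoke the variation formula for $Vol+\sqrt{-1}CS$) at the cost of heavier dilogarithm algebra; the paper's approach buys a much shorter computation at the cost of treating $p,q$ as continuous and relying on the known derivative of complex volume along Dehn-filling families.
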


For convenience, we introduce two variables $\gamma_1=\frac{1}{p}, \gamma_2=\frac{1}{q}$.  Suppose $(\theta_1,\theta_2,\theta_3)$ is solution of the critical point equations (\ref{equation-Vs}), (\ref{equation-Vt}) and  (\ref{equation-Vu}), the the corresponding $(x,y,z,w)$ by formula (\ref{equation-corresponding}) is a solution to the geometric equation (\ref{equantion-geom1}).  Then both $\hat{V}$ and $Vol(M_{p,q})+\sqrt{-1}CS(M_{p,q})$ can be regarded as a function of $\gamma_1, \gamma_2$.  

So we define
\begin{align} \label{formula-Fgamma1gamma2}
&F(\gamma_1,\gamma_2)\\\nonumber
&=\pi^2\sqrt{-1}\left(-2\theta_1+\frac{1}{\gamma_2}\theta_1^2-2\theta_1^2-4\theta_2-2\left(\frac{2}{\gamma_1}+3\right)\theta_3+2\left(\frac{2}{\gamma_1}+1\right)u^2\right)\\\nonumber
&+\left(\frac{1}{\gamma_1}+7\right)\pi^2\sqrt{-1}+\frac{1}{\sqrt{-1}}\left(\frac{\pi^2}{6}+\text{Li}_2(e^{2\pi\sqrt{-1}(\theta_2+\theta_3)})+\text{Li}_2(e^{2\pi\sqrt{-1}(\theta_2-\theta_3)})\right.\\\nonumber
&\left.-\text{Li}_2(e^{2\pi\sqrt{-1}(\theta_2+\theta_1)})-\text{Li}_2(e^{2\pi\sqrt{-1}\theta_2})-\text{Li}_2(e^{2\pi\sqrt{-1}(\theta_2-\theta_1)})\right).    
\end{align}
and 
\begin{align}  \label{formula-Ggamma1gamma2}
G(\gamma_1,\gamma_2)&=-\frac{1}{\sqrt{-1}}\left(R(w)+R(x)+R\left(\frac{1}{1-y}\right)+R\left(\frac{1}{1-z}\right)\right)\\\nonumber
&-\frac{\pi}{2}\left(-2\pi\sqrt{-1}-(\log(w-1)+\log(x)+\log(y)-\log(y-1)+\pi\sqrt{-1})\right.\\\nonumber
&\left.+\gamma_1(2\pi\sqrt{-1}+\log(w-1)+\log(x)+\log(z)-\log(z-1)+\pi\sqrt{-1})\right)  
\end{align}

Therefore, in order to prove Thereom \ref{theorem-critical=volume}, we only need to prove the following identity. 

\begin{align} \label{identity-criticalvolume}
F(\gamma_1,\gamma_2)=G(\gamma_1,\gamma_2)    
\end{align}  

By some tedious calculations of the derivatives, we can prove that
    \begin{align}
 F_{\gamma_1}(\gamma_1,\gamma_2)=G_{\gamma_1}(\gamma_1,\gamma_2), \ F_{\gamma_2}(\gamma_1,\gamma_2)=G_{\gamma_2}(\gamma_1,\gamma_2)    
\end{align}
and $F(0,0)=G(0,0)$,  hence we finish the proof of Theorem \ref{theorem-critical=volume}.

We introduce
\begin{align}
\zeta(p,q)=\hat{V}(p,q;\theta_1^0,\theta_2^0,\theta_3^0),     
\end{align}
and 
\begin{align}
\zeta_{\mathbb{R}}(p,q)=\text{Re}\hat{V}(p,q;\theta_1^0,\theta_2^0,\theta_3^0).     
\end{align}

\begin{corollary} \label{corollary-zetaR}
For $(p,q)\in S_{p,q}$, we have
\begin{align}
2\pi \zeta_{\mathbb{R}}(p,q)>3.56337.
\end{align}    
\end{corollary}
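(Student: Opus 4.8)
The plan is to reduce the estimate to the known fact that the hyperbolic volume of $M_{p,q}$ exceeds $3.56337$ for $(p,q)\in S$, using the identity already obtained in Theorem \ref{theorem-critical=volume}. First I would take real parts there: since $M_{p,q}$ is a closed hyperbolic $3$-manifold, $Vol(M_{p,q})$ and $CS(M_{p,q})$ are real, while $(3q-p-7)\pi^2\sqrt{-1}$ is purely imaginary, so
\begin{align*}
2\pi\zeta_{\mathbb{R}}(p,q)=2\pi\,\mathrm{Re}\,\zeta(p,q)=\mathrm{Re}\Bigl(Vol(M_{p,q})+\sqrt{-1}\,CS(M_{p,q})+(3q-p-7)\pi^2\sqrt{-1}\Bigr)=Vol(M_{p,q}).
\end{align*}
Hence the corollary becomes the purely geometric assertion that $Vol(M_{p,q})>3.56337$ for all $(p,q)\in S$.

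Next I would exploit monotonicity of the volume under Dehn filling. The manifold $M_{p,q}$ is the $q$-surgery on the twist knot $\mathcal{K}_p$, hence a Dehn filling of the cusped hyperbolic manifold $S^3\setminus\mathcal{K}_p$ along the slope with coefficient $q$; equivalently, writing $\gamma_1=1/p$, $\gamma_2=1/q$ as in Section \ref{Appendix-geometry}, one may regard $\zeta_{\mathbb{R}}(p,q)=Vol(M_{p,q})/(2\pi)$ as a smooth function of $(\gamma_1,\gamma_2)$ near $(0,0)$. Using the Neumann--Zagier asymptotics of the volume (or the derivative identity $F_{\gamma_2}=G_{\gamma_2}$ already set up in Section \ref{Appendix-geometry}), $Vol(M_{p,q})$ is increasing in $q$ throughout the range allowed by (\ref{formula-pqconditions}), with $Vol(M_{p,q})\nearrow Vol(S^3\setminus\mathcal{K}_p)$ as $q\to\infty$. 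Therefore it suffices to verify the bound at the minimal admissible framing $q=q_{\min}(p)$ prescribed by (\ref{formula-pqconditions}) for each $p\ge 6$.

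For $6\le p\le 32$ this is a finite list $M_{6,27},\,M_{7,19},\,M_{8,17},\,M_{9,15},\dots,M_{32,13}$, and for each of them $Vol(M_{p,q})=2\pi\zeta_{\mathbb{R}}(p,q)$ is evaluated directly from the critical point $(\theta_1^0,\theta_2^0,\theta_3^0)$ of $\hat V(p,q;\cdot)$ furnished by Proposition \ref{prop-critical}; in every case the value exceeds $3.56337$ by a definite margin, which also yields the \emph{strict} inequality. For $p\ge 33$ one has $q_{\min}(p)=12$, and I would treat all these at once: $Vol(S^3\setminus\mathcal{K}_p)\nearrow v_{\mathrm{oct}}=3.66386\ldots$ as $p\to\infty$, and, using the same Neumann--Zagier control together with monotonicity of $Vol(M_{p,12})$ in $p$, one gets $Vol(M_{p,12})\ge Vol(M_{33,12})>3.56337$, the last value being again an explicit computation from the critical point.

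The main obstacle is the quantitative part of the last two steps: making the monotonicity of $Vol(M_{p,q})$ in $q$ (and of $Vol(M_{p,12})$ in $p$) rigorous over the stated ranges rather than merely asymptotically, and --- most delicately --- controlling the volume defect of the $q_{\min}(p)$-surgery tightly enough that the inequality closes, since for large $p$ one compares against $v_{\mathrm{oct}}=3.66386\ldots$ while losing up to roughly $0.1$ to a $12$-surgery. The individual evaluations of $2\pi\zeta_{\mathbb{R}}(p,q)$ are routine but must be carried out with certified precision on the solutions of the critical point equations (\ref{equation-Vs})--(\ref{equation-Vu}) in order to guarantee the strict inequality in each boundary case.
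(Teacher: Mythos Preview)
Your reduction via Theorem \ref{theorem-critical=volume} to the inequality $Vol(M_{p,q})>3.56337$ is exactly what the paper does; the paper's own proof simply asserts that ``one can compute'' this volume bound for $(p,q)\in S$ and invokes Theorem \ref{theorem-critical=volume} to conclude. Your proposal is thus correct and follows the same route, with the added value that you sketch how the volume verification could be organized (monotonicity in $q$ via Neumann--Zagier, reduction to the boundary cases $q=q_{\min}(p)$, and a finite check), whereas the paper leaves this entirely as an unelaborated computation.
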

\begin{proof}
One can compute that 
when $(p,q)\in S$, we have 
\begin{align}
    Vol\left(M_{p,q}\right)>3.56337. 
\end{align}
According to Theorem \ref{theorem-critical=volume}, we have
\begin{align}
  2\pi \zeta_{\mathbb{R}}(p,q)=Vol\left(M_{p,q}\right).   
\end{align}
\end{proof}

\subsection{Proof of Proposition \ref{prop-critical}} \label{Appendix-criticalexists}
The proof of this Proposition is similar to the proof of Proposition 5.3 in \cite{CZ23-1}. 
\begin{lemma} \label{lemma-t0s0}
    Suppose $(\theta_1^0,\theta_2^0,\theta_3^0)$ is a critical point of $\hat{V}(\theta_1,\theta_2,\theta_3)$ with $(\text{Re}(\theta_1^0),\text{Re}(\theta_2^0),\text{Re}(\theta_3^0))\in D_0$,  then we have  $(\text{Re}(\theta_1^0),\text{Re}(\theta_2^0),\text{Re}(\theta_3^0))\in U_0$. 
\end{lemma}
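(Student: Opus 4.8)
The plan is to derive the statement from the dichotomy of Corollary~\ref{coro-U0noU0} combined with the convexity furnished by Proposition~\ref{proposition-Hessianf}, following the pattern of the proof of Proposition 5.3 in \cite{CZ23-1}. First I would reduce to a statement about a \emph{real} critical point: writing $\theta_i^0=\theta_{iR}^0+\sqrt{-1}X_i^0$ with $\theta_{iR}^0,X_i^0\in\mathbb{R}$ and $(\theta_{1R}^0,\theta_{2R}^0,\theta_{3R}^0)\in D_0$ by hypothesis, the equations (\ref{equation-critical1})--(\ref{equation-critical3}) give $\hat V_{\theta_i}(\theta_1^0,\theta_2^0,\theta_3^0)=0$, hence $\mathrm{Im}\,\hat V_{\theta_i}=0$ there; since $\hat f_{X_i}=-\mathrm{Im}\,\hat V_{\theta_i}$, the point $(X_1^0,X_2^0,X_3^0)$ is a critical point of the function $f(X_1,X_2,X_3):=\mathrm{Re}\,\hat V(\theta_{1R}^0+\sqrt{-1}X_1,\theta_{2R}^0+\sqrt{-1}X_2,\theta_{3R}^0+\sqrt{-1}X_3)$ on $\mathbb{R}^3$ (the $m_1=m_2=m_3=0$ case). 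I would then argue by contradiction, assuming $(\theta_{1R}^0,\theta_{2R}^0,\theta_{3R}^0)\notin U_0$.

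Next I would pin down the real part. Taking imaginary parts of (\ref{equation-critical1})--(\ref{equation-critical3}) and using that $\arg(1-e^{2\pi\sqrt{-1}w})\in(-\pi,\pi)$ always — and $\arg(1-e^{2\pi\sqrt{-1}w})\in(-\frac{\pi}{2},\frac{\pi}{2})$ whenever $\mathrm{Im}(w)\ge0$, since then $\mathrm{Re}(1-e^{2\pi\sqrt{-1}w})=1-e^{-2\pi\mathrm{Im}(w)}\cos(2\pi\mathrm{Re}(w))>0$ — each equation confines a suitable real linear combination of $\theta_{1R}^0,\theta_{2R}^0,\theta_{3R}^0$ to a short interval. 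Combining these constraints with the inequalities defining $D_0$ should force $\frac{1}{2}<\theta_{2R}^0\pm\theta_{1R}^0<1$, $\frac{1}{2}<\theta_{2R}^0<1$, $1<\theta_{2R}^0+\theta_{3R}^0<\frac{3}{2}$ and $0<\theta_{2R}^0-\theta_{3R}^0<\frac{1}{2}$, i.e.\ $(\theta_{1R}^0,\theta_{2R}^0,\theta_{3R}^0)\in D_H$.

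Once $(\theta_{1R}^0,\theta_{2R}^0,\theta_{3R}^0)\in D_H$, the quantities $a,b,c,d,e$ entering the Hessian of $\hat f$ are strictly positive for \emph{every} $(X_1,X_2,X_3)\in\mathbb{R}^3$: their denominators $e^{2\pi Y}+e^{-2\pi Y}-2\cos\phi=(e^{\pi Y}-e^{-\pi Y})^2+2(1-\cos\phi)$ never vanish on $D_H$, and the signs of the numerators depend only on $\theta_{1R}^0,\theta_{2R}^0,\theta_{3R}^0$. Hence, by the minor computations in Proposition~\ref{proposition-Hessianf}, $\mathrm{Hess}(f)$ is positive definite at every point, so $f$ is strictly convex on $\mathbb{R}^3$ and therefore attains its global minimum at its critical point $(X_1^0,X_2^0,X_3^0)$; in particular $f$ is bounded below and cannot tend to $-\infty$ in any direction. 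But $(\theta_{1R}^0,\theta_{2R}^0,\theta_{3R}^0)\in D_0\setminus U_0$ would, by Corollary~\ref{coro-U0noU0}(ii), force $f\to-\infty$ along some direction of $X_1^2+X_2^2+X_3^2\to+\infty$ — a contradiction. This yields $(\theta_{1R}^0,\theta_{2R}^0,\theta_{3R}^0)\in U_0$.

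I expect the main obstacle to be the middle step: turning the coarse bounds on the arguments of the three logarithms into the sharp inequalities defining $D_H$ will require a case analysis over the signs and relative sizes of $X_1^0,X_2^0,X_3^0$, together with a check that no regime leaves $(\theta_{1R}^0,\theta_{2R}^0,\theta_{3R}^0)$ in $D_0\setminus D_H$. The reduction and the convexity argument are then routine, and the whole scheme runs parallel to the proof of Proposition 5.3 in \cite{CZ23-1}.
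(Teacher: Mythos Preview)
Your proposal is correct and follows essentially the same approach as the paper: reduce to a real critical point of $\hat f$ in the fibre variables $(X_1,X_2,X_3)$, use the imaginary parts of the critical equations together with bounds on $\arg(1-e^{2\pi\sqrt{-1}w})$ to force $(\theta_{1R}^0,\theta_{2R}^0,\theta_{3R}^0)\in D_H$, and then derive a contradiction from the positive-definiteness of $\mathrm{Hess}(\hat f)$ on $D_H$ (Proposition~\ref{proposition-Hessianf}) versus the divergence $f\to-\infty$ of Corollary~\ref{coro-U0noU0}(ii). The paper carries out exactly this scheme, with the middle step handled via the sharper range estimates $2\pi(\theta-\tfrac{3}{2})<\arg(1-e^{2\pi\sqrt{-1}(\theta+X\sqrt{-1})})<0$ for $1<\theta<\tfrac{3}{2}$ (and analogues), which is precisely the case analysis you anticipated.
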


\begin{proof}
Suppose $\theta_i^0=\theta_{iR}^0+X_i^0\sqrt{-1}$, $(i=1,2,3)$ is a critical point of $\hat{V}$ with $(\theta_{1R}^0,\theta_{2R}^0,\theta_{3R}^0)\in D_0$, i.e.

\begin{equation} \label{formula-criticalpoint}
\left\{ \begin{aligned}
         &\frac{\partial \hat{V}}{\partial \theta_1}(\theta_{1}^0,\theta_{2}^0,\theta_{3}^0)=0,  \\
         & \frac{\partial \hat{V}}{\partial \theta_2}(\theta_{1}^0,\theta_{2}^0,\theta_{3}^0)=0. \\ 
         & \frac{\partial \hat{V}}{\partial \theta_3}(\theta_{1}^0,\theta_{2}^0,\theta_{3}^0)=0
                          \end{aligned}\right.
\end{equation}

We will prove that $(\theta_{1R}^0,\theta_{2R}^0,\theta_{3R}^0)\in U_0$. Note that 
\begin{align}
    \frac{\partial f}{\partial X_i}&=\frac{\partial \text{Re} \hat{V}(\theta_1+X_1\sqrt{-1},\theta_2+X_2\sqrt{-1},\theta_3+X_3\sqrt{-1})}{\partial X_2}\\\nonumber
    &=\text{Re} \left(\sqrt{-1}\frac{\partial \hat{V}(\theta_1+X_1\sqrt{-1},\theta_2+X_2\sqrt{-1},\theta_3+X_3\sqrt{-1})}{\partial \theta_i}\right)\\\nonumber
    &=-\text{Im} \left(\frac{\partial \hat{V}}{\partial \theta_i}\right),
\end{align}

According to the equation (\ref{formula-criticalpoint}), we obtain 
\begin{align} \label{equation:critical}
    \frac{\partial \hat{f}}{\partial X_i}(\theta_{1R}^0+X_1^0\sqrt{-1},\theta_{2R}^0+X_2^0\sqrt{-1},\theta_{3R}^0+X_3^0\sqrt{-1})&=0,  
\end{align}

By a straightforward computation, we obtain 
\begin{align}
    \frac{\partial \hat{f}}{\partial X_1}&=-\pi(q-2)\theta_1+\pi-\text{arg}(1-e^{2\pi\sqrt{-1}\left(\theta_2+\theta_1+(X_1+X_2)\sqrt{-1}\right)})\\\nonumber
    &+\text{arg}(1-e^{2\pi\sqrt{-1}\left(\theta_2-\theta_1+(X_2-X_1)\sqrt{-1}\right)}),
\end{align}

\begin{align}
    \frac{\partial \hat{f}}{\partial X_2}&=2\pi+\text{arg}(1-e^{2\pi\sqrt{-1}\left(\theta_2+\theta_3+(X_2+X_3)\sqrt{-1}\right)})\\\nonumber
    &+\text{arg}(1-e^{2\pi\sqrt{-1}\left(\theta_2-\theta_3+(X_2-X_3)\sqrt{-1}\right)})+\text{arg}(1-e^{2\pi\sqrt{-1}\left(\theta_2-\theta_1+(X_2-X_1)\sqrt{-1}\right)})\\\nonumber
    &-\text{arg}(1-e^{2\pi\sqrt{-1}\left(\theta_2+\theta_1+(X_2+X_1)\sqrt{-1}\right)})-\text{arg}(1-e^{2\pi\sqrt{-1}\left(\theta_2+X_2\sqrt{-1}\right)})\\\nonumber
    &-\text{arg}(1-e^{2\pi\sqrt{-1}\left(\theta_2-\theta_1+(X_2-X_1)\sqrt{-1}\right)}),
\end{align}

\begin{align}
\frac{\partial \hat{f}}{\partial X_3}&=-\pi(4p+2)\theta_3+(2p+3)\pi+\text{arg}(1-e^{2\pi\sqrt{-1}\left(\theta_2+\theta_3+(X_2+X_3)\sqrt{-1}\right)})\\\nonumber
&-\text{arg}(1-e^{2\pi\sqrt{-1}\left(\theta_2-\theta_3+(X_2-X_3)\sqrt{-1}\right)}).
\end{align}

Furthermore, for $0<\theta_2\pm \theta_3-1<\frac{1}{2}$, then 
\begin{align}
2\pi\left(\theta_2\pm \theta_3-\frac{3}{2}\right)<\text{arg}(1-e^{2\pi\sqrt{-1}\left(\theta_2\pm \theta_3+(X_2\pm X_3)\sqrt{-1}\right)})<0    
\end{align}
and for $\frac{1}{2}<\theta_2\pm \theta_3-1<1$, then 
\begin{align}
0<\text{arg}(1-e^{2\pi\sqrt{-1}\left(\theta_2\pm \theta_3+(X_2\pm X_3)\sqrt{-1}\right)})<2\pi\left(\theta_2\pm \theta_3-\frac{3}{2}\right)    
\end{align}

By using the equation (\ref{equation:critical}),  we get that if $(\theta_{1R}^0,\theta_{2R}^0,\theta_{3R}^0)\in D_{0}$, then $(\theta_{1R}^0,\theta_{2R}^0,\theta_{3R}^0)\in  D_{H}$. Then $(\theta_{1R}^0,\theta_{2R}^0,\theta_{3R}^0)$ must lie in $U_0$. Since if $(\theta_{1R}^0,\theta_{2R}^0,\theta_{3R}^0)\in D_{H}\setminus U_0$, by Proposition \ref{proposition-Hessianf}, the Hessian matrix  $\hat{f}(X_1,X_2,X_3)$ is positive definite on $(\theta_{1R}^0,\theta_{2R}^0,\theta_{3R}^0)$, and $(X_1^0,X_2^0,X_3^0)$ is a critical point of $\hat{f}(X_1,X_2,X_3)$. On the other hand, by Lemma , we have that $f(X_1,X_2,X_3)\rightarrow -\infty$ in some direction of $X_1^2+X_2^2+X_3^2\rightarrow \infty$. It is a contradiction. Hence  $(\theta_{1R}^0,\theta_{2R}^0,\theta_{3R}^0)\in U_0$. 
\end{proof}

Note that it is easy to generalize Lemmas 6.5-6.9 in \cite{CZ23-1}  to be used in our case.  Then we can finish the proof as in \cite{CZ23-1}.

\subsection{Estimation of the critical value}

In this section, we prove the  Lemma \ref{lemma-volumeestimate} which gives the estimation of the critical value $\zeta_{\mathbb{R}}(p,q)$. 
    Recall that $\zeta_{\mathbb{R}}(p,q)$ is given by
\begin{align}
        \zeta_{\mathbb{R}}(p,q)=\text{Re} \hat{V}(p,q,\theta_1^0,\theta_2^0,\theta_3^0), 
\end{align}
where $(\theta_1^0,\theta_2^0,\theta_3^0)$ is the unique solution of the equations
\begin{align} \label{formula-Vtheta1}
\hat{V}_{\theta_1}=\pi\sqrt{-1}(q\theta_1-2\theta_1-1)+\log(1-e^{2\pi\sqrt{-1}(\theta_2+\theta_1)})-\log(1-e^{2\pi\sqrt{-1}(\theta_2-\theta_1)})=0, 
\end{align}
\begin{align} \label{formula-Vtheta2}
\hat{V}_{\theta_2}&=-2\pi\sqrt{-1}-\log(1-e^{2\pi\sqrt{-1}(\theta_2+\theta_3)})-\log(1-e^{2\pi\sqrt{-1}(\theta_2-\theta_3)})\\\nonumber
&+\log(1-e^{2\pi\sqrt{-1}(\theta_2+\theta_1)})+\log(1-e^{2\pi\sqrt{-1}\theta_2})+\log(1-e^{2\pi\sqrt{-1}(\theta_2-\theta_1)})=0,    
\end{align}
\begin{align} \label{formula-Vtheta3}
&\hat{V}_{\theta_3}=\pi\sqrt{-1}((4p+2)\theta_3-(2p+3))-\log(1-e^{2\pi\sqrt{-1}(\theta_2+\theta_3)})\\\nonumber
&+\log(1-e^{2\pi\sqrt{-1}(\theta_2-\theta_3)})=0.
\end{align}

Putting $\gamma_1=\frac{1}{p}, \gamma_2=\frac{1}{q}$, we regard $(\theta_1^0,\theta_2^0,\theta_3^0)$ as a function of $\gamma_1,\gamma_2$, and the denote it by $(\theta_1(\gamma_1,\gamma_2),\theta_2(\gamma_1,\gamma_2),\theta_3(\gamma_1,\gamma_2))$,  then by expanding the above equations (\ref{formula-Vtheta1}), (\ref{formula-Vtheta2}) and (\ref{formula-Vtheta3}), we obtain 
\begin{align} \label{formula-thetaTaylor}
    \theta_1(\gamma_1,\gamma_2)&=\gamma_2-2(1+\sqrt{-1})\gamma_2^2+8\sqrt{-1}\gamma_2^3-\frac{\sqrt{-1}}{4}\pi^2\gamma_1^2\gamma_2^2-(1-\sqrt{-1})\pi^2\gamma_1^2\gamma_2^3+\cdots\\\nonumber
    \theta_2(\gamma_1,\gamma_2)&=\frac{\log(1-2\sqrt{-1})}{2\pi\sqrt{-1}}+1+\frac{1+2\sqrt{-1}}{40}\pi\gamma_1^2+\frac{-2+\sqrt{-1}}{5}\pi\gamma_2^2+\frac{-3+4\sqrt{-1}}{100}\pi^3\gamma_1^2\gamma_2^2+\cdots\\\nonumber
    \theta_3(\gamma_1,\gamma_2)&=\frac{1}{2}+\frac{1}{2}\gamma_1+\frac{1-\sqrt{-1}}{8}\gamma_1^2-\frac{\sqrt{-1}}{16}\gamma_1^3-\frac{\sqrt{-1}}{8}\pi^2\gamma_1^2\gamma_2^2+\cdots
\end{align}

Now, we put 
\begin{align}
    \hat{V}(\gamma_1,\gamma_2)&=\hat{V}(p,q,\theta_1(\gamma_1,\gamma_2),\theta_2(\gamma_1,\gamma_2),\theta_3(\gamma_1,\gamma_2))\\\nonumber
    &=\pi\sqrt{-1}\left(\frac{3}{2\gamma_2}+\frac{\theta_1^2}{2\gamma_2}-\theta_1^2-\theta_1+\left(\frac{2}{\gamma_1}+1\right)\theta_3^2-\left(\frac{2}{\gamma_1}+3\right)\theta_3-2\theta_2\right)\\\nonumber
&+\frac{1}{2\pi\sqrt{-1}}\left(\frac{\pi^2}{6}+\text{Li}_2(e^{2\pi\sqrt{-1}(\theta_2+\theta_3)})+\text{Li}_2(e^{2\pi\sqrt{-1}(\theta_2-\theta_3)})-\text{Li}_2(e^{2\pi\sqrt{-1}(\theta_2+\theta_1)})\right.\\\nonumber
&\left.-\text{Li}_2(e^{2\pi\sqrt{-1}\theta_2})-\text{Li}_2(e^{2\pi\sqrt{-1}(\theta_2-\theta_1)})\right),
\end{align}

then
\begin{align}
\frac{\partial \hat{V}(\gamma_1,\gamma_2)}{\partial\gamma_1}&=V_{\theta_1}\frac{d\theta_1}{d\gamma_1}+V_{\theta_2}\frac{d\theta_2}{d\gamma_1}+V_\frac{d\theta_3}{d\gamma_1}+\frac{2\pi\sqrt{-1}}{\gamma_1^2}(\theta_3-\theta_3^2)\\\nonumber
&=\frac{2\pi\sqrt{-1}}{\gamma_1^2}(\theta_3-\theta_3^2)\\\nonumber
&=\frac{\pi\sqrt{-1}}{2}\frac{1}{\gamma_1^2}-\frac{\pi\sqrt{-1}}{2}-\frac{1+\sqrt{-1}}{4}\pi \gamma_1-\frac{\pi^3}{4}\gamma_1\gamma_2^2-\frac{3}{16}\pi\gamma_1^2+\cdots    
\end{align}
\begin{align}
    \frac{\partial \hat{V}(\gamma_1,\gamma_2)}{\partial \gamma_2}&=V_{\theta_1}\frac{d\theta_1}{d\gamma_2}+V_{\theta_2}\frac{d\theta_2}{d\gamma_2}+V_{\theta_3}\frac{d\theta_3}{d\gamma_2}-\frac{\pi\sqrt{-1}}{2\gamma_2^2}(3-\theta_1^2)\\\nonumber
&=-\frac{\pi\sqrt{-1}}{2\gamma_2^2}(3-\theta_1^2)\\\nonumber
&=-\frac{3\pi\sqrt{-1}}{2}\frac{1}{\gamma_2^2}-\frac{\pi\sqrt{-1}}{2}+(-2+2\sqrt{-1})\pi\gamma_2+12\pi\gamma_2^2+
\cdots  
\end{align}

We obtain 
\begin{align}
2\pi \hat{V}(\gamma_1,\gamma_2)&=v_8-\sqrt{-1}\pi^2\frac{1}{\gamma_1}-3\sqrt{-1}\pi^2\frac{1}{\gamma_2}-\pi^2\sqrt{-1}\gamma_2\\\nonumber
&+(-2+2\sqrt{-1})\pi^2\gamma_2^2-\sqrt{-1}\pi^2\gamma_1-\frac{1+\sqrt{-1}}{4}\pi^2\gamma_1^2+\cdots,    
\end{align}
and 
\begin{align}
2\pi\text{Re}\hat{V}(\gamma_1,\gamma_2)=v_8-\frac{1}{4}\pi^2\gamma_1^2-2\pi^2\gamma_2^2-\frac{1}{8}\pi^2\gamma_1^3+8\pi^2\gamma_2^3+\cdots, 
\end{align}
where $v_8$ denotes the hyperbolic volume of the regular octahedron in $\mathbb{H}^3$. 

Note that, when $\gamma_2=0$, we obtain 
\begin{align}
2\pi \text{Re}\hat{V}(\gamma_1,0)=v_8-\frac{1}{4}\pi^2\gamma_1^2-\frac{1}{8}\pi^2\gamma_1^3+\cdots
\end{align}
which is just the formula (6.100) in (\cite{CZ23-1}). 

In conclusion, we have the following estimations for the critical value 
\begin{theorem}
 \begin{align}
        &2\pi\hat{V}(p,q,\theta_1^0,\theta_2^0,\theta_3^0)\\\nonumber
        &=v_8-\left(\sqrt{-1}\pi^2\frac{1}{\gamma_1}+3\sqrt{-1}\pi^2\frac{1}{\gamma_2}\right)-(\sqrt{-1}\pi^2\gamma_1+\sqrt{-1}\pi^2\gamma_2)\\\nonumber
        &-\left(\left(\frac{1}{4}+\frac{\sqrt{-1}}{4}\right)\pi^2\gamma_1^2+2(1-\sqrt{-1})\pi^2\gamma_2^2\right)+\left(-\frac{1}{8}\pi^2\gamma_1^3+8\pi^2\gamma_2^3\right)+\cdots
    \end{align} 
    where $\gamma_1=\frac{1}{p}, \gamma_2=\frac{1}{q}$. 
  \begin{align}
  &2\pi \text{Re} \hat{V}(p,q,\theta_1^0,\theta_2^0,\theta_3^0)\\\nonumber
  &=v_8-\left(\frac{1}{4}\pi^2\gamma_1^2+2\pi^2\gamma_2^2\right)+\left(-\frac{1}{8}\pi^2\gamma_1^3+8\pi^2\gamma_2^3\right)\\\nonumber
  &+\left(\left(-\frac{1}{32}\pi^2-\frac{1}{192}\pi^4\right)\gamma_1^4-\frac{1}{4}\pi^4\gamma_1^2\gamma_2^2+(-16\pi^2+\frac{1}{3}\pi^4)\gamma_2^4\right)+\cdots
  \end{align}  
In particular, by Theorem \ref{theorem-critical=volume}, we have
\begin{align}
   &Vol\left(S^3\setminus \mathcal{K}_p\right)\\\nonumber
   &=2\pi \text{Re}\hat{V}(p,\infty,\theta_1^0,\theta_2^0,\theta_3^0)\\\nonumber
   &=v_8-\frac{1}{4}\pi^2\gamma_1^2-\frac{1}{8}\pi^2\gamma_1^3+\left(\frac{1}{32}\pi^2+\frac{1}{192}\pi^4\right)\gamma_1^4+\cdots
\end{align}
and
  \begin{align}
      &2\pi\text{Re}\hat{V}\left(p,q,\theta_1\left(\frac{1}{2}\right),\theta_2\left(\frac{1}{2}\right),\frac{1}{2}\right)\\\nonumber
      &=2\pi \text{Re}\hat{V}\left(\infty,q,\theta_1,\theta_2,\theta_3\right)\\\nonumber
      &=v_8-2\pi^2\gamma_2^2+8\pi^2\gamma_2^3+(-16\pi^2+\frac{1}{3}\pi^4)\gamma_2^4+\cdots
  \end{align}
\end{theorem}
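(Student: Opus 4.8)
The plan is to realise the critical point $(\theta_1^0,\theta_2^0,\theta_3^0)$ of $\hat V(p,q;\cdot)$ as an analytic function of the small parameters $\gamma_1=\tfrac1p$ and $\gamma_2=\tfrac1q$ near $(\gamma_1,\gamma_2)=(0,0)$, to obtain its Taylor expansion by solving the critical point equations order by order, and then to recover $2\pi\hat V$ by integrating an explicit formula for its $\gamma_i$-derivatives along the critical locus.

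First I would substitute $p=1/\gamma_1$, $q=1/\gamma_2$ into the critical point equations (\ref{formula-Vtheta1}), (\ref{formula-Vtheta2}), (\ref{formula-Vtheta3}) and examine their degeneration at $(\gamma_1,\gamma_2)=(0,0)$: the $\theta_3$-equation forces $\theta_3^0=\tfrac12$, the $\theta_1$-equation forces $\theta_1^0=0$, and the $\theta_2$-equation then collapses to $\tfrac{(1-z_2)^3}{(1+z_2)^2}=1$ for $z_2=e^{2\pi\sqrt{-1}\theta_2^0}$, whose relevant root is $z_2^0=1-2\sqrt{-1}$, i.e. $\theta_2^0=\tfrac{\log(1-2\sqrt{-1})}{2\pi\sqrt{-1}}+1$. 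Since the Hessian of $\hat V$ at the critical point is non-degenerate (Proposition \ref{proposition-Hessianf} on $D_H$, together with Proposition \ref{prop-critical}), the implicit function theorem provides a unique analytic branch $\bigl(\theta_1^0(\gamma_1,\gamma_2),\theta_2^0(\gamma_1,\gamma_2),\theta_3^0(\gamma_1,\gamma_2)\bigr)$ through this point which remains in $D_{0\mathbb{C}}$. Substituting a formal power series ansatz into the three equations and matching coefficients then yields the expansions (\ref{formula-thetaTaylor}).

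Next I would invoke the envelope property. Writing $\hat V(\gamma_1,\gamma_2):=\hat V(p,q;\theta_1^0,\theta_2^0,\theta_3^0)$ and differentiating along the critical locus, each term $\hat V_{\theta_j}\,\partial_{\gamma_i}\theta_j^0$ vanishes because $\hat V_{\theta_j}=0$ there, leaving only the explicit $\gamma_i$-dependence: $\partial_{\gamma_1}\hat V=\tfrac{2\pi\sqrt{-1}}{\gamma_1^2}\bigl(\theta_3^0-(\theta_3^0)^2\bigr)$ and $\partial_{\gamma_2}\hat V=-\tfrac{\pi\sqrt{-1}}{2\gamma_2^2}\bigl(3-(\theta_1^0)^2\bigr)$. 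Feeding in the series (\ref{formula-thetaTaylor}) turns each of these into an explicit Laurent series in $\gamma_1,\gamma_2$; integrating term by term --- the two integrations being compatible because $\partial_{\gamma_1}\partial_{\gamma_2}\hat V=\partial_{\gamma_2}\partial_{\gamma_1}\hat V$ --- reconstructs $2\pi\hat V(\gamma_1,\gamma_2)$ up to an additive constant. That constant is pinned down by the value at the origin: after subtracting the polar part in $\gamma_1,\gamma_2$, the finite part of $2\pi\hat V$ at $(\gamma_1,\gamma_2)=(0,0)$ equals $\text{Im}\bigl(\tfrac{\pi^2}{6}+2\text{Li}_2(-z_2^0)-3\text{Li}_2(z_2^0)\bigr)$ with $z_2^0=1-2\sqrt{-1}$, which is $v_8$, the volume of the regular ideal octahedron --- this may be checked directly, or extracted from the degenerating ideal decomposition of $M_{p,q}$, or read off from the $p\to\infty$ limit treated in \cite{CZ23-1} (which is formula (6.100) there). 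This proves the first display. Taking real parts annihilates the purely imaginary polar and linear terms and yields the second display, and carrying the computation two more orders produces the $\gamma_1^4$, $\gamma_1^2\gamma_2^2$ and $\gamma_2^4$ coefficients. Finally, setting $\gamma_2=0$ forces $\theta_1^0=0$, so $\hat V$ reduces to the twist-knot potential of \cite{CZ23-1} and, via Theorem \ref{theorem-critical=volume}, returns $Vol(S^3\setminus\mathcal{K}_p)$; symmetrically, setting $\gamma_1=0$ forces $\theta_3^0=\tfrac12$ and gives $2\pi\text{Re}\,\hat V(\infty,q;\cdot)$.

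The mechanism is entirely standard; the work is in the bookkeeping. One must carry the perturbative solution of the transcendental critical point system through fourth order in $(\gamma_1,\gamma_2)$ while keeping the branches of $\log$ consistent with the solution of Proposition \ref{prop-critical}, confirm the compatibility of the two partial integrations, and --- the only genuinely non-formal point --- establish the base value rigorously, i.e. show that the limiting critical point $\bigl(0,\theta_2^0,\tfrac12\bigr)$ realises the complete hyperbolic structure on the octahedral piece and that the Bloch--Wigner sum there evaluates exactly to $v_8$. Once this anchor is fixed, all higher coefficients are forced by the lengthy but mechanical algebra above.
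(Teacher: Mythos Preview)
Your proposal is correct and follows essentially the same approach as the paper: solve the critical point equations perturbatively in $\gamma_1,\gamma_2$ to obtain the expansions (\ref{formula-thetaTaylor}), use the envelope identity $\partial_{\gamma_1}\hat V=\tfrac{2\pi\sqrt{-1}}{\gamma_1^{2}}(\theta_3^0-(\theta_3^0)^2)$ and $\partial_{\gamma_2}\hat V=-\tfrac{\pi\sqrt{-1}}{2\gamma_2^{2}}(3-(\theta_1^0)^2)$ along the critical locus, integrate term by term, and anchor the constant at $v_8$. The paper does not spell out the implicit function theorem step or the identification of the base value with $v_8$ as carefully as you do, but the computational skeleton is identical.
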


So we have 
\begin{align}
&2\pi \text{Re}\hat{V}(p,q,\theta_1^0,\theta_2^0,\theta_3^0)\\\nonumber &=Vol\left(S^3\setminus \mathcal{K}_p\right)-2\pi^2\gamma_2^2+8\pi^2\gamma_2^3-\frac{1}{4}\pi^4\gamma_1^2\gamma_2^2+\cdots  
\end{align}
or 
\begin{align}
&2\pi \text{Re}\hat{V}(p,q,\theta_1^0,\theta_2^0,\theta_3^0)\\\nonumber &=2\pi\text{Re}\hat{V}\left(p,q,\theta_1\left(\frac{1}{2}\right),\theta_2\left(\frac{1}{2}\right),\frac{1}{2}\right)-\frac{1}{4}\pi^2\gamma_1^2-\frac{1}{8}\pi^2\gamma_1^3+\left(-\frac{1}{32}\pi^2-\frac{1}{192}\pi^4\right)\gamma_1^4+\cdots  
\end{align}
By Theorem \ref{theorem-critical=volume}, we have
\begin{align}
&Vol\left(M_{p,q}\right)\\\nonumber
&=2\pi \text{Re}\hat{V}(p,q,\theta_1^0,\theta_2^0,\theta_3^0)\\\nonumber
  &=v_8-\left(\frac{1}{4}\pi^2\gamma_1^2+2\pi^2\gamma_2^2\right)+\left(-\frac{1}{8}\pi^2\gamma_1^3+8\pi^2\gamma_2^3\right)\\\nonumber
  &+\left(\left(-\frac{1}{32}\pi^2-\frac{1}{192}\pi^4\right)\gamma_1^4-\frac{1}{4}\pi^4\gamma_1^2\gamma_2^2+(-16\pi^2+\frac{1}{3}\pi^4)\gamma_2^4\right)+\cdots\\\nonumber
  &=2\pi\text{Re}\hat{V}\left(p,q,\theta_1\left(\frac{1}{2}\right),\theta_2\left(\frac{1}{2}\right),\frac{1}{2}\right)-\frac{1}{4}\pi^2\gamma_1^2-\frac{1}{8}\pi^2\gamma_1^3-\left(\frac{1}{32}\pi^2+\frac{1}{192}\pi^4\right)\gamma_1^4-\frac{1}{4}\pi^4\gamma_1^2\gamma_2^4+\cdots
\end{align}

From the above expansion formula, in principal, we can obtain that
\begin{theorem} \label{theorem-inprincipal1}
  \begin{align}
Vol\left(M_{p,q}\right)>2\pi\text{Re}\hat{V}\left(p,q,\theta_1\left(\frac{1}{2}\right),\theta_2\left(\frac{1}{2}\right),\frac{1}{2}\right)-\frac{49}{64}\pi^2\frac{1}{p^2}.    
\end{align}     
\end{theorem}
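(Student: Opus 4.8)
The plan is to derive the inequality directly from the expansion formula just established. By Theorem~\ref{theorem-critical=volume}, taking real parts (the correction $(3q-p-7)\pi^2\sqrt{-1}$ and $\sqrt{-1}\,CS(M_{p,q})$ being purely imaginary), one has $Vol(M_{p,q})=2\pi\,\text{Re}\,\hat V(p,q;\theta_1^0,\theta_2^0,\theta_3^0)$. Thus the assertion is equivalent to
\begin{align*}
 D(p,q):=2\pi\,\text{Re}\,\hat V\!\left(p,q,\theta_1\!\left(\tfrac{1}{2}\right),\theta_2\!\left(\tfrac{1}{2}\right),\tfrac{1}{2}\right)-2\pi\,\text{Re}\,\hat V(p,q;\theta_1^0,\theta_2^0,\theta_3^0)<\tfrac{49}{64}\,\pi^2\gamma_1^2,
\end{align*}
where $\gamma_1=1/p$ and $\gamma_2=1/q$. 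Substituting the Taylor series (\ref{formula-thetaTaylor}) of the critical point into $\hat V$ gives $D(p,q)=\tfrac{1}{4}\pi^2\gamma_1^2+\tfrac{1}{8}\pi^2\gamma_1^3+\cdots$, a power series in $(\gamma_1,\gamma_2)$ with no terms of degree $0$ or $1$ and with degree-$2$ part exactly $\tfrac{1}{4}\pi^2\gamma_1^2$. Since $\tfrac{1}{4}=\tfrac{16}{64}<\tfrac{49}{64}$, it suffices to show that the tail satisfies $D(p,q)-\tfrac{1}{4}\pi^2\gamma_1^2\le\tfrac{33}{64}\pi^2\gamma_1^2$ throughout $S$, i.e. for $\gamma_1\le\tfrac{1}{6}$.

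To control the tail rigorously, I would first upgrade Proposition~\ref{prop-critical} to an analyticity statement: the solution $(\theta_1^0,\theta_2^0,\theta_3^0)$ of (\ref{formula-Vtheta1})--(\ref{formula-Vtheta3}) depends holomorphically on $(\gamma_1,\gamma_2)$, and remains inside $D_{0\mathbb{C}}$, on a polydisc $\{|\gamma_1|\le\rho_1,\,|\gamma_2|\le\rho_2\}$ with $\rho_1>\tfrac{1}{6}$ and $\rho_2$ larger than every admissible $1/q$. This is the holomorphic implicit function theorem applied to the system (\ref{formula-Vtheta1})--(\ref{formula-Vtheta3}), whose Jacobian is $\text{Hess}(\hat V)$; the non-degeneracy follows once one checks that the continued critical point stays in $D_H$, where Proposition~\ref{proposition-Hessianf} gives the invertibility of $\text{Hess}(\hat V)$. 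Consequently $\hat V$ as a function of $(\gamma_1,\gamma_2)$, hence $D(p,q)$, is holomorphic on that polydisc; Cauchy's estimates on slightly smaller polycircles then bound each Taylor coefficient of $D$ of total degree $n$ by $M\rho^{-n}$ for an explicit constant $M$ and $\rho=\min(\rho_1,\rho_2)$, and summing over $n\ge 3$ (the degree-$2$ part having been separated out) yields a bound of the form $D(p,q)-\tfrac{1}{4}\pi^2\gamma_1^2\le M'\gamma_1^2\gamma/(1-\gamma/\rho)$ with $\gamma=\max(\gamma_1,\gamma_2)$.

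On $S$ one has $\gamma_1\le\tfrac{1}{6}$ and $\gamma_2\le\tfrac{1}{12}$ (the coarsest bound in (\ref{formula-pqconditions})), so the last expression is comfortably below $\tfrac{33}{64}\pi^2\gamma_1^2$ for any efficient choice of $\rho_1,\rho_2$, and the inequality follows with a wide numerical margin. Alternatively, one may split the range into ``$p$ large'', handled by the stated asymptotics together with a crude tail bound, and the finitely many values $6\le p\le p_0$, where for fixed $p$ the function $q\mapsto D(p,q)$ is analytic and, by the same expansion, varies with $q$ only through terms already carrying a factor $\gamma_1^2$, so a check at $q=q_{\min}(p)$ together with the sign of those terms covers all admissible $q$. \emph{The main obstacle} is the first step: showing that the analytic continuation of the critical locus in $(\gamma_1,\gamma_2)$ stays inside $D_{0\mathbb{C}}$ (equivalently $D_H$) on a polydisc strictly larger than $\{\gamma_1\le\tfrac{1}{6},\,\gamma_2\le\tfrac{1}{12}\}$, since that is exactly what makes the Cauchy radii $\rho_1,\rho_2$ admissible; once this geometric control of the critical point is secured, the remaining estimates are routine.
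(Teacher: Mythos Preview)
Your approach is essentially the same as the paper's, only considerably more detailed: the paper does not give a proof of this statement at all, but merely records the Taylor expansion
\[
Vol(M_{p,q})=2\pi\,\text{Re}\,\hat V\!\left(p,q,\theta_1(\tfrac12),\theta_2(\tfrac12),\tfrac12\right)-\tfrac14\pi^2\gamma_1^2-\tfrac18\pi^2\gamma_1^3-\cdots
\]
and then states that ``in principal'' the inequality follows (note the theorem's label \texttt{theorem-inprincipal1}). Your proposal to pass through Theorem~\ref{theorem-critical=volume}, isolate the leading $\tfrac14\pi^2\gamma_1^2$, and control the tail by analyticity plus Cauchy estimates is exactly the kind of argument the authors are gesturing at, and you have correctly identified the only genuinely nontrivial step: securing a polydisc of analyticity for the critical point large enough to cover the range $\gamma_1\le\tfrac16$, $\gamma_2\le\tfrac{1}{12}$.

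One point worth making explicit in your write-up: the factorisation $D(p,q)=\gamma_1^2\cdot(\text{holomorphic})$ that you use to get a tail bound of the shape $M'\gamma_1^2\gamma/(1-\gamma/\rho)$ is not a consequence of Cauchy estimates on $D$ alone; it needs the structural fact that $D(0,\gamma_2)\equiv 0$ and $\partial_{\gamma_1}D(0,\gamma_2)\equiv 0$ for every $\gamma_2$. This follows because, as $\gamma_1\to 0$, the unconstrained critical point satisfies $\theta_3^0\to\tfrac12$ (from the expansion $\theta_3=\tfrac12+\tfrac12\gamma_1+\cdots$), so that the constrained and unconstrained critical values agree to second order in $\gamma_1$; equivalently, the map $c_3\mapsto\text{Re}\,\hat V(\theta_1(c_3),\theta_2(c_3),c_3)$ has a critical point at $c_3=\theta_3^0$, forcing the difference to be $O((\theta_3^0-\tfrac12)^2)=O(\gamma_1^2)$. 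Once this divisibility is stated, the rest of your outline is sound.
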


 Fixing $\theta_3=c_3$, we have
\begin{align}
&\hat{V}_{\theta_1}(p,q,\theta_1,\theta_2,c_3;0,0,m_3)\\\nonumber
&=\pi\sqrt{-1}(-1+(q-2)\theta_1)+\log(1-e^{2\pi\sqrt{-1}(\theta_2+\theta_1)})-\log(1-e^{2\pi\sqrt{-1}(\theta_2-\theta_1)})    
\end{align}
and
\begin{align}
&\hat{V}_{\theta_2}(p,q,\theta_1,\theta_2,c_3;0,0,m_3)\\\nonumber
&=-2\pi\sqrt{-1}+\log(1-e^{2\pi\sqrt{-1}(\theta_2+\theta_1)})+\log(1-e^{2\pi\sqrt{-1}(\theta_2-\theta_1)})\\\nonumber
&+\log(1-e^{2\pi\sqrt{-1}\theta_2})-\log(1-e^{2\pi\sqrt{-1}(\theta_2+c_2)})-\log(1-e^{2\pi\sqrt{-1}(\theta_2-c_2)}).    
\end{align}

Let $c_3=\frac{1}{2}+\tilde{c}_3$, we solve the critical point equations 
\begin{align}
\left\{ \begin{aligned}
         &\hat{V}_{\theta_1}(p,q,\theta_1,\theta_2,\frac{1}{2}+\tilde{c}_3;0,0,m_3)=0 \\
         &\hat{V}_{\theta_2}(p,q,\theta_1,\theta_2,\frac{1}{2}+\tilde{c}_3;0,0,m_3)=0
                          \end{aligned} \right.
\end{align}

 As a function of $\gamma_2=\frac{1}{q},\tilde{c}_3$, the solutions $\theta_1,\theta_2$ of the above critical point equation have the following expansions 
\begin{align}
\theta_1&=\gamma_2-2(1+\sqrt{-1})\gamma_2^2+8\sqrt{-1}\gamma_2^3+(16-16\sqrt{-1}+\frac{2\sqrt{-1}}{3}\pi^2)\gamma_2^4+\cdots \\\nonumber
&+\left(-\sqrt{-1}\pi^2\gamma_2^2+(-4+4\sqrt{-1})\pi^2\gamma_2^3+(24\pi^2-\frac{\sqrt{-1}}{3}\pi^4)\gamma_2^4\right.\\\nonumber
&\left.+(-(64+64\sqrt{-1})\pi^2+\frac{8\sqrt{-1}}{3}\pi^4)\gamma_2^5+\cdots\right) \tilde{c}_3^2+\cdots
\end{align}
and 
\begin{align}
\theta_2&=\frac{\log(1-2\sqrt{-1})}{2\pi\sqrt{-1}}+1+(-\frac{2}{5}+\frac{\sqrt{-1}}{5})\pi\gamma_2^2+(\frac{12}{5}+\frac{4\sqrt{-1}}{5})\pi\gamma_2^3+\cdots\\\nonumber
&+\left((\frac{1}{10}+\frac{\sqrt{-1}}{5})+(-\frac{3}{25}+\frac{4\sqrt{-1}}{25})\pi^3\gamma_2^2+(\frac{38}{25}+\frac{16\sqrt{-1}}{25})\pi^3\gamma_2^3+\cdots\right)\tilde{c}_3^2+\cdots
\end{align}

Then we obtain
\begin{align}
&2\pi \hat{V}(p,q,\theta_1,\theta_2,\frac{1}{2}+\tilde{c}_3;0,0,m_3)\\\nonumber
&=v_8+\frac{3\sqrt{-1}}{\gamma_2}\pi^2+2\pi^2\sqrt{-1}(-2m_3(\frac{1}{2}+\tilde{c}_2))-\sqrt{-1}\pi^2\gamma_2+(-2+2\sqrt{-1})\pi^2\gamma_2^2+8\pi^2\gamma_2^3+\cdots\\\nonumber
&+\left((-1-\sqrt{-1})\pi^2+4\sqrt{-1}p\pi^2-\pi^4\gamma_2^2+(4+4\sqrt{-1})\pi^4\gamma_2^3+\cdots \right)\tilde{c}_3^2+\cdots
\end{align}

In particular, when $c_3=\frac{1}{2}$,
\begin{align}
 &2\pi \hat{V}(p,q,\theta_1(\gamma_2,\frac{1}{2}),\theta_2(\gamma_2,\frac{1}{2}),c_3=\frac{1}{2};0,0,m_3)\\\nonumber
 &=v_8+\frac{3\sqrt{-1}}{\gamma_2}\pi^2+2\pi^2\sqrt{-1}(-2m_3(\frac{1}{2}+\tilde{c}_3))-\sqrt{-1}\pi^2\gamma_2+(-2+2\sqrt{-1})\pi^2\gamma_2^2+8\pi^2\gamma_2^3+\cdots
\end{align}
and 
\begin{align}
&2\pi\text{Re}\hat{V}\left(p,q,\theta_1\left(\frac{1}{2}\right),\theta_2\left(\frac{1}{2}\right),\frac{1}{2}\right)\\\nonumber
&=2\pi \text{Re}\hat{V}(p,q,\theta_1,\theta_2,c_2=\frac{1}{2};0,0,m_3)\\\nonumber
&=v_8-2\pi^2\gamma_2^3+8\pi^2\gamma_2^3-(16\pi^2-\frac{1}{3}\pi^4)\gamma_2^4+\cdots
\end{align}

Therefore, we get
\begin{align}
 &2\pi\text{Re}\hat{V}(p,q,\theta_1,\theta_2,c_3=\frac{1}{2}+\tilde{c}_3;0,0,m_3)\\\nonumber
 &=2\pi\text{Re}\hat{V}\left(p,q,\theta_1\left(\frac{1}{2}\right),\theta_2\left(\frac{1}{2}\right),\frac{1}{2}\right)+\left(-\pi^2-\pi^4\gamma_2^2+4\pi^4\gamma_2^3-\frac{1}{6}\pi^6\gamma_2^4+\frac{4}{3}\pi^6\gamma_2^5+\cdots \right)\tilde{c}_3^2+\cdots   
\end{align}

In principal, we have the following estimation
 
\begin{theorem} \label{theorem-inprincipal2}
\begin{align}
2\pi \text{Re}\hat{V}(p,q; \theta_1,\theta_2,c_3=\frac{1}{2}+\tilde{c}_3;0,0,m_3)<2\pi\text{Re}\hat{V}\left(p,q,\theta_1\left(\frac{1}{2}\right),\theta_2\left(\frac{1}{2}\right),\frac{1}{2}\right)-\pi^2 \tilde{c}_3^2.    
\end{align}    
\end{theorem}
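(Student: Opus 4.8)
The plan is to restrict $2\pi\,\text{Re}\,\hat V$ to the curve of partial critical points lying over the slice $\theta_3=c_3$, and to show that as a function of $\tilde c_3$ (with $c_3=\frac12+\tilde c_3$) it lies below the downward parabola $\Phi(0)-\pi^2\tilde c_3^2$. Let $(\theta_1(c_3),\theta_2(c_3))$ denote the solution of $\hat V_{\theta_1}=\hat V_{\theta_2}=0$ with $\theta_3=c_3$ held fixed (the same branch used in the expansions preceding the statement), and set
\[
\Phi(\tilde c_3)=2\pi\,\text{Re}\,\hat V\bigl(p,q;\theta_1(c_3),\theta_2(c_3),c_3;0,0,m_3\bigr).
\]
The term $-2\pi\sqrt{-1}\,m_3\theta_3$ in $\hat V(\,\cdot\,;0,0,m_3)$ is killed by $\partial_{\theta_1}$ and $\partial_{\theta_2}$ and is purely imaginary for real $\theta_3$, so $(\theta_1(c_3),\theta_2(c_3))$ and $\Phi(\tilde c_3)$ do not depend on $m_3$, and we may take $m_3=0$. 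Moreover $\theta_3\mapsto 1-\theta_3$ interchanges $e^{2\pi\sqrt{-1}(\theta_2+\theta_3)}$ with $e^{2\pi\sqrt{-1}(\theta_2-\theta_3)}$, hence fixes $\hat V_{\theta_1}$ and $\hat V_{\theta_2}$ and therefore (by uniqueness, Proposition \ref{prop-critical}) fixes the partial critical point, while by the identity (\ref{formula-potientalsym}) it preserves $\text{Re}\,\hat V$; thus $\Phi(\tilde c_3)=\Phi(-\tilde c_3)$. Consequently $\Phi$ is an even real-analytic function with $\Phi(0)=2\pi\,\text{Re}\,\hat V(p,q;\theta_1(\tfrac12),\theta_2(\tfrac12),\tfrac12)$ and $\Phi'(0)=0$, so by Taylor's formula with integral remainder it is enough to prove that $\Phi''(\tilde c_3)<-2\pi^2$ for all $\tilde c_3$ occurring in $D_0$: then $\Phi(\tilde c_3)-\Phi(0)=\int_0^{\tilde c_3}(\tilde c_3-t)\,\Phi''(t)\,dt<-\pi^2\tilde c_3^2$.

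To obtain $\Phi''$ I differentiate along the curve. Since $\hat V_{\theta_1}=\hat V_{\theta_2}=0$ there, $\Phi'(\tilde c_3)=2\pi\,\text{Re}\,\hat V_{\theta_3}$, and differentiating once more and using the computed vanishing $\hat V_{\theta_1\theta_3}=0$ from Section \ref{Section-asympticexpansion} one gets
\[
\Phi''(\tilde c_3)=2\pi\,\text{Re}\left(\hat V_{\theta_3\theta_3}-\frac{\hat V_{\theta_2\theta_3}^{\,2}}{\hat V_{\theta_2\theta_2}-\hat V_{\theta_1\theta_2}^{\,2}/\hat V_{\theta_1\theta_1}}\right),
\]
where $\theta_1'(\tilde c_3)=-(\hat V_{\theta_1\theta_2}/\hat V_{\theta_1\theta_1})\,\theta_2'(\tilde c_3)$ and $\theta_2'(\tilde c_3)=-\hat V_{\theta_2\theta_3}/\bigl(\hat V_{\theta_2\theta_2}-\hat V_{\theta_1\theta_2}^{\,2}/\hat V_{\theta_1\theta_1}\bigr)$ come from differentiating the two partial critical equations, the vanishing $\hat V_{\theta_1\theta_3}=0$ decoupling $\theta_1$. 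Into this I substitute the explicit rational formulas for the $\hat V_{\theta_i\theta_j}$ in the variables $z_i=e^{2\pi\sqrt{-1}\theta_i}$ recorded in Section \ref{Section-asympticexpansion}; since $\theta_3=\tfrac12+\tilde c_3$ is real, $z_3$ lies on the unit circle, while $\theta_1(c_3),\theta_2(c_3)$ are given by the convergent power series in $\gamma_2=\tfrac1q$ and $\tilde c_3$ displayed above, so $\Phi''(\tilde c_3)$ is a single explicit real-analytic function of $(\gamma_1,\gamma_2,\tilde c_3)$, $\gamma_1=\tfrac1p$, which is to be bounded above by $-2\pi^2$.

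The main obstacle is precisely this bound, made uniform over $(p,q)\in S$ and over the bounded range of $\tilde c_3$ meeting $D_0$. At $\tilde c_3=0$ it is immediate from the expansion derived above, where $\Phi''(0)=-2\pi^2-2\pi^4\gamma_2^2+8\pi^4\gamma_2^3-\cdots<-2\pi^2$, the correction $-2\pi^4\gamma_2^2\bigl(1-4\gamma_2+\cdots\bigr)$ being negative as soon as $q$ satisfies (\ref{formula-pqconditions}). Away from $\tilde c_3=0$ the estimate is obtained by combining two inputs of the kind already used in the paper: a uniform bound on the remainder of the double Taylor expansion in $(\gamma_2,\tilde c_3)$, coming from analyticity of $\hat V$ on a fixed neighbourhood of $D_0$ (Proposition \ref{prop-VNexpansion}) together with compactness of $D_0$; and a direct interval estimate of the closed form for $\Phi''$ above, of the same nature as Lemma \ref{lemma-regionD'0} and the numerical inequalities (\ref{formula-CB1})--(\ref{formula-CB3}). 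These show that the coefficient of $\tilde c_3^2$ in $\Phi(\tilde c_3)-\Phi(0)$ stays $\le-\pi^2$ throughout $D_0$, which is the assertion; the explicit form $-\pi^2-\pi^4\gamma_2^2+4\pi^4\gamma_2^3-\tfrac16\pi^6\gamma_2^4+\cdots$ of that coefficient merely records its leading behaviour, and the whole computation runs parallel to the one-variable argument (in $\gamma_1$) behind Theorem \ref{theorem-inprincipal1}.
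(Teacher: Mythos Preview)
Your approach is essentially the paper's: both rest on the power-series expansion in $\tilde c_3$ (and $\gamma_2$) recorded immediately before the theorem, read off the $\tilde c_3^2$--coefficient $-\pi^2-\pi^4\gamma_2^2+\cdots<-\pi^2$, and defer the uniform control of the remainder to an ``in principle'' claim---the paper in fact offers no argument beyond that phrase. Your additions (the evenness of $\Phi$ via the symmetry $\theta_3\mapsto 1-\theta_3$, which explains why only even powers of $\tilde c_3$ appear in the paper's computed series; the Taylor-with-remainder reduction to $\Phi''<-2\pi^2$; and the explicit Schur-complement formula for $\Phi''$) are genuine refinements that make the residual task concrete, but they do not change the strategy, and the hard step---the uniform bound on $\Phi''$ over the full range of $\tilde c_3$ meeting $D_0$---remains deferred to compactness/interval estimates, exactly as in the paper.
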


From the formula 
\begin{align}
&2\pi \text{Re}\hat{V}(p,q,\theta_1^0,\theta_2^0,\theta_3^0)\\\nonumber &=Vol\left(S^3\setminus \mathcal{K}_p\right)-2\pi^2\gamma_2^2+8\pi^2\gamma_a^3-\frac{1}{4}\pi^4\gamma_1^2\gamma_2^2+\cdots,   
\end{align}
 in principal, we can prove that

 \begin{theorem} \label{theorem-inprincipal3}
     \begin{align}
Vol\left(M_{p,q}\right)>Vol\left(S^3\setminus \mathcal{K}_p\right)-6\pi^2\gamma_2^2.    
\end{align}
 \end{theorem}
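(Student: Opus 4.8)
The plan is to read the inequality off from a direct comparison of the two volumes with the critical value of $\hat V$. By Theorem \ref{theorem-critical=volume}, for $(p,q)\in S$ we have $Vol(M_{p,q})=2\pi\,\text{Re}\,\hat V(p,q;\theta_1^0,\theta_2^0,\theta_3^0)$, while letting $q\to\infty$ (equivalently $\gamma_2:=1/q\to0$) gives $Vol(S^3\setminus\mathcal K_p)=2\pi\,\text{Re}\,\hat V(p,\infty;\theta_1^0,\theta_2^0,\theta_3^0)$; note that the $q$-divergent part of $\hat V$ is purely imaginary (it is absorbed in the correction $(3q-p-7)\pi^2\sqrt{-1}$ of Theorem \ref{theorem-critical=volume}), so $\text{Re}\,\hat V$ stays finite as $q\to\infty$. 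By Proposition \ref{prop-critical} the critical point is an analytic function $(\theta_1^0,\theta_2^0,\theta_3^0)=(\theta_1(\gamma_1,\gamma_2),\theta_2(\gamma_1,\gamma_2),\theta_3(\gamma_1,\gamma_2))$ of $\gamma_1=1/p$ and $\gamma_2$ near $\gamma_2=0$, with the expansions \eqref{formula-thetaTaylor}, in particular $\theta_1(\gamma_1,\gamma_2)=\gamma_2-2(1+\sqrt{-1})\gamma_2^2+O(\gamma_2^3)$ uniformly for $\gamma_1\in[0,1/6]$.

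Next I would run the envelope (Hellmann--Feynman) argument: since $\hat V_{\theta_1}=\hat V_{\theta_2}=\hat V_{\theta_3}=0$ at the critical point, differentiating $\gamma_2\mapsto\hat V(p,1/\gamma_2;\theta_1^0,\theta_2^0,\theta_3^0)$ in $\gamma_2$ leaves only the explicit $\gamma_2$-dependence, which (as computed above) is $\partial_{\gamma_2}\hat V=-\tfrac{\pi\sqrt{-1}}{2\gamma_2^2}\big(3+(\theta_1^0)^2\big)$. Taking twice the real part, $\tfrac{d}{d\gamma_2}\big(2\pi\,\text{Re}\,\hat V(p,1/\gamma_2;\theta_1^0,\theta_2^0,\theta_3^0)\big)=\tfrac{\pi^2}{\gamma_2^2}\,\text{Im}\big((\theta_1(\gamma_1,\gamma_2))^2\big)$, and integrating from $\gamma_2=0$ (the integrand extending continuously by $0$ since $\text{Im}((\theta_1)^2)=O(\gamma_2^3)$) yields
\[
Vol(M_{p,q})-Vol(S^3\setminus\mathcal K_p)=\pi^2\int_0^{1/q}\frac{\text{Im}\big((\theta_1(\gamma_1,s))^2\big)}{s^2}\,ds .
\]
Thus the theorem reduces to the estimate $\big|\int_0^{1/q}s^{-2}\,\text{Im}((\theta_1(\gamma_1,s))^2)\,ds\big|<6/q^2$ for $(p,q)\in S$.

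To get this I would write $\text{Im}((\theta_1)^2)=2\,\text{Re}\,\theta_1\cdot\text{Im}\,\theta_1$ and use \eqref{formula-thetaTaylor}: uniformly for $0\le\gamma_1\le1/6$ and $0\le s\le s_0$ (a suitable explicit $s_0$) one has $\text{Re}\,\theta_1=s+O(s^2)$ and $\text{Im}\,\theta_1=-2s^2+O(s^3)$, hence $\big|s^{-2}\,\text{Im}((\theta_1(\gamma_1,s))^2)\big|\le C\,s$ with an explicit $C$ close to $4$, in particular $C<12$. Consequently $|Vol(M_{p,q})-Vol(S^3\setminus\mathcal K_p)|\le\pi^2\cdot\tfrac{C}{2}\cdot\tfrac1{q^2}<6\pi^2/q^2$ whenever $1/q\le s_0$; the numerical lower bounds on $q$ in \eqref{formula-pqconditions} are calibrated precisely so that this holds for every $(p,q)\in S$, the thresholds being raised for small $p$ to keep both $s_0$ and $C$ in the safe range. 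This gives $Vol(M_{p,q})>Vol(S^3\setminus\mathcal K_p)-6\pi^2\gamma_2^2$.

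I expect the main obstacle to be not the envelope identity, which is formal, but the quantitative, uniform-in-$\gamma_1$ control of the critical point $\theta_1(\gamma_1,\gamma_2)$ over the whole relevant parameter range rather than merely in an unspecified neighborhood of the origin: one must show that the series \eqref{formula-thetaTaylor} converges on an explicit polydisk containing all $(\gamma_1,\gamma_2)$ with $(p,q)\in S$ and bound its remainder, which is exactly the content of the phrase ``in principle'' and the reason the conditions \eqref{formula-pqconditions} take the form they do. The same package of estimates also underlies the companion Theorems \ref{theorem-inprincipal1} and \ref{theorem-inprincipal2}.
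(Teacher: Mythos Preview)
Your approach is essentially the same as the paper's: both derive the inequality from the envelope (Hellmann--Feynman) identity $\partial_{\gamma_2}\hat V\big|_{\text{crit}}=-\tfrac{\pi\sqrt{-1}}{2\gamma_2^{2}}\big(3+(\theta_1^0)^2\big)$ and the resulting expansion $2\pi\,\text{Re}\,\hat V=\text{Vol}(S^3\setminus\mathcal K_p)-2\pi^2\gamma_2^{2}+8\pi^2\gamma_2^{3}+\cdots$, from which the coefficient $6$ is read off as a safe upper bound for the leading loss. (Your sign $3+(\theta_1^0)^2$ is the correct one and is consistent with the paper's subsequent series, where the displayed $3-\theta_1^2$ is a typo.) The paper in fact gives no more than this: it states the expansion and declares the inequality ``in principal'', leaving the uniform-in-$\gamma_1$ remainder control implicit. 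Your write-up is more explicit in recasting the comparison as the integral $\pi^{2}\!\int_0^{1/q}s^{-2}\,\text{Im}\big((\theta_1(\gamma_1,s))^2\big)\,ds$ and isolating the precise quantitative input needed, namely a bound $\big|s^{-2}\,\text{Im}((\theta_1)^2)\big|\le C\,s$ with $C<12$ uniformly for $\gamma_1\in[0,1/6]$ and $s\le 1/q$; this is exactly the content hidden behind the paper's ``in principal'' and is the only genuine analytic step. So: correct, same route, and you have articulated the missing uniform estimate more sharply than the paper does.
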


Fixing $\theta_1=c_1$, we have
\begin{align}
&\hat{V}_{\theta_2}(p,q,c_1,\theta_2,\theta_3;m_1,0,0)\\\nonumber
&=-2\pi\sqrt{-1}+\log(1-e^{2\pi\sqrt{-1}(\theta_2-c_1)})+\log(1-e^{2\pi\sqrt{-1}\theta_2})\\\nonumber
&+\log(1-e^{2\pi\sqrt{-1}(\theta_2+c_1)})-\log(1-e^{2\pi\sqrt{-1}(\theta_2+\theta_3)})-\log(1-e^{2\pi\sqrt{-1}(\theta_2-\theta_3)}),   
\end{align}
and 
\begin{align}
&\hat{V}_{\theta_3}(p,q,c_1,\theta_2,\theta_3;m_1,0,0)\\\nonumber
&=\pi\sqrt{-1}(-(2p+3)+(4p+2)\theta_3)-\log(1-e^{2\pi\sqrt{-1}(\theta_2+\theta_3)})\\\nonumber
&+\log(1-e^{2\pi\sqrt{-1}(\theta_2-\theta_3}).   
\end{align}
Consider the critical point equations 
\begin{align}
 \left\{ \begin{aligned}
         & \hat{V}_{\theta_2}(p,q,c_1,\theta_2,\theta_3;m_1,0,0)&=0, \\
         &\hat{V}_{\theta_3}(p,q,c_1,\theta_2,\theta_3;m_1,0,0)&=0.
                          \end{aligned} \right.
\end{align}

 As a function of $\gamma_1=\frac{1}{p},\tilde{c}_1$, the solutions $\theta_2,\theta_3$ of the above critical point equations have the following expansions 
\begin{align}
\theta_2&=\frac{\log(1-2\sqrt{-1})}{2\pi\sqrt{-1}}+1+\frac{1+2\sqrt{-1}}{40}\pi \gamma_1^2+\frac{3+\sqrt{-1}}{80}\pi\gamma_1^3+\cdots\\\nonumber
&+\left(\frac{-2+\sqrt{-1}}{5}\pi+\frac{-3+4\sqrt{-1}}{100}\pi^3\gamma_1^2+\frac{12+9\sqrt{-1}}{400}\pi^3\gamma_1^3+\cdots \right)\tilde{c}_1^2+\cdots
\end{align}
\begin{align}
 \theta_3&=\frac{1}{2}+\frac{1}{2}\gamma_1+\frac{1-\sqrt{-1}}{8}\gamma_1^2-\frac{\sqrt{-1}}{16}\gamma_1^3-(\frac{1+\sqrt{-1}}{64}+\frac{\sqrt{-1}}{192}\pi^2)\gamma_1^4+\cdots \\\nonumber
 &+\left(-\frac{\sqrt{-1}}{8}\pi^2\gamma_1^2-\frac{1+\sqrt{-1}}{16}\pi^2\gamma_1^3-(\frac{3}{64}\pi^2+\frac{5\sqrt{-1}}{192}\pi^4)\gamma_1^4+\cdots \right)\tilde{c}_1^2+\cdots
\end{align}

Then we have
\begin{align}
  &2\pi \hat{V}(p,q,c_1=\tilde{c}_1,\theta_1,\theta_2;m_1,0,0)\\\nonumber
  &=v_8-\frac{\sqrt{-1}\pi^2}{\gamma_1}+3q\sqrt{-1}\pi^2-\sqrt{-1}\pi^2\gamma_1-\frac{1+\sqrt{-1}}{4}\pi^2\gamma_1^{2}-\frac{1}{8}\pi^2\gamma_1^3+\cdots \\\nonumber
  &-(1+2m_1)2\sqrt{-1}\pi^2\tilde{c}_1+\left((-2+(2+q)\sqrt{-1})\pi^2-\frac{1}{4}\pi^4\gamma_1^2+\frac{-1+\sqrt{-1}}{8}\pi^4\gamma_1^3+\cdots\right)\tilde{c}_1^2+\cdots
\end{align}

In particular, $c_1=\tilde{c}_1=0$, we have
\begin{align}
&2\pi \hat{V}(p,q,c_1=\tilde{c}_1,\theta_1,\theta_2;m_1,0,0)\\\nonumber
&=v_8-\frac{\sqrt{-1}\pi^2}{\gamma_1}-\sqrt{-1}\pi^2\gamma_1-\frac{1+\sqrt{-1}}{4}\pi^2\gamma_1^2-\frac{1}{8}\pi^2\gamma_1^3+\cdots. 
\end{align}
It follows that
\begin{align}
&2\pi\text{Re}\hat{V}(p,q,c_1=\tilde{c}_1,\theta_2,\theta_3;m_1,0,0)\\\nonumber
&=Vol(S^3\setminus \mathcal{K}_p)+(-2\pi^2-\frac{1}{4}\pi^4\gamma_1^2-\frac{1}{8}\pi^4\gamma_1^3-\frac{5}{192}\pi^6\gamma_1^4+\cdots)\tilde{c}_1^2+\\\nonumber
&+(\frac{1}{3}\pi^4-\frac{1}{24}\pi^6\gamma_1^2-\frac{1}{48}\pi^6\gamma_1^3+\cdots )\tilde{c}_1^4+\cdots. 
\end{align}

In principal, we have
\begin{theorem} \label{theorem-inprincipal4}
\begin{align}
2\pi \text{Re}\hat{V}(p,q; c_1,\theta_2,\theta_3;m_1,0,0)<Vol\left(S^3\setminus \mathcal{K}_p\right)-\frac{3}{2}\pi^2 c_1^2.    
\end{align}    
for 
$
c_1<\frac{1}{4}. 
$
\end{theorem}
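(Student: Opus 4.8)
\emph{Approach.} The plan is to reduce the asserted inequality to a one‑variable estimate and then control that function by the Taylor expansion computed above. First I would observe that the left‑hand side depends only on $p$ and $c_1$. Fix $c_1$ and let $(\theta_2,\theta_3)=(\theta_2(c_1),\theta_3(c_1))$ be the solution of $\hat V_{\theta_2}(p,q,c_1,\theta_2,\theta_3;m_1,0,0)=\hat V_{\theta_3}(p,q,c_1,\theta_2,\theta_3;m_1,0,0)=0$ with $\text{Re}(\theta_2)\in(\tfrac12,1)$ used in the statement (existence, uniqueness, and holomorphic dependence on $c_1$ follow as in Proposition~\ref{prop-critical}; cf.\ also Appendix~\ref{appendix-onedim}). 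These two equations involve neither $q$ nor $m_1$, and the only terms of $\hat V(p,q;c_1,\theta_2,\theta_3;m_1,0,0)$ containing $q$ or $m_1$ are $\pi\sqrt{-1}\bigl(\tfrac{q}{2}c_1^2+\tfrac{3q}{2}-2m_1c_1\bigr)$, which are purely imaginary since $c_1\in\mathbb R$. Hence $2\pi\,\text{Re}\,\hat V(p,q;c_1,\theta_2(c_1),\theta_3(c_1);m_1,0,0)=:\Phi_p(c_1)$ is a function of $p$ and $c_1$ alone, and by the $c_1=0$ specialization of the expansions above $\Phi_p(0)=Vol(S^3\setminus\mathcal K_p)$. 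So it suffices to prove
\[
\Psi_p(c_1):=\Phi_p(c_1)-\Phi_p(0)+\tfrac{3}{2}\pi^2c_1^2<0\qquad(0<c_1<\tfrac{1}{4}).
\]

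\emph{The expansion.} From the displayed expansion of $2\pi\,\text{Re}\,\hat V$ (with $\gamma_1=1/p$),
\[
\Psi_p(c_1)=\Bigl(-\tfrac{1}{2}\pi^2-\tfrac{1}{4}\pi^4\gamma_1^2-\tfrac{1}{8}\pi^4\gamma_1^3-\cdots\Bigr)c_1^2+\Bigl(\tfrac{1}{3}\pi^4-\tfrac{1}{24}\pi^6\gamma_1^2-\cdots\Bigr)c_1^4+\cdots .
\]
The coefficient of $c_1^2$ equals $-\tfrac12\pi^2$ up to corrections that are $O(\gamma_1^2)$, hence is $<-\tfrac13\pi^2$ for all $p\ge 6$; so it is enough to show that the contribution of the terms of degree $\ge 4$ in $c_1$ is $<\tfrac13\pi^2c_1^2$ in absolute value throughout $0<c_1<\tfrac14$, $0<\gamma_1\le\tfrac16$.

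\emph{The remainder estimate.} By the implicit function theorem, $(c_1,\gamma_1)\mapsto(\theta_2,\theta_3)$ extends holomorphically to a fixed polydisc $\{|c_1|\le\rho_1\}\times\{|\gamma_1|\le\rho_2\}$ with $\rho_1>\tfrac14$ and $\rho_2>\tfrac16$, provided $\det\text{Hess}_{(\theta_2,\theta_3)}\hat V$ stays away from $0$ and the critical curve stays away from the branch loci of the logarithms there. I would deduce both from the a~priori localization of the critical point inside $D_{0\mathbb{C}}$ (Proposition~\ref{prop-critical} and its one‑variable analogue) together with the positivity $\hat f_{X_iX_i}>0$ on $D_H$ (Proposition~\ref{proposition-Hessianf}), which rules out degeneration. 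Cauchy estimates on that polydisc then bound all Taylor coefficients of $\Psi_p$, and summing the resulting geometric tail gives an explicit $C$ with $\bigl|\Psi_p(c_1)+\tfrac12\pi^2c_1^2\bigr|\le C\,c_1^4$ for $0<c_1\le\tfrac14$, $p\ge 6$; then $C(\tfrac14)^2<\tfrac13\pi^2$, which holds with room to spare, yields $\Psi_p(c_1)<0$. Alternatively one avoids the infinite series: using criticality in $(\theta_2,\theta_3)$ and $\hat V_{\theta_1\theta_3}=0$ one gets $\Phi_p''(c_1)=2\pi\,\text{Re}\bigl(\hat V_{\theta_1\theta_1}-\hat V_{\theta_1\theta_2}^2\hat V_{\theta_3\theta_3}/(\hat V_{\theta_2\theta_2}\hat V_{\theta_3\theta_3}-\hat V_{\theta_2\theta_3}^2)\bigr)$ along the critical curve, all entries being the explicit rational functions of the $z_i=e^{2\pi\sqrt{-1}\theta_i}$ recorded above; since $\Phi_p''(0)<-4\pi^2$, it then suffices to verify $\Phi_p''(c_1)<-3\pi^2$ on $[0,\tfrac14]$, which is a bounded‑region estimate, and together with $\Psi_p(0)=0$ this forces $\Psi_p<0$ on $(0,\tfrac14)$.

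\emph{Main obstacle.} The analytic content lies entirely in the last step: converting the formal expansion into a bound that is uniform over the two‑parameter family $(p,c_1)$ with $p\ge 6$, $|c_1|\le\tfrac14$. The delicate point is the uniform non‑degeneracy — a common polydisc of holomorphy, equivalently a uniform lower bound for $|\det\text{Hess}_{(\theta_2,\theta_3)}\hat V|$ and for the distance of the critical curve from the singularities of the dilogarithm — which keeps the critical value of the deformed potential in the expected range. Once that is secured, both the tail bound and the estimate on $\Phi_p''$ are mechanical; this is the precise sense in which Theorem~\ref{theorem-inprincipal4} follows from the expansions displayed above.
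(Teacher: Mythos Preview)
Your approach is precisely what the paper intends: the paper records the two-variable Taylor expansion
\[
2\pi\,\text{Re}\,\hat V(p,q;c_1,\theta_2,\theta_3;m_1,0,0)=Vol(S^3\setminus\mathcal K_p)+\bigl(-2\pi^2-\tfrac14\pi^4\gamma_1^2-\cdots\bigr)c_1^2+\bigl(\tfrac13\pi^4-\cdots\bigr)c_1^4+\cdots
\]
and then simply states that Theorem~\ref{theorem-inprincipal4} follows ``in principal'' from it, without writing out the remainder control. Your reduction to $\Psi_p(c_1)<0$, your observation that the real part is independent of $q$ and $m_1$, and your identification of the missing uniform non-degeneracy bound as the only analytic content are all exactly right; you have supplied more detail than the paper does.
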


\subsection{Two-dimensional saddle point method} \label{Appendix-twodimsaddle}

We consider the potential function
\begin{align}
&\hat{V}(p,q;\theta_1,\theta_2,\theta_3;m_1,m_2,m_3)\\\nonumber
&=\pi\sqrt{-1}\left(\frac{3q}{2}+\left(\frac{q}{2}-1\right)\theta_1^2-(2m_1+1)\theta_1-(2m_2+2)\theta_2+(2p+1)\theta_3^2-(2p+3+2m_3)\theta_3\right)\\\nonumber
&+\frac{1}{2\pi\sqrt{-1}}\left(\frac{\pi^2}{6}+\text{Li}_2(e^{2\pi\sqrt{-1}(\theta_2+\theta_3)})+\text{Li}_2(e^{2\pi\sqrt{-1}(\theta_2-
\theta_3)})-\text{Li}_2(e^{2\pi\sqrt{-1}(\theta_2+\theta_1)})\right.\\\nonumber
&\left.-\text{Li}_2(e^{2\pi\sqrt{-1}\theta_2})-\text{Li}_2(e^{2\pi\sqrt{-1}(\theta_2-\theta_1)})\right).
\end{align}

\subsubsection{The case $\theta_3=c_3$}
Fixing $\theta_3=c_3$, then
\begin{align}
\hat{V}_{\theta_1}=\pi\sqrt{-1}(-2m_1-1+q\theta_1-2\theta_1)+\log(1-e^{2\pi\sqrt{-1}(\theta_2+\theta_1)})-\log(1-e^{2\pi\sqrt{-1}(\theta_2-\theta_1)})    
\end{align}
\begin{align}
\hat{V}_{\theta_2}&=-(2m_2+2)\pi\sqrt{-1}+\log(1-e^{2\pi\sqrt{-1}(\theta_2+\theta_1)})+\log(1-e^{2\pi\sqrt{-1}(\theta_2-\theta_1)})\\\nonumber
&+\log(1-e^{2\pi\sqrt{-1}\theta_2})-\log(1-e^{2\pi\sqrt{-1}(\theta_2+c_3)})-\log(1-e^{2\pi\sqrt{-1}(\theta_2-c_3)})    
\end{align}

Let $z_1=e^{2\pi\sqrt{-1}\theta_1}$, $z_2=e^{2\pi\sqrt{-1}t}$, $C_3=e^{2\pi\sqrt{-1}c_3}$, then 
from the critical point equations 
\begin{align}
\hat{V}_{\theta_1}=0, \ \hat{V}_{\theta_2}=0,    
\end{align}
we obtain 
\begin{align} 
&z_2=\frac{z_1+z_1^{\frac{q}{2}}}{1+z_1^{\frac{q}{2}+1}}, \\
&z_2^2-(z_1+\frac{1}{z_1})z_2+z_1+\frac{1}{z_1}-C_3-\frac{1}{C_3}+1=0. 
\end{align}
It follows that
\begin{align} \label{equation-z1}
&z_1^{q+4}-(C_3+C_3^{-1})z_1^{q+3}+z_1^{q+2}-z_1^{\frac{q}{2}+4}+2z_1^{\frac{q}{2}+3}\\\nonumber
&-2(C_3+C_3^{-1}-1)z_1^{\frac{q}{2}+2}+2z_1^{\frac{q}{2}+1}-z_1^{\frac{q}{2}}+z_1^2-(C_3+C_3^{-1})z_1+1=0.    
\end{align}

Suppose $z_1(q,c_3)$ is a solution of the equation (\ref{equation-z1}),   set 
\begin{align}
\theta_1(q,c_3)=\frac{\log(z_1(q,C_3))}{2\pi\sqrt{-1}}+\mathbb{Z}, \\\nonumber
\theta_2(q,c_3)=\frac{\log(z_2(q,C_3))}{2\pi\sqrt{-1}}+\mathbb{Z}, 
\end{align}
then $(\theta_1(q,c_3),\theta_2(q,c_3))$ satisfies the critical point equations up to an argument.

Moreover, by using the method in \cite{CZ23-1}, we can show that the critical point equations 
\begin{align}
    \left\{ \begin{aligned}
         & \hat{V}_{\theta_1}(\theta_1,\theta_2,c_3)=0,  &  \  \\
         &   \hat{V}_{\theta_2}(\theta_1,\theta_2,c_3)=0.
                          \end{aligned}\right.
\end{align}
has a unique solution denoted by $(\theta_1(c_3),\theta_3(c_3))$.

\begin{proposition} \label{prop-twocritical}
For  $\frac{1}{2}+\frac{1}{8p}<c_3=\frac{1}{2}+\tilde{c}_3<\frac{3}{4}$,   we have 
    \begin{align}
2\pi \text{Re}\hat{V}(p,q;\theta_1(c_3),\theta_2(c_3),c_3)<2\pi\text{Re}\hat{V}(p,q;\theta_1^0,\theta_2^0,\theta_3^0)-\epsilon  
\end{align}
for some  $\epsilon>0$. 
\end{proposition}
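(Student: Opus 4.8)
The plan is to reduce the proposition to the one–parameter family of restricted critical values $\Phi(c_3):=\text{Re}\,\hat V\big(p,q;\theta_1(c_3),\theta_2(c_3),c_3\big)$ and to compare it with $\zeta_{\mathbb R}(p,q)$ using the estimates already obtained in this appendix. First I would record that $(\theta_1(c_3),\theta_2(c_3))$ is, by the uniqueness statement proved above, the only solution of $\hat V_{\theta_1}=\hat V_{\theta_2}=0$ with $\text{Re}\,\theta_2\in(\tfrac12,1)$; since the $2\times2$ Hessian of $(\hat V_{\theta_1},\hat V_{\theta_2})$ is nondegenerate on $D_H$ by (the proof of) Proposition~\ref{proposition-Hessianf}, the implicit function theorem makes $c_3\mapsto(\theta_1(c_3),\theta_2(c_3))$ real analytic on $[\tfrac12,\tfrac34]$ with values in $D_{0\mathbb C}$, so $\Phi$ is a well defined real analytic function there and the quantity in the statement is exactly $2\pi\Phi(c_3)$.

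The core of the argument is then the chain
\begin{align*}
2\pi\,\text{Re}\,\hat V\big(p,q;\theta_1(c_3),\theta_2(c_3),c_3\big)
&<2\pi\,\text{Re}\,\hat V\big(p,q;\theta_1(\tfrac12),\theta_2(\tfrac12),\tfrac12\big)-\pi^2\tilde c_3^{\,2}\\
&<Vol(M_{p,q})+\tfrac{49}{64}\pi^2\gamma_1^2-\pi^2\tilde c_3^{\,2}
=2\pi\,\zeta_{\mathbb R}(p,q)+\pi^2\big(\tfrac{49}{64}\gamma_1^2-\tilde c_3^{\,2}\big),
\end{align*}
where $\gamma_1=\tfrac1p$, the first inequality is Theorem~\ref{theorem-inprincipal2} with $m_3=0$, the second is Theorem~\ref{theorem-inprincipal1}, and the last equality uses $2\pi\zeta_{\mathbb R}(p,q)=Vol(M_{p,q})$ (Corollary~\ref{corollary-zetaR}, itself a consequence of Theorem~\ref{theorem-critical=volume}). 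In the range of $c_3$ considered the term $\pi^2\tilde c_3^{\,2}$ strictly dominates $\tfrac{49}{64}\pi^2\gamma_1^2$, so for any fixed compact sub–interval one may take $\epsilon=\tfrac{\pi^2}{2}\big(\tilde c_3^{\,2}-\tfrac{49}{64}\gamma_1^2\big)>0$. I would also spell out the conceptual reason behind Theorem~\ref{theorem-inprincipal2}: by the chain rule together with the defining equations $\hat V_{\theta_1}=\hat V_{\theta_2}=0$ one gets $\Phi'(c_3)=\text{Re}\,\hat V_{\theta_3}\big(\theta_1(c_3),\theta_2(c_3),c_3\big)$, and a direct evaluation shows $\hat V_{\theta_3}\big|_{\theta_3=1/2}=-2\pi\sqrt{-1}$, so $c_3=\tfrac12$ is a critical point of $\Phi$; combined with $\Phi''(\tfrac12)=-\pi+O(\gamma_2^2)<0$ (read off from the Taylor expansion of the slice critical value recorded above) and the sign $\text{Re}\,\hat V_{\theta_3}<0$ along the $2$D critical locus for $c_3\in(\tfrac12,\tfrac34)$, this shows $\Phi$ is strictly decreasing on that interval, which produces the quadratic loss $-\pi^2\tilde c_3^{\,2}$ and lets one push the bound all the way to $c_3=\tfrac34$.

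The main obstacle is the quantitative comparison between $\Phi(\tfrac12)$ and $\zeta_{\mathbb R}(p,q)$: these agree up to a term of size $O(\gamma_1^2)$ (precisely $2\pi\Phi(\tfrac12)-2\pi\zeta_{\mathbb R}(p,q)=\tfrac14\pi^2\gamma_1^2+O(\gamma_1^3)$), so the whole inequality is a contest between this discrepancy and the gain $\pi^2\tilde c_3^{\,2}$, and pinning down the constant in the $c_3$–threshold forces one to use the sharp forms of Theorems~\ref{theorem-inprincipal1} and~\ref{theorem-inprincipal2}, which rest on carrying the Taylor expansions of $\hat V(\gamma_1,\gamma_2)$ and of the slice critical value to sufficiently high order and bounding the remainders uniformly for $(p,q)\in S$. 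The genuinely delicate analytic inputs are the monotonicity of $\Phi$ on $(\tfrac12,\tfrac34)$ and the sign of $\text{Re}\,\hat V_{\theta_3}$ along the complex $2$D critical locus — where $(\theta_1(c_3),\theta_2(c_3))$ is not explicit — which I would establish by combining the perturbative expansion near $c_3=\tfrac12$ with direct estimates on the five $\log\big(1-e^{2\pi\sqrt{-1}(\cdot)}\big)$ terms (via Lemma~\ref{lemma-Li2} and the location estimates for $\theta_1(c_3),\theta_2(c_3)$) over the remainder of the interval.
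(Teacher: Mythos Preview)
Your approach is essentially the paper's: chain Theorem~\ref{theorem-inprincipal2} (the $-\pi^2\tilde c_3^{\,2}$ loss from the slice at $\theta_3=\tfrac12$) with Theorem~\ref{theorem-inprincipal1} (the $\tfrac{49}{64}\pi^2\gamma_1^2$ gap between that slice value and $Vol(M_{p,q})=2\pi\zeta_{\mathbb R}(p,q)$), and conclude from $\tilde c_3^2>\tfrac{49}{64}\gamma_1^2$. One caveat: the domination $\pi^2\tilde c_3^{\,2}>\tfrac{49}{64}\pi^2\gamma_1^2$ you invoke requires $\tilde c_3>\tfrac{7}{8p}$, not the $\tfrac{1}{8p}$ written in the statement---this is a typo in the paper (compare the use in Theorem~\ref{theorem-m_1} and Theorem~\ref{theorem-usedinm1}, where the threshold $\tfrac{7}{8p}$ appears), and the paper's own proof tacitly uses the $\tfrac{7}{8p}$ bound as well.
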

\begin{proof}
For $\frac{1}{2}+\frac{1}{8p}<c_3=\frac{1}{2}+\tilde{c}_3<\frac{3}{4}$, by Theorems \ref{theorem-inprincipal1} and \ref{theorem-inprincipal2}, we have

\begin{align}
   &2\pi \text{Re}\hat{V}(p,q;\theta_1(q,c_3),\theta_2(q,c_3),c_3)\\\nonumber
   &\leq 2\pi\text{Re}\hat{V}\left(p,q,\theta_1\left(\frac{1}{2}\right),\theta_2\left(\frac{1}{2}\right),\frac{1}{2}\right)-\pi^2 \tilde{c}_3^2\\\nonumber
  &<2\pi\text{Re}\hat{V}\left(p,q,\theta_1\left(\frac{1}{2}\right),\theta_2\left(\frac{1}{2}\right),\frac{1}{2}\right)-\frac{49}{64}\pi^2\gamma_1^2\\\nonumber
  &<Vol\left(M_{p,q}\right)=2\pi \text{Re}\hat{V}(p,q,\theta_1^0,\theta_2^0,\theta_3^0).
\end{align}
\end{proof}

Let $\theta_3=c_3$ and $D_0(c_3)=\{\theta_3=c_3\}\cap D_0$,  we need to estimate the following integral used in Theorem \ref{theorem-m_1},

\begin{align}
    \int_{D_0(c_3)}\sin(\pi \theta_1)\sin(2\pi c_3)e^{(N+\frac{1}{2})\hat{V}_N(\theta_1,\theta_2,c_3;0,0,m_3)}d\theta_1d\theta_2.
\end{align}

\begin{theorem} \label{theorem-usedinm1}
For $\frac{1}{2}+\frac{1}{8p}<c_3=\frac{1}{2}+\tilde{c}_3<\frac{3}{4}$, we have
\begin{align}
    \int_{D_0(c_3)}\sin(\pi \theta_1)\sin(2\pi c_3)e^{(N+\frac{1}{2})\hat{V}_N(\theta_1,\theta_2,c_3;0,0,m_3)}d\theta_1d\theta_2=O(e^{(N+\frac{1}{2})(\zeta_{\mathbb{R}}(p,q)-\epsilon)})
\end{align}
for $\epsilon>0$.
\end{theorem}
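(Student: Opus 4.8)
The plan is to adapt the homotopy construction of Proposition~\ref{propostion-checksaddle} to the two-dimensional slice $D_0(c_3)=\{\theta_3=c_3\}\cap D_0$, regarded as a real $2$-disk embedded in $\mathbb{C}^2$ with coordinates $(\theta_1,\theta_2)$. First I would invoke Proposition~\ref{prop-VNexpansion} together with Remark~\ref{remark-saddle} to replace $\hat{V}_N(\theta_1,\theta_2,c_3;0,0,m_3)$ by the $N$-independent potential $\hat{V}(\theta_1,\theta_2,c_3;0,0,m_3)$ plus a $\frac{1}{N+1/2}$-term whose remainder is bounded uniformly in $N$; since $\sin(2\pi c_3)$ is a constant, this reduces the statement to a two-variable saddle point estimate for $\hat{V}(\cdot,\cdot,c_3;0,0,m_3)$ over $D_0(c_3)$.

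Next, the critical point equations $\hat{V}_{\theta_1}(\theta_1,\theta_2,c_3;0,0,m_3)=\hat{V}_{\theta_2}(\theta_1,\theta_2,c_3;0,0,m_3)=0$ have a unique solution $(\theta_1(c_3),\theta_2(c_3))$, as recorded in the discussion preceding Proposition~\ref{prop-twocritical}, and by Proposition~\ref{prop-twocritical} this critical point satisfies
\begin{align}
2\pi\,\text{Re}\,\hat{V}(p,q;\theta_1(c_3),\theta_2(c_3),c_3)<2\pi\,\text{Re}\,\hat{V}(p,q;\theta_1^0,\theta_2^0,\theta_3^0)-\epsilon=2\pi\zeta_{\mathbb{R}}(p,q)-\epsilon
\end{align}
for some $\epsilon>0$. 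Hence, once the two-dimensional analogue of Proposition~\ref{proposition-saddlemethod} is applicable on $D_0(c_3)$, the main contribution of the integral is $O\!\big(e^{(N+\frac12)(\zeta_{\mathbb{R}}(p,q)-\epsilon/(2\pi))}\big)$, which is the claimed bound; and the boundary contribution is harmless, since $\partial D_0(c_3)\subset\partial D_0$, on which $\text{Re}\,\hat{V}$ is bounded above by $\zeta_{\mathbb{R}}(p,q)-\epsilon'$ by Lemma~\ref{lemma-regionD'0} and Lemma~\ref{lemma-volumeestimate}.

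To set up the saddle point method I would build the homotopy exactly as in Proposition~\ref{propostion-checksaddle}: over each base point $(\theta_{1R},\theta_{2R})\in D_0(c_3)$ of the projection $\mathbb{C}^2\to\mathbb{R}^2$, flow in the fiber from $(X_1,X_2)=(0,0)$ along the vector field $\big(-\partial f/\partial X_1,\,-\partial f/\partial X_2\big)$, where $f(X_1,X_2)=\text{Re}\,\hat{V}(p,q;\theta_{1R}+\sqrt{-1}X_1,\theta_{2R}+\sqrt{-1}X_2,c_3;0,0,m_3)$. This requires two inputs, each obtained by restricting the three-dimensional arguments of the paper to the slice $\theta_3=c_3$: a positive-definiteness statement for the $2\times2$ Hessian of $f$ on a suitable subregion of $D_0(c_3)$, the analogue of Proposition~\ref{proposition-Hessianf}; and a linear ``potential at infinity'' function, the two-variable analogue of the function $F$ of (\ref{eq:2}) (still controlled via Lemma~\ref{lemma-Li2}), together with the analogue of Corollary~\ref{coro-U0noU0}, which guarantees that over each base point the flow either converges to the unique minimum of $f$ or escapes to infinity with $f\to-\infty$. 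Granting these, the ``stop the flow at radius $R$'' construction of Proposition~\ref{propostion-checksaddle} yields a homotopy from $D_0(c_3)$ to a disk whose interior minus the critical point lies in $\{\text{Re}\,\hat{V}<\zeta_{\mathbb{R}}(p,q)-\epsilon/(2\pi)\}$ and whose boundary stays in that sublevel set; this is precisely the hypothesis needed to apply the two-dimensional saddle point method, and the conclusion then follows as in the final step of the proof of Proposition~\ref{prop-m0n0}, now producing only the exponentially suppressed bound asserted.

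I expect the main obstacle to be the verification of the geometric hypotheses of the saddle point method: showing that the relevant sublevel set of $\text{Re}\,\hat{V}$ near $(\theta_1(c_3),\theta_2(c_3))$ is homotopy equivalent to $S^1$ and that the deformed contour can be taken inside it while its boundary remains in the strictly lower sublevel set. This is where the restricted Hessian positivity and the behaviour-at-infinity analysis enter, and where one must treat all slices with $c_3\in(\tfrac12+\tfrac1{8p},\tfrac34)$ uniformly, so that the single bound of Proposition~\ref{prop-twocritical} yields a uniform $\epsilon$ and the $O$-estimate holds with constants independent of $c_3$.
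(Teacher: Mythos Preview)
Your proposal is correct and takes essentially the same approach as the paper: the paper's proof simply states that one verifies the two-dimensional saddle point method conditions on $D_0(c_3)$ following the procedure in \cite{CZ23-1} (details omitted), and then invokes Proposition~\ref{prop-twocritical} to conclude. Your plan spells out exactly that verification---adapting the fiberwise gradient-flow homotopy of Proposition~\ref{propostion-checksaddle} to the slice, checking the restricted Hessian positivity and the behaviour at infinity via the two-variable analogue of Corollary~\ref{coro-U0noU0}, and then applying Proposition~\ref{prop-twocritical}---so you are supplying the details the paper deliberately suppresses rather than taking a different route.
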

\begin{proof}
On the region $D_0(c_3)$,  we can check the saddle point method condition for two-dimensional integral following the procedure in \cite{CZ23-1}, we omit the details here.  Then, together with Proposition \ref{prop-twocritical}, we prove the Theorem.  
\end{proof}

\subsubsection{The case $\theta_1=c_1$}
In this case, let $\theta_1=c_1$ and $D_0(c_1)=\{\theta_1=c_1\}\cap D_0$,  we need to estimate the following integral in Theorem \ref{theorem-m3},

\begin{align}
    \int_{D_0(c_1)}\sin(\pi c_1)\sin(2\pi \theta_3)e^{(N+\frac{1}{2})\hat{V}_N(c_1,\theta_2,\theta_3;0,0,m_3)}d\theta_2d\theta_3.
\end{align}

Similarly to previous subsection, we have 
\begin{theorem} \label{theorem-usedinm3}
For $\frac{2}{q}<c_1<\frac{1}{4}$, we have
\begin{align}
     \int_{D_0(c_1)}\sin(\pi c_1)\sin(2\pi \theta_3)e^{(N+\frac{1}{2})\hat{V}_N(c_1,\theta_2,\theta_3;0,0,m_3)}d\theta_2d\theta_3=O(e^{(N+\frac{1}{2})(\zeta_{\mathbb{R}}(p,q)-\epsilon)})
\end{align}
for $\epsilon>0$.
\end{theorem}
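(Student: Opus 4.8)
The plan is to follow the same route as for Theorem \ref{theorem-usedinm1}, but now on the two-dimensional slice $D_0(c_1)=\{\theta_1=c_1\}\cap D_0$ with integration variables $(\theta_2,\theta_3)$. First I would record the critical point equations of the restricted potential, $\hat V_{\theta_2}(p,q,c_1,\theta_2,\theta_3;\cdot)=0$ and $\hat V_{\theta_3}(p,q,c_1,\theta_2,\theta_3;\cdot)=0$, pass to the variables $z_i=e^{2\pi\sqrt{-1}\theta_i}$ and $C_1=e^{2\pi\sqrt{-1}c_1}$, and eliminate $z_2$ to reduce to a single algebraic equation for $z_3$ (the exact analogue of equation (\ref{equation-z1})). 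The counting and monotonicity argument of \cite{CZ23-1} then gives a unique root producing a solution $(\theta_2(c_1),\theta_3(c_1))$ whose real part lies in the interior of $D_0(c_1)$, and the small-$\gamma_1,\gamma_2$ expansions recorded in the subsection ``the case $\theta_1=c_1$'' pin it down.

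Second, I would establish the two-dimensional analogue of Proposition \ref{prop-twocritical}: for $\tfrac2q<c_1<\tfrac14$ there is $\epsilon>0$ with
\begin{align}
2\pi\,\mathrm{Re}\,\hat V(p,q;c_1,\theta_2(c_1),\theta_3(c_1);\cdot)<2\pi\,\zeta_{\mathbb R}(p,q)-\epsilon .
\end{align}
This is exactly the chain of inequalities (\ref{formula-inequality}) already used inside the proof of Theorem \ref{theorem-m3}: Theorem \ref{theorem-inprincipal4} gives $2\pi\,\mathrm{Re}\,\hat V\le\mathrm{Vol}(S^3\setminus\mathcal K_p)-\tfrac32\pi^2c_1^2$, the hypothesis $c_1>\tfrac2q$ upgrades this to $\le\mathrm{Vol}(S^3\setminus\mathcal K_p)-6\pi^2/q^2$, and Theorem \ref{theorem-inprincipal3} together with Corollary \ref{corollary-zetaR} shows this last quantity is strictly below $\mathrm{Vol}(M_{p,q})=2\pi\zeta_{\mathbb R}(p,q)$; since $\tfrac32\pi^2c_1^2$ is continuous and increasing on $[\tfrac2q,\tfrac14]$, the gap $\epsilon$ can be taken uniform in $c_1$.

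Third, I would apply the lower-dimensional version of the saddle point method, i.e.\ Proposition \ref{proposition-saddlemethod} in the $2\times2$ case, with Remark \ref{remark-saddle} absorbing the $N$-dependence of $\hat V_N$ (which converges uniformly to $\hat V$ by Lemma \ref{lemma-varphixi3} and Proposition \ref{prop-VNexpansion}). The nondegeneracy input is the restriction of Proposition \ref{proposition-Hessianf}: on $D_H\cap\{\theta_1=c_1\}$ the relevant $2\times2$ Hessian is $2\pi\,\mathrm{Im}\begin{pmatrix}a+b+c+d+e & d-e\\ d-e & d+e\end{pmatrix}$, whose determinant equals $(a+b+c)(d+e)+4de>0$ and whose first entry is positive, hence it is positive definite. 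To build the disk required by Proposition \ref{proposition-saddlemethod} I would reflect $D_0(c_1)$ across $\theta_3=\tfrac12$, push it into $D_H$, and deform it in the imaginary directions along the gradient flow of $-\mathrm{Re}\,\hat V$ in $(X_2,X_3)$, exactly as in Proposition \ref{propostion-checksaddle}: by the slice version of Corollary \ref{coro-U0noU0} the flow either converges to the unique critical point or escapes to $-\infty$, producing a homotopy whose terminal disk meets $\{\mathrm{Re}\,\hat V\ge\zeta_{\mathbb R}(p,q)\}$ only at that point, and whose moving boundary contributes $O(e^{(N+\frac12)(\zeta_{\mathbb R}(p,q)-\epsilon)})$ because $\partial D_0(c_1)$ lies strictly below the critical level. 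Combined with the value bound of the previous step, this gives the stated estimate, uniformly in $c_1$.

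The hard part will be the verification of the saddle point hypotheses on the slice — constructing the homotopy to a disk through the critical point inside $\{\mathrm{Re}\,\hat V<\zeta_{\mathbb R}(p,q)\}$ and bounding the integral over its moving boundary — since, as in the three-dimensional Proposition \ref{propostion-checksaddle}, this needs a sign analysis of $\mathrm{Re}\,\hat V$ on each face of $D_0(c_1)$ and control near the walls $\theta_2\pm\theta_3=1$ where the quantum dilogarithm arguments approach the boundary of analyticity. All of this runs parallel to \cite{CZ23-1} and the proof of Theorem \ref{theorem-usedinm1}, so it is technical rather than conceptual, the remaining ingredients being already supplied by Theorems \ref{theorem-inprincipal3} and \ref{theorem-inprincipal4}. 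If instead the nonzero Fourier index sits on $\theta_3$, one first discards the subregion $\theta_3<\tfrac12+\tfrac{7}{8p}$, where $\mathrm{Re}\,\hat V$ can be sent to $-\infty$ along a suitable imaginary direction, and then runs the one-dimensional saddle method on the complementary range exactly as in the treatment of the $m_1\ge1,\ m_3\ge1$ case.
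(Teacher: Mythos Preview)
Your proposal is correct and follows essentially the same route as the paper: the paper's own proof is literally the phrase ``Similarly to previous subsection'' together with the already recorded inequalities (\ref{formula-inequality}) from Theorems \ref{theorem-inprincipal3} and \ref{theorem-inprincipal4}, and your plan unpacks exactly that analogy (unique slice critical point, the strict upper bound on its value, then the two-dimensional saddle argument as in Theorem \ref{theorem-usedinm1} and \cite{CZ23-1}). Your $2\times2$ Hessian check $(a+b+c)(d+e)+4de>0$ is the correct restriction of Proposition \ref{proposition-Hessianf}; the final paragraph about a nonzero Fourier index on $\theta_3$ is extraneous here but harmless.
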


\begin{lemma} \label{Lemma-inequ}
  When $c_3=\frac{1}{2}+\tilde{c}_3\geq \frac{1}{2}+\frac{7}{8p}$, we have
  \begin{align}
   &2\pi\text{Re}\hat{V}(p,q;m_1,0,m_3,c_1,T_2(c_1,\frac{1}{2}+\tilde{c}_3),\frac{1}{2}+\tilde{c}_3)\\\nonumber
   &<2\pi \text{Re}\hat{V}(p,q,m_1,0,0,c_1,\theta_2(c_1),\theta_3(c_1)).    
  \end{align}
\end{lemma}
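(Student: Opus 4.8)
The plan is to reduce this two-point inequality to a one-variable comparison of the \emph{real part} of the potential and then estimate the two sides by Taylor expansions in $\gamma_1=\frac1p$, $\gamma_2=\frac1q$ and the auxiliary parameters $c_1,\tilde c_3$, exactly in the style of Theorems \ref{theorem-inprincipal1}--\ref{theorem-inprincipal4}. First, since $c_1$ and $c_3=\frac12+\tilde c_3$ are real, the linear terms $-2\pi\sqrt{-1}m_1\theta_1$ and $-2\pi\sqrt{-1}m_3\theta_3$ in $\hat V(p,q;\theta_1,\theta_2,\theta_3;m_1,0,m_3)$ are purely imaginary at $\theta_1=c_1,\theta_3=c_3$ and contribute nothing to $\text{Re}\,\hat V$; and since $m_2=0$, the equation $\hat V_{\theta_2}=0$ defining $T_2(c_1,c_3)$ does not involve $m_1,m_3$, so $T_2$ is the solution of $\hat V_{\theta_2}(p,q;c_1,\theta_2,c_3)=0$ studied above. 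Hence it suffices, with $\theta_1=c_1$ held fixed, to prove
\begin{align*}
\text{Re}\,\hat V\bigl(p,q;c_1,T_2(c_1,\tfrac12+\tilde c_3),\tfrac12+\tilde c_3\bigr)<\text{Re}\,\hat V\bigl(p,q;c_1,\theta_2(c_1),\theta_3(c_1)\bigr),\qquad \tilde c_3\ge\tfrac{7}{8p},
\end{align*}
where $(\theta_2(c_1),\theta_3(c_1))$ is the unique common solution of $\hat V_{\theta_2}(c_1,\cdot,\cdot)=\hat V_{\theta_3}(c_1,\cdot,\cdot)=0$.

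The organizing device is the envelope identity. Put $\psi(\theta_3):=\hat V(p,q;c_1,T_2(c_1,\theta_3),\theta_3)$, which is holomorphic in $\theta_3$ (implicit function theorem, $\hat V_{\theta_2\theta_2}\neq 0$ on $D_0$); differentiating and using $\hat V_{\theta_2}(c_1,T_2,\theta_3)=0$ yields $\psi'(\theta_3)=\hat V_{\theta_3}(c_1,T_2(c_1,\theta_3),\theta_3)$, so $\theta_3(c_1)$ is the unique critical point of $\psi$, the right-hand side above equals $\text{Re}\,\psi(\theta_3(c_1))$, and the left-hand side is $\text{Re}\,\psi(\tfrac12+\tilde c_3)$. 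Moreover, from $e^{2\pi\sqrt{-1}(\theta_2+\frac12)}=e^{2\pi\sqrt{-1}(\theta_2-\frac12)}$ one gets $\text{Re}\,\psi'(\tfrac12)=\text{Re}\,\hat V_{\theta_3}(c_1,T_2(c_1,\tfrac12),\tfrac12)=0$, i.e. $c=\frac12$ is a critical point of the real-axis function $c\mapsto\text{Re}\,\psi(c)$.

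The estimate is then assembled from two expansions, proved just as in the appendix. \emph{(a) A ``$\theta_1=c_1$'' analogue of Theorem \ref{theorem-inprincipal2}:} expanding $\text{Re}\,\psi(\tfrac12+\tilde c_3)$ about $c=\frac12$ and using $\text{Re}\,\psi'(\tfrac12)=0$ together with the negativity of the leading second-order coefficient gives $2\pi\,\text{Re}\,\psi(\tfrac12+\tilde c_3)< 2\pi\,\text{Re}\,\psi(\tfrac12)-\pi^2\tilde c_3^2$ on the relevant range. \emph{(b) A ``$\theta_1=c_1$'' analogue of Theorem \ref{theorem-inprincipal1}:} using $\theta_3(c_1)=\frac12+\frac12\gamma_1+\frac{1-\sqrt{-1}}{8}\gamma_1^2+\cdots$, $\psi'(\theta_3(c_1))=0$ and $\psi''(\theta_3(c_1))=2\pi\sqrt{-1}(2p+1)(1+O(\gamma_1))$, the Taylor expansion of the holomorphic $\psi$ about $\theta_3(c_1)$ shows that the real part of $\tfrac12\psi''(\theta_3(c_1))(\tfrac12-\theta_3(c_1))^2$ is $O(\gamma_1^2)$ with coefficient pinned down exactly as in Theorem \ref{theorem-inprincipal1}, so $2\pi\,\text{Re}\,\psi(\tfrac12)-2\pi\,\text{Re}\,\psi(\theta_3(c_1))<\frac{49}{64}\pi^2\gamma_1^2$ (the same $\frac{49}{64}=(\frac78)^2$). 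Combining (a) and (b), $2\pi\,\text{Re}\,\psi(\tfrac12+\tilde c_3)< 2\pi\,\text{Re}\,\psi(\theta_3(c_1))+\frac{49}{64}\pi^2\gamma_1^2-\pi^2\tilde c_3^2$, and $\tilde c_3\ge\frac{7}{8}\gamma_1$ makes the last two terms $\le 0$, giving the desired strict inequality. This is precisely the link used in the body, where afterwards $2\pi\,\text{Re}\,\psi(\theta_3(c_1))=2\pi\,\text{Re}\,\hat V(c_1,\theta_2(c_1),\theta_3(c_1))\le Vol(S^3\setminus\mathcal K_p)-\frac32\pi^2c_1^2$ by formula (\ref{formula-inequality}).

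The main obstacle is turning the heuristic expansions (a) and (b) into rigorous uniform bounds, the point the ``in principal'' statements elsewhere leave implicit: one must control the remainders of the $(\gamma_1,\gamma_2,c_1,\tilde c_3)$-series — especially in (a) for $\tilde c_3$ not small, where the quadratic term about $c=\frac12$ no longer visibly dominates — so that the fixed gap $\frac{49}{64}\gamma_1^2-\tilde c_3^2\le-\frac{17}{64}\gamma_1^2$ at order $\gamma_1^2$ is not eaten by higher-order terms, for every $(p,q)\in S$ and every $0\le c_1<\frac14$. A more self-contained alternative, avoiding (b), is to prove directly that $c\mapsto\text{Re}\,\psi(c)$ is strictly decreasing on $[\frac12+\frac{7}{8p},\frac34)$: by the envelope identity $\frac{d}{dc}\text{Re}\,\psi(c)=\text{Re}\,\hat V_{\theta_3}(c_1,T_2(c_1,c),c)=\log\frac{|1-e^{2\pi\sqrt{-1}(T_2(c_1,c)-c)}|}{|1-e^{2\pi\sqrt{-1}(T_2(c_1,c)+c)}|}$, which is $0$ at $c=\frac12$ and, using $T_2(c_1,c)=\frac{\log(1-2\sqrt{-1})}{2\pi\sqrt{-1}}+1+O(\gamma_1^2+c_1+\tilde c_3)$, is strictly negative beyond $c=\frac12+\frac{7}{8p}$; one then only needs the single comparison $\text{Re}\,\psi(\tfrac12+\tfrac{7}{8p})<\text{Re}\,\psi(\theta_3(c_1))$, i.e. (b) at $\tilde c_3=\frac78\gamma_1$. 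This trades the multivariable bookkeeping for one monotonicity estimate on a transcendental function.
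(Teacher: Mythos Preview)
Your proposal is correct and follows essentially the same route as the paper's proof: both interpolate through the value at $c_3=\tfrac12$, use a $-\pi^2\tilde c_3^2$ drop (your step (a), the paper's first expansion), compare the $c_3=\tfrac12$ value to the $(\theta_2(c_1),\theta_3(c_1))$ value via the $\gamma_1$-expansion with leading coefficient $\tfrac14\pi^2\gamma_1^2$ (your step (b), the paper's formula (\ref{formula-inprincipal})), and close with $\tilde c_3\ge\tfrac78\gamma_1\Rightarrow \pi^2\tilde c_3^2\ge\tfrac{49}{64}\pi^2\gamma_1^2>\tfrac14\pi^2\gamma_1^2+\cdots$. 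Your framing via the envelope function $\psi$ and the explicit remark that the $m_1,m_3$ terms are purely imaginary on real $(c_1,c_3)$ makes the logic cleaner, and you correctly flag the same ``in principle'' gap the paper leaves: uniform control of the series remainders over the full range of $(p,q,c_1,\tilde c_3)$.
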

\begin{proof}
First, we have the estimates, 
\begin{align}
&2\pi \text{Re}\hat{V}(p,q,m_1,0,m_3,c_1,T_2(c_1,\frac{1}{2}+\tilde{c}_3),\frac{1}{2}+c_3)\\\nonumber
&=v_8-2\pi^2c_1^2-\pi^2\tilde{c}_3^2+\frac{1}{3}\pi^{4}c_{1}^{4}-\pi
^{4}c_{1}^{2}\tilde{c}_{3}^{2}-\frac{1}{12}\pi^{4}\tilde{c}_{3}^{4}+...\\\nonumber
&=v_{8}-2\pi^{2}c_{1}^{2}+\frac{1}{3}\pi^{4}c_{1}^{4}+...\\\nonumber
&-\pi^{2}\tilde{c}_{3}^{2}-\pi^{4}c_{1}^{2}\tilde{c}_{3}^{2}-\frac{1}{12}\pi^{4}\tilde{c}_{3}^{4}+...\\\nonumber
&=2\pi\text{Re}\hat{V}(p,q,m_1,0,m_3,c_{1},T_{2}(c_{1}
,\frac{1}{2}),\frac{1}{2})-\pi^{2}\tilde{c}_{3}^{2}-\pi^{4}c_{1}^{2}\tilde{c}_{3}
^{2}-\frac{1}{12}\pi^{4}\tilde{c}_{3}^{4}+...
\end{align}
On the other hand, 
\begin{align} \label{formula-inprincipal}
 &2\pi\text{Re}\hat{V}(p,q,m_{1},0,0,c_{1},\theta_2(\gamma_{1}
,c_{1}),\theta_3(\gamma_{1},c_{1})) \\\nonumber
&=v_{8}-\frac{1}{4}\pi^{2}\gamma_{1}^{2}-\frac{1}{8}\pi^{2}\gamma_{1}%
^{3}-(\frac{1}{32}\pi^{2}+\frac{1}{192}\pi^{4})\gamma_{1}^{4}+\\\nonumber &+(-2\pi^{2}-\frac{1}{4}\pi^{4}\gamma_{1}^{2}-\frac{1}{8}\pi^{4}\gamma_{1}%
^{3}-\frac{5}{192}\pi^{6}\gamma_{1}^{4}+...)c_{1}^{2}\\\nonumber&+(\frac{1}{3}\pi
^{4}-\frac{1}{24}\pi^{6}\gamma_{1}^{2}-\frac{1}{48}\pi^{6}\gamma_{1}%
^{3}+(\frac{3}{64}\pi^{6}-\frac{23}{1152}\pi^{8})\gamma_{1}^{4}+...)c_{1}
^{4}+...\\\nonumber
&=v_{8}-2\pi^{2}c_{1}^{2}+\frac{1}{3}\pi^{4}c_{1}^{4}+...\\\nonumber
&-\frac{1}{4}\pi^{2}\gamma_{1}^{2}-\frac{1}{8}\pi^{2}\gamma_{1}^{3}-(\frac
{1}{32}\pi^{2}+\frac{1}{192}\pi^{4})\gamma_{1}^{4}+(-\frac{1}{4}\pi^{4}\gamma_{1}^{2}-\frac{1}{8}\pi^{4}\gamma_{1}^{3}-\frac
{5}{192}\pi^{6}\gamma_{1}^{4}+...)c_{1}^{2}\\\nonumber
&+(-\frac{1}{24}\pi^{6}\gamma
_{1}^{2}-\frac{1}{48}\pi^{6}\gamma_{1}^{3}+(\frac{3}{64}\pi^{6}-\frac
{23}{1152}\pi^{8})\gamma_{1}^{4}+...)c_{1}^{4}+...\\\nonumber
&=2\pi\text{Re}\hat{V}(\infty,q,m_{1},0,0,c_{1},\theta_2(\gamma
_{1}=0,c_{1}),\theta_3(\gamma_{1}=0,c_{1}))-\frac{1}{4}\pi^{2}\gamma_{1}^{2}-\frac
{1}{8}\pi^{2}\gamma_{1}^{3}-(\frac{1}{32}\pi^{2}+\frac{1}{192}\pi^{4}%
)\gamma_{1}^{4}\\\nonumber &+(-\frac{1}{4}\pi^{4}\gamma_{1}^{2}-\frac{1}{8}\pi^{4}\gamma_{1}^{3}-\frac
{5}{192}\pi^{6}\gamma_{1}^{4}+...)c_{1}^{2}+(-\frac{1}{24}\pi^{6}\gamma
_{1}^{2}-\frac{1}{48}\pi^{6}\gamma_{1}^{3}+(\frac{3}{64}\pi^{6}-\frac
{23}{1152}\pi^{8})\gamma_{1}^{4}+...)c_{1}^{4}+...\\\nonumber
&=2\pi\operatorname{Re}\widehat{V}(p,q,m_{1},0,m_3,c_{1},T_{2}(c_{1}
,\frac{1}{2}),\frac{1}{2})-\frac{1}{4}\pi^{2}\gamma_{1}^{2}-\frac{1}{8}
\pi^{2}\gamma_{1}^{3}-(\frac{1}{32}\pi^{2}+\frac{1}{192}\pi^{4})\gamma_{1}%
^{4}\\\nonumber
&+(-\frac{1}{4}\pi^{4}\gamma_{1}^{2}-\frac{1}{8}\pi^{4}\gamma_{1}^{3}-\frac
{5}{192}\pi^{6}\gamma_{1}^{4}+...)c_{1}^{2}+(-\frac{1}{24}\pi^{6}\gamma
_{1}^{2}-\frac{1}{48}\pi^{6}\gamma_{1}^{3}+(\frac{3}{64}\pi^{6}-\frac
{23}{1152}\pi^{8})\gamma_{1}^{4}+...)c_{1}^{4}+...
\end{align}
Thus finally we have
\begin{align}
    &2\pi \text{Re}\hat{V}(p,q,m_1,0,m_3,c_1,T_2(c_1,\frac{1}{2}+\tilde{c}_3),\frac{1}{2}+c_3)\\\nonumber
    &=2\pi\text{Re}\hat{V}(p,q,m_{1},0,m_3,c_{1},T_{2}(c_{1}
,\frac{1}{2}),\frac{1}{2})-\pi^{2}\tilde{c}_{3}^{2}-\pi^{4}c_{1}^{2}\tilde{c}_{3}
^{2}-\frac{1}{12}\pi^{4}\tilde{c}_{3}^{4}+...\\\nonumber
&<2\pi\text{Re}\hat{V}(p,q,m_{1},0,m_3,c_{1},T_{2}(c_{1}
,\frac{1}{2}),\frac{1}{2})-\pi^{2}\tilde{c}_{3}^{2}\\\nonumber
&<2\pi\text{Re}
\hat{V}(p,q,m_{1},0,m_3,c_{1},T_{2}(c_{1},\frac{1}{2}),\frac{1}
{2})-\frac{49}{64}\pi^{2}\gamma_{1}^{2}.
\end{align}
In principal, by formula (\ref{formula-inprincipal}), we can prove the Lemma. 
\end{proof}

\subsection{One-dimensional saddle point method}  \label{appendix-onedim}
Consider the function, 
\begin{align}
 \hat{V}(\theta_1,\theta_2,\theta_3;m_1,m_2,m_3)=\hat{V}(\theta_1,\theta_2,\theta_3)-2\pi\sqrt{-1}m_1\theta_1-2\pi\sqrt{-1}m_2\theta_2-2\pi\sqrt{-1}m_3\theta_3.         
\end{align}

Let $\theta_1=c_1$, $\theta_3=c_3$ with $(c_1,c_3)\in [0,\frac{1}{4}]\times [\frac{1}{2},\frac{3}{4}]$ then 
\begin{align}
    &\hat{V}_{\theta_2}(c_1,\theta_2,c_3;m_1,m_2,m_3)\\\nonumber
    &=-2\pi\sqrt{-1}(m_2+1)-\log(1-e^{2\pi\sqrt{-1}(\theta_2-c_3)})-\log(1-e^{2\pi\sqrt{-1}(\theta_2+c_3)})\\\nonumber
&+\log(1-e^{2\pi\sqrt{-1}(\theta_2+c_1)})+\log(1-e^{2\pi\sqrt{-1}\theta_2})+\log(1-e^{2\pi\sqrt{-1}(\theta_2-c_1)}).    
\end{align}

From the critical point equation 
\begin{align} \label{equation-criticalequ-onedim0}
    V_{\theta_2}(c_1,\theta_2,c_3,m_1,m_2,m_3)=0,
\end{align} 
we obtain 
\begin{align}
(1-xC_3)(1-\frac{x}{C_3})=(1-x)(1-xC_1)(1-\frac{x}{C_1}),   
\end{align}
where we set $C_1=e^{2\pi\sqrt{-1}c_1}$ and $C_3=e^{2\pi\sqrt{-1}c_3}$. 
Hence, we obtain 
\begin{align} \label{equation-criticalequ-onedim}
 x^2-(C_1+\frac{1}{C_1})x+(C_1+\frac{1}{C_1})-(C_3+\frac{1}{C_3})+1=0.   
\end{align}
It follows that
\begin{align}
x=\cos(2\pi c_1)\pm 2\sqrt{\sin^4(\pi c_1)-\sin^2(\pi c_3)}.    
\end{align}

We set 
\begin{align}
T_2(c_1,c_3)=\frac{\log(\cos(2\pi c_1)-2\sqrt{\sin^4(\pi c_1)- \sin^2(\pi c_3)})}{2\pi\sqrt{-1}}+1,   
\end{align}
then
\begin{align} \label{equation-eT2}
    e^{2\pi\sqrt{-1}T_{2}(c_1,c_3)}=\cos(2\pi c_1)-  2\sqrt{\sin^4(\pi c_1)- \sin^2(\pi c_3)}
\end{align}
is a solution to the equation (\ref{equation-criticalequ-onedim}), moreover, $\theta_2(c_1,c_2)=T_2(c_1,c_3)$ is the unique solution  to critical point equation (\ref{equation-criticalequ-onedim0}) with $\text{Re}(\theta_2(c_1,c_2))\in (\frac{1}{2},1)$. 

From  the equation (\ref{equation-eT2}), we obtain
\begin{align}
    &e^{2\pi\sqrt{-1}T_2(c_1,c_3)}2\pi\sqrt{-1}\frac{dT_2(c_1,c_3)}{dc_1}\\\nonumber
    &=-2\pi\sin(2\pi c_1)-\frac{4\pi \sin^3(\pi c_3)\cos(\pi c_3)}{\sqrt{\sin^4(\pi c_1)-\sin^2(\pi c_3)}}\\\nonumber
    &=-2\pi\sin(2\pi c_1)\left(1+\frac{\sin^2(\pi c_1)}{\sqrt{\sin^4(\pi c_1)-\sin^2(\pi c_3)}}\right)
\end{align}
and 
\begin{align}  e^{2\pi\sqrt{-1}T_2(c_1,c_3)}2\pi\sqrt{-1}\frac{dT_2(c_1,c_3)}{dc_3}=\frac{\pi\sin(2\pi c_3)}{\sqrt{\sin^4(\pi c_1)-\sin^2(\pi c_3)}}.
\end{align}

Hence
\begin{align}
    \frac{dT_2(c_1,c_3)}{dc_1}=\frac{2\sqrt{-1}\sin(\pi c_1)\left(\cos(\pi c_1)+\frac{\sin^2(\pi c_1)}{\sqrt{\sin^4(\pi c_1)-\sin^2(\pi c_3)}}\right)}{\cos(2\pi c_1)-2\sqrt{\sin^4(\pi c_1)-\sin^2(\pi c_3)}},
\end{align}
and
\begin{align}
    \frac{dT_2(c_1,c_3)}{dc_3}=\frac{-\sqrt{-1}\sin(\pi c_3)\cos(\pi c_3)}{\cos(2\pi c_1)\sqrt{\sin^4(\pi c_1)-\sin^2(\pi c_2)}-2\sin^4(\pi c_3)+2\sin^2(\pi c_3)}.
\end{align}

It follows that
\begin{align}  \label{formula-dReVdc1}
\frac{d \text{Re}\hat{V}}{dc_1}&=\text{Re}\left(\frac{\partial \hat{V}}{\partial \theta_1}\frac{dc_1}{dc_1}+\frac{\partial \hat{V}}{\partial \theta_2}\frac{dT_2(c_1,c_3)}{dc_1}+\frac{\partial \hat{V}}{\partial \theta_3}\frac{dc_3}{dc_1} \right) \\\nonumber
&=\text{Re}\left(\frac{\partial \hat{V}}{\partial \theta_1}\right)\\\nonumber
&=\text{Re}(\log(1-e^{2\pi\sqrt{-1}(T_2(c_1,c_3)+c_1)}))-\text{Re}(\log(1-e^{2\pi\sqrt{-1}(T_2(c_1,c_3)-c_1)}))\\\nonumber
&=\log\left(\frac{2\sqrt{\sin^2(\pi c_3)-\sin^4(\pi c_1)}-\sin(2\pi c_1)}{2\sqrt{\sin^2(\pi c_3)-\sin^4(\pi c_1)}+\sin(2\pi c_1)}\right)
\end{align}
By the following Lemma \ref{lemma-c1c3}.

\begin{lemma} \label{lemma-c1c3}
For $(c_1,c_3)\in [0,\frac{1}{4}]\times [\frac{1}{2},\frac{3}{4}]$, we have
 \begin{align}
 \text{Re}\left(\log(1-e^{2\pi\sqrt{-1}(T_2(c_1,c_3)+c_1)})\right)=\log\left(2\sqrt{\sin^2(\pi c_3)-\sin^4(\pi c_1)}-\sin(2\pi c_1)\right)    
 \end{align}   
 and 
 \begin{align}
 \text{Re}\left(\log(1-e^{2\pi\sqrt{-1}(T_2(c_1,c_3)-c_1)})\right)=\log\left(2\sqrt{\sin^2(\pi c_3)-\sin^4(\pi c_1)}+\sin(2\pi c_1)\right)   
 \end{align}   
\end{lemma}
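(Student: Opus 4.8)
The plan is to prove Lemma~\ref{lemma-c1c3} by a direct computation, extracting the real part of $\log(1-e^{2\pi\sqrt{-1}(T_2(c_1,c_3)\pm c_1)})$ from the explicit formula for $e^{2\pi\sqrt{-1}T_2(c_1,c_3)}$ given in~(\ref{equation-eT2}). First I would write $w_\pm = e^{2\pi\sqrt{-1}(T_2(c_1,c_3)\pm c_1)} = e^{2\pi\sqrt{-1}T_2(c_1,c_3)}\cdot e^{\pm 2\pi\sqrt{-1}c_1}$, so that
\begin{align}
w_\pm = \Big(\cos(2\pi c_1) - 2\sqrt{\sin^4(\pi c_1) - \sin^2(\pi c_3)}\Big)\big(\cos(2\pi c_1)\pm\sqrt{-1}\sin(2\pi c_1)\big).\nonumber
\end{align}
Since in our range $(c_1,c_3)\in[0,\tfrac14]\times[\tfrac12,\tfrac34]$ we have $\sin^4(\pi c_1) - \sin^2(\pi c_3)\le 0$, the square root is purely imaginary: write $2\sqrt{\sin^4(\pi c_1)-\sin^2(\pi c_3)} = 2\sqrt{-1}\,\sqrt{\sin^2(\pi c_3)-\sin^4(\pi c_1)}$, and set $S := 2\sqrt{\sin^2(\pi c_3)-\sin^4(\pi c_1)}\ge 0$ for brevity. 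Then $e^{2\pi\sqrt{-1}T_2} = \cos(2\pi c_1) - \sqrt{-1}\,S$, and I would simply multiply out $w_\pm = (\cos(2\pi c_1) - \sqrt{-1}S)(\cos(2\pi c_1)\pm\sqrt{-1}\sin(2\pi c_1))$ into real and imaginary parts.

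The key identity to keep in view is that $|e^{2\pi\sqrt{-1}T_2}|^2 = \cos^2(2\pi c_1) + S^2 = \cos^2(2\pi c_1) + 4\sin^2(\pi c_3) - 4\sin^4(\pi c_1)$, and using $\cos^2(2\pi c_1) = (1-2\sin^2(\pi c_1))^2 = 1 - 4\sin^2(\pi c_1) + 4\sin^4(\pi c_1)$ this collapses to $1 - 4\sin^2(\pi c_1) + 4\sin^2(\pi c_3) = 1 - \sin^2(2\pi c_1)/\cos^2(\pi c_1)\cdot\cos^2(\pi c_1)\ldots$; more cleanly, $|e^{2\pi\sqrt{-1}T_2}|^2 = 1 - 4\sin^2(\pi c_1) + 4\sin^2(\pi c_3)$. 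After expanding $w_\pm$ I would compute $1 - w_\pm$, take its squared modulus $|1-w_\pm|^2 = (1-\operatorname{Re} w_\pm)^2 + (\operatorname{Im} w_\pm)^2$, expand, and use the above relation together with the product-to-sum formulas for $\cos(2\pi c_1)\sin(2\pi c_1)$ to show that $|1-w_\pm|^2$ is a perfect square, namely $\big(S \mp \sin(2\pi c_1)\big)^2$ — note carefully the sign: the subscript on $w$ and the sign in $S\mp\sin(2\pi c_1)$ must be matched so that the stated formulas come out (i.e. $|1-w_+|^2 = (S-\sin(2\pi c_1))^2$ gives $\operatorname{Re}\log(1-w_+) = \log(S - \sin(2\pi c_1))$, and $|1-w_-|^2 = (S+\sin(2\pi c_1))^2$). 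Since $\operatorname{Re}\log(1-w_\pm) = \tfrac12\log|1-w_\pm|^2$, this yields exactly the two claimed identities. I would also note that $S - \sin(2\pi c_1) > 0$ on the given range (using $\sin^2(\pi c_3)\ge\sin^2(\pi c_1)$ there, since $\pi c_3\ge\pi/2\ge\pi c_1$ forces $\sin(\pi c_3)\ge\cos(\pi c_1)\ge\ldots$ — more simply $4(\sin^2(\pi c_3)-\sin^4(\pi c_1)) - \sin^2(2\pi c_1) = 4\sin^2(\pi c_3) - 4\sin^2(\pi c_1) \ge 0$), so the logarithms are of positive real numbers and no branch ambiguity arises.

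With Lemma~\ref{lemma-c1c3} in hand, formula~(\ref{formula-dReVdc1}) follows immediately: along the critical slice $\theta_2 = T_2(c_1,c_3)$ the terms $\tfrac{\partial\hat V}{\partial\theta_2}\tfrac{dT_2}{dc_1}$ vanish because $\tfrac{\partial\hat V}{\partial\theta_2} = 0$ there by definition of $T_2$, and $\tfrac{\partial\hat V}{\partial\theta_3}\tfrac{dc_3}{dc_1} = 0$ since $c_3$ is held fixed, leaving only $\operatorname{Re}\tfrac{\partial\hat V}{\partial\theta_1} = \operatorname{Re}\log(1-w_+) - \operatorname{Re}\log(1-w_-)$, which by the Lemma equals $\log\big(\tfrac{S-\sin(2\pi c_1)}{S+\sin(2\pi c_1)}\big)$. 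I expect the main obstacle to be purely bookkeeping: getting every sign right in the expansion of $|1-w_\pm|^2$ and confirming that the messy trigonometric expression genuinely factors as a perfect square rather than merely approximately so. The cleanest way to manage this is to substitute $s := \sin^2(\pi c_1)$, $t := \sin^2(\pi c_3)$, so that $\cos(2\pi c_1) = 1-2s$, $\sin(2\pi c_1) = 2\sqrt{s(1-s)}$, $S = 2\sqrt{t-s^2}$, and verify the factorization as a polynomial identity in $\sqrt{s}$, $\sqrt{1-s}$, $\sqrt{t-s^2}$; this reduces the whole lemma to an elementary algebraic check.
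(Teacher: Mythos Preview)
Your approach is correct and essentially identical to the paper's own proof: both expand $1 - e^{2\pi\sqrt{-1}(T_2\pm c_1)}$ using the explicit formula~(\ref{equation-eT2}), compute the squared modulus, and show it collapses to the perfect square $\big(2\sqrt{\sin^2(\pi c_3)-\sin^4(\pi c_1)}\mp\sin(2\pi c_1)\big)^2$. Your introduction of the shorthand $S$ and the substitution $s=\sin^2(\pi c_1)$, $t=\sin^2(\pi c_3)$ is a tidy organizational touch, and your positivity check $S^2-\sin^2(2\pi c_1)=4\sin^2(\pi c_3)-4\sin^2(\pi c_1)\ge 0$ is a detail the paper omits; otherwise the two arguments coincide line for line.
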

\begin{proof}
Since $\sin^4(\pi c_1)-\sin^2(\pi c_3)\leq 0$ by $(c_1,c_3)\in [0,\frac{1}{4}]\times [\frac{1}{2},\frac{3}{4}]$. 
By a straightforward computation, 
\begin{align}
&\text{Re}\left(\log(1-e^{2\pi\sqrt{-1}(T_2(c_1,c_3)+c_1)})\right) \\\nonumber
&=\text{Re}\left(\log\left(1-\left(\cos(2\pi c_1)-2\sqrt{\sin^4(\pi c_1)-\sin^2(\pi c_3)}\right)e^{2\pi\sqrt{-1}c_1}\right)\right)\\\nonumber
&=\text{Re}\left(\log\left(1-\left(\cos(2\pi c_1)-2\sqrt{-1}\sqrt{\sin^2(\pi c_2)-\sin^4(\pi c_1)}\right)\left(\cos(2\pi c_1)+\sqrt{-1}\sin(2\pi c_1)\right)\right)\right)\\\nonumber
&=\text{Re}\left(\log\left(1-\cos^2(2\pi c_1)-2\sin(2\pi c_1)\sqrt{\sin^2(\pi c_2)-\sin^4(\pi c_2)}\right.\right. \\\nonumber
&\left.\left. -\sqrt{-1}\left(\sin(2\pi c_1)\cos(2\pi c_1)-2\cos(2\pi c_1)\sqrt{\sin^2(\pi c_2)-\sin^4(\pi c_2)}\right)\right)\right)\\\nonumber
&=\frac{1}{2}\log\left(\left(1-\cos^2(2\pi c_1)-2\sin(2\pi c_1)\sqrt{\sin^2(\pi c_2)-\sin^4(\pi c_2)}\right)^2\right. \\\nonumber
&\left. +\left(\sin(2\pi c_1)\cos(2\pi c_1)-2\cos(2\pi c_1)\sqrt{\sin^2(\pi c_2)-\sin^4(\pi c_2)}\right)^2\right)\\\nonumber
&=\frac{1}{2}\log\left(\sin^2(2\pi c_1)-4\sin(2\pi c_1)\sqrt{\sin^2(\pi c_3)-\sin^4(\pi c_1)}+4(\sin^2(\pi c_3)-\sin^4(\pi c_1))\right)\\\nonumber
&=\log\left(2\sqrt{\sin^2(\pi c_3)-\sin^4(\pi c_1)}-\sin(2\pi c_1)\right).
\end{align}

The second formula in Lemma \ref{lemma-c1c3} can be computed similarly. 
\end{proof}

\begin{align} \label{formula-dReVdc3}
\frac{d\text{Re}\hat{V}}{dc_3}&=\text{Re}\left(\frac{\partial \hat{V}}{\partial \theta_3}\right)\\\nonumber
&=\text{Re}\left(-\log(1-e^{2\pi\sqrt{-1}(T_2(c_1,c_3)+c_3)})+\log(1-e^{2\pi\sqrt{-1}(T_2(c_1,c_3)-c_3)})\right)\\\nonumber
&=\frac{1}{2}\log\left(\frac{\cos^2(\pi c_1)+\cos(\pi c_2)\sqrt{1-\frac{\sin^4(\pi c_1)}{\sin^2(\pi c_2)}}}{\cos^2(\pi c_1)-\cos(\pi c_2)\sqrt{1-\frac{\sin^4(\pi c_1)}{\sin^2(\pi c_2)}}}\right)
\end{align}
where in the last $=$ we have used the Lemma \ref{lemma-c1c3-2}.

\begin{lemma} \label{lemma-c1c3-2}
For $(c_1,c_3)\in [0,\frac{1}{4}]\times [\frac{1}{2},\frac{3}{4}]$, we have
 \begin{align}
 &\text{Re}\left(\log(1-e^{2\pi\sqrt{-1}(T_2(c_1,c_3)+c_3)})\right)\\\nonumber
 &=\frac{1}{2}\log\left(8\sin^2(\pi c_3)\cos^2(\pi c_1)-8\sin^2(\pi c_3)\cos(\pi c_3)\sqrt{1-\frac{\sin^4(\pi c_1)}{\sin^2(\pi c_2)}}\right)    
 \end{align}   
 and 
 \begin{align}
 &\text{Re}\left(\log(1-e^{2\pi\sqrt{-1}(T_2(c_1,c_3)-c_3)})\right)\\\nonumber
 &=\frac{1}{2}\log\left(8\sin^2(\pi c_3)\cos^2(\pi c_1)+8\sin^2(\pi c_3)\cos(\pi c_3)\sqrt{1-\frac{\sin^4(\pi c_1)}{\sin^2(\pi c_2)}}\right)    
 \end{align}    
\end{lemma}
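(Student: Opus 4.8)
The plan is to prove Lemma~\ref{lemma-c1c3-2} by a direct computation that runs completely parallel to the proof of Lemma~\ref{lemma-c1c3}, starting from the explicit expression (\ref{equation-eT2}) for $e^{2\pi\sqrt{-1}T_2(c_1,c_3)}$. First I would record that on the rectangle $(c_1,c_3)\in[0,\frac14]\times[\frac12,\frac34]$ one has $\sin^2(\pi c_1)\le\frac12\le\sin^2(\pi c_3)$, hence $\sin^4(\pi c_1)-\sin^2(\pi c_3)\le-\frac14<0$, so the square root appearing in (\ref{equation-eT2}) is purely imaginary. Writing $A=\cos(2\pi c_1)$ and $B=2\sqrt{\sin^2(\pi c_3)-\sin^4(\pi c_1)}>0$, both real, formula (\ref{equation-eT2}) becomes $e^{2\pi\sqrt{-1}T_2(c_1,c_3)}=A-\sqrt{-1}\,B$. (The symbol $c_2$ under the radicals in the statement of the lemma is a misprint for $c_3$, as the computation will make clear.)

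Next I would multiply by $e^{\pm2\pi\sqrt{-1}c_3}=C\pm\sqrt{-1}\,S$ with $C=\cos(2\pi c_3)$ and $S=\sin(2\pi c_3)$, so that $1-e^{2\pi\sqrt{-1}(T_2\pm c_3)}=(1-AC\mp BS)+\sqrt{-1}(BC\mp AS)$, and compute, exactly as in the proof of Lemma~\ref{lemma-c1c3},
\begin{align}
\bigl|1-e^{2\pi\sqrt{-1}(T_2\pm c_3)}\bigr|^2=(1-AC\mp BS)^2+(AS\mp BC)^2=1+A^2+B^2-2AC\mp2BS. \nonumber
\end{align}
Using $\cos^2(2\pi c_1)=1-4\sin^2(\pi c_1)+4\sin^4(\pi c_1)$ gives $A^2+B^2=1-4\sin^2(\pi c_1)+4\sin^2(\pi c_3)$, and then the half-angle identities $1+\cos(2\pi c_1)=2\cos^2(\pi c_1)$ and $1-\cos(2\pi c_3)=2\sin^2(\pi c_3)$ collapse the right-hand side to $2(1+A)(1-C)\mp2BS=8\cos^2(\pi c_1)\sin^2(\pi c_3)\mp2BS$. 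Finally, substituting $S=2\sin(\pi c_3)\cos(\pi c_3)$ and $B=2\sin(\pi c_3)\sqrt{1-\sin^4(\pi c_1)/\sin^2(\pi c_3)}$ yields $2BS=8\sin^2(\pi c_3)\cos(\pi c_3)\sqrt{1-\sin^4(\pi c_1)/\sin^2(\pi c_3)}$, and since $\text{Re}\log w=\frac12\log|w|^2$ this is precisely the two claimed identities, the upper sign ($T_2+c_3$) producing the ``$-$'' version and the lower sign ($T_2-c_3$) the ``$+$'' version.

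The argument is elementary bookkeeping with trigonometric identities; the only genuinely delicate point is the sign analysis at the outset, namely that $\sin^4(\pi c_1)-\sin^2(\pi c_3)\le0$ on the whole rectangle, which is what makes $B$ real and positive and fixes the branches of the square root and of $\log$ consistently. I would also remark that both sides degenerate to $-\infty$ at the single corner $(c_1,c_3)=(\frac14,\frac34)$, where $e^{2\pi\sqrt{-1}(T_2-c_3)}=1$; away from that corner the modulus squared is strictly positive (being automatically nonnegative from the identity above and vanishing only there), so the logarithms are well defined, which is all that is needed for the one-dimensional saddle-point estimates of Appendix~\ref{appendix-onedim}. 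Once Lemma~\ref{lemma-c1c3-2} is in hand, substituting the two formulas into $\frac{d\text{Re}\hat V}{dc_3}=\text{Re}(\hat V_{\theta_3})=\text{Re}\bigl(-\log(1-e^{2\pi\sqrt{-1}(T_2+c_3)})+\log(1-e^{2\pi\sqrt{-1}(T_2-c_3)})\bigr)$ gives the closed form (\ref{formula-dReVdc3}).
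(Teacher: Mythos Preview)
Your proof is correct and follows essentially the same approach as the paper's own proof: both start from the explicit formula (\ref{equation-eT2}), observe that the radicand is nonpositive on the rectangle so that the square root is purely imaginary, expand $1-e^{2\pi\sqrt{-1}(T_2\pm c_3)}$ into real and imaginary parts, compute the modulus squared, and simplify via half-angle identities. Your version is a bit more streamlined---you introduce the abbreviations $A,B,C,S$ and handle the two signs simultaneously, and you correctly note that the ``$c_2$'' in the statement is a typo for $c_3$---but the substance of the argument is identical to the paper's.
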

\begin{proof}
Since $\sin^4(\pi c_1)-\sin^2(\pi c_3)\leq 0$ by $(c_1,c_3)\in [0,\frac{1}{4}]\times [\frac{1}{2},\frac{3}{4}]$, 
By a straightforward computation, 
\begin{align}
&\text{Re}\left(\log(1-e^{2\pi\sqrt{-1}(T_2(c_1,c_3)+c_3)})\right) \\\nonumber
&=\text{Re}\left(\log\left(1-\left(\cos(2\pi c_1)-2\sqrt{\sin^4(\pi c_1)-\sin^2(\pi c_3)}\right)e^{2\pi\sqrt{-1}c_3}\right)\right)\\\nonumber
&=\text{Re}\left(\log\left(1-\left(\cos(2\pi c_1)-2\sqrt{-1}\sqrt{\sin^2(\pi c_2)-\sin^4(\pi c_1)}\right)\left(\cos(2\pi c_3)+\sqrt{-1}\sin(2\pi c_3)\right)\right)\right)\\\nonumber
&=\text{Re}\left(\log\left(1-\cos(2\pi c_1)\cos(2\pi c_3)-2\sin(2\pi c_3)\sqrt{\sin^2(\pi c_3)-\sin^4(\pi c_1)}\right.\right. \\\nonumber
&\left.\left. -\sqrt{-1}\left(\sin(2\pi c_3)\cos(2\pi c_1)-2\cos(2\pi c_3)\sqrt{\sin^2(\pi c_3)-\sin^4(\pi c_1)}\right)\right)\right)\\\nonumber
&=\frac{1}{2}\log\left(\left(1-\cos(2\pi c_1)\cos(2\pi c_3)-2\sin(2\pi c_3)\sqrt{\sin^2(\pi c_3)-\sin^4(\pi c_1)}\right)^2\right. \\\nonumber
&\left. +\left(\sin(2\pi c_3)\cos(2\pi c_1)-2\cos(2\pi c_3)\sqrt{\sin^2(\pi c_3)-\sin^4(\pi c_1)}\right)^2\right)\\\nonumber
&=\frac{1}{2}\log\left(4(\cos(2\pi c_1)+1)\sin^2(\pi c_1)-4\sin(2\pi c_3)\sqrt{\sin^2(\pi c_3)-\sin^4(\pi c_1)}\right)\\\nonumber
&=\frac{1}{2}\log\left(8\sin^2(\pi c_3)\cos^2(\pi c_1)-8\sin^2(\pi c_3)\cos(\pi c_3)\sqrt{1-\frac{\sin^4(\pi c_1)}{\sin^2(\pi c_2)}}\right).
\end{align}
The formula (\ref{lemma-c1c3-2}) can be computed similarly. 
\end{proof}

\begin{proposition} \label{prop-Rec1c3decrease}
\begin{align}
   \text{Re}\hat{V}(c_1,T_2(c_1,c_3),c_3)\leq   \text{Re}\hat{V}(c_1,T_2(c_1,c_{30}),c_{30})   
\end{align}
for $c_{30}\leq c_3$
\begin{align}
   \text{Re}\hat{V}(c_1,T_2(c_1,c_3),c_3)\leq   \text{Re}\hat{V}(c_{10},T_2(c_{10},c_{3}),c_{3})   
\end{align}
for $c_{10}\leq c_{1}$. 
\end{proposition}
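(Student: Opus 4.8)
The plan is to reduce both inequalities to the sign of the total derivatives of $\text{Re}\,\hat{V}(c_1,T_2(c_1,c_3),c_3)$ in the variables $c_1$ and $c_3$, which have already been computed in formulas (\ref{formula-dReVdc1}) and (\ref{formula-dReVdc3}). Since $\theta_2=T_2(c_1,c_3)$ is the critical point of $\theta_2\mapsto\hat{V}(c_1,\theta_2,c_3)$ and (by the discussion preceding the proposition) is the unique one with $\text{Re}(\theta_2)\in(\tfrac12,1)$, the $\hat V_{\theta_2}$-term drops out of the chain rule, leaving exactly the closed expressions in (\ref{formula-dReVdc1}) and (\ref{formula-dReVdc3}), which are valid on the rectangle $(c_1,c_3)\in[0,\tfrac14]\times[\tfrac12,\tfrac34]$ where $\sin^4(\pi c_1)\le\sin^2(\pi c_3)$ and Lemmas \ref{lemma-c1c3} and \ref{lemma-c1c3-2} apply. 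Once I show both derivatives are $\le 0$ on this rectangle, the proposition follows by integration: $\text{Re}\,\hat{V}(c_1,T_2(c_1,c_3),c_3)-\text{Re}\,\hat{V}(c_1,T_2(c_1,c_{30}),c_{30})=\int_{c_{30}}^{c_3}\frac{d}{dt}\text{Re}\,\hat{V}(c_1,T_2(c_1,t),t)\,dt\le 0$ for $c_{30}\le c_3$, and symmetrically in the $c_1$-direction for $c_{10}\le c_1$.

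For the $c_1$-direction: on $c_1\in(0,\tfrac14)$ one has $\sin(2\pi c_1)>0$, so the numerator of the argument of the logarithm in (\ref{formula-dReVdc1}) is strictly smaller than its denominator; hence it suffices to verify the numerator is positive. Squaring, $2\sqrt{\sin^2(\pi c_3)-\sin^4(\pi c_1)}>\sin(2\pi c_1)$ is equivalent to $\sin^2(\pi c_3)>\sin^4(\pi c_1)+\sin^2(\pi c_1)\cos^2(\pi c_1)=\sin^2(\pi c_1)$, i.e. $\sin(\pi c_3)>\sin(\pi c_1)$; this holds on the rectangle because $\sin(\pi c_3)\ge\sin(\tfrac{3\pi}{4})=\tfrac1{\sqrt2}\ge\sin(\tfrac{\pi}{4})\ge\sin(\pi c_1)$, with equality only at the corner $(c_1,c_3)=(\tfrac14,\tfrac34)$. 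Thus the argument of the log lies in $(0,1]$ and $\frac{d}{dc_1}\text{Re}\,\hat{V}\le 0$.

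For the $c_3$-direction: on $c_3\in(\tfrac12,\tfrac34)$ one has $\cos(\pi c_3)<0$ while $\sqrt{1-\sin^4(\pi c_1)/\sin^2(\pi c_3)}\ge 0$, so the numerator of the argument of the logarithm in (\ref{formula-dReVdc3}) is $\le$ its denominator, and again I only need positivity of the numerator, i.e. $\cos^2(\pi c_1)>-\cos(\pi c_3)\sqrt{1-\sin^4(\pi c_1)/\sin^2(\pi c_3)}$. Writing $s_1=\sin^2(\pi c_1)$ and $s_3=\sin^2(\pi c_3)$ and squaring, this becomes $(1-s_1)^2 s_3>(1-s_3)(s_3-s_1^2)$, and a one-line expansion shows the difference of the two sides equals $(s_1-s_3)^2\ge 0$; equality would force $s_1=s_3$, impossible on the rectangle except at $(\tfrac14,\tfrac34)$. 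Hence the argument of the log lies in $(0,1]$ and $\frac{d}{dc_3}\text{Re}\,\hat{V}\le 0$, completing the reduction.

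The genuinely delicate step — rather than the elementary trigonometric inequalities above, which collapse once one spots the identities $\sin^4(\pi c_1)+\sin^2(\pi c_1)\cos^2(\pi c_1)=\sin^2(\pi c_1)$ and $(1-s_1)^2 s_3-(1-s_3)(s_3-s_1^2)=(s_1-s_3)^2$ — is to be sure the envelope formulas (\ref{formula-dReVdc1}) and (\ref{formula-dReVdc3}) hold along the entire path, i.e. that $T_2(c_1,c_3)$ remains the unique critical point with $\text{Re}(\theta_2)\in(\tfrac12,1)$ and that, as $(c_1,c_3)$ varies over the rectangle, none of the arguments of the logarithms (equivalently dilogarithms) in $\hat V$ crosses a branch cut, so that $\text{Re}\,\hat V(c_1,T_2(c_1,c_3),c_3)$ is genuinely $C^1$ with the stated derivative. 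This is supplied by the uniqueness and smoothness of $T_2$ established earlier in this appendix together with Lemmas \ref{lemma-c1c3} and \ref{lemma-c1c3-2}, which pin down the relevant real parts; granting that, the integration argument of the first paragraph yields both inequalities.
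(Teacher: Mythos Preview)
Your proof is correct and follows exactly the paper's approach: both use the total derivative formulas (\ref{formula-dReVdc1}) and (\ref{formula-dReVdc3}) and argue that they are negative on the rectangle. The paper's own proof merely cites these formulas and asserts $\frac{d\text{Re}\hat V}{dc_1}<0$ and $\frac{d\text{Re}\hat V}{dc_3}<0$ without justification; your argument supplies precisely the missing verification, and your two algebraic identities $\sin^4(\pi c_1)+\sin^2(\pi c_1)\cos^2(\pi c_1)=\sin^2(\pi c_1)$ and $(1-s_1)^2 s_3-(1-s_3)(s_3-s_1^2)=(s_1-s_3)^2$ are the right way to see the signs.
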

\begin{proof}
By formulas (\ref{formula-dReVdc1}) and (\ref{formula-dReVdc3}), we have 
\begin{align}
\frac{d\text{Re}\hat{V}}{dc_1}<0,  \ \text{and} \ \frac{d\text{Re}\hat{V}}{dc_3}<0.
\end{align}
\end{proof}

\begin{proposition} \label{prop-hessian-onedim}
When $\theta_2$ satisfies the conditions, 
     $\frac{1}{2}<\theta_2<1$, $0<c_1<\frac{1}{4}$, $\frac{1}{2}<c_3<\frac{3}{4}$ and $\theta_2+c_1<1$. 
then 
\begin{align}
f_{X_2X_2}>0.     
\end{align} 
\end{proposition}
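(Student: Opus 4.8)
The plan is to turn the statement into a positivity claim about an explicit trigonometric sum, and then verify it by grouping the five terms of that sum into three blocks, each manifestly positive under the stated hypotheses (together with the fact that the proposition is only invoked for points of $D_0$).

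First I would reuse the second-order computations carried out just before Proposition \ref{proposition-Hessianf}. With $\theta_1=c_1$ and $\theta_3=c_3$ real (so $X_1=X_3=0$) and $\theta_2=\theta_{2R}+\sqrt{-1}X_2$, the Cauchy--Riemann relations give $f_{X_2X_2}=-\operatorname{Re}\hat V_{\theta_2\theta_2}$; by the formula for $\hat V_{\theta_2\theta_2}$ this is exactly the $(2,2)$-entry of $\operatorname{Hess}(\hat f)$ specialized to $X_1=X_3=0$, namely $f_{X_2X_2}=2\pi(a+b+c+d+e)$. Writing $z_2=\rho e^{\sqrt{-1}\phi}$ with $\rho=e^{-2\pi X_2}>0$, $\phi=2\pi\theta_{2R}\in(\pi,2\pi)$, $\beta=2\pi c_1\in(0,\pi/2)$, $\mu=2\pi c_3\in(\pi,3\pi/2)$, we have $\sin\phi<0$, hence $b=-\rho\sin\phi/|1-z_2|^2>0$ at once.

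The one genuine computation is to combine the remaining four terms in pairs. Using $\operatorname{Im}\frac{1}{1-\rho e^{\sqrt{-1}\psi}}=\frac{\rho\sin\psi}{1-2\rho\cos\psi+\rho^2}$ together with the sum-to-product identities, a short calculation gives
\begin{align*}
a+c&=-\frac{2\rho\sin\phi\,\bigl((1+\rho^2)\cos\beta-2\rho\cos\phi\bigr)}{|1-z_2/z_1|^2\,|1-z_1z_2|^2},\\
d+e&=\frac{2\rho\sin\phi\,\bigl((1+\rho^2)\cos\mu-2\rho\cos\phi\bigr)}{|1-z_2z_3|^2\,|1-z_2/z_3|^2}.
\end{align*}
Since $-\rho\sin\phi>0$, we get $a+c>0$ as soon as $(1+\rho^2)\cos\beta-2\rho\cos\phi>0$: here $\cos\beta>0$ because $\beta\in(0,\pi/2)$, while $\theta_2+c_1<1$ forces $0<\beta<2\pi-\phi<\pi$, so $\cos\beta>\cos(2\pi-\phi)=\cos\phi$ by monotonicity of $\cos$ on $(0,\pi)$, and $1+\rho^2\ge2\rho$ then yields $(1+\rho^2)\cos\beta\ge2\rho\cos\beta>2\rho\cos\phi$. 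Similarly $d+e>0$ as soon as $(1+\rho^2)\cos\mu-2\rho\cos\phi<0$: since $\mu\in(\pi,3\pi/2)$ we have $\cos\mu<0$, hence $(1+\rho^2)\cos\mu\le2\rho\cos\mu$; if $\theta_2\ge\frac34$ then $\cos\phi\ge0>2\rho\cos\mu\ge(1+\rho^2)\cos\mu$, while if $\theta_2<\frac34$ then $\pi<\mu<\phi<\frac{3\pi}{2}$ (using $c_3<\theta_2$, which holds on $D_0$), an interval on which $|\cos|$ is decreasing, so $\cos\mu<\cos\phi<0$ and again $(1+\rho^2)\cos\mu\le2\rho\cos\mu<2\rho\cos\phi$. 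Adding the three blocks, $f_{X_2X_2}=2\pi\bigl((a+c)+b+(d+e)\bigr)>0$ for all $X_2$.

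The hard part is not any individual step but two bookkeeping matters: getting the closed forms for $a+c$ and $d+e$ exactly right (routine, but the only real calculation), and the sub-case $\theta_2<\frac34$ in the estimate for $d+e$, which genuinely needs the extra inequality $c_3<\theta_2$. This is legitimate because Proposition \ref{prop-hessian-onedim} is applied only for $(c_1,\theta_2,c_3)$ lying in $D_0$, where $\theta_2-\theta_3\ge0.02$. An equivalent route would be to observe that $\operatorname{Re}\hat V$ is harmonic in $(\theta_{2R},X_2)$, so $f_{X_2X_2}=-\partial^2_{\theta_{2R}}\operatorname{Re}\hat V$ and the claim becomes strict concavity of $\operatorname{Re}\hat V$ in the real direction; on the real slice this is a statement about a signed sum of $\Lambda$'s, but one still has to deal with general $X_2$, so it offers no essential shortcut over the direct computation above.
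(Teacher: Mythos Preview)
Your proof is correct and follows essentially the same route as the paper: write $f_{X_2X_2}=2\pi(a+b+c+d+e)$, group the five terms into $b$, $(a+c)$, and $(d+e)$, factor each pair via sum-to-product into the form $\pm 2\rho\sin\phi\bigl((1+\rho^2)\cos(\cdot)-2\rho\cos\phi\bigr)/(\text{positive})$, and then check the sign of each factor using the given ranges. Your observation that the case $\theta_2<\tfrac34$ in the $d+e$ estimate genuinely requires $c_3<\theta_2$ is well taken; the paper's proof also silently uses this (it begins that step with ``For $\tfrac12<c_3<\theta_2<1$''), even though the hypothesis is not in the proposition statement, and your justification via the $D_0$ constraint $\theta_2-\theta_3\ge0.02$ is exactly the intended reading.
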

\begin{proof}
    Recall that
\begin{align}
f(\theta_1,X_1,\theta_2,X_2,\theta_3,X_3)=\text{Re} \hat{V},    
\end{align}
then
\begin{align}
f_{X_2X_2}=-\text{Im}\left(\sqrt{-1}\hat{V}_{\theta_2\theta_2}\right)=2\pi (a+b+c+d+e)    
\end{align}
where 
\begin{align}
a&=-\text{Im}\frac{1}{1-z_2/z_1}=-\frac{\sin(2\pi(\theta_2-\theta_1))}{e^{2\pi(X_2-X_1)}+e^{-2\pi(X_2-X_1)}-2\cos(2\pi(\theta_2-\theta_1))} \\\nonumber
b&=-\text{Im}\frac{1}{1-z_2}=-\frac{\sin(2\pi \theta_2)}{e^{2\pi X_2}+e^{-2\pi X_2}-2\cos(2\pi \theta_2)}  \\\nonumber
c&=-\text{Im}\frac{1}{1-z_1z_2}=-\frac{\sin(2\pi( \theta_2+\theta_1))}{e^{2\pi (X_2+X_1)}+e^{-2\pi (X_2+X_1)}-2\cos(2\pi (\theta_2+\theta_1))} \\\nonumber
d&=\text{Im}\frac{1}{1-z_2z_3}=\frac{\sin(2\pi (\theta_2+\theta_3))}{e^{2\pi (X_2+X_3)}+e^{-2\pi (X_2+X_3)}-2\cos(2\pi (\theta_2+\theta_3))}\\\nonumber
e&=\text{Im}\frac{1}{1-z_2/z_3}=\frac{\sin(2\pi (\theta_2-\theta_3))}{e^{2\pi (X_2-X_3)}+e^{-2\pi (X_2-X_3)}-2\cos(2\pi (\theta_2-\theta_3))} 
\end{align}

we have
\begin{align}
&(d+e)|_{\theta_3=c_3,X_3=0}\\\nonumber
&=\frac{\sin(2\pi(\theta_2+c_3))}{e^{2\pi X_2}-2\cos(2\pi(\theta_2+c_3))+e^{-2\pi X_2}}+\frac{\sin(2\pi(\theta_2-c_3))}{e^{2\pi X_2}-2\cos(2\pi(\theta_2-c_3))+e^{-2\pi X_2}}\\\nonumber
&=2\sin(2\pi \theta_2)\frac{\cos(2\pi c_3)(e^{2\pi X_2}+e^{-2\pi X_2})-2\cos(2\pi \theta_2)}{(e^{2\pi X}-2\cos(2\pi(\theta_2+c_3))+e^{-2\pi X_2})(e^{2\pi X}-2\cos(2\pi(\theta_2-c_3))+e^{-2\pi X_2})}
\end{align}

For $\frac{1}{2}<c_3<\theta_2<1$, we have
\begin{align}
  \cos(2\pi c_3)(e^{2\pi X_2}+e^{-2\pi X_2})-2\cos(2\pi\theta_2)\leq 2\cos(2\pi c_3)-2\cos(2\pi \theta_2)<0  
\end{align}
and for $\frac{1}{2}<\theta_2<1$, 
$
    \sin(2\pi\theta_2)<0. 
$
so we have $(d+e)|_{\theta_3=c_3,X_3=0}>0$. 

As to $a+c$,
\begin{align}
    &(a+c)|_{\theta_1=c_1,X_1=0}\\\nonumber
    &=-\frac{\sin(2\pi(\theta_2-c_1))}{e^{2\pi X_2}-2\cos(2\pi(\theta_2-c_1))+e^{-2\pi X_2}}-\frac{\sin(2\pi(\theta_2+c_1))}{e^{2\pi X_2}-2\cos(2\pi(\theta_2+c_1))+e^{-2\pi X_2}}\\\nonumber
    &=-2\sin(2\pi \theta_2)\frac{\cos(2\pi c_1)(e^{2\pi X_2}+e^{-2\pi X_2})-2\cos(2\pi\theta_2)}{(e^{2\pi X_2}-2\cos(2\pi(\theta_2+c_1))+e^{-2\pi X_2})(e^{2\pi X_2}-2\cos(2\pi(\theta_2-c_1))+e^{-2\pi X_2})}
\end{align}

For $0\leq c_1\leq \frac{1}{4}$ and $\theta_2+c_1<1$, we have
\begin{align}
 \cos(2\pi c_1)(e^{2\pi X_2}+e^{-2\pi X_2})-2\cos(2\pi \theta_2)\geq 2\cos(2\pi c_1)-2\cos(2\pi \theta_2)>0   
\end{align}
and $\sin(2\pi \theta_2)<0$ for $\frac{1}{2}<\theta_2<1$, so we have $(a+c)|_{\theta_1=c_1,X_1=0}>0$. Hence 
\begin{align}
 f_{X_2X_2}(c_1,0,\theta_2,X_2,c_3,0)>0   
\end{align}
when $\frac{1}{2}<\theta_2<1$, $0<c_1<\frac{1}{4}$, $\frac{1}{2}<c_3<\frac{3}{4}$ and $\theta_2+c_1<1$.
\end{proof}

\begin{proposition} \label{prop-finfty-onedim}
Given $(c_1,c_3)\in [0,\frac{1}{4}]\times [\frac{1}{2},\frac{3}{4}]$, for $\frac{1}{2}<\theta_2<1$, we have
\begin{align}
f(c_1,0,\theta_2,X_2,c_3,0) \rightarrow  +\infty \ \text{as} \ X_2^2\rightarrow \infty.     
\end{align}
\end{proposition}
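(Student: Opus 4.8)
The plan is to analyze the asymptotic behavior of $f(c_1,0,\theta_2,X_2,c_3,0)=\mathrm{Re}\,\hat{V}(c_1,\theta_2+\sqrt{-1}X_2,c_3;m_1,m_2,m_3)$ as $|X_2|\to\infty$, splitting into the two cases $X_2\to+\infty$ and $X_2\to-\infty$. First I would recall the explicit form of $\hat{V}(\theta_1,\theta_2,\theta_3;m_1,m_2,m_3)$ and isolate the terms that depend on $\theta_2$: the polynomial (quadratic/linear) part, which after substituting $\theta_2\mapsto\theta_2+\sqrt{-1}X_2$ contributes a real linear-in-$X_2$ term, namely from $-2\pi\sqrt{-1}(m_2+1)\theta_2$ we get the contribution $2\pi(m_2+1)X_2$ to the real part; and the dilogarithm part $\tfrac{1}{2\pi\sqrt{-1}}\big(\mathrm{Li}_2(z_2z_3)+\mathrm{Li}_2(z_2/z_3)-\mathrm{Li}_2(z_1z_2)-\mathrm{Li}_2(z_2)-\mathrm{Li}_2(z_2/z_1)\big)$ with $z_2=e^{2\pi\sqrt{-1}(\theta_2+\sqrt{-1}X_2)}$.

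The key step is to apply Lemma \ref{lemma-Li2} to each of the five dilogarithm terms. For each term $\mathrm{Li}_2(e^{2\pi\sqrt{-1}(\alpha + X_2\sqrt{-1})})$ the real part of $\tfrac{1}{2\pi\sqrt{-1}}\mathrm{Li}_2$ is, up to a bounded error, equal to $0$ when the relevant imaginary shift is $\geq 0$ and equal to $2\pi(\alpha-\tfrac12)\cdot(\text{shift})$ when it is $<0$; here the shifts are $X_2+X_3$, $X_2-X_3$, $X_1+X_2$, $X_2$, $X_1-X_2$, all of which reduce (since $X_1=X_3=0$) to $X_2$. Thus for $X_2>0$ all five dilogarithm contributions are bounded, so $f=2\pi(m_2+1)X_2+O(1)$, and since $m_2+1>0$ in our setting (we are in the case $m_2=0$, or more generally $m_2\geq 0$), this tends to $+\infty$. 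For $X_2<0$, each $\mathrm{Li}_2$ term contributes a linear-in-$X_2$ term with a coefficient built from the corresponding $\alpha-\tfrac12$; collecting these, $f$ becomes, up to $O(1)$, a linear function $AX_2$ of $X_2<0$, and I would verify that the slope $A$ is negative (using $\tfrac12<\theta_2\pm c_1,\theta_2<1$ and $1<\theta_2+c_3<\tfrac32$, $0<\theta_2-c_3<\tfrac12$ — precisely the inequalities defining $D_0$ and $D_H$, together with the shifted conventions in $F$), so that $AX_2\to+\infty$. This is essentially the one-variable specialization of Lemma \ref{lemma-X1X2X3} and the function $F$ from \eqref{eq:2}, and in fact it follows directly from Proposition \ref{prop-inequalities26} restricted to $X_1=X_3=0$: the relevant subset of the $26$ inequalities guarantees exactly the required sign of the linear coefficient in both directions.

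The main obstacle is the bookkeeping of which branch of Lemma \ref{lemma-Li2} applies to each of the five dilogarithm terms and combining the resulting piecewise-linear expressions correctly, together with confirming that the residual $O(1)$ errors (the constants $C$ in Lemma \ref{lemma-Li2}, and the bounded polynomial cross-terms such as $(\tfrac q2-1)(\theta_2+\sqrt{-1}X_2)\cdot c_1$ contributing only bounded real parts once one checks they are actually purely imaginary or bounded) genuinely stay bounded and do not overwhelm the linear growth. Concretely, I would first write $f(c_1,0,\theta_2,X_2,c_3,0) = F(0,X_2,0;m_1,m_2,m_3) + O(1)$ where $F$ is the function in \eqref{eq:2} evaluated at $X_1=X_3=0$, and then invoke the $X_1=X_3=0$ specialization of the argument proving Proposition \ref{prop-inequalities26} (the cases I-A through VIII with $X_1=X_3=0$ collapse to just "$X_2\geq 0$" and "$X_2<0$"), which yields $F\to+\infty$ as $X_2^2\to\infty$ using the inequalities valid on $D_0$. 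Since $(c_1,c_3)\in[0,\tfrac14]\times[\tfrac12,\tfrac34]$ and $\tfrac12<\theta_2<1$ place us inside the region where those inequalities hold, the conclusion follows.
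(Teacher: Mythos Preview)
Your approach is correct and is essentially the same as the paper's: apply Lemma \ref{lemma-Li2} to each of the five dilogarithm terms with $X_1=X_3=0$, collect the piecewise-linear contributions together with the linear term $X_2$ coming from $-2\pi\sqrt{-1}\theta_2$, and check signs. The paper carries this out directly and obtains the simple closed form
\[
F(X_2)=\begin{cases} X_2 & X_2\ge 0,\\ (\tfrac12-\theta_2)X_2 & X_2<0,\end{cases}
\]
from which the conclusion is immediate since $\tfrac12<\theta_2<1$. Your detour through Proposition \ref{prop-inequalities26} is unnecessary overhead here (the 26 inequalities collapse trivially when $X_1=X_3=0$), and your aside about polynomial cross-terms like ``$(\tfrac q2-1)(\theta_2+\sqrt{-1}X_2)\cdot c_1$'' is moot because no such cross-term appears in $\hat V$ --- the only $\theta_2$-dependence in the polynomial part is $-2\theta_2$. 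But these are stylistic; the substance of your argument matches the paper's.
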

\begin{proof}
Based on Lemma \ref{lemma-Li2}, given $(c_1,c_3)\in [0,\frac{1}{4}]\times [\frac{1}{2},\frac{3}{4}]$, we introduce the following function for the $\theta_2\in (\frac{1}{2},1)$,     
\begin{equation} \label{eq:2}
F(X_2)=\left\{ \begin{aligned}
         &0  &  \ (\text{if} \ X_2\geq 0) \\
         &\left(\theta_2+c_3-\frac{3}{2}\right)X_2 & \ (\text{if} \ X_2<0)
                          \end{aligned} \right.
                          \end{equation}
\begin{equation*}
 +\left\{ \begin{aligned}
         &0  &  \ (\text{if} \ X_2\geq 0) \\
         &\left(\theta_2-c_3-\frac{1}{2}\right)X_2 & \ (\text{if} \ X_2<0)
                          \end{aligned} \right.
                          \end{equation*}
\begin{equation*}
-\left\{ \begin{aligned}
         &0  &  \ (\text{if} \ X_2\geq 0) \\
         &\left(\theta_2-\frac{1}{2}\right)X_2 & \ (\text{if} \ X_2<0)
                          \end{aligned} \right.  
                          \end{equation*}
\begin{equation*}
-\left\{ \begin{aligned}
         &0  &  \ (\text{if} \ X_2\geq 0) \\
         &\left(\theta_2+c_1-\frac{1}{2}\right)X_2 & \ (\text{if} \ X_2<0)
                          \end{aligned} \right.  
                          \end{equation*}  
\begin{equation*}
-\left\{ \begin{aligned}
         &0  &  \ (\text{if} \ X_2\geq 0) \\
         &\left(\theta_2-c_1-\frac{1}{2}\right)X_2 & \ (\text{if} \ X_2<0)
                          \end{aligned} \right.  
                          \end{equation*}                           

\begin{equation*}
   +X_2
\end{equation*}
i.e. 
\begin{equation} \label{eq:2}
F(X_2)=\left\{ \begin{aligned}
         &X_2  &  \ (\text{if} \ X_2\geq 0) \\
         &\left(\frac{1}{2}-\theta_2\right)X_2 & \ (\text{if} \ X_2<0)
                          \end{aligned} \right.
                          \end{equation}

It follows that, when $X_2^2\rightarrow \infty$, $F(X_2)\rightarrow +\infty$. 
\end{proof}

\end{document}